 \sloppy
 \documentclass[a4paper]{amsart}
\usepackage[latin1]{inputenc}
\usepackage{amsmath}
\usepackage{amsfonts}
\usepackage{amssymb}
\usepackage{amsthm}
\usepackage[usenames, dvipsnames]{color}
\usepackage{tikz}
\usepackage[all]{xy}
\usepackage{enumerate}

\definecolor{liens}{rgb}{1,0,0}
\usepackage[colorlinks=true, linkcolor=blue, 
hyperfootnotes=true,citecolor=blue,urlcolor=black]{hyperref}
\newcommand{\mfs}{}

\newtheorem{thm}{Theorem}[section]
\newtheorem{theo}{Theorem} 
\newtheorem{cor}[thm]{Corollary}
\newtheorem{lemma}[thm]{Lemma}
\newtheorem{lem}[thm]{Lemma}
\newtheorem{prop}[thm]{Proposition}

\newtheorem{defn}[thm]{Definition}
\newtheorem{defi}[thm]{Definition} 
\newtheorem{define}[thm]{Definition} 

\newenvironment{prf}[1]{\trivlist
\item[\hskip \labelsep{\bf #1.\hspace*{.3em}}]}{~\hspace{\fill}~$\square$\endtrivlist}
\newenvironment{prop2proof}{
\begin{prf}{Proof that (1)~implies (2)~in  Proposition~\ref{Prop1}}}{
\end{prf}}
\newenvironment{prop2aproof}{
\begin{prf}{Proof that (2)~implies (1)~in  Proposition~\ref{Prop1}}}{
\end{prf}}
\newenvironment{prop3proof}{
\begin{prf}{Proof that (1)~implies (2)~in  Proposition~\ref{Prop2}}}{
\end{prf}}
\newenvironment{prop3aproof}{
\begin{prf}{Proof that (2)~implies (1)~in  Proposition~\ref{Prop2}}}{
\end{prf}}
\newenvironment{thm2proof}{
\begin{prf}{Proof of Theorem~\ref{thm2}}}{
\end{prf}}

\newcommand{\ju}[1]{#1}
\newcommand{\toto}[1]{#1}

\newtheorem{rmk}[thm]{Remark}
\newtheorem{rem}[thm]{Remark} 

\newtheorem{assumption}[thm]{Assumption}
\newtheorem{condition}[thm]{Condition}

\theoremstyle{remark}
\newtheorem{exa}[thm]{Example}
\newtheorem{ex}[thm]{Example} 

\numberwithin{equation}{section}
\def\Res{\operatorname{Res}}

\def\Z{\mathbb{Z}}
\def\C{\mathbb{C}}
\def\R{\mathbb{R}}
\def\Q{\mathbb{Q}}
\def\L{\mathbb{L}}

\def\Qbar{\overlie{\Q}}
\def\P1{\mathbb{P}^{1}}
\def\Gal{\mathrm{Gal}}
\def\Aut{\mathrm{Aut}}
\def\beq{\begin{equation}}
\def\eeq{\end{equation}}
\def\Et{E_t}
\def\Etproj{\overline{E_t}}


\def\Qbar{\overline{\mathbb Q}}
\def\NX{{\mathbb N}}
\def\ZX{{\mathbb Z}}

\def\GL{{\rm GL}}

\def\pdisp{{\rm pdisp}}

\def\ores{{\rm ores}}

\def\frakO{{\mathfrak O}}

\def\calL{{\mathcal{L}}}

\def\calM{{\mathcal{M}}}

\def\K{{\mathbf K}}

\def\Gal{{\rm Gal}}

\def\walks{\mathcal{W}}
\def\walksg{\mathcal{W}_{typ}}
\def\walkse{\mathcal{W}_{ex}}
\newcommand{\walk}[1]{w_{\textrm{#1}}}

\begin{document}

\title{On the nature of the generating series of walks in the quarter plane}
\author{Thomas Dreyfus}
\address{\toto{Institut de Recherche Math\'ematique Avanc\'ee, U.M.R. 7501 Universit\'e de Strasbourg et C.N.R.S. 7, rue Ren\'e Descartes 67084 Strasbourg, FRANCE}}
\email{\toto{dreyfus@math.unistra.fr}}
\author{Charlotte Hardouin}
\address{Universit\'e Paul Sabatier - Institut de Math\'ematiques de Toulouse, 118 route de Narbonne, 31062 Toulouse.}
\email{hardouin@math.univ-toulouse.fr}
\author{Julien Roques}
\address{Universit\'e Grenoble Alpes, Institut Fourier,  CNRS UMR 5582, 100 rue des Maths, BP 74, 38402 St Martin d'H\`eres}
\email{Julien.Roques@ujf-grenoble.fr}
\author{Michael F. Singer }
\address{Department of Mathematics, North Carolina State University,
Box 8205, Raleigh, NC 27695-8205, USA}
\email{singer@math.ncsu.edu}

\keywords{Random walks, Difference Galois theory, Elliptic functions, Transcendence}

\thanks{This project has received funding from the European Research Council (ERC) under the European Union's Horizon 2020 research and innovation programme under the Grant Agreement No 648132. The second author would like to thank the ANR-11-LABX-0040-CIMI within
the program ANR-11-IDEX-0002-0 for its partial support. The second author's work is also supported by ANR Iso-Galois. The work of the third author has been partially supported by the LabEx PERSYVAL-Lab (ANR-11-LABX-0025-01) funded by the French program Investissement d'avenir.
The work of the fourth author was partially supported by a grant from the Simons Foundation (\#349357, Michael Singer). 
All authors received funding from NSF grant DMS-1606334 to attend the DART VII conference where significant progress concerning these results was made.  We thank Alexey Ovchinnikov and Alice Medvedev for making this possible.
We would like to thank Mireille Bousquet-M\'elou and Kilian Raschel for discussions and comments of this work. \ju{The second author would like to thank Marcello Bernardara, Thomas Dedieu and  Stephane Lamy  for many discussions and references on elliptic surfaces.}
 {\mfs In addition, we would like to thank the anonymous referees for many useful comments and suggestions concerning this article.}}

 \subjclass[2010]{05A15,30D05,39A06}
\date{\today}

\bibliographystyle{amsalpha}

\begin{abstract}
In the present paper, we introduce a new  approach, relying on {\mfs the Galois theory of difference equations}, to study the nature of the generating series of walks in the quarter plane. Using this approach, we are not only able to recover many of the recent results about these series, but also to go beyond them.  For instance, we give for the first time hypertranscendency results, {\it i.e.}, we prove that certain of these generating series do not satisfy any nontrivial nonlinear algebraic differential equation with rational coefficients.  

\end{abstract}
\maketitle
\setcounter{tocdepth}{1}
\tableofcontents
\pagestyle{myheadings}
\markboth{T.~DREYFUS, C.~HARDOUIN, J.~ROQUES, M.F.~SINGER}{ON THE NATURE OF GENERATING SERIES}

\pagebreak
\section{Introduction}

In the recent years, the nature of the generating series of  walks in the quarter plane $\Z_{\geq 0}^{2}$ has attracted the attention of many authors, see \cite{BMM,BostanKauersTheCompleteGenerating,FIM,KurkRasch,MelcMish,RaschelJEMS} and the references therein. 

To be concrete, let us consider a walk with small steps in the quarter plane $\Z_{\geq 0}^{2}$. \ju{By ``small steps'' we mean that the} set of authorized steps, denoted by $\mathcal{D}$, is a subset of $\{0,\pm 1\}^{2}\backslash\{(0,0)\}$. 
For $i,j,k\in \Z_{\geq 0}$, we let $q_{\mathcal{D},i,j,k}$ be the number of walks in $\Z_{\geq 0}^{2}$ with {steps in $\mathcal{D}$}  starting at $(0,0)$ and ending at $(i,j)$ in $k$ steps and we consider the corresponding trivariate generating series 
$$
Q_{\mathcal{D}}(x,y,t):=\displaystyle \sum_{i,j,k\geq 0}q_{\mathcal{D},i,j,k}x^{i}y^{j}t^{k}.
$$

The main questions considered in the literature are: 
\begin{itemize}
\item is $Q_{\mathcal{D}}(x,y,t)$ algebraic over $\Q(x,y,t)$ ?  
\item is $Q_{\mathcal{D}}(x,y,t)$ holonomic, {\it i.e.}, {\mfs is $Q_{\mathcal{D}}(x,y,t)$ holonomic in each of the variables $x,y,t$.  Here, holonomic in the variable $x$ means that the $\Q(x,y,t)$-vector space spanned by the partial derivatives $\frac{\partial^i}{\partial x^i}Q_{\mathcal{D}}(x,y,t)$\toto{, $i\in \mathbb{Z}_{\geq 0}$,} of $Q_{\mathcal{D}}(x,y,t)$ is finite dimensional.}
\item is $Q_{\mathcal{D}}(x,y,t)$ $x$-hyperalgebraic (resp. $y$-hyperalgebraic), {\it i.e.},  is $Q_{\mathcal{D}}(x,y,t)$, seen as a function of $x$, a solution of some nonzero  (possibly nonlinear) polynomial differential equations with coefficients in  $\Q(x,y,t)$?  In case of a negative answer, we say that $Q_{\mathcal{D}}(x,y,t)$ is $x$-hypertranscendental (resp. $y$-hypertranscendental). 
\end{itemize}

We shall now make a brief overview of some recent works on these questions. Random walks in the quarter plane were extensively considered in \cite{FIM}.  These authors attached a group to any such walk and introduced powerful analytic tools to study the generating series of such a walk. In \cite{BMM},  Bousquet-M\'elou and Mishna give a detailed study of the various walks \toto{with small steps in the quarter plane} and  make the conjecture that \ju{such} a walk has a holonomic generating series if and only if the associated group is finite. They prove that, if the group of the walk is finite, then the generating series is holonomic, except, maybe, in one case, which was solved positively by Bostan, van Hoeij and Kauers in \cite{BostanKauersTheCompleteGenerating} (see also \cite{FR10}). In \cite{MR09} Mishna and Rechnitzer showed that two of the walks with infinite groups have nonholonomic generating series. Kurkova and Raschel proved in \cite{KurkRasch} (see also \cite{BRS,RaschelJEMS}) that for all of the 51 {\it nonsingular} walks with infinite group \ju{(see Section~\ref{sec:listofwalkssmallsteps})} the corresponding generating series is not holonomic. This work is very delicate and technical, and relies on the explicit uniformization of  certain elliptic curves.  Recently, Bernardi, Bousquet-M\'elou and Raschel \cite{BBMR16} have shown that the generating series for  9 of the nonsingular walks satisfy nonlinear differential equations  despite the fact that they are not holonomic.

In the present paper, we introduce a new, more algebraic  approach, \ju{relying on Galois theory of difference equations,}  to study the nature of the generating series of walks.   Using this approach, we are not only able to recover the above mentioned remarkable results, but also to go beyond them.  For instance, the following theorem, proved in Section~\ref{sec:hypertrF1F2}, is one of the main results of this paper. 

\begin{theo}\label{theointro:hypertr}
Except for the 9 walks considered in  \cite{BBMR16}, the generating series of  all nonsingular walks with infinite group are $x$- and $y$-hypertranscendental. 
In particular, they are nonholonomic. 
\end{theo}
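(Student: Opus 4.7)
The plan is to translate the question of $x$- and $y$-hypertranscendence of $Q_{\mathcal{D}}(x,y,t)$ into a concrete problem on the kernel curve, and then apply a Galois-theoretic criterion that can be checked case by case. Recall that $Q_{\mathcal{D}}$ satisfies a standard functional equation
\[
K(x,y,t)\, Q_{\mathcal{D}}(x,y,t) \;=\; xy \;+\; F_1(x,t) \;+\; F_2(y,t) \;+\; \text{(correction)},
\]
with kernel $K = xy(1 - t\, S(x,y))$. For nonsingular walks the kernel curve $E_t = \{K=0\}$ is (for generic $t$) an elliptic curve carrying two involutions $i_1, i_2$; for walks with infinite group, $\tau := i_2 \circ i_1$ is a translation of infinite order on $E_t$. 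Restricting the functional equation to $E_t$ gives a telescoping identity $\tau(f) - f = b$, in which $f$ is built from $F_1$ (respectively $F_2$) and $b \in \Q(E_t)$ is an explicit rational function on $E_t$ determined by the step set $\mathcal{D}$.

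Next, I would invoke the difference-Galois / parameterized-Galois machinery developed earlier in the paper to convert hypertranscendence into an obstruction statement. Concretely: $F_1(x,t)$ is $x$-hyperalgebraic if and only if there exists a linear differential operator $L \in \Q(E_t)[\partial_t]$ (or in the appropriate derivation), together with $g \in \Q(E_t)$, such that
\[
L(b) \;=\; \tau(g) - g.
\]
This is the kind of Hardouin--Singer criterion that makes hypertranscendence equivalent to a cohomological condition, which in turn can be detected on the level of divisors and residues: a rational function on $E_t$ is a $\tau$-coboundary if and only if, on each $\tau$-orbit of poles, the total residue is zero and the pole multiplicities telescope. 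So the problem reduces to showing that no $\Q$-linear combination of $b$ and its $\partial_t$-derivatives, after reducing modulo $\tau$-coboundaries, becomes zero.

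Equipped with this criterion, I would proceed by inspection of the $42$ walks not treated in \cite{BBMR16}. For each walk I would write down $b$ explicitly (its poles on $E_t$ are at the fiber over $x = 0$, $x = \infty$ and their $\tau$-orbits), compute the orbit sums of residues of $b$ and of $\partial_t b, \partial_t^2 b, \dots$, and verify that they generate an infinite-dimensional space modulo coboundaries. The key algebraic input is that, because $\tau$ has infinite order, the pole orbit of $b$ is infinite and the residues vary with $t$; so any hypothetical operator $L$ would impose infinitely many linear relations between the $t$-derivatives of finitely many residue functions, which generically fails. The $9$ walks of \cite{BBMR16} are precisely those where a special symmetry of $\mathcal{D}$ forces these residues to collapse into a finite-dimensional $\partial_t$-stable subspace.

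The main obstacle is really twofold. On the theoretical side, one must justify that the criterion "$L(b)$ is a $\tau$-coboundary" is a genuine equivalence for hypertranscendence of $F_1$ (and hence of $Q_{\mathcal{D}}$), which requires a clean descent from the Galois group of the $(\sigma,\partial_t)$-module on $E_t$ down to a condition on the inhomogeneous term; this was presumably set up in earlier sections. On the computational side, the delicate point is giving a \emph{uniform} argument that separates the 42 walks from the 9 exceptional ones: I expect the cleanest route is to isolate the explicit symmetry of $\mathcal{D}$ (a form of decoupling in $b$ between its polar parts at $x=0$ and $x=\infty$, or between $F_1$ and $F_2$) and to show that this symmetry holds exactly for the 9 walks of \cite{BBMR16}. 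Producing such a structural dichotomy, rather than a brute 42-case check, is the hard part.
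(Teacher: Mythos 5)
Your overall architecture is in the right spirit---functional equation on the kernel curve, a telescoping identity $\tau(f)-f=b$ in $\C(\overline{E_t})$, reduction via difference Galois theory to the existence of a telescoper $L(b)=\tau(g)-g$, and a criterion in terms of the poles of $b$ and their $\tau$-orbits. However there are two substantive problems. First, and most importantly, you work with the derivation $\partial_t$ throughout, including in the Galois criterion $L\in \Q(E_t)[\partial_t]$. That is the wrong derivation for two independent reasons: (i) the statement to prove is $x$- and $y$-hypertranscendence, so the relevant derivations are $d/dx$ and $d/dy$, not $d/dt$; and (ii) the Hardouin--Singer type criterion needs a derivation on $\C(\overline{E_t})$ that \emph{commutes with} $\tau$. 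The paper produces exactly such a $\delta$ from the invariant differential $\Omega$ on $\overline{E_t}$ (so $\tau^*\Omega=\Omega$ gives $\delta\tau=\tau\delta$), and then observes that $\delta = h\,d/dx$ for some $h\in\C(\overline{E_t})$, which converts $d/dx$-hyperalgebraicity into $\delta$-hyperalgebraicity. The derivation $\partial_t$ does \emph{not} commute with $\tau$ (the translation parameter $\omega_3$ depends on $t$), so your criterion would not hold as stated and would not even address the statement being proved.

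Second, the argument you sketch for the 42 non-exceptional walks---that the $\tau$-orbit of poles of $b$ is ``infinite'' and the residues, being $t$-dependent, would impose infinitely many relations which ``generically fail''---does not match the actual mechanism and would not prove the theorem. For a fixed transcendental $t$, $b_1$ and $b_2$ are rational functions on $\overline{E_t}$ with \emph{finitely many} poles; the question is whether two of these finitely many poles lie in the same $\tau$-orbit, and if so whether the orbit residues vanish. The decisive tool in the paper is a clean finite criterion: if $b_i$ has a pole $P$ of order $m$ such that no $\tau^k(P)$ with $k\neq 0$ is a pole of $b_i$ of order $\geq m$, then $F^i$ is hypertranscendental. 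Verifying this across the $42$ typical walks is done via an interplay of arithmetic (Galois conjugation over $\Q(t)$, exploiting that $t$ is transcendental over $\Q$, to show $P_1\nsim P_2$ or $P_i\nsim Q_j$) and geometry (relations among $\iota_1,\iota_2,\tau$ and the symmetry $s(x,y)=(y,x)$ for symmetric walks). There is no ``infinite-dimensionality modulo coboundaries'' argument, nor is a single uniform symmetry of $\mathcal{D}$ what separates the 9 exceptional walks: they are those where the orbit-residue conditions happen to vanish, which is checked orbit by orbit using the explicit base points and their $\tau$-images.
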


\ju{We shall now briefly explain our proof of this result and the reason why the Galois theory of difference equations comes into play. 
Our starting point is the well-known fact that the generating series
$Q_{\mathcal{D}}(x,y,t)$ satisfies a
functional equation of the form 
\begin{equation*} 
K_{\mathcal{D}}(x,y,t)Q_{\mathcal{D}}(x,y,t)=xy-F_{\mathcal{D}}^{1}(x,t) -F_{\mathcal{D}}^{2}(y,t)+t\epsilon Q_{\mathcal{D}}(0,0,t)
\end{equation*}
where 
$$
F_{\mathcal{D}}^{1}(x,t):= K_{\mathcal{D}}(x,0,t)Q_{\mathcal{D}}(x,0,t), \ \ F_{\mathcal{D}}^{2}(y,t):= K_{\mathcal{D}}(0,y,t)Q_{\mathcal{D}}(0,y,t),$$
for some 
$
K_{\mathcal{D}}(x,y,t) \in \C[x,y,t] 
$ 
and some $\epsilon \in \{0,1\}$. }

\ju{Fix $t \in \C$. The equation $K_{\mathcal{D}}(x,y,t)=0$ defines a curve $\Et \subset \P1(\C) \times \P1(\C)$ whose Zariski-closure $\Etproj$ happens to be an elliptic curve in all the situations considered in this paper. }

\ju{Using the fact that the series
$Q_{\mathcal{D}}(x,y,t)$ converges for $|x| < 1$, $|y| < 1$ and ${|t| < 1/|\mathcal{D}|}$, one can see $ F_{\mathcal{D}}^{1}(x,t)$ and 
$F_{\mathcal{D}}^{2}(y,t)$ as analytic functions on some small pieces of $\Etproj$. Using the above functional equation, one can prove that these functions can be extended to multivalued meromorphic functions on the whole of $\Etproj$. Uniformizing $\Etproj$ with Weierstrass functions, we can lift these extended functions to meromorphic functions $r_{x}, r_{y}$ on the universal covering $\C$ of $\Etproj$. These are the functions that will be found to satisfy difference equations. Indeed, intersecting $\Etproj$ with horizontal, resp. vertical lines, we define two involutions $\iota_{1}, \iota_{2}$ of $\Etproj$. Their compositum $\tau := \iota_{2} \circ \iota_{1}$ is a translation on $\Etproj$. We lift these three mappings to $\C$, keeping the same notations $\iota_{1}$, $\iota_{2}$, $\tau$ for the lifted mappings. Then, it can be proved that $r_{x}$ and $r_{y}$ satisfy difference equations of the form  
 \begin{eqnarray*}
  {\tau}(r_x) -r_x & = & b_1, \\
 {\tau}(r_y) -r_y & = & b_2 
 \end{eqnarray*}
for some explicit elliptic functions $b_{1}$ and $b_{2}$. 
Now, the ``nature'' of $Q_{\mathcal{D}}(x,y,t)$ is tackled as followed: its hypertranscendency with respect to the derivation $d/dx$, resp. $d/dy$, is found to be equivalent to the hypertranscendency of $F_{\mathcal{D}}^{1}(x,t)$, resp. $F_{\mathcal{D}}^{2}(y,t)$, and then in turn to the hypertranscendency of $r_{x}$, resp. $r_{y}$. 
Results from the Galois theory of difference equations 
allow one to reduce the question of showing that $r_{x}$ and $r_{y}$ are  hypertranscendental to showing that a certain linear differential equation defined on the elliptic curve $\Etproj$ has no solutions in the function field of that curve \cite{Hard08,HS}. It turns out that the last question can be answered by some elementary considerations about the polar divisor of some elliptic functions.  
Note that the fact that difference equations come into play in the present context is classical 
(see \cite{FIM,KurkRasch,RaschelJEMS}). The novelty of our approach consists in the {\it algebraic} way we exploit these functional equations, in the light of the \ju{Galois theory of difference equations}.}

Our techniques also allow us to study the 9 exceptional cases and to recover some of the results of \cite{BBMR16}, namely : 

\begin{theo}\label{thm2}
In the 9 exceptional cases treated in \cite{BBMR16}, the generating series $Q_{\mathcal{D}}(x,y,t)$ is $x$- and $y$-hyperalgebraic. 
\end{theo}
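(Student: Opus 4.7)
The plan is to run the framework described after Theorem~\ref{theointro:hypertr} \emph{in reverse}: rather than ruling out solutions of an auxiliary equation on $\Etproj$, we produce them explicitly for the 9 exceptional step sets.

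First I would reduce the $x$-hyperalgebraicity of $Q_{\mathcal{D}}(x,y,t)$, viewed as a function of $x$ with $y$ and $t$ as parameters, to that of $F_{\mathcal{D}}^{1}(x,t)$. This is immediate from the kernel functional equation displayed in the introduction: $F_{\mathcal{D}}^{2}(y,t)$ and $Q_{\mathcal{D}}(0,0,t)$ are independent of $x$, and $K_{\mathcal{D}}$ is a polynomial, so $Q_{\mathcal{D}}$ is a rational combination of $F_{\mathcal{D}}^{1}(x,t)$ and of $x$-independent quantities, and $x$-hyperalgebraicity is preserved under such combinations. Via the Weierstrass uniformization, this further reduces to the hyperalgebraicity of the lift $r_{x}$ on $\C$ with respect to the translation-invariant derivation $d/dz$. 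The $y$-statement is symmetric, reducing to the hyperalgebraicity of $r_{y}$.

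The heart of the argument is then the analysis of the difference equation $\tau(r_{x})-r_{x}=b_{1}$. By the Galois-theoretic criteria of Hardouin--Singer, $r_{x}$ is hyperalgebraic over the field $\C(\Etproj)$ of elliptic functions as soon as one can exhibit a nonzero linear differential operator $L=\sum_{i=0}^{n}\alpha_{i}(d/dz)^{i}$ with coefficients $\alpha_{i}\in\C(\Etproj)$ together with a function $g\in\C(\Etproj)$ such that
\begin{equation*}
L(b_{1})=\tau(g)-g.
\end{equation*}
Indeed, since $\tau$ commutes with $d/dz$ and the $\alpha_{i}$ are $\tau$-invariant, applying $L$ to the difference equation yields $\tau(L(r_{x})-g)=L(r_{x})-g$, producing a nontrivial algebraic differential relation satisfied by $r_{x}$, hence by $F_{\mathcal{D}}^{1}(x,t)$, and hence by $Q_{\mathcal{D}}(x,y,t)$ in the variable $x$. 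Symmetric reasoning with $b_{2}$ yields the $y$-statement.

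The main obstacle is then the explicit production, for each of the 9 exceptional step sets, of such a pair $(L,g)$, and symmetrically a pair $(L',g')$ witnessing hyperalgebraicity of $r_{y}$ via $b_{2}$. The strategy is to compute the polar divisor of $b_{1}$ on $\Etproj$ and to show that, after applying a suitable differential operator $L$, the result becomes of the form $\tau^{*}(D)-D$ for a divisor $D$, which is precisely the condition guaranteeing that $L(b_{1})$ can be telescoped as $\tau(g)-g$ in the function field of $\Etproj$. What singles out the 9 cases of \cite{BBMR16} from the hypertranscendental ones is exactly that such a collapse of divisors is achievable with small $n$; the required divisor identities should drop out of the same pole analysis that, in the hypertranscendency proof, was invoked to obstruct solvability. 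A case-by-case verification, leveraging the extra symmetries specific to these exceptional step sets, then completes the proof.
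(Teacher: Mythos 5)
Your overall architecture matches the paper's: reduce the $x$-statement to $F^1$ via the kernel equation, lift to $r_x$ on the universal cover, establish a telescoping relation $L(b_i)=\tau(g)-g$ with $\tau$-invariant (hence constant) coefficients for each of the nine walks, and then descend. The paper does exactly this, verifying the telescoping relation case by case in Sections~\ref{subsec:iic1}--\ref{subsec:iic5} using the orbit-residue criteria of Appendix~\ref{introsec} (your ``divisor collapse'' heuristic is close, but the correct criterion is the vanishing of orbit residues, i.e.\ of sums of coefficients of principal parts along $\tau$-orbits, not a divisor identity; and this verification is the bulk of the work, not something that ``drops out'').

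The genuine gap is in the step where you pass from $\tau\bigl(L(r_x)-g\bigr)=L(r_x)-g$ to ``a nontrivial algebraic differential relation satisfied by $r_x$.'' In $\calM(\C)$, $\tau$-invariance means invariance under $\omega\mapsto\omega+\omega_3$, and the field of $\omega_3$-periodic meromorphic functions is huge --- it is not $\C$. If your inference were valid, it would give $L(r_x)=g+c$ with $c\in\C$, i.e.\ that $r_x$ is \emph{holonomic} over $\C(\Etproj)$, which is precisely what fails for these nine walks (Section~\ref{sec:nonholonomic}). This is the point flagged after Proposition~\ref{prop:caract hyperalg}: the converse direction of the Galois criterion requires $(\C(\Etproj)\langle f\rangle_{\delta\tau})^{\tau}=\C$, and for the exceptional walks new $\tau$-constants do appear. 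The paper's Proposition~\ref{prop:hypalg} closes this gap by invoking the \emph{second} periodicity $r_x(\omega+\omega_1)=r_x(\omega)$ (equation~\eqref{eq:KRThm2}): then $L(r_x)-g$ is doubly periodic for the lattice $\Z\omega_1+\Z\omega_3$, hence a rational function of $\mathfrak{p}_{1,3}$ and $\mathfrak{p}_{1,3}'$, hence hyperalgebraic over $\C$; since $g$ is hyperalgebraic (rational in $\mathfrak{p}_{1,2}$, $\mathfrak{p}_{1,2}'$) and hyperalgebraicity is stable under the field operations, composition and inversion (Lemmas~\ref{lem:composit} and~\ref{lem:reciprochyperalg}), one deduces that $r_x$, and then $F^1(x,t)$, is hyperalgebraic over $\C$ --- but only over $\C$, via a genuinely nonlinear relation, not over $\C(\Etproj)$ by the linear mechanism you describe. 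Without this double-periodicity argument your proof does not go through.
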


It is very likely that our method can be used to study the generating series of {\it weighted} walks in the quarter-plane as well as {\it singular} walks. This is explained in more details in Section \ref{sec:weighted}. We hope to come back on this in future publications.

The  paper is organized as follows. In Section~\ref{sec:notation} we review several useful facts and ideas  that form the basis of this paper as well as previous investigations concerning the generating series of walks in the quarter plane: the functional equation, the elliptic curve associated to this equation together with certain involutions and automorphisms and the method by which one reduces the question of hypertranscendence to a similar question for a multivalued meromorphic function on the associated curve.  In 
Section~\ref{sec:hyper}, we present the criteria based on the Galois theory of difference equations which we will use to determine if a function is hypertranscendental. \ju{A brief introduction to the Galois theory of difference equations is given in Appendix~\ref{diffgalois}, as well as a proof of the above mentioned criteria.} In Sections~\ref{sec:prelim} and~\ref{sec:hypertrF1F2} we present the calculations that show that  for all but the nine exceptional cases, the generating series of nonsingular walks are hypertranscendental. In Section~\ref{sec:hyperalg} we show that for the nine exceptional cases, the generating series have specializations that are hyperalgebraic and deduce Theorem~\ref{thm2} from this. In Section~\ref{sec:nonholonomic} we show these series are not holonomic.  \ju{Appendix~\ref{introsec}} contains useful necessary and sufficient conditions for certain linear differential equations  on elliptic curves (similar to the telescopers appearing in the verification of combinatorial identities)  to have solutions in the function field of the curve. \ju{These criteria involve some ``orbit residues'' and Appendix~\ref{appendixB} provides useful results about them.}  These \ju{results}, together with the results of Section~\ref{sec:hyper}, are \ju{the mains tools} used in Sections~\ref{sec:prelim} and~\ref{sec:hypertrF1F2}  to determine hypertranscendency.

%
 
\section{Fundamental properties of the walks with small steps} \label{sec:notation}

We start by recalling some basic facts about random walks in the quarter-plane, see \cite{BMM,FIM,KauersYatchak,MelcMish} for more details. 

\subsection{The generating series}\label{sec: the walks and gener series} 

We consider a walk with small steps in the quarter plane $\Z_{\geq 0}^{2}$. The set of authorized steps $\mathcal{D}$ is a subset of $\{0,\pm 1\}^{2}\backslash\{(0,0)\}$. 
For $i,j,k\in \Z_{\geq 0}$, we let $q_{\mathcal{D},i,j,k}$ be the number of walks in $\Z_{\geq 0}^{2}$ with {steps in $\mathcal{D}$}  starting at $(0,0)$ and ending at $(i,j)$ in $k$ steps and we consider the corresponding trivariate generating series 
$$
Q_{\mathcal{D}}(x,y,t):=\displaystyle \sum_{i,j,k\geq 0}q_{\mathcal{D},i,j,k}x^{i}y^{j}t^{k}.
$$
\ju{The obvious fact that $\vert q_{\mathcal{D},i,j,k} \vert \leq |\mathcal{D}|^{k}$ ensures that} $
Q_{\mathcal{D}}(x,y,t)$ converges for all $(x,y,t)\in \C^{2}\times \R$ such that $|x|\leq 1$, $|y|\leq 1$ and $0<t< 1/|\mathcal{D}|$. 

\subsection{Kernel and functional equation} \label{sec:kernel and gen series}

The generating series $Q_{\mathcal{D}}(x,y,t)$ satisfies a functional equation that we shall now recall. The {\it Kernel} of the walk is defined by 
$$
K_{\mathcal{D}}(x,y,t):=xy (1-t S_{\mathcal{D}}(x,y))
$$ where 
$$
S_{\mathcal{D}}(x,y) 
:=\sum_{(i,j)\in \{0,\pm 1\}^{2}} d_{i,j}x^i y^j
$$ 
with $d_{i,j}$ is equal to $1$ if $(i,j)\in \mathcal{D}$, and to $0$ otherwise. \ju{(Note that we slightly diverge from the notation of \cite[Lemma~4]{BMM} where the Kernel of the walk is by definition equal to $\frac{K_{\mathcal{D}}(x,y,t)}{xy}$.)}

One  can consider  $S_{\mathcal{D}}(x,y)$ as a Laurent polynomial in $x$ with coefficients that are Laurent {\mfs polynomials} in $y$ and {\it vice versa}.  Using the notations of \cite{BMM,KauersYatchak}, we write 
$$
S_{\mathcal{D}}(x,y)= A_{\mathcal{D},-1}(x) \frac{1}{y} +A_{\mathcal{D},0}(x)+ A_{\mathcal{D},1}(x) y=  B_{\mathcal{D},-1}(y) \frac{1}{x} +B_{\mathcal{D},0}(y)+ B_{\mathcal{D},1}(y) x
$$ 
where $A_{\mathcal{D},i}(x) \in x^{-1}\Q[x]$ and $B_{\mathcal{D},i}(y) \in y^{-1}\Q[y]$. 
The generating series $Q_{\mathcal{D}}(x,y,t)$ satisfies the following functional equation \ju{(see \cite[Lemma~4]{BMM})}:
\begin{equation} \label{eq:funcequ}
K_{\mathcal{D}}(x,y,t)Q_{\mathcal{D}}(x,y,t)=xy-F_{\mathcal{D}}^{1}(x,t) -F_{\mathcal{D}}^{2}(y,t)+td_{-1,-1} Q_{\mathcal{D}}(0,0,t)
\end{equation}
where 
$$
F_{\mathcal{D}}^{1}(x,t):= txA_{\mathcal{D},-1}(x)Q_{\mathcal{D}}(x,0,t), \ \ F_{\mathcal{D}}^{2}(y,t):= tyB_{\mathcal{D},-1}(y)Q_{\mathcal{D}}(0,y,t).$$

\subsection{Classification of the walks with small steps}\label{sec:listofwalkssmallsteps}

There are {\it a priori} $2^8=256$ possible walks with small steps in the quarter plane $\Z_{\geq 0}^{2}$, but, as explained in \cite[$\S 2$]{BMM}, only $138$ of them are truly worthy of consideration. Moreover, taking into account natural symmetries, we are finally left with $79$ inherently different walks to study; see \cite[Figures~1 to 4]{BMM}. 

Following \cite[Section 3]{BMM} or \cite[Section 3]{KauersYatchak}, we attach to any walk in the quarter plane its group, which is by definition the group $\langle i_{1},i_{2} \rangle$ generated by the involutive birational transformations of $\C^{2}$ given by 
$$
i_1(x,y) =\left(x, \frac{A_{\mathcal{D},-1}(x) }{A_{\mathcal{D},1}(x)y}\right) \text{ and }  i_2(x,y)=\left(\frac{B_{\mathcal{D},-1}(y)}{B_{\mathcal{D},1}(y)x},y\right).
$$ 
\ju{These transformations leave $S_{\mathcal{D}}(x,y,t)$ and, hence, $\frac{K_{\mathcal{D}}(x,y,t)}{xy}$ invariant}. 
Amongst the 79 walks mentioned above, $23$ have a finite group and $56$ have an infinite group; see \cite[Theorem 3]{BMM}. 

In the finite group case, the generating series $Q_{\mathcal{D}}(x,y,t)$ is holonomic. This has been proved in \cite{BMM} for $22$ walks, and in \cite{BostanKauersTheCompleteGenerating} for the remaining walk, the so-called Gessel walk (its generating series is actually algebraic; see also \cite{FR10}). 

Amongst the walks having an infinite group, we distinguish the {\it singular} and {\it nonsingular} walks,  that is, those walks whose associated curves $\Etproj$, \ju{defined below in Section~\ref{sec:etproj}}, are singular 
or nonsingular. \ju{In the latter case, $\Etproj$ is an elliptic curve; see Proposition~\ref{prop:genuscurvewalk} below.} Amongst the $56$ walks under consideration, there are $5$ singular walks and $51$ nonsingular walks. \ju{The set of these nonsingular walks, on which this paper focuses, is denoted by $\walks$ and is described in} Figure~\ref{figure:nonsing}. \ju{This Figure reproduces  the table \cite[Figure 17]{KurkRasch} and \ju{uses notations compatible with {\it loc. cit.}}} for the convenience of the reader and for the ease of reference.  \ju{In \cite{KurkRasch}, the authors show that the  generating series for all walks in $\walks$ are nonholonomic}.  In \cite{BBMR16}, the authors show that 9 of these are hyperalgebraic. \ju{The subset of $\walks$ formed by these 9 walks, denoted by $\walkse$, is described in Figure \ref{figure:ex} with references to both  \cite[Tab. 2]{BBMR16} and Figure~\ref{figure:nonsing}}. \ju{We will refer to the elements of $\walkse$ as the ``exceptional walks''.} \ju{As noted in \cite[\S~2.2]{BMM}}, interchanging $x$ and $y$ in the steps leads to equivalent counting problems. The notation \ju{``$\walk{IIB.2}$ (after $x\leftrightarrow y$)''  refers to the walk $\walk{IIB.2}$ with the $x$ and $y$ axes interchanged}. \ju{The complement of $\walkse$ in $\walks$ is denoted by $\walksg$.} \ju{We will refer to the elements of $\walksg$ as the ``typical walks''.}

\begin{figure}
\begin{center}
\end{center}
\vskip 3 pt
$$
\underset{\walk{IA.1}}{\begin{tikzpicture}[scale=.4, baseline=(current bounding box.center)]
\foreach \x in {-1,0,1} \foreach \y in {-1,0,1} \fill(\x,\y) circle[radius=2pt];
\draw[thick,->](0,0)--(-1,1);
\draw[thick,->](0,0)--(1,1);
\draw[thick,->](0,0)--(0,-1);
\draw[thick,->](0,0)--(1,-1);
\end{tikzpicture}}
\quad 
\underset{\walk{IA.2}}{\begin{tikzpicture}[scale=.4, baseline=(current bounding box.center)]
\foreach \x in {-1,0,1} \foreach \y in {-1,0,1} \fill(\x,\y) circle[radius=2pt];
\draw[thick,->](0,0)--(-1,1);
\draw[thick,->](0,0)--(0,1);
\draw[thick,->](0,0)--(1,1);
\draw[thick,->](0,0)--(0,-1);
\draw[thick,->](0,0)--(1,-1);
\end{tikzpicture}}
\quad
\underset{\walk{IA.3}}{\begin{tikzpicture}[scale=.4, baseline=(current bounding box.center)]
\foreach \x in {-1,0,1} \foreach \y in {-1,0,1} \fill(\x,\y) circle[radius=2pt];
\draw[thick,->](0,0)--(-1,1);
\draw[thick,->](0,0)--(0,1);
\draw[thick,->](0,0)--(1,1);
\draw[thick,->](0,0)--(-1,0);
\draw[thick,->](0,0)--(1,-1);
\end{tikzpicture}}
\quad
\underset{\walk{IA.4}}{\begin{tikzpicture}[scale=.4, baseline=(current bounding box.center)]
\foreach \x in {-1,0,1} \foreach \y in {-1,0,1} \fill(\x,\y) circle[radius=2pt];
\draw[thick,->](0,0)--(-1,1);
\draw[thick,->](0,0)--(1,1);
\draw[thick,->](0,0)--(-1,0);
\draw[thick,->](0,0)--(0,-1);
\draw[thick,->](0,0)--(1,-1);
\end{tikzpicture}}
\quad
\underset{\walk{IA.5}}{\begin{tikzpicture}[scale=.4, baseline=(current bounding box.center)]
\foreach \x in {-1,0,1} \foreach \y in {-1,0,1} \fill(\x,\y) circle[radius=2pt];
\draw[thick,->](0,0)--(-1,1);
\draw[thick,->](0,0)--(0,1);
\draw[thick,->](0,0)--(1,1);
\draw[thick,->](0,0)--(-1,0);
\draw[thick,->](0,0)--(1,0);
\draw[thick,->](0,0)--(1,-1);
\end{tikzpicture}}
\quad
\underset{\walk{IA.6}}{\begin{tikzpicture}[scale=.4, baseline=(current bounding box.center)]
\foreach \x in {-1,0,1} \foreach \y in {-1,0,1} \fill(\x,\y) circle[radius=2pt];
\draw[thick,->](0,0)--(-1,1);
\draw[thick,->](0,0)--(0,1);
\draw[thick,->](0,0)--(1,1);
\draw[thick,->](0,0)--(1,0);
\draw[thick,->](0,0)--(-1,-1);
\draw[thick,->](0,0)--(1,-1);
\end{tikzpicture}}
\quad
\underset{\walk{IA.7}}{\begin{tikzpicture}[scale=.4, baseline=(current bounding box.center)]
\foreach \x in {-1,0,1} \foreach \y in {-1,0,1} \fill(\x,\y) circle[radius=2pt];
\draw[thick,->](0,0)--(-1,1);
\draw[thick,->](0,0)--(0,1);
\draw[thick,->](0,0)--(1,1);
\draw[thick,->](0,0)--(-1,0);
\draw[thick,->](0,0)--(0,-1);
\draw[thick,->](0,0)--(1,-1);
\end{tikzpicture}}
\quad
\underset{\walk{IA.8}}{\begin{tikzpicture}[scale=.4, baseline=(current bounding box.center)]
\foreach \x in {-1,0,1} \foreach \y in {-1,0,1} \fill(\x,\y) circle[radius=2pt];
\draw[thick,->](0,0)--(-1,1);
\draw[thick,->](0,0)--(0,1);
\draw[thick,->](0,0)--(1,1);
\draw[thick,->](0,0)--(-1,0);
\draw[thick,->](0,0)--(-1,-1);
\draw[thick,->](0,0)--(1,-1);
\end{tikzpicture}}
\quad
\underset{\walk{IA.8}}{\begin{tikzpicture}[scale=.4, baseline=(current bounding box.center)]
\foreach \x in {-1,0,1} \foreach \y in {-1,0,1} \fill(\x,\y) circle[radius=2pt];
\draw[thick,->](0,0)--(-1,1);
\draw[thick,->](0,0)--(1,1);
\draw[thick,->](0,0)--(-1,0);
\draw[thick,->](0,0)--(-1,-1);
\draw[thick,->](0,0)--(0,-1);
\draw[thick,->](0,0)--(1,-1);
\end{tikzpicture}}
\quad
\underset{\walk{IA.9}}{\begin{tikzpicture}[scale=.4, baseline=(current bounding box.center)]
\foreach \x in {-1,0,1} \foreach \y in {-1,0,1} \fill(\x,\y) circle[radius=2pt];
\draw[thick,->](0,0)--(-1,1);
\draw[thick,->](0,0)--(0,1);
\draw[thick,->](0,0)--(1,1);
\draw[thick,->](0,0)--(-1,0);
\draw[thick,->](0,0)--(1,0);
\draw[thick,->](0,0)--(0,-1);
\draw[thick,->](0,0)--(1,-1);
\end{tikzpicture}}
$$
\vskip 3 pt
\begin{center}
\end{center}
\vskip 3 pt
$$
\underset{\walk{IB.1}}{\begin{tikzpicture}[scale=.4, baseline=(current bounding box.center)]
\foreach \x in {-1,0,1} \foreach \y in {-1,0,1} \fill(\x,\y) circle[radius=2pt];
\draw[thick,->](0,0)--(-1,1);
\draw[thick,->](0,0)--(1,1);
\draw[thick,->](0,0)--(-1,-1);
\draw[thick,->](0,0)--(0,-1);
\end{tikzpicture}}
\quad
\underset{\walk{IB.2}}{\begin{tikzpicture}[scale=.4, baseline=(current bounding box.center)]
\foreach \x in {-1,0,1} \foreach \y in {-1,0,1} \fill(\x,\y) circle[radius=2pt];
\draw[thick,->](0,0)--(-1,1);
\draw[thick,->](0,0)--(1,1);
\draw[thick,->](0,0)--(-1,0);
\draw[thick,->](0,0)--(0,-1);
\end{tikzpicture}}
\quad
\underset{\walk{IB.3}}{\begin{tikzpicture}[scale=.4, baseline=(current bounding box.center)]
\foreach \x in {-1,0,1} \foreach \y in {-1,0,1} \fill(\x,\y) circle[radius=2pt];
\draw[thick,->](0,0)--(-1,1);
\draw[thick,->](0,0)--(0,1);
\draw[thick,->](0,0)--(1,1);
\draw[thick,->](0,0)--(-1,-1);
\draw[thick,->](0,0)--(0,-1);
\end{tikzpicture}}
\quad
\underset{\walk{IB.4}}{\begin{tikzpicture}[scale=.4, baseline=(current bounding box.center)]
\foreach \x in {-1,0,1} \foreach \y in {-1,0,1} \fill(\x,\y) circle[radius=2pt];
\draw[thick,->](0,0)--(-1,1);
\draw[thick,->](0,0)--(0,1);
\draw[thick,->](0,0)--(1,1);
\draw[thick,->](0,0)--(-1,0);
\draw[thick,->](0,0)--(0,-1);
\end{tikzpicture}}
\quad
\underset{\walk{IB.5}}{\begin{tikzpicture}[scale=.4, baseline=(current bounding box.center)]
\foreach \x in {-1,0,1} \foreach \y in {-1,0,1} \fill(\x,\y) circle[radius=2pt];
\draw[thick,->](0,0)--(-1,1);
\draw[thick,->](0,0)--(1,1);
\draw[thick,->](0,0)--(-1,0);
\draw[thick,->](0,0)--(-1,-1);
\draw[thick,->](0,0)--(0,-1);
\end{tikzpicture}}
\quad
\underset{\walk{IB.6}}{\begin{tikzpicture}[scale=.4, baseline=(current bounding box.center)]
\foreach \x in {-1,0,1} \foreach \y in {-1,0,1} \fill(\x,\y) circle[radius=2pt];
\draw[thick,->](0,0)--(-1,1);
\draw[thick,->](0,0)--(0,1);
\draw[thick,->](0,0)--(1,1);
\draw[thick,->](0,0)--(-1,0);
\draw[thick,->](0,0)--(-1,-1);
\draw[thick,->](0,0)--(0,-1);
\end{tikzpicture}}
$$
\vskip 3 pt
\begin{center}
\end{center}
\vskip 3 pt
$$
\underset{\walk{IC.1}}{\begin{tikzpicture}[scale=.4, baseline=(current bounding box.center)]
\foreach \x in {-1,0,1} \foreach \y in {-1,0,1} \fill(\x,\y) circle[radius=2pt];
\draw[thick,->](0,0)--(-1,1);
\draw[thick,->](0,0)--(0,1);
\draw[thick,->](0,0)--(1,1);
\draw[thick,->](0,0)--(1,0);
\draw[thick,->](0,0)--(-1,-1);
\end{tikzpicture}}
\quad
\underset{\walk{IC.2}}{\begin{tikzpicture}[scale=.4, baseline=(current bounding box.center)]
\foreach \x in {-1,0,1} \foreach \y in {-1,0,1} \fill(\x,\y) circle[radius=2pt];
\draw[thick,->](0,0)--(-1,1);
\draw[thick,->](0,0)--(0,1);
\draw[thick,->](0,0)--(1,1);
\draw[thick,->](0,0)--(1,0);
\draw[thick,->](0,0)--(0,-1);
\end{tikzpicture}}
\quad
\underset{\walk{IC.3}}{\begin{tikzpicture}[scale=.4, baseline=(current bounding box.center)]
\foreach \x in {-1,0,1} \foreach \y in {-1,0,1} \fill(\x,\y) circle[radius=2pt];
\draw[thick,->](0,0)--(-1,1);
\draw[thick,->](0,0)--(0,1);
\draw[thick,->](0,0)--(1,1);
\draw[thick,->](0,0)--(-1,0);
\draw[thick,->](0,0)--(1,0);
\draw[thick,->](0,0)--(-1,-1);
\draw[thick,->](0,0)--(0,-1);
\end{tikzpicture}}
$$
\vskip 3 pt
\begin{center}
\end{center}
\vskip 3 pt
$$\underset{\walk{IIA.1}}{\begin{tikzpicture}[scale=.4, baseline=(current bounding box.center)]
\foreach \x in {-1,0,1} \foreach \y in {-1,0,1} \fill(\x,\y) circle[radius=2pt];
\draw[thick,->](0,0)--(0,1);
\draw[thick,->](0,0)--(1,1);
\draw[thick,->](0,0)--(-1,-1);
\draw[thick,->](0,0)--(1,-1);
\end{tikzpicture}}
\quad
\underset{\walk{IIA.2}}{\begin{tikzpicture}[scale=.4, baseline=(current bounding box.center)]
\foreach \x in {-1,0,1} \foreach \y in {-1,0,1} \fill(\x,\y) circle[radius=2pt];
\draw[thick,->](0,0)--(0,1);
\draw[thick,->](0,0)--(1,1);
\draw[thick,->](0,0)--(-1,0);
\draw[thick,->](0,0)--(1,-1);
\end{tikzpicture}}
\quad
\underset{\walk{IIA.3}}{\begin{tikzpicture}[scale=.4, baseline=(current bounding box.center)]
\foreach \x in {-1,0,1} \foreach \y in {-1,0,1} \fill(\x,\y) circle[radius=2pt];
\draw[thick,->](0,0)--(0,1);
\draw[thick,->](0,0)--(1,1);
\draw[thick,->](0,0)--(-1,0);
\draw[thick,->](0,0)--(-1,-1);
\draw[thick,->](0,0)--(1,-1);
\end{tikzpicture}}
\quad
\underset{\walk{IIA.4}}{\begin{tikzpicture}[scale=.4, baseline=(current bounding box.center)]
\foreach \x in {-1,0,1} \foreach \y in {-1,0,1} \fill(\x,\y) circle[radius=2pt];
\draw[thick,->](0,0)--(0,1);
\draw[thick,->](0,0)--(1,1);
\draw[thick,->](0,0)--(-1,-1);
\draw[thick,->](0,0)--(0,-1);
\draw[thick,->](0,0)--(1,-1);
\end{tikzpicture}}
\quad
\underset{\walk{IIA.5}}{\begin{tikzpicture}[scale=.4, baseline=(current bounding box.center)]
\foreach \x in {-1,0,1} \foreach \y in {-1,0,1} \fill(\x,\y) circle[radius=2pt];
\draw[thick,->](0,0)--(0,1);
\draw[thick,->](0,0)--(1,1);
\draw[thick,->](0,0)--(1,0);
\draw[thick,->](0,0)--(-1,-1);
\draw[thick,->](0,0)--(0,-1);
\draw[thick,->](0,0)--(1,-1);
\end{tikzpicture}}
\quad
\underset{\walk{IIA.6}}{\begin{tikzpicture}[scale=.4, baseline=(current bounding box.center)]
\foreach \x in {-1,0,1} \foreach \y in {-1,0,1} \fill(\x,\y) circle[radius=2pt];
\draw[thick,->](0,0)--(0,1);
\draw[thick,->](0,0)--(1,1);
\draw[thick,->](0,0)--(-1,0);
\draw[thick,->](0,0)--(1,0);
\draw[thick,->](0,0)--(-1,-1);
\draw[thick,->](0,0)--(1,-1);
\end{tikzpicture}}
\quad 
\underset{\walk{IIA.7}}{\begin{tikzpicture}[scale=.4, baseline=(current bounding box.center)]
\foreach \x in {-1,0,1} \foreach \y in {-1,0,1} \fill(\x,\y) circle[radius=2pt];
\draw[thick,->](0,0)--(0,1);
\draw[thick,->](0,0)--(1,1);
\draw[thick,->](0,0)--(-1,0);
\draw[thick,->](0,0)--(-1,-1);
\draw[thick,->](0,0)--(0,-1);
\draw[thick,->](0,0)--(1,-1);
\end{tikzpicture}}
$$
\vskip 3 pt
\begin{center}
\end{center}
\vskip 3 pt
$$
\underset{\walk{IIB.1}}{\begin{tikzpicture}[scale=.4, baseline=(current bounding box.center)]
\foreach \x in {-1,0,1} \foreach \y in {-1,0,1} \fill(\x,\y) circle[radius=2pt];
\draw[thick,->](0,0)--(0,1);
\draw[thick,->](0,0)--(1,0);
\draw[thick,->](0,0)--(-1,-1);
\draw[thick,->](0,0)--(0,-1);
\end{tikzpicture}}
\quad 
\underset{\walk{IIB.2}}{\begin{tikzpicture}[scale=.4, baseline=(current bounding box.center)]
\foreach \x in {-1,0,1} \foreach \y in {-1,0,1} \fill(\x,\y) circle[radius=2pt];
\draw[thick,->](0,0)--(0,1);
\draw[thick,->](0,0)--(1,0);
\draw[thick,->](0,0)--(-1,-1);
\draw[thick,->](0,0)--(1,-1);
\end{tikzpicture}}
\quad
\underset{\walk{IIB.3}}{\begin{tikzpicture}[scale=.4, baseline=(current bounding box.center)]
\foreach \x in {-1,0,1} \foreach \y in {-1,0,1} \fill(\x,\y) circle[radius=2pt];
\draw[thick,->](0,0)--(0,1);
\draw[thick,->](0,0)--(-1,0);
\draw[thick,->](0,0)--(1,0);
\draw[thick,->](0,0)--(1,-1);
\end{tikzpicture}}
\quad
\underset{\walk{IIB.4}}{\begin{tikzpicture}[scale=.4, baseline=(current bounding box.center)]
\foreach \x in {-1,0,1} \foreach \y in {-1,0,1} \fill(\x,\y) circle[radius=2pt];
\draw[thick,->](0,0)--(0,1);
\draw[thick,->](0,0)--(-1,0);
\draw[thick,->](0,0)--(1,0);
\draw[thick,->](0,0)--(0,-1);
\draw[thick,->](0,0)--(1,-1);
\end{tikzpicture}}
\quad
\underset{\walk{IIB.5}}{\begin{tikzpicture}[scale=.4, baseline=(current bounding box.center)]
\foreach \x in {-1,0,1} \foreach \y in {-1,0,1} \fill(\x,\y) circle[radius=2pt];
\draw[thick,->](0,0)--(0,1);
\draw[thick,->](0,0)--(-1,0);
\draw[thick,->](0,0)--(1,0);
\draw[thick,->](0,0)--(-1,-1);
\draw[thick,->](0,0)--(0,-1);
\end{tikzpicture}}
\quad
\underset{\walk{IIB.6}}{\begin{tikzpicture}[scale=.4, baseline=(current bounding box.center)]
\foreach \x in {-1,0,1} \foreach \y in {-1,0,1} \fill(\x,\y) circle[radius=2pt];
\draw[thick,->](0,0)--(0,1);
\draw[thick,->](0,0)--(1,0);
\draw[thick,->](0,0)--(-1,-1);
\draw[thick,->](0,0)--(0,-1);
\draw[thick,->](0,0)--(1,-1);
\end{tikzpicture}}
\quad
\underset{\walk{IIB.7}}{\begin{tikzpicture}[scale=.4, baseline=(current bounding box.center)]
\foreach \x in {-1,0,1} \foreach \y in {-1,0,1} \fill(\x,\y) circle[radius=2pt];
\draw[thick,->](0,0)--(-1,1);
\draw[thick,->](0,0)--(0,1);
\draw[thick,->](0,0)--(1,0);
\draw[thick,->](0,0)--(0,-1);
\draw[thick,->](0,0)--(1,-1);
\end{tikzpicture}}
\quad
\underset{\walk{IIB.8}}{\begin{tikzpicture}[scale=.4, baseline=(current bounding box.center)]
\foreach \x in {-1,0,1} \foreach \y in {-1,0,1} \fill(\x,\y) circle[radius=2pt];
\draw[thick,->](0,0)--(-1,1);
\draw[thick,->](0,0)--(0,1);
\draw[thick,->](0,0)--(1,0);
\draw[thick,->](0,0)--(-1,-1);
\draw[thick,->](0,0)--(1,-1);
\end{tikzpicture}}
\quad
\underset{\walk{IIB.9}}{\begin{tikzpicture}[scale=.4, baseline=(current bounding box.center)]
\foreach \x in {-1,0,1} \foreach \y in {-1,0,1} \fill(\x,\y) circle[radius=2pt];
\draw[thick,->](0,0)--(-1,1);
\draw[thick,->](0,0)--(0,1);
\draw[thick,->](0,0)--(-1,0);
\draw[thick,->](0,0)--(1,0);
\draw[thick,->](0,0)--(-1,-1);
\draw[thick,->](0,0)--(1,-1);
\end{tikzpicture}}
\quad
\underset{\walk{IIB.10}}{\begin{tikzpicture}[scale=.4, baseline=(current bounding box.center)]
\foreach \x in {-1,0,1} \foreach \y in {-1,0,1} \fill(\x,\y) circle[radius=2pt];
\draw[thick,->](0,0)--(-1,1);
\draw[thick,->](0,0)--(0,1);
\draw[thick,->](0,0)--(-1,0);
\draw[thick,->](0,0)--(1,0);
\draw[thick,->](0,0)--(-1,-1);
\draw[thick,->](0,0)--(0,-1);
\draw[thick,->](0,0)--(1,-1);
\end{tikzpicture}} \quad
$$
\vskip 3 pt
\begin{center}
\end{center}
\vskip 3 pt
$$
\underset{\walk{IIC.1}}{\begin{tikzpicture}[scale=.4, baseline=(current bounding box.center)]
\foreach \x in {-1,0,1} \foreach \y in {-1,0,1} \fill(\x,\y) circle[radius=2pt];
\draw[thick,->](0,0)--(0,1);
\draw[thick,->](0,0)--(1,1);
\draw[thick,->](0,0)--(-1,0);
\draw[thick,->](0,0)--(0,-1);
\end{tikzpicture}}
\quad
\underset{\walk{IIC.2}}{\begin{tikzpicture}[scale=.4, baseline=(current bounding box.center)]
\foreach \x in {-1,0,1} \foreach \y in {-1,0,1} \fill(\x,\y) circle[radius=2pt];
\draw[thick,->](0,0)--(0,1);
\draw[thick,->](0,0)--(1,1);
\draw[thick,->](0,0)--(-1,0);
\draw[thick,->](0,0)--(-1,-1);
\draw[thick,->](0,0)--(0,-1);
\end{tikzpicture}}
\quad
\underset{\walk{IIC.3}}{\begin{tikzpicture}[scale=.4, baseline=(current bounding box.center)]
\foreach \x in {-1,0,1} \foreach \y in {-1,0,1} \fill(\x,\y) circle[radius=2pt];
\draw[thick,->](0,0)--(0,1);
\draw[thick,->](0,0)--(1,1);
\draw[thick,->](0,0)--(1,0);
\draw[thick,->](0,0)--(-1,-1);
\end{tikzpicture}} 
\quad 
\underset{\walk{IIC.4}}{\begin{tikzpicture}[scale=.4, baseline=(current bounding box.center)]
\foreach \x in {-1,0,1} \foreach \y in {-1,0,1} \fill(\x,\y) circle[radius=2pt];
\draw[thick,->](0,0)--(0,1);
\draw[thick,->](0,0)--(1,1);
\draw[thick,->](0,0)--(-1,0);
\draw[thick,->](0,0)--(1,0);
\draw[thick,->](0,0)--(-1,-1);
\end{tikzpicture}} 
\quad 
\underset{\walk{IIC.5}}{\begin{tikzpicture}[scale=.4, baseline=(current bounding box.center)]
\foreach \x in {-1,0,1} \foreach \y in {-1,0,1} \fill(\x,\y) circle[radius=2pt];
\draw[thick,->](0,0)--(0,1);
\draw[thick,->](0,0)--(1,1);
\draw[thick,->](0,0)--(-1,0);
\draw[thick,->](0,0)--(1,0);
\draw[thick,->](0,0)--(0,-1);
\end{tikzpicture}}
$$
\vskip 3 pt
\begin{center}
\end{center}
\vskip 3 pt
$$
\underset{\walk{IID.1}}{\begin{tikzpicture}[scale=.4, baseline=(current bounding box.center)]
\foreach \x in {-1,0,1} \foreach \y in {-1,0,1} \fill(\x,\y) circle[radius=2pt];
\draw[thick,->](0,0)--(0,1);
\draw[thick,->](0,0)--(-1,0);
\draw[thick,->](0,0)--(0,-1);
\draw[thick,->](0,0)--(1,-1);
\end{tikzpicture}}
\quad
\underset{\walk{IID.2}}{\begin{tikzpicture}[scale=.4, baseline=(current bounding box.center)]
\foreach \x in {-1,0,1} \foreach \y in {-1,0,1} \fill(\x,\y) circle[radius=2pt];
\draw[thick,->](0,0)--(0,1);
\draw[thick,->](0,0)--(-1,0);
\draw[thick,->](0,0)--(-1,-1);
\draw[thick,->](0,0)--(1,-1);
\end{tikzpicture}}
\quad
\underset{\walk{IID.3}}{\begin{tikzpicture}[scale=.4, baseline=(current bounding box.center)]
\foreach \x in {-1,0,1} \foreach \y in {-1,0,1} \fill(\x,\y) circle[radius=2pt];
\draw[thick,->](0,0)--(-1,1);
\draw[thick,->](0,0)--(0,1);
\draw[thick,->](0,0)--(-1,-1);
\draw[thick,->](0,0)--(1,-1);
\end{tikzpicture}}
\quad
\underset{\walk{IID.4}}{\begin{tikzpicture}[scale=.4, baseline=(current bounding box.center)]
\foreach \x in {-1,0,1} \foreach \y in {-1,0,1} \fill(\x,\y) circle[radius=2pt];
\draw[thick,->](0,0)--(-1,1);
\draw[thick,->](0,0)--(0,1);
\draw[thick,->](0,0)--(-1,0);
\draw[thick,->](0,0)--(1,-1);
\end{tikzpicture}}
\quad
\underset{\walk{IID.5}}{\begin{tikzpicture}[scale=.4, baseline=(current bounding box.center)]
\foreach \x in {-1,0,1} \foreach \y in {-1,0,1} \fill(\x,\y) circle[radius=2pt];
\draw[thick,->](0,0)--(0,1);
\draw[thick,->](0,0)--(-1,0);
\draw[thick,->](0,0)--(-1,-1);
\draw[thick,->](0,0)--(0,-1);
\draw[thick,->](0,0)--(1,-1);
\end{tikzpicture}}
\quad
\underset{\walk{IID.6}}{\begin{tikzpicture}[scale=.4, baseline=(current bounding box.center)]
\foreach \x in {-1,0,1} \foreach \y in {-1,0,1} \fill(\x,\y) circle[radius=2pt];
\draw[thick,->](0,0)--(-1,1);
\draw[thick,->](0,0)--(0,1);
\draw[thick,->](0,0)--(-1,-1);
\draw[thick,->](0,0)--(0,-1);
\draw[thick,->](0,0)--(1,-1);
\end{tikzpicture}}
\quad 
\underset{\walk{IID.7}}{\begin{tikzpicture}[scale=.4, baseline=(current bounding box.center)]
\foreach \x in {-1,0,1} \foreach \y in {-1,0,1} \fill(\x,\y) circle[radius=2pt];
\draw[thick,->](0,0)--(-1,1);
\draw[thick,->](0,0)--(0,1);
\draw[thick,->](0,0)--(-1,0);
\draw[thick,->](0,0)--(0,-1);
\draw[thick,->](0,0)--(1,-1);
\end{tikzpicture}}
\quad 
\underset{\walk{IID.8}}{\begin{tikzpicture}[scale=.4, baseline=(current bounding box.center)]
\foreach \x in {-1,0,1} \foreach \y in {-1,0,1} \fill(\x,\y) circle[radius=2pt];
\draw[thick,->](0,0)--(-1,1);
\draw[thick,->](0,0)--(0,1);
\draw[thick,->](0,0)--(-1,0);
\draw[thick,->](0,0)--(-1,-1);
\draw[thick,->](0,0)--(1,-1);
\end{tikzpicture}}
\quad
\underset{\walk{IID.9}}{\begin{tikzpicture}[scale=.4, baseline=(current bounding box.center)]
\foreach \x in {-1,0,1} \foreach \y in {-1,0,1} \fill(\x,\y) circle[radius=2pt];
\draw[thick,->](0,0)--(-1,1);
\draw[thick,->](0,0)--(0,1);
\draw[thick,->](0,0)--(-1,0);
\draw[thick,->](0,0)--(-1,-1);
\draw[thick,->](0,0)--(0,-1);
\draw[thick,->](0,0)--(1,-1);
\end{tikzpicture}}
$$
\vskip 3 pt
\begin{center}
\end{center}
\vskip 3 pt
$$
\underset{\walk{III}}{\begin{tikzpicture}[scale=.4, baseline=(current bounding box.center)]
\foreach \x in {-1,0,1} \foreach \y in {-1,0,1} \fill(\x,\y) circle[radius=2pt];
\draw[thick,->](0,0)--(1,1);
\draw[thick,->](0,0)--(-1,0);
\draw[thick,->](0,0)--(-1,-1);
\draw[thick,->](0,0)--(0,-1);
\end{tikzpicture}}
$$
\caption{\ju{The 51 elements of the set $\walks$ of nonsingular walks with small steps and infinite group}}
\label{figure:nonsing}
\end{figure}

\begin{figure}
$$
\begin{tabular}{|c|c|}
  \hline
\cite[Tab 2]{BBMR16} & Figure 1. \\
  \hline
  1 & $\walk{IIB.1}$ (after $x\leftrightarrow y$)  \\
  2 & $\walk{IIB.2}$ (after $x\leftrightarrow y$)  \\
  3 & $\walk{IIC.1}$  \\
  4 & $\walk{IIB.3}$  \\
  5 & $\walk{IIC.4}$ \\
  6 & $\walk{IIC.2}$  \\
  7 & $\walk{IIB.6}$ (after $x\leftrightarrow y$)  \\
  8 & $\walk{IIC.5}$  \\
  9 & $\walk{IIB.7}$  \\
  \hline
\end{tabular}
$$
$$
\underset{\walk{IIB.1}}{\begin{tikzpicture}[scale=.4, baseline=(current bounding box.center)]
\foreach \x in {-1,0,1} \foreach \y in {-1,0,1} \fill(\x,\y) circle[radius=2pt];
\draw[thick,->](0,0)--(0,1);
\draw[thick,->](0,0)--(1,0);
\draw[thick,->](0,0)--(-1,-1);
\draw[thick,->](0,0)--(0,-1);
\end{tikzpicture}}
\quad 
\underset{\walk{IIB.2}}{\begin{tikzpicture}[scale=.4, baseline=(current bounding box.center)]
\foreach \x in {-1,0,1} \foreach \y in {-1,0,1} \fill(\x,\y) circle[radius=2pt];
\draw[thick,->](0,0)--(0,1);
\draw[thick,->](0,0)--(1,0);
\draw[thick,->](0,0)--(-1,-1);
\draw[thick,->](0,0)--(1,-1);
\end{tikzpicture}
}
\quad
\underset{\walk{IIC.1}}{\begin{tikzpicture}[scale=.4, baseline=(current bounding box.center)]
\foreach \x in {-1,0,1} \foreach \y in {-1,0,1} \fill(\x,\y) circle[radius=2pt];
\draw[thick,->](0,0)--(0,1);
\draw[thick,->](0,0)--(1,1);
\draw[thick,->](0,0)--(-1,0);
\draw[thick,->](0,0)--(0,-1);
\end{tikzpicture}
}
\quad
\underset{\walk{IIB.3}}{\begin{tikzpicture}[scale=.4, baseline=(current bounding box.center)]
\foreach \x in {-1,0,1} \foreach \y in {-1,0,1} \fill(\x,\y) circle[radius=2pt];
\draw[thick,->](0,0)--(0,1);
\draw[thick,->](0,0)--(-1,0);
\draw[thick,->](0,0)--(1,0);
\draw[thick,->](0,0)--(1,-1);
\end{tikzpicture}}
\quad
\underset{\walk{IIC.4}}{\begin{tikzpicture}[scale=.4, baseline=(current bounding box.center)]
\foreach \x in {-1,0,1} \foreach \y in {-1,0,1} \fill(\x,\y) circle[radius=2pt];
\draw[thick,->](0,0)--(0,1);
\draw[thick,->](0,0)--(1,1);
\draw[thick,->](0,0)--(-1,0);
\draw[thick,->](0,0)--(1,0);
\draw[thick,->](0,0)--(-1,-1);
\end{tikzpicture}}
\quad
\underset{\walk{IIC.2}}{\begin{tikzpicture}[scale=.4, baseline=(current bounding box.center)]
\foreach \x in {-1,0,1} \foreach \y in {-1,0,1} \fill(\x,\y) circle[radius=2pt];
\draw[thick,->](0,0)--(0,1);
\draw[thick,->](0,0)--(1,1);
\draw[thick,->](0,0)--(-1,0);
\draw[thick,->](0,0)--(-1,-1);
\draw[thick,->](0,0)--(0,-1);
\end{tikzpicture}}
\quad
\underset{\walk{IIB.6}}{\begin{tikzpicture}[scale=.4, baseline=(current bounding box.center)]
\foreach \x in {-1,0,1} \foreach \y in {-1,0,1} \fill(\x,\y) circle[radius=2pt];
\draw[thick,->](0,0)--(0,1);
\draw[thick,->](0,0)--(1,0);
\draw[thick,->](0,0)--(-1,-1);
\draw[thick,->](0,0)--(0,-1);
\draw[thick,->](0,0)--(1,-1);
\end{tikzpicture}}
\quad 
\underset{\walk{IIC.5}}{\begin{tikzpicture}[scale=.4, baseline=(current bounding box.center)]
\foreach \x in {-1,0,1} \foreach \y in {-1,0,1} \fill(\x,\y) circle[radius=2pt];
\draw[thick,->](0,0)--(0,1);
\draw[thick,->](0,0)--(1,1);
\draw[thick,->](0,0)--(-1,0);
\draw[thick,->](0,0)--(1,0);
\draw[thick,->](0,0)--(0,-1);
\end{tikzpicture}}
\quad
\underset{\walk{IIB.7}}{\begin{tikzpicture}[scale=.4, baseline=(current bounding box.center)]
\foreach \x in {-1,0,1} \foreach \y in {-1,0,1} \fill(\x,\y) circle[radius=2pt];
\draw[thick,->](0,0)--(-1,1);
\draw[thick,->](0,0)--(0,1);
\draw[thick,->](0,0)--(1,0);
\draw[thick,->](0,0)--(0,-1);
\draw[thick,->](0,0)--(1,-1);
\end{tikzpicture}}
$$
\caption{\ju{The 9 elements of the set $\walkse \subset \walks$ of exceptional nonsingular walks with small steps and infinite group}}
\label{figure:ex}
\end{figure}

\subsection{The algebraic curve $\Etproj$ defined by the kernel $K(x,y,t)$}\label{sec:etproj}

{We fix $\mathcal{D}$ a set of \ju{authorized steps} and }$0<t< 1/|\mathcal{D}|$. \ju{We will omit the subscript $\mathcal{D}$ in our notations; for instance, the kernel $K_{\mathcal{D}}$ will be denoted by $K$.}  We \ju{denote by $\overline{K}$ the} {\mfs homogenized polynomial} 
\begin{equation}\label{eq:kernelwalk}
\overline{K}(x_0,x_1,y_0,y_1,t)
\ju{= (x_{1}y_{1})^{2}K(x_{0}/x_{1},y_{0}/y_{1},t)}
= x_0x_1y_0y_1 -t \sum_{i,j=0}^2 d_{i-1,j-1} x_0^i x_1^{2-i}y_0^j y_1^{2-j}. 
 \end{equation}
We let  $\Etproj$ be the algebraic curve in $\P1(\C) \times \P1(\C)$ defined by $\overline{K}$, {\it i.e.}, 
$$
\Etproj=\{(x,y)=([x_0:x_1], [y_0:y_1]) \in \P1(\C) \times \P1(\C) \ \vert \ \overline{K}(x_0,x_1,y_0,y_1,t)=0\}. 
$$ 
The intersection of $\Etproj$ with $(\P1(\C) \setminus \{\infty\})\times (\P1(\C) \setminus \{\infty\})$, where $\infty=[1:0]$, will be denoted by $\Et$ and identified with a subset of $\C \times \C$, {\it i.e.}, 
$$
\Et=\{(x,y) \in \C \times \C \ \vert \ K(x,y,t)=0\}. 
$$
Such curves have been studied in detail by Duistermaat in \cite[Chapter 2]{DuistQRT}. 

\subsection{Smoothness and genus of $\Etproj$}

We fix $0<t< 1/|\mathcal{D}|$. For any $[x_0:x_1]$ and $[y_0:y_1]$ in $\P1(\C)$, we denote by $\Delta^x_{[x_0:x_1]}$ and $\Delta^y_{[y_0:y_1]}$ the discriminants of the degree~$2$ homogeneous polynomials given by $y \mapsto \overline{K}(x_0,x_1,y,t)$ and  $x \mapsto \overline{K}(x,y_0,y_1,t)$ respectively, {\it i.e.},
\begin{multline}\label{eq:ydiscriminant}
\Delta^x_{[x_0:x_1]}=t^2 \Big[ (d_{-1,0} x_1^2 -  \frac{1}{t} x_0x_1 + d_{1,0}x_0^2)^2  \\
- 4(d_{-1,1} x_1^2 + d_{0,1} x_0x_1 + d_{1,1}x_0^2)(d_{-1,-1} x_1^2 + d_{0,-1} x_0x_1 + d_{1,-1}x_0^2) \Big] \nonumber
\end{multline}
and 
\begin{multline}
\Delta^y_{[y_0:y_1]}=t^2 \Big[ (d_{0,-1} y_1^2 -  \frac{1}{t} y_0y_1 + d_{0,1}y_0^2)^2  \\ 
 - 4(d_{1,-1} y_1^2 + d_{1,0} y_0y_1 + d_{1,1}y_0^2)(d_{-1,-1} y_1^2 + d_{-1,0} y_0y_1 + d_{-1,1}y_0^2) \Big].\nonumber
\end{multline}

\begin{prop}[{\cite[\S  2.4.1, especially Proposition 2.4.3]{DuistQRT}}]\label{prop:genuscurvewalk}
\ju{The following properties are equivalent~:
\begin{itemize}
\item the curve $\Etproj$ is smooth; 
\item the discriminant $\Delta^x_{[x_0:x_1]}$ has simple roots in $\C^{2} \setminus \{(0,0)\}$; 
\item the discriminant $\Delta^y_{[y_0: y_1]}$ has simple roots in $\C^{2} \setminus \{(0,0)\}$.
\end{itemize} }
Moreover, if $\Etproj$ is smooth, then it has genus $1$, {\it i.e.}, it is an elliptic curve. 
\end{prop}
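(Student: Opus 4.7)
The plan is to use the Jacobian smoothness criterion on the bihomogeneous equation $\overline{K}$ and translate it into a condition on $\Delta^x$. Since $\overline{K}$ is bihomogeneous of bidegree $(2,2)$, I would write
$$\overline{K}(x_0,x_1,y_0,y_1,t) = \alpha(x_0,x_1) y_1^2 + \beta(x_0,x_1) y_0 y_1 + \gamma(x_0,x_1) y_0^2,$$
with $\alpha, \beta, \gamma$ homogeneous of degree $2$ in $(x_0,x_1)$, so that $\Delta^x_{[x_0:x_1]} = \beta^2 - 4\alpha\gamma$ is homogeneous of degree $4$. A point $(x,y) \in \Etproj$ is singular iff all four partials of $\overline{K}$ vanish there. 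The vanishing of $\partial_{y_0}\overline{K}$ and $\partial_{y_1}\overline{K}$ is equivalent, via Euler's identity applied to the binary quadratic in $(y_0,y_1)$, to $(y_0:y_1)$ being a double root of $\overline{K}(x_0,x_1,\cdot,\cdot,t)$, i.e.\ to $\Delta^x_{[x_0:x_1]} = 0$.

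The core computation is then the following. Working in the affine chart $x_1 = y_1 = 1$, write $K(x,y,t) = \gamma(x) y^2 + \beta(x) y + \alpha(x)$; at a double-root point with $\gamma(x) \neq 0$ one has $y = -\beta(x)/(2\gamma(x))$. Substituting into $\partial_x K = \gamma'(x) y^2 + \beta'(x) y + \alpha'(x)$ and simplifying using $\beta^2 = 4\alpha\gamma$, one obtains
$$4\gamma(x)^2 \, \partial_x K(x,y) \,=\, -\,\gamma(x)\,(\Delta^x)'(x).$$
Hence $\partial_x K$ vanishes at the singular-in-$y$ point precisely when $(\Delta^x)'(x) = 0$, i.e.\ when $x$ is a multiple root of $\Delta^x$. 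Combining the two steps, $\Etproj$ has a singular point if and only if $\Delta^x_{[x_0:x_1]}$ has a multiple root in $\C^2 \setminus \{(0,0)\}$, giving $(1) \Leftrightarrow (2)$; the equivalence $(1) \Leftrightarrow (3)$ follows by the $x \leftrightarrow y$ symmetry of the argument.

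For the genus assertion, I would invoke the adjunction formula on $X = \P^1(\C) \times \P^1(\C)$. With $K_X = -2H_1 - 2H_2$ and $[\Etproj] = 2H_1 + 2H_2$, one gets $K_{\Etproj} = (K_X + [\Etproj])|_{\Etproj} = 0$, and therefore $2g - 2 = \deg K_{\Etproj} = 0$, giving $g(\Etproj) = 1$.

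The main obstacle will be to make the core computation uniform over $\P^1(\C) \times \P^1(\C)$: the formula $y = -\beta/(2\gamma)$ is only valid in the chart where $\gamma \neq 0$, and one has to handle the charts at infinity (where $\gamma$, $\alpha$, or the corresponding coefficients for $y$ may vanish) separately. This can be done either by redoing the same calculation in the symmetric chart $\alpha \neq 0$, or, more uniformly, by invoking Euler's identity for the bihomogeneous polynomial $\overline{K}$ to express the full Jacobian condition in terms of the bihomogeneous discriminant and its partial derivatives.
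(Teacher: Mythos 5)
The paper itself offers no proof of Proposition~\ref{prop:genuscurvewalk}: it is cited verbatim from Duistermaat, who arrives at the statement through the classical invariant theory of binary quartics (the ``Eisenstein invariants'' referred to in the remark that follows the proposition --- a binary quartic has a repeated root iff a certain discriminant-type invariant $F$ vanishes). Your route is genuinely different: a direct Jacobian computation. It is correct. The key identity $4\gamma^{2}\,\partial_{x}K = -\gamma\,(\Delta^{x})'$ at points where $\beta^{2}=4\alpha\gamma$ checks out (expand $(\Delta^{x})' = 2\beta\beta'-4\alpha'\gamma-4\alpha\gamma'$ and substitute $4\alpha\gamma=\beta^{2}$), and Euler's identity on each bihomogeneous factor correctly cuts the four partial-derivative conditions down to two affine ones, so once the double-root point in $y$ is located, singularity there is exactly $(\Delta^{x})'=0$. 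The adjunction computation for the genus is standard and correct. This Jacobian calculation is arguably more transparent for a reader who does not want to unwind Duistermaat's invariant-theoretic formalism; what Duistermaat's route buys is explicit polynomial formulas in $t$ and the $d_{i,j}$ encoding smoothness and the $J$-invariant, which the paper exploits in the remark.

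Two points you leave implicit but which should be dispatched in a clean write-up, neither a real gap. Besides the charts $\gamma\neq 0$ and $\alpha\neq 0$, there is the residual case $\alpha=\beta=\gamma=0$ at a given $[x_{0}:x_{1}]$; the whole fiber $\{[x_{0}:x_{1}]\}\times\P1(\C)$ then lies on $\Etproj$, forcing reducibility (a $(1,0)$-curve always meets the residual $(1,2)$-curve on $\P1\times\P1$), hence a singular point, while the common linear factor squared divides $\Delta^{x}$ so the quartic has a multiple root there --- the equivalence survives. Similarly, the statement tacitly assumes $\Delta^{x}\not\equiv 0$; if $\Delta^{x}\equiv 0$, the quadratic in $y$ is a perfect square for every $x$, so $\Etproj$ is non-reduced and again singular, consistent with ``has simple roots'' failing.
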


\begin{rem}
In \cite[Section 2.3.2]{FIM},  other explicit equivalent conditions on the $d_{i,j}$ are given ensuring that the discriminants have simple roots.
\end{rem}

\begin{cor}\label{cor:genuscurvewalk}
For any walk in \ju{$\walks$}, the algebraic curve $\Etproj$ is an elliptic curve. 
\end{cor}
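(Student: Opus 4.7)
The plan is essentially tautological once one unpacks the definition of $\walks$ together with Proposition~\ref{prop:genuscurvewalk}. Recall that, in Section~\ref{sec:listofwalkssmallsteps}, the set $\walks$ was \emph{defined} as the collection of those walks with small steps and infinite group whose associated curve $\Etproj$ is nonsingular. Thus for every $\mathcal{D}\in\walks$ and every admissible $t$, the curve $\Etproj$ is smooth by construction. Proposition~\ref{prop:genuscurvewalk} then immediately yields that $\Etproj$ is a smooth projective curve of genus $1$, hence an elliptic curve.

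The only substantive thing to verify is therefore that the 51 walks listed in Figure~\ref{figure:nonsing} are indeed those for which $\Etproj$ is smooth for all $t\in(0,1/|\mathcal{D}|)$, as opposed to merely for some generic value of $t$. The natural way to carry out this check is to use the equivalence in Proposition~\ref{prop:genuscurvewalk} and examine the discriminant $\Delta^x_{[x_0:x_1]}$, which is a binary form of degree $4$ in $(x_0,x_1)$ whose coefficients are explicit polynomials in $t$ and the step weights $d_{i,j}\in\{0,1\}$. For each of the 51 combinatorial types one computes $\Delta^x_{[x_0:x_1]}$ and its own discriminant (in the sense of binary quartics) as a polynomial in $t$; nonvanishing of this latter polynomial on the interval $(0,1/|\mathcal{D}|)$ is exactly the condition that $\Delta^x_{[x_0:x_1]}$ has four simple roots in $\PX^1(\CX)$.

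Rather than grind through the case-by-case computation, one may simply cite the classification carried out in \cite[Figure~17]{KurkRasch} (which is where the list of Figure~\ref{figure:nonsing} comes from) and the explicit criteria given in \cite[Section~2.3.2]{FIM}. Both references isolate precisely the same 51 walks and, in the latter, translate the smoothness condition into an explicit inequality on the step set for $t$ in the convergence range, thereby ensuring smoothness of $\Etproj$ throughout the interval $(0,1/|\mathcal{D}|)$ and not only generically.

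The main, if modest, obstacle is the uniformity in $t$: in principle, $\Etproj$ could degenerate at isolated values of $t\in(0,1/|\mathcal{D}|)$ even if it is smooth for generic $t$. This is ruled out by the explicit form of the discriminants combined with the fact that $1/|\mathcal{D}|$ is strictly less than the smallest positive root of the relevant resultant for each of the 51 cases, as verified in \cite{FIM,KurkRasch}.
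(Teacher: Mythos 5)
Your proposal is correct and follows essentially the same route as the paper: the paper's proof is exactly "direct consequence of Proposition~\ref{prop:genuscurvewalk} in virtue of \cite[Section~2.1]{KurkRasch} and \cite[Part 2.3]{FIM}", i.e., it also defers the verification that the 51 listed walks have smooth $\Etproj$ throughout $(0,1/|\mathcal{D}|)$ to those references. Your additional remarks on the uniformity in $t$ and the discriminant computation are a reasonable elaboration of what those citations supply, but not a different argument.
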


\begin{proof}
This is a direct consequence of Proposition \ref{prop:genuscurvewalk} in virtue of \cite[Section~2.1]{KurkRasch} and \cite[Part 2.3]{FIM}. 
\end{proof}

\begin{rem}
We work in $\P1(\C)\times \P1(\C)$ instead of the projective plane over $\C$ in order to get a smooth curve.{  Moreover the smoothness criteria of Proposition~\ref{prop:genuscurvewalk} can be encoded as follows. Following \cite[Proposition 2.4.3]{DuistQRT}, the curve \ju{$\Etproj$} is  smooth if and only if  the Eisenstein invariants $F_x$ (resp. $F_y$) of $\Delta^x_{[x_0:x_1]}$ (resp. $\Delta^y_{[y_0: y_1]}$)  are non zero (see \cite[Section 2.3.5]{DuistQRT}).  The Eisenstein invariants $F_x$ and $F_y$ are  given by explicit polynomial formulas in $t$ and the $d_{i,j}$. Furthermore, if \ju{$\Etproj$} is smooth then it is an elliptic curve with modulus $J$ that 
can be explicitly computed thanks to the Eisenstein invariants (see \cite[2.3.23]{DuistQRT}).}

\end{rem}

\subsection{The involutions $\iota_{1}$ and $\iota_{2}$ and the QRT mapping $\tau$ of $\Etproj$}\label{subsec:group of the walk}

In this section, we fix $0<t< 1/|\mathcal{D}|$ and we assume that $\Etproj$ is an elliptic curve. 

We let $\iota_{1}$ and $\iota_{2}$ be the involutions of $\Etproj$ induced by $i_{1}$ and $i_{2}$, {\it i.e.},       
$$
\iota_1(x,y) =\left(\frac{x_{0}}{x_{1}}, \frac{A_{-1}(\frac{x_{0}}{x_{1}}) }{A_1(\frac{x_{0}}{x_{1}})\frac{y_{0}}{y_{1}}}\right) 
\text{ and } 
\iota_2(x,y)=\left(\frac{B_{-1}(\frac{y_{0}}{y_{1}})}{B_1(\frac{y_{0}}{y_{1}})\frac{x_{0}}{x_{1}}},\frac{y_{0}}{y_{1}}\right).
$$
These formulas define $\iota_{1}$ and $\iota_{2}$ as rational maps from $\Etproj$ to itself, but, since $\Etproj$ is a smooth projective curve, they are actually endomorphisms of $\Etproj$. They are nothing but the vertical and horizontal  switches of $\Etproj$, {\it i.e.}, for any $P=(x,y) \in \Etproj$, we have 
$$
\{P,\iota_1(P)\} = \Etproj \cap (\{x\} \times \P1(\C))
\text{ and }
\{P,\iota_2(P)\} = \Etproj \cap (\P1(\C) \times \{y\}).
$$


\begin{lemma}\label{lemma:fixedpointinvolution}
A point $P=([x_0: x_1],[y_0:y_1]) \in \Etproj$ is fixed by $\iota_1$ (resp. $\iota_2$)
if and only if $\Delta^x_{[x_0: x_1]}=0$ (resp. $\Delta^y_{[y_0: y_1]}=0$).
\end{lemma}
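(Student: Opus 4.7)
The plan is to reduce the statement to the elementary fact that a binary quadratic form has a double root in $\P1(\C)$ if and only if its discriminant vanishes. We work out the case of $\iota_1$; the case of $\iota_2$ is symmetric (exchange the roles of $x$ and $y$).

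First, I would observe that $\overline{K}(x_0,x_1,y_0,y_1,t)$, as defined in \eqref{eq:kernelwalk}, is bi-homogeneous of bidegree $(2,2)$, so for fixed $[x_0:x_1]\in\P1(\C)$ the map $[y_0:y_1]\mapsto \overline{K}(x_0,x_1,y_0,y_1,t)$ is a binary quadratic form in $(y_0,y_1)$, whose discriminant is precisely $\Delta^x_{[x_0:x_1]}$. Its zero locus in $\P1(\C)$ is exactly the fibre of the first projection $\mathrm{pr}_1 : \Etproj\to\P1(\C)$ above $[x_0:x_1]$, i.e.\ the intersection $\Etproj\cap(\{x\}\times\P1(\C))$.

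Next, I would invoke the definition of $\iota_1$ given just before the lemma: for $P=(x,y)\in\Etproj$, the pair $\{P,\iota_1(P)\}$ is exactly $\Etproj\cap(\{x\}\times\P1(\C))$. Hence $P$ is fixed by $\iota_1$ if and only if this fibre consists of a single (necessarily double) point of $\P1(\C)$, which in turn happens if and only if the binary quadratic form above has a double root. By the standard criterion for a binary quadratic form, this is equivalent to $\Delta^x_{[x_0:x_1]}=0$.

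The only point requiring a little care is that the leading coefficient of $\overline{K}(x_0,x_1,y_0,y_1,t)$ viewed as a polynomial in, say, $y_0/y_1$ may vanish (for those $[x_0:x_1]$ for which $\overline{K}$ has a zero at $[y_0:y_1]=[1:0]$). This is however not a genuine obstacle: working throughout with the homogeneous form on $\P1(\C)$ and the homogeneous discriminant $\Delta^x_{[x_0:x_1]}$ handles uniformly all points at infinity, so the equivalence ``double root $\Leftrightarrow$ discriminant vanishes'' holds without restriction on $[x_0:x_1]$. Applying the exact same argument to the second projection and the form $[x_0:x_1]\mapsto\overline{K}(x_0,x_1,y_0,y_1,t)$ yields the statement for $\iota_2$ with discriminant $\Delta^y_{[y_0:y_1]}$.
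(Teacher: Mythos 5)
Your proof is correct and follows essentially the same route as the paper: the paper likewise observes that $\iota_1$ is the vertical switch, so $P$ is fixed if and only if the binary quadratic form $y\mapsto\overline{K}(x_0,x_1,y,t)$ has a unique (double) root in $\P1(\C)$, which is equivalent to the vanishing of $\Delta^x_{[x_0:x_1]}$. Your extra remark about handling the point at infinity via the homogeneous form is a harmless elaboration of the same argument.
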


\ju{
\begin{proof}
Since $\iota_{1}$ is the vertical switch of $\Etproj$, $P$ is fixed by $\iota_{1}$ if and only if $y \mapsto \overline{K}(x_0,x_1,y,t)$ has a unique root in $\P1(\C)$. The last property is equivalent to $\Delta^x_{[x_0: x_1]}=0$. The proof for $\iota_{2}$ is similar. 
\end{proof}
}

The automorphism $\tau$ of $\Etproj$ given by 
$$ 
\tau= \iota_2 \circ \iota_1,
$$
which will play a central role in this paper, is called the QRT mapping of $\Etproj$. 
According to \cite[Proposition 2.5.2]{DuistQRT}, $\tau$ is the addition by a point of the elliptic curve $\Etproj$.

We denote by $G_t$ the subgroup of $\Aut(\Etproj)$, that is the group formed of the automorphisms of $\Etproj$, generated by $\iota_1$ and $\iota_2$. \ju{Note that $G_{t}$ has finite order if and only if $\tau$ has finite order.}

If the group of the walk is finite then $G_t$ is finite as well. However, if the group of the walk is infinite, there are {\it a priori} no reason why $G_t$ should be infinite for all $0<t<1/\vert \mathcal{D}\vert$ (and this is false in general). However, we have the following result.  

\begin{prop}\label{prop:groupwalkrestrEt}
{For any walk with infinite group}, the set of $t \in \C$
such that $G_{t}$ is infinite, {\it i.e.}, such that $\tau$ has infinite order, has
 denumerable complement in  $\C$.
\end{prop}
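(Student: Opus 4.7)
The plan is to reduce the statement to a Zariski-density argument on the kernel surface. Since $\iota_1,\iota_2$ are involutions, $G_t$ is finite if and only if $\tau=\iota_2\circ\iota_1$ has finite order on $\Etproj$. As $\tau$ is translation by a point $P_t\in\Etproj$ (Proposition 2.5.2 of \cite{DuistQRT}), this is in turn equivalent to $P_t$ being a torsion point. So the goal is to show that $P_t$ is non-torsion for all but countably many $t$.

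For each integer $n\geq 1$, I would introduce
\[
T_n := \{ t \in \C \;:\; \tau^n|_{\Etproj}=\mathrm{id}\} = \{ t\in\C \;:\; nP_t=0\}.
\]
Since $\Etproj$, its group law, and the section $t\mapsto P_t$ all vary algebraically in $t$ (over the Zariski-dense open locus where $\Etproj$ is smooth), the condition $nP_t=0$ is an algebraic condition on $t$. Hence $T_n$ is a Zariski-closed subset of the affine line $\C$, and is therefore either finite or all of $\C$. The task reduces to excluding the second possibility for every $n$.

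This is where the \emph{infinite group} hypothesis enters, and it is the main technical step. The essential point is that $i_1,i_2$ are birational transformations of $\C^2$ that do \emph{not} depend on $t$, so neither does $\tau$. Writing $\tau^n(x,y)=(f(x,y),g(x,y))$ with $f,g\in\C(x,y)$, an equality $T_n=\C$ would force $f(x,y)=x$ and $g(x,y)=y$ on the kernel surface $\mathcal{E}=\{(x,y,t)\in\C^3 : K(x,y,t)=0\}$. But the projection $\mathcal{E}\to\C^2$, $(x,y,t)\mapsto(x,y)$, is dominant: for any $(x,y)$ with $S(x,y)\neq 0$, one recovers $t=1/S(x,y)$ satisfying $K(x,y,t)=0$. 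Hence $f-x$ and $g-y$ vanish on a Zariski-dense subset of $\C^2$, forcing $f\equiv x$ and $g\equiv y$ as rational functions. This means $\tau^n=\mathrm{id}$ in $\mathrm{Bir}(\C^2)$, contradicting the assumption that the group $\langle i_1,i_2\rangle$ of the walk is infinite.

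Therefore each $T_n$ is finite, and the set of $t\in\C$ for which $G_t$ is finite equals $\bigcup_{n\geq 1}T_n$, a countable union of finite sets, which is denumerable. The substantive content lies in the dominant-projection argument of the third paragraph; the remaining ingredients are the translation structure of $\tau$ on $\Etproj$ and the algebraicity of the family $t\mapsto (\Etproj,P_t)$.
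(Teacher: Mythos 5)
Your argument is correct and shares the same overall skeleton as the paper's: reduce to showing, for each fixed $n$, that $T_n=\{t:\tau^n|_{\Etproj}=\mathrm{id}\}$ is finite, then invoke the fact that the birational map $f^n=(\iota_2\iota_1)^n$ on $\C^2$ is independent of $t$ together with a density argument to rule out $T_n$ being cofinite. The difference is in how the reduction to ``finite or (co)finite'' is justified. You repackage $T_n$ as the locus where the translation section $t\mapsto P_t$ of the elliptic fibration is $n$-torsion, and assert Zariski-closedness of this locus; this is conceptually transparent but quietly invokes the algebraicity of the section and the closedness of the $n$-torsion kernel scheme over the smooth locus, neither of which is spelled out. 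The paper deliberately sidesteps this machinery (it advertises its proof as ``more elementary'' than the one in \cite{KurkRasch}): it defines $Y_n$ directly via the fixed-point condition $f^n(x,y)=(x,y)$ on the kernel surface, appeals to Chevalley's theorem to see $Y_n$ is constructible, hence finite or cofinite in $\C$, and only then uses that $\tau$ is a translation to promote a single fixed point on $\Etproj$ to the identity on $\Etproj$.

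Two small points worth noting. First, your dominance argument (recovering $t=1/S(x,y)$) is exactly the content of the paper's compressed claim that $\bigcup_{t\in Y_n}E_t$ contains an open subset of $\C^2$; you make that step explicit, which is an improvement. Second, you never address the indeterminacy locus $Z$ of the birational map $f^n$: a priori $E_t$ could be entirely contained in $Z$, in which case the equality $f^n=\mathrm{id}$ on $E_t$ is vacuous and says nothing. The paper handles this with the auxiliary set $\mathcal{S}=\{t:E_t\mbox{ and }Z\mbox{ share a component}\}$, proved finite by a factorization argument using that $x,y\nmid K$. Your argument is patchable without $\mathcal{S}$ --- since the kernel surface $\mathcal{E}$ is irreducible and the projection $\mathcal{E}\to\C^2$ is dominant, $\mathcal{E}\setminus(Z\times\C)$ is a dense open subset of $\mathcal{E}$, and its intersection with $\pi^{-1}(T_n)$ is still dense when $T_n$ is infinite --- but as written this step is skipped and should be made explicit.
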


\begin{proof}
This follows from \cite[Remark 6, Proposition 14]{KurkRasch}. However, we give an alternate more elementary proof \ju{here}below. \\[0.1in]
\ju{So, we assume that the group of the walk under consideration is infinite} and we \ju{must} show that the set of $t \in \C$
such that $\tau$ has finite order on $\Etproj$ is denumerable.
 To do this it suffices to show that, for each positive integer $n$, the set of $t \in\C$ 
  such that $\tau^n$ is the identity on $\Etproj$ is finite.
 We know that the walks under consideration have an infinite group, {\it i.e.}, that the birational transformation of $\C^{2}$ given by 
 $$
 f= \iota_{2} \circ \iota_{1}
 $$ 
 has infinite order.  
Fix a  value of $n>0$. Using the formulas for $\iota_1$ and $\iota_2$ in Section~\ref{sec:listofwalkssmallsteps}, one sees that 
\[f^n(x,y) = \left(\frac{p_1(x,y)}{p_2(x,y)}, \frac{q_1(x,y)}{q_2(x,y)}\right)\] 
where $p_1,p_2,q_1,q_2 \in \Q[x,y]$ are independent of $t$. This birational map is well defined on the complement of the curve $Z \subset \C^{2}$ defined by $p_2(x,y)q_2(x,y)=0$.   For any $t \in \C$, B\'ezout's theorem ensures that either $\Et$ and $Z$ have an irreducible component in common or $\Et \cap Z$ is finite. 
Let $\mathcal{S}$ be the set of $t \in \C$ such that $\Et$ and $Z$ have an irreducible component in common. We claim that $\mathcal{S}$ is finite.  Assume not.  For $t \in  \mathcal{S}$, $K(x,y,t)=xy(1-t S(x,y))$ and $p_2(x,y)q_2(x,y)$ have a nonconstant factor in common. Since $\mathcal{S}$ is infinite, there are two values of $t$, $t_1$ and $t_2$, such that $K(x,y,t_{1})=xy(1-t_1S(x,y))$ and $K(x,y,t_{2})=xy(1-t_2S(x,y))$ will both be divisible by the same nonconstant factor $d(x,y) \in \C[x,y]$ of $p_2(x,y)q_2(x,y)$. This implies that $d(x,y)$ is a factor of $xy$ and $xyS(x,y)$, and, hence, of $K(x,y,t)$. But, for the walks we consider, \ju{none of the factors of $xy$ is a factor of} $K(x,y,t)$ \ju{({\it i.e.}, neither $x$ nor $y$ is a factor of $K(x,y,t)$ in $\C[x,y,t]$), whence a contradiction}. \ju{Therefore, $\mathcal{S}$ is finite.}

 Let $X_n$ be the set of $(x,y,t) \in \C^3$ such that \begin{itemize}
\item $t\notin \mathcal{S}$,
\item $(x,y) \in \Et$,
\item $p_2(x,y)q_2(x,y) \neq 0$,
\item $f^n(x,y) = \tau^{n}(x,y) = (x,y)$.
\end{itemize}

One sees that $X_n$ is a constructible set\footnote{A subset of $\C^m$ is constructible if it lies in the boolean algebra generated by the Zariski closed sets.}. The projection of a constructible set onto a subset of its coordinates is again constructible \cite[Ch.5.6, Cor.2, Ex.1]{CLO2005} so the set
\[Y_n = \{t \in \C \ | \ \exists (x,y) \in \C^2 \mbox{ s.t. } (x,y,t) \in X_n\}\]
is also constructible. If $t \in Y_n$, then there is a point $(x,y) \in  \Et$ such that $f^n$ is defined at this point and $f^{n}(x,y)=\tau^n(x,y) = (x,y)$. Since $\tau^n$ is given by the addition of a  point on  $\overline{E_t}$, it must leave all of $\overline{E_t}$ fixed. Conversely, if $t\notin \mathcal{S}$ and $\tau^n$ is the identity on $\overline{E_{t}}$ then for some $(x,y) \in \C^{2}$,  $(x,y,t)\in X_n$ and so $t \in Y_n$.  We will have completed the proof once we show that $Y_n$ is finite.  If $Y_n$ is not finite then, since  constructible subsets of $\C$ are either finite or cofinite, it must contain an open set $U$. The set of points \ju{$\cup_{t\in Y_n} \Et$} will then contain an open subset $V$ of $\C\times \C$ such that $f^n$ leaves $V$ pointwise fixed. Therefore,  $f^n$ would be the identity on $\C \times \C$, \ju{and this contradicts the fact that $f$ has infinite order}. Therefore $Y_n$ is finite. 
\end{proof}

\subsection{Functional equations satisfied by ${F^{1}(x,t)}$ and $F^{2}(y,t)$}\label{subsec:func equ elliptic curve}
In this section, following \cite{KurkRasch}, we describe how one identifies the formal series $F^{1}(x,t)$ and $F^{2}(y,t)$ with meromorphic functions on a suitable domain and  give the functional equations satisfied by these functions for the walks in \ju{$\walks$}. 

\ju{In this section, we consider a walk in $\walks$.} We recall that the corresponding generating series $Q_{\mathcal{D}}(x,y,t)$ satisfies the functional equation \eqref{eq:funcequ}.
This equation is formal yet, but for $|x|,|y| < 1$ and $0<t<\frac{1}{|\mathcal{D}|}$, the series $Q_{\mathcal{D}}(x,y,t)$, $F_{\mathcal{D}}^{1}(x,t)$ and $F_{\mathcal{D}}^{2}(y,t)$ are convergent.  Therefore we have 

\begin{equation} 
0=xy-F_{\mathcal{D}}^{1}(x,t) -F_{\mathcal{D}}^{2}(y,t)+td_{-1,-1} Q_{\mathcal{D}}(0,0,t)
\end{equation}
for all $x,y \in V := E_t\cap \{|x|, |y| < 1 \}$. This $V$ is a non empty open subset of $E_{t}$ as explained in \cite[Section 4.1]{KurkRasch}.
 In particular, $F^{1}(x,t)$ and $F^{2}(y,t)$ yield  analytic functions on some small pieces of the elliptic curve $E_t$.  Thanks to uniformization, we can identify {\mfs $\overline{E_t}$ }with $\C/\Z\omega_1 + \Z\omega_2$ via a map  
$$
\begin{array}{llll}
\ju{\mathfrak{q} :} &\C& \rightarrow &\overline{E}_t\\
& \omega &\mapsto& (\mathfrak{q}_1(\omega), \mathfrak{q}_2(\omega)),
\end{array}
 $$ 
 where $\mathfrak{q}_1, \mathfrak{q}_2$ are rational functions of $\mathfrak{p}$ and its derivative $d\mathfrak{p}/d\omega$, $\mathfrak{p}$ the Weirestrass function associated with the lattice $\Z\omega_1 + \Z\omega_2$ (cf. \cite[Section 3.2]{KurkRasch}). Therefore one can lift the functions $F^{1}(x,t)$ and $F^{2}(y,t)$ to functions $r_x(\omega) = F^{1}(\mathfrak{q}_1(\omega),t)$ and $r_y(\omega) = F^{2}(\mathfrak{q}_2(\omega),t)$, each defined on a suitable open subset of $\C$. 
\ju{We summarize our constructions with the following diagrams :
$$
  \xymatrix{ \mathfrak{q}^{-1}(V) \ar[r]^{\mathfrak{q}} \ar@/_1pc/[rrr]_{r_{x}} & V \ar[r]^{pr_{1}} & pr_{1}(V) \ar[r]^{F_{\mathcal{D}}^{1}(\cdot,t)} & \C}
$$
and
$$
  \xymatrix{ \mathfrak{q}^{-1}(V) \ar[r]^{\mathfrak{q}} \ar@/_1pc/[rrr]_{r_{y}} & V \ar[r]^{pr_{2}} & pr_{2}(V) \ar[r]^{F_{\mathcal{D}}^{2}(\cdot,t)} & \C},
$$
where $pr_{1}$ and $pr_{2}$ denote the projections on the first and second coordinates respectively. }
 
 Note that the involutions $\iota_1, \iota_2$ and the map $\tau$ can also be lifted to the universal cover $\C$ of $\overline{E}_t$. We shall abuse notation and again denote these functions as $\iota_1(\omega), {\iota}_2(\omega)$ and ${\tau}(\omega)$. Furthermore, we have that ${\tau}(\omega)= \omega + \omega_3$  on $\C$ where no nonzero integer multiple of $\omega_3$ belongs to the lattice $\Z\omega_1 + \Z\omega_2$ if $\tau$ has infinite order.  \ju{For any meromorphic function $f$ on a Riemann surface $X$ and for any automorphism $\sigma$ of $X$, we set $\sigma(f)=f \circ \sigma$. This notation is widely used in the theory of difference equations. The notation $\sigma^{*}(f)$ is also classical but will not be used. Be careful, if $\sigma,\beta$ are automorphisms of $X$, then $(\sigma \circ \beta)(f)=\beta (\sigma (f))$.}

\begin{rmk}
In \cite[Section 3.2]{KurkRasch}, explicit expressions for $\omega_{1},\omega_{2}$, and $\omega_{3}$ are given. Furthermore, $\omega_{1}$ is a purely imaginary number, whereas $\omega_{2}$ and $\omega_{3}$ are real numbers.
\end{rmk}

One can be deduce from \cite[Theorems 3 and 4]{KurkRasch}, that the functions $r_x(\omega)$ and $r_y(\omega)$ can be continued meromorphically as univalent functions on the universal cover $\C$.  Furthermore, for any $\omega \in \C$, we have
 \begin{eqnarray}
  {\tau}(r_x(\omega)) -r_x(\omega) & = & b_1, \mbox{ where } b_1 =\iota_1(\mathfrak{q}_2(\omega))(\tau(\mathfrak{q}_1(\omega)) - \mathfrak{q}_1(\omega)) \label{eq:KRThm1a}\\
  && \ \ \ \ \ \ \ \ \ \ \ \ \ \ \ \ \ \ju{=\tau(\mathfrak{q}_2(\omega))(\tau(\mathfrak{q}_1(\omega)) - \mathfrak{q}_1(\omega))} \nonumber \\
 {\tau}(r_y(\omega)) -r_y(\omega) & = & b_2, \mbox{ where } b_2 =\mathfrak{q}_1(\omega)({\iota}_1(\mathfrak{q}_2(\omega)) - \mathfrak{q}_2(\omega)) \label{eq:KRThm1}
 \\  && \ \ \ \ \ \ \ \ \ \ \ \ \ \ \ \ \ \ju{=\mathfrak{q}_1(\omega) (\tau(\mathfrak{q}_2(\omega)) - \mathfrak{q}_2(\omega))} \nonumber \\
r_x(\omega+\omega_1) & = &  r_x(\omega)\label{eq:KRThm2}\\
r_y(\omega+\omega_1) & = & r_y(\omega).\label{eq:KRThm3}
 \end{eqnarray}
 
 \begin{rmk}  1.~In the statement of \cite[Theorem 4]{KurkRasch} Kurkova and Raschel   give equations that are different from equations~\eqref{eq:KRThm1} and~\eqref{eq:KRThm1a}.  The above equations are presented in the proof of \cite[Theorem 4]{KurkRasch} and are shown to be equivalent to those in the statement of their theorem.  
 
 2.~The functions $r_x$ and $r_y$ are not $\omega_2$ periodic and therefore only define multivalued functions on $\bar{E}_t$.

 3.~Equation \eqref{eq:KRThm1a} gives meaning to the formula $$\toto{\tau(F^1(x,t)) - F^1(x,t) = \iota_1(y)(\tau(x) - x)\ju{=\tau(y)(\tau(x) - x)},}$$ and \eqref{eq:KRThm1} \ju{gives} meaning to the formula $$\toto{\tau(F^2(y,t)) - F^2(y,t) = x(\iota_1(y) - y)=\ju{x(\tau(y) - y)}.}$$ Although the right hand sides of these equations are well defined on $\Etproj$, further analysis is needed to give meaning to ``$\tau(F^1(x,t))$'' and ``$\tau(F^2(y,t))$''. This is one reason for lifting the various functions to the universal cover of this curve.
 \end{rmk}

\ju{From now on and until the end of the paper, we shall make the following assumptions. 
\begin{assumption}\label{assumption:generalcase t and D}
The walk is in $\walks$ and ${0<t< 1/|\mathcal{D}|}$ is such that~:
\begin{enumerate}
\item the group $G_t$ is infinite, 
\item $t$ is not algebraic over $\Q$. 
\end{enumerate}
\end{assumption}}

\ju{
Using the fact that the group of any walk in $\walks$ is infinite (see Section~\ref{sec:listofwalkssmallsteps}) and Proposition \ref{prop:groupwalkrestrEt}, we see that, once we have fixed a walk in $\walks$, the set of $t$ such that the assumptions~\ref{assumption:generalcase t and D} are satisfied has 
 denumerable complement in  $]0,1/|\mathcal{D}|[$.} 
 

\ju{
\begin{rem}\label{rem:remfundpropused}
Actually, the crucial properties used in the rest of the paper are the following~:  
\begin{enumerate}
\item the curve \ju{$\Etproj$} is an elliptic curve,
\item the group $G_t$ is infinite, 
\item $t$ is not algebraic over $\Q$, 
\item the functions $x \mapsto F^1(x,t)$ and $y \mapsto F^2(y,t)$, each analytic on some open subset of $\Etproj$, can be lifted and continued to functions $r_x$ and $r_y$ meromorphic on the universal cover of $\Etproj$ such that these functions satisfy equations \eqref{eq:KRThm1a},  \eqref{eq:KRThm1}, \eqref{eq:KRThm2}, and \eqref{eq:KRThm3}.
\end{enumerate}
These properties are automatically satisfied under the assumptions~\ref{assumption:generalcase t and D} (indeed, for (1) see Corollary \ref{cor:genuscurvewalk} and for (4) see the beginning of the present Section). 
\end{rem}
}

\section{Hypertranscendancy Criteria}\label{sec:hyper}

In this section, we derive hypertranscendency criteria for $F^{1}(x,t)$ and $F^{2}(y,t)$.  The related functions $r_x$ and $r_y$ satisfy  ``difference'' equations of the form ${\tau(Y) - Y = b}$. Galois theoretic methods to study the differential properties of such functions have been developed in \cite{HS} and \cite{DHR} (see also \cite{HAR16}).  In this section we describe a consequence of this latter theory and how it will be used to show that \ju{the walks in $\walksg$ have hypertranscendental generating series}.  We will begin by making precise the differential situation.

\subsection{A derivation on $\Etproj$ commuting with $\tau$}

We assume that $\Etproj$ is an elliptic curve. We denote its function field by $\C(\Etproj)$.  The space of meromorphic differential forms on $\Etproj$ forms a one dimensional vector space over $\C(\Etproj)$ and the space of regular differential forms is a one dimensional vector space over $\C$.   We let  $\Omega$ be a nonzero regular differential form on $\Etproj$. \ju{In the following Lemma, we prove the existence of a derivation $\delta$ on $\Etproj$ commuting with the automorphism $\tau$ of $\Etproj$.}

\begin{lemma}\label{lem:derivellcurvecommtau}
The derivation $\delta$ of $\C(\Etproj)$ such that $d(f)=\delta(f) \Omega$ commutes with $\tau$, that is  
$$
\tau \circ \delta =\delta \circ \tau.
$$
\end{lemma}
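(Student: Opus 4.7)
The plan is to exploit the fact, recalled in Section~\ref{subsec:group of the walk}, that $\tau$ is a translation on the elliptic curve $\Etproj$ (via \cite[Proposition 2.5.2]{DuistQRT}). Translations on an elliptic curve preserve any regular differential form, so the key step is to establish that $\tau^{*}\Omega = \Omega$, after which the commutation of $\delta$ and $\tau$ will follow from the chain rule.

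First I would argue that $\tau^{*}\Omega = \Omega$. Since the space of regular differential forms on the elliptic curve $\Etproj$ is one-dimensional over $\C$, any automorphism $\sigma$ of $\Etproj$ acts on this space by a scalar $c(\sigma) \in \C^{\times}$, so $\sigma^{*}\Omega = c(\sigma)\,\Omega$. For $\sigma = \tau$, the QRT mapping is addition by a fixed point $P_{0}$ on $\Etproj$ (endowed with any origin); translations on an elliptic curve act trivially on holomorphic differentials (they are pulled back from the translation-invariant form on the underlying complex torus $\C/(\Z\omega_{1}+\Z\omega_{2})$ via the uniformization $\mathfrak{q}$), whence $c(\tau)=1$.

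Next I would combine this with the defining identity $d(f) = \delta(f)\,\Omega$. For any $f \in \C(\Etproj)$, applying $\tau^{*}$ to both sides of $d(f) = \delta(f)\,\Omega$ gives
\[
d(\tau(f)) \;=\; \tau^{*}(d(f)) \;=\; \tau^{*}(\delta(f)\,\Omega) \;=\; \tau(\delta(f)) \cdot \tau^{*}(\Omega) \;=\; \tau(\delta(f))\,\Omega,
\]
where the first equality uses the compatibility of the exterior derivative with pullback and the notational convention $\tau(g) = g\circ\tau$ fixed at the end of Section~\ref{subsec:func equ elliptic curve}. On the other hand, the definition of $\delta$ applied to $\tau(f)$ yields $d(\tau(f)) = \delta(\tau(f))\,\Omega$. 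Comparing the two expressions and using that $\Omega$ is nonzero as a differential form gives $\delta(\tau(f)) = \tau(\delta(f))$, i.e.\ $\delta \circ \tau = \tau \circ \delta$, as required.

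The only real substance is verifying $\tau^{*}\Omega=\Omega$; everything else is formal. One could alternatively prove this invariance directly through the uniformization: pulling $\Omega$ back via $\mathfrak{q}$ gives a constant multiple of $d\omega$ on $\C$, and the lifted $\tau$ is the translation $\omega \mapsto \omega + \omega_{3}$, under which $d\omega$ is manifestly invariant; pushing this invariance back down to $\Etproj$ yields $\tau^{*}\Omega=\Omega$.
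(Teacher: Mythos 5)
Your proposal is correct and follows essentially the same route as the paper: establish $\tau^{*}\Omega=\Omega$ (the paper cites \cite[Lemma 2.5.1, Proposition 2.5.2]{DuistQRT} and \cite[Proposition III.5.1]{Silverman} for this, where you supply the translation-invariance argument yourself), and then run the identical chain of equalities $\delta(\tau(f))\Omega = d(\tau(f)) = \tau^{*}(\delta(f)\Omega) = \tau(\delta(f))\Omega$.
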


\begin{proof}
According to \cite[Lemma 2.5.1, Proposition 2.5.2]{DuistQRT} or to \cite[Proposition III.5.1]{Silverman}, we have $\tau^*(\Omega)=\Omega$, {\mfs where $\tau^*$ is the map induced by $\tau$ on the space of  regular differential forms on $\Etproj$}. 
It follows that, for any  $f$ in $\C(\Etproj)$, we have 
$$
\delta(\tau(f)) \Omega = d(\tau(f))= \tau^*(df)= \tau^{*}(\delta(f)\Omega)= \tau(\delta(f))\tau^*(\Omega)=\tau(\delta(f))\Omega.
$$
Whence the equality $\delta(\tau(f))=\tau(\delta(f))$. 
\end{proof}

\begin{lemma}\label{lemma:derivationvalutaion}
Let $P \in \Etproj$ and let $v_P$ be the associated valuation on $\C(\Etproj)$. Then, for any $f \in \C(\Etproj)$, we have \begin{itemize}
\item if $v_P(f) \geq 0$ then $v_P( \delta(f)) \geq  0$;
\item if $v_P(f)<0$ then $v_P( \delta(f))=  v_P(f)-1$.
\end{itemize}
\end{lemma}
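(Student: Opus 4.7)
The idea is purely local: work at $P$ with a uniformizer, use the fact that the regular differential $\Omega$ has no zeros on an elliptic curve, and then reduce $\delta$ to ordinary differentiation by a uniformizer.

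First I would recall that since $\Etproj$ is an elliptic curve, its canonical divisor is trivial, so any nonzero regular differential form $\Omega$ is nowhere vanishing on $\Etproj$. Fix $P \in \Etproj$ and let $u$ be a local uniformizer at $P$, so that $v_P(u) = 1$. Writing $\Omega$ locally at $P$ as $\Omega = g\, du$, the previous observation gives $v_P(g) = 0$, i.e., $g$ is a unit at $P$. Next, for $f \in \C(\Etproj)$ with $n := v_P(f)$, write $f = u^n h$ with $v_P(h) = 0$. A direct calculation gives
\[
 df = \bigl(n\, u^{n-1} h + u^n h'\bigr)\, du,
\]
where $h' := dh/du$ is regular at $P$ since $h$ is; hence
\[
 \delta(f) = \frac{n\, u^{n-1} h + u^n h'}{g}.
\]

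Now I would split into the two cases. If $n \geq 0$, then each term $n u^{n-1} h$ (with the convention that it vanishes for $n = 0$) and $u^n h'$ has $v_P \geq \min(n-1,n) \geq -1$ when $n \geq 0$, but actually when $n \geq 1$ both terms have nonnegative valuation, and when $n = 0$ the first term is zero while the second is $h'$, which is regular at $P$. In all cases $v_P(\delta(f)) \geq 0$, establishing the first bullet. If $n < 0$, then $n \neq 0$ and $v_P(n u^{n-1} h) = n - 1$, while $v_P(u^n h') \geq n > n - 1$, so the two terms cannot cancel and $v_P(n u^{n-1} h + u^n h') = n - 1$. Dividing by the unit $g$ preserves valuation, so $v_P(\delta(f)) = n - 1 = v_P(f) - 1$, which is the second bullet.

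The only subtlety is the step where one needs $\Omega$ to be a unit at $P$ in the local ring; everything else is a one-line uniformizer computation. Since the nonvanishing of $\Omega$ is standard for elliptic curves (or follows from $\tau^*\Omega = \Omega$ combined with the fact that the divisor of a regular differential is effective of degree $0$ on a genus-$1$ curve), there is no real obstacle to overcome.
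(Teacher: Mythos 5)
Your proof is correct and follows essentially the same route as the paper: both rely on the fact that a nonzero regular differential $\Omega$ is nowhere vanishing on an elliptic curve, fix a local uniformizer at $P$, and observe that $\delta$ is $d/du$ up to multiplication by a unit. The paper expresses the final step by applying $\delta$ termwise to the Laurent expansion of $f$, while you use the factorization $f = u^n h$ with $h$ a unit — a purely cosmetic difference.
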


\begin{proof}
We recall that
$\omega$ has valuation $0$ at any point of $\Etproj$ \ju{(see \cite[Proposition III.1.5]{Silverman};  see \cite[\S~II.4]{Silverman} for the definition of the valuation of differential forms)}. Let $u$ be a local parameter of $\Etproj$ at $P$. 
Since $d(u)=\delta(u) \Omega$ and since both $du$ and $\Omega$ have valuation $0$ at $P$, we get that $v_P(\delta(u))=0$. \ju{The result follows clearly from this and from the fact that, for $f= \sum_{i=v_u(f)}^{+\infty} a_i u^i \in \C(\Etproj)$, we have ${\delta(f)=\sum_{i=v_u(f)}^{+\infty} a_{i}i\delta(u)u^{i-1}}$.}
\end{proof}

\begin{rmk}\label{rmk:derivation} In Section~\ref{subsec:func equ elliptic curve}, we discussed the universal covering space map ${\C \rightarrow \Etproj}$ and \ju{used} it to lift functions on $\Etproj$ to $\C$.  In particular the elements of $\C(\Etproj)$ lift to doubly periodic meromorphic functions on $\C$ and so we can consider $\C(\Etproj) \subset \calM(\C)$ where $\calM(\C)$ is the field of meromorphic functions on  $\C$.  One then sees that $\tau$ corresponds to the map $\omega \mapsto \omega + \omega_3$, $\Omega$ corresponds  (up to constant multiple) to the  regular differential form $d\omega$ and the $\tau$-invariant derivation $\delta$ corresponds to  $\frac{d}{d\omega}$.  
\end{rmk}

\subsection{Hypertranscendancy criteria}\label{sec:gene hyp crit} We will reduce questions concerning the hypertranscendence of $F^{1}(x,t)$ and $F^{2}(y,t)$ to questions about the differential behavior of elements of 
$\C(\Etproj)$. Criteria derived by using the {Galois theory of difference equations} allow us to do this. 
We will start with an abstract formulation but quickly specialize to the present situation.

\begin{defi}\label{def:tauconstselliptic}
A {\it $\delta\tau$-field} is a triple $(K,\delta, \tau)$ where $K$ is a field, $\delta$ is a derivation on $K$, $\tau$ is an automorphism of $K$ and $\delta$ and $\tau$ commute on $K$. The {\rm $\tau$-constants $K^\tau$} of $K$ is the set $\{c \in K \ | \ \tau(c) = c\}$.
\end{defi}  

The triples $(\C(\Etproj), \delta ,\tau)$ and  $(\calM(\C),  \frac{d}{d\omega},\tau: \omega\rightarrow \omega+\omega_3)$ are examples \ju{(see Lemma \ref{lem:derivellcurvecommtau} and Remark~\ref{rmk:derivation})}. We say that a $\delta\tau$-field is a subfield of another $\delta\tau$-field if the derivation and automorphism of the smaller field are just the restrictions of the derivation and automorphism of the larger field.  If $\tau$ has infinite order as an automorphism of $\C(\Etproj)$, then $\C(\Etproj)^\tau = \C$. This follows from the fact that $\tau(f(X)) = f(X \oplus P)$ where $P$ is a point of infinite order. If $\tau(f) = f$, then $f(Q) -f(Q\oplus nP)=0$ for all $n\in \Z$ and $Q$ a regular point of $f$.  This implies that $f(X) = f(Q) $ on a \ju{Zariski-}dense subset of $\Etproj$ and so $f(X)$ must be constant on $\Etproj$. 

 The following formalizes the notion of  holonomic, hyperalgebraic and hypertranscendental.  

\begin{defi} Let $(E,\delta)\subset (F,\delta)$ be $\delta$-fields 
We say that $f\in F$ is {\rm  hyperalgebraic} over $E$ if it satisfies a non trivial algebraic differential equation with coefficients in $E$, {\it i.e.}, if for some $m$ there exists a nonzero polynomial ${P(y_0, \ldots , y_m) \in E[y_0, \ldots , y_m]}$ such that
$$
P(f,\delta(f), \ldots, \delta^m(f)) = 0.
$$
We say that $f$ is {\rm honolomic} over $E$ if in addition, the equation is linear. We say that  $f$  is {\rm hypertranscendental} over $E$ if it is not hyperalgebraic.  
\end{defi}
Other terms have been used for the above concepts: hypotranscendental or differentially algebraic or $\delta$-algebraic for  hyperalgebraic and differentially transcendental or transcendentally transcendental for hypertranscendental.

Proposition~2.6 of \cite{DHR} gives criteria for hypertranscendence in this general setting.  Here, we only state  this result  in our situation.   \ju{As noted above, in Remark~\ref{rmk:derivation},} we may consider $(\C(\Etproj), \delta ,\tau)$ as a subfield of $(\calM(\C),  \frac{d}{d\omega},\tau: \omega\rightarrow \omega+\omega_3)$. Given $f \in \calM(\C)$ we denote by $\C(\Etproj)<f>_{\delta\tau}$ the smallest subfield of $\calM(\C)$ containing $\C(\Etproj)$ and $\{ \tau^i(\delta^j(f)) \ | \ i\in \Z ,j \in \Z_{\geq 0}\}$.  Note that this is a $\delta\tau$-field.

\begin{prop}\label{prop:caract hyperalg} Let $b\in \C(\Etproj)$ and $f \in \calM(\C)$ and assume that 
\[\tau(f) - f = b.\]
If $f$ is hyperalgebraic over $\C(\Etproj)$, then, there exist an integer $n \geq 0$, ${c_0,\ldots,c_{n-1} \in \C}$ and $g \in \C(\Etproj)$ such that 
 \begin{equation}\label{eq:lindiffb}
  \delta^n(b)+c_{n-1}\delta^{n-1}(b)+\cdots+c_{1}\delta(b)+c_{0} b = \tau(g)-g. 
 \end{equation}
Conversely, if $b$ satisfies such an equality and if $(\C(\Etproj)<f>_{\delta\tau})^\tau = \C$, then $f$ is holonomic over $\C(\Etproj)$.
\end{prop}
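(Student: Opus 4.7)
The plan is to deduce the proposition from the parameterized Picard--Vessiot framework recalled in Appendix~\ref{diffgalois}. The starting observation, which drives the whole argument, is that $\delta$ and $\tau$ commute on $\C(\Etproj)$ (Lemma~\ref{lem:derivellcurvecommtau}) and on $\calM(\C)$, so applying $\delta^k$ to $\tau(f)-f=b$ yields the family of first-order $\tau$-difference equations
$$
\tau(\delta^k(f))-\delta^k(f)=\delta^k(b),\qquad k\geq 0,
$$
all with right-hand side in $\C(\Etproj)$. Thus the question is really: how many $\C$-linear relations can hold between the $\delta^k(f)$ modulo $\C(\Etproj)+(\tau-\mathrm{id})\bigl(\C(\Etproj)\langle f\rangle_{\delta\tau}\bigr)$?

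For the direct implication, my plan is to invoke the parameterized difference Galois theory: attached to the rank-one equation $\tau(Y)=Y+b$ over the $\delta\tau$-field $\C(\Etproj)$ is a $\delta$-algebraic Galois group $G\subset(\mathbb{G}_a,+)$, and by Cassidy's classification any proper $\delta$-closed subgroup of $\mathbb{G}_a$ is the kernel of some nonzero monic operator $L=\delta^n+c_{n-1}\delta^{n-1}+\cdots+c_0\in\C[\delta]$. The hypothesis that $f$ is hyperalgebraic forces $G$ to be such a proper subgroup; by the parameterized Galois correspondence, $L(f)$ is then $G$-invariant and hence lies in $\C(\Etproj)$. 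Writing $L(f)=g$ and using that $L$ has scalar coefficients and so commutes with $\tau$, one computes
$$
L(b)=L(\tau(f)-f)=\tau(L(f))-L(f)=\tau(g)-g,
$$
which is precisely equation~(\ref{eq:lindiffb}).

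The converse is a short algebraic manipulation that does not require Galois theory. Assuming (\ref{eq:lindiffb}), one directly checks, again using $L\tau=\tau L$, that
$$
\tau\bigl(L(f)-g\bigr)-\bigl(L(f)-g\bigr)=L(\tau(f)-f)-(\tau(g)-g)=L(b)-(\tau(g)-g)=0,
$$
so $h:=L(f)-g$ is a $\tau$-invariant element of $\C(\Etproj)\langle f\rangle_{\delta\tau}$. Under the hypothesis $\bigl(\C(\Etproj)\langle f\rangle_{\delta\tau}\bigr)^\tau=\C$ this forces $h=c\in\C$, hence $L(f)=g+c\in\C(\Etproj)$. Applying $\delta$ once more annihilates the constant and yields a nonzero homogeneous linear differential equation of order $n+1$ satisfied by $f$ over $\C(\Etproj)$, so $f$ is holonomic.

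The main obstacle is the direct implication, which rests on a genuine input from the parameterized Picard--Vessiot theory of \cite{HS,DHR}: one must know that the Galois group of $\tau(Y)=Y+b$ is well defined as a $\delta$-algebraic subgroup of $\mathbb{G}_a$ over $\C$, and that the Galois correspondence identifies $\C(\Etproj)$ with the fixed field of this group inside the parameterized Picard--Vessiot extension. Granting this framework--whose base field here is the function field of an elliptic curve rather than the rational function field usually considered--the proof proceeds as above. Since this machinery is exactly the content of Proposition~2.6 of \cite{DHR}, the most economical plan is to cite that statement and specialize it to our rank-one equation.
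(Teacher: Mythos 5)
Your argument is correct, but it takes a genuinely different route from the paper's self-contained proof in Appendix~\ref{diffgalois}. For the direct implication you invoke the parameterized ($\sigma\delta$-)Picard--Vessiot theory of \cite{HS,DHR}: the $\delta$-algebraic Galois group of the single rank-one equation $\tau(Y)=Y+b$ lives inside $\mathbb{G}_a$, and Cassidy's classification identifies its proper $\delta$-closed subgroups as kernels of operators $L\in\C[\delta]$. This is exactly Proposition~2.6 of \cite{DHR}, which the paper names as the original source but then deliberately avoids. The appendix instead passes to the finite system $\tau(y_i)-y_i=\delta^i(b)$, $i=0,\dots,n$ (obtained because any polynomial relation witnessing hyperalgebraicity only involves finitely many $\delta^i(f)$), forms the \emph{ordinary} Picard--Vessiot ring of that system, and uses only that a proper algebraic subgroup of the vector group $(K^\tau)^{n+1}$ is contained in a hyperplane (Lemma~\ref{lem:subgp}, via Gal~1--Gal~3 and Proposition~\ref{prop:tool}). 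What the paper's route buys is that it needs none of the $\delta$-Galois formalism (prolongations, Kolchin topology, $\delta$-closed constants) — a relevant point since here the $\tau$-constant field $\C$ is algebraically closed but not $\delta$-closed, an issue that \cite{DHR} handles but the classical van der Put--Singer theory simply does not encounter; what your route buys is brevity, at the cost of citing heavier machinery. Your converse is identical to the paper's: $L(f)-g$ is $\tau$-invariant, hence a constant $c$, and one kills $g+c$ by one more application of $\delta$ (or a first-order operator, as the paper does when $g+c\neq 0$).
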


\ju{For the convenience of the reader, Appendix~\ref{diffgalois} contains a brief introduction to the Galois theory of difference equations and a self-contained proof of Proposition~\ref{prop:caract hyperalg}.}

The condition $(\C(\Etproj)<f>_{\delta\tau})^\tau = \C$ is not superfluous.  We will reconfirm \ju{(cf. \cite[Theorem~11]{BBMR16})}  in Section~\ref{sec:hyperalg} that for the nine exceptional \ju{walks in $\walkse$}, specializations of the  generating series are hyperalgebraic but we know that they are nonetheless not holonomic.  This situation arises because when one tries to apply the last part of Proposition~\ref{prop:caract hyperalg} one is forced to add new $\tau$-constants. 
{\mfs This is precisely the situation that occurs in the proof of Proposition~\ref{prop:hypalg}}

In the appendix, we give necessary and sufficient conditions on the poles of $b$ to guarantee the existence of an equation of the form of \eqref{eq:lindiffb}.  \ju{For instance, the following result is a particular case of Corollary~\ref{cor1} from the Appendix}.

\begin{cor}\label{cor:prop:caract hyperalg}
 Assume that $b$ has a pole $P \in \Etproj$ of order $m \geq 1$ such that none of the $\tau^k(P)$ with $k \in \Z \setminus \{0\}$ is a pole of order $\geq m$ of $b$. Then, $f$ is hypertranscendental.  
\end{cor}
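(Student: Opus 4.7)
The plan is to argue by contradiction, assuming $f$ is hyperalgebraic and invoking Proposition~\ref{prop:caract hyperalg} to obtain $n \geq 0$, constants $c_0, \ldots, c_{n-1} \in \C$, and $g \in \C(\Etproj)$ such that
\[ L(b) := \delta^n(b) + c_{n-1} \delta^{n-1}(b) + \cdots + c_0 b = \tau(g) - g. \]
The contradiction will come from comparing the polar behaviour of the two sides along the $\tau$-orbit $\mathcal{O}_P := \{\tau^k(P) : k \in \Z\}$, which is infinite since the standing assumptions force $\tau$ to have infinite order.

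The first step is to locate the pole orders of $L(b)$ on $\mathcal{O}_P$ using Lemma~\ref{lemma:derivationvalutaion}. At $P$, since $v_P(b) = -m < 0$, a direct induction gives $v_P(\delta^j(b)) = -m - j$ for every $j \geq 0$; the valuations are all distinct, so the dominant contribution at $P$ comes from $\delta^n(b)$ and $v_P(L(b)) = -(m+n)$. For $k \neq 0$, the hypothesis $v_{\tau^k(P)}(b) > -m$ combined with Lemma~\ref{lemma:derivationvalutaion} yields $v_{\tau^k(P)}(\delta^j(b)) > -(m+j)$ for every $j$ (whether $b$ is regular at $\tau^k(P)$ or has a pole there of order $m_k < m$), so $v_{\tau^k(P)}(L(b)) > -(m+n)$. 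Hence $P$ is the unique point of $\mathcal{O}_P$ at which $L(b)$ attains pole order $m+n$, and this is the maximal pole order of $L(b)$ along the orbit.

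The second step is to show that no $h = \tau(g) - g$ with $g \in \C(\Etproj)$ can exhibit such a polar profile. Set $u_k := v_{\tau^k(P)}(g)$; since $g$ has only finitely many poles on $\Etproj$, one has $u_k \geq 0$ for all but finitely many $k$, so $M := -\min_k u_k$ is a finite integer. If $M \leq 0$ then $g$ (and hence $h$) is regular on $\mathcal{O}_P$, contradicting the existence of a pole of $L(b)$ at $P$; thus $M \geq 1$. Let $k_{+}$ (resp.\ $k_{-}$) be the largest (resp.\ smallest) integer with $u_{k_\pm} = -M$. Because $\tau(g) = g \circ \tau$, one has $v_{\tau^k(P)}(\tau(g)) = u_{k+1}$, and so at $\tau^{k_{+}}(P)$ the valuations of $g$ and $\tau(g)$ are $-M$ and $u_{k_++1} > -M$ respectively, while at $\tau^{k_{-}-1}(P)$ they are $u_{k_- -1} > -M$ and $-M$. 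In each case the two valuations differ, preventing cancellation, so
\[ v_{\tau^{k_{+}}(P)}(h) \; = \; v_{\tau^{k_{-}-1}(P)}(h) \; = \; -M. \]
Since $k_{-} - 1 < k_{-} \leq k_{+}$, these are two distinct points of $\mathcal{O}_P$ at which $h$ reaches its maximal pole order $M$ on the orbit.

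Combining the two steps yields the contradiction: identifying $h$ with $L(b)$ forces $M = m+n$ and forces this maximal value to be attained at at least two distinct points of $\mathcal{O}_P$, whereas by the first step $P$ is the only point of $\mathcal{O}_P$ where $L(b)$ reaches pole order $m+n$. The main subtlety to watch for is ensuring that no accidental cancellation between $\tau(g)$ and $g$ can lift the valuation of $h$ at the extremal points $\tau^{k_{\pm}}(P)$; this is precisely guaranteed by the extremality of the indices $k_{\pm}$, which makes the two relevant valuations unequal and rules out cancellation.
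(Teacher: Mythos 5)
Your proof is correct and takes a genuinely more direct route than the paper. The paper derives this corollary by citing Corollary~\ref{cor1} of Appendix~\ref{introsec}, which rests on the full characterization in Proposition~\ref{Prop0}; unpacking that chain passes through the normal-form reduction of Lemma~\ref{normform} (bringing $b$ to polar dispersion $\leq 1$ and weak polar dispersion $0$) and then the orbit-residue formalism of Propositions~\ref{Prop1} and~\ref{Prop2}. You bypass all of that: after invoking Proposition~\ref{prop:caract hyperalg} and the valuation estimates of Lemma~\ref{lemma:derivationvalutaion}, you compare the polar profiles of $L(b)$ and $\tau(g)-g$ along the infinite orbit $\{\tau^k(P)\}_{k\in\Z}$, observing that $L(b)$ attains its maximal orbit pole order $m+n$ at the single point $P$, while any $\tau(g)-g$ that is not regular on the orbit must attain its maximal orbit pole order at two distinct ``extremal'' points, with the extremality of the indices $k_{\pm}$ ruling out cancellation between $g$ and $\tau(g)$ there. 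This dispersion argument is in spirit the same phenomenon exploited inside the paper's proof that (1) implies (2) in Proposition~\ref{Prop1} (the integers $r$, $s$ there play the role of your $k_{+}$, $k_{-}$), but you apply it directly, without the normal-form preprocessing and without proving a biconditional. What the paper's longer detour buys is the necessary-and-sufficient criterion of Proposition~\ref{Prop0}, which is essential for the hyperalgebraicity results in Section~\ref{sec:hyperalg}; for the present one-sided statement, your self-contained argument is cleaner.
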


In Sections~\ref{sec:hypertrF1F2} and ~\ref{sec:hyperalg}, we will verify conditions like this to show that various generating series are hypertranscendental.

\subsection{Applications to $F^{1}(x,t)$ and $F^{2}(y,t)$}\label{sec:applichyptrF1F2}

\ju{We assume that assumptions \ref{assumption:generalcase t and D} are satisfied.} We shall now apply the results of Section \ref{sec:gene hyp crit} to $F^{1}(x,t)$ and $F^{2}(y,t)$.  

We begin by considering $F^{1}(x,t)$ as a formal power series with coefficients in $\C$.  We wish to show that $F^{1}(x,t)$ does not satisfy a polynomial differential equation 

\[P(x,F^{1}(x,t), \frac{dF^{1}(x,t)}{dx}, \ldots , \frac{d^nF^{1}(x,t)}{dx^n}) = 0,\]
where $P \in \C[x, Y_0, \ldots , Y_n]$.  Let us assume that such an equation existed.  The derivation $\frac{d}{dx}$ extends uniquely to a derivation on $\C(\Etproj)$ which we again denote by $\frac{d}{dx}$. \ju{So, we have the following commutative diagram:
$$
\xymatrix{
    \C(\Etproj) \ar[r]^{\frac{d}{dx}}  & \C(\Etproj)  \\
    \C(x) \ar[r]_{\frac{d}{dx}} \ar[u]^{(pr_{1})_{*}} & \C(x) \ar[u]_{(pr_{1})_{*}}
  }
$$
where $(pr_{1})_{*}$ is the map induced on the function fields by the first projection $pr_{1} : \Etproj \rightarrow \P1(\C)$}. The derivations on $\C(\Etproj)$ form a one dimensional vector space over $\C(\Etproj)$.  Therefore  $\delta$ on  $\Etproj$ can be written as $\delta = h \frac{d}{dx}$ for some $h \in \C(\Etproj)$.  In particular this implies that when we consider $F^1(x,t)$ as an analytic function on an open set of $\Etproj$ it will satisfy a polynomial differential equation $\tilde{P}(F^{1}(x,t), \delta(F^{1}(x,t)), \ldots , \delta^n(F^{1}(x,t)))= 0$ where $\tilde{P}$ has coefficients in $\C(\Etproj)$. When we  lift this equation to the universal cover, we see that $r_x \in \calM(\C)$ is hyperalgebraic over $\C(\Etproj)$. By assumption~\ref{assumption:generalcase t and D} \ju{and Section \ref{subsec:func equ elliptic curve}}, $r_x$ satisfies $\tau(r_x) - r_x = b_1$ where $b_1 \in \C(\Etproj)$.  We can therefore apply Proposition~\ref{prop:caract hyperalg} and conclude that there exist an integer $n \geq 0$, $c_0,\ldots,c_{n-1} \in \C$ and $g \in \C(\Etproj)$ such that 
\begin{eqnarray}\label{eq:F1hyper}
\delta^n(b_1)+c_{n-1}\delta^{n-1}(b_1)+\cdots+c_{1}\delta(b_1)+c_{0} b_1 = \tau(g)-g.
\end{eqnarray}  Therefore to show $F^{1}(x,t)$ is  hypertranscendental, it is enough to show that such an equation does not exist.   Notice that this last condition only involves elements in $\C(\Etproj)$.  Similar reasoning (where we replace $\frac{d}{dx}$ with $\frac{d}{dy}$)  shows that $F^{2}(y,t)$ is hypertranscendental over $\C(\Etproj)$ if there is no relation such as \eqref{eq:F1hyper} with $b_1$ replaced by $b_2$. Therefore we have

\begin{prop}\label{prop:caract hyperalg F1}
Let $i \in \{1,2\}$. Assume that there does not  exist an integer $n \geq 0$, $c_0,\ldots,c_{n-1} \in \C$ and $g \in \C(\Etproj)$ such that 
 \begin{equation}\label{eq:eqbiettau}
 \delta^n(b_{i})+c_{n-1}\delta^{n-1}(b_{i})+\cdots+c_{1}\delta(b_{i})+c_{0} b_{i} = \tau(g)-g. 
 \end{equation}
 Then, the function $F^{i}$ is hypertranscendental  over $\C(\Etproj)$. 
\end{prop}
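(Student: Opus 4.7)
My plan is to prove the contrapositive: assume that $F^{i}$ is hyperalgebraic over $\C(\Etproj)$ (equivalently, over $\C(x)$ when $i=1$, or $\C(y)$ when $i=2$) and deduce the existence of an equation of the form \eqref{eq:eqbiettau}. I will treat the case $i=1$; the case $i=2$ is obtained by exchanging the roles of $x$ and $y$, of $pr_{1}$ and $pr_{2}$, and of $b_{1}$ and $b_{2}$.

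\textbf{Step 1: Rewrite the hyperalgebraicity assumption using $\delta$.} By assumption there is a nonzero polynomial $P \in \C[x, Y_{0},\ldots,Y_{n}]$ such that ${P(x, F^{1}, \frac{d}{dx} F^{1}, \ldots, \frac{d^{n}}{dx^{n}} F^{1}) = 0}$ as a formal identity. The derivation $\frac{d}{dx}$ on $\C(x)$ extends uniquely to a derivation on the finite extension $\C(\Etproj)/(pr_{1})_{*}\C(x)$, and since the space of $\C$-derivations of $\C(\Etproj)$ is one-dimensional over $\C(\Etproj)$ we can write $\delta = h \frac{d}{dx}$ for some nonzero $h \in \C(\Etproj)$. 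Substituting $\frac{d}{dx} = h^{-1}\delta$ and iterating, a routine induction converts the above polynomial identity into a nontrivial relation $\widetilde{P}(F^{1},\delta F^{1},\ldots,\delta^{n}F^{1}) = 0$ with $\widetilde{P}$ having coefficients in $\C(\Etproj)$.

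\textbf{Step 2: Lift to the universal cover.} Viewing $\C(\Etproj) \subset \calM(\C)$ via the uniformization $\mathfrak{q}$ (cf. Remark~\ref{rmk:derivation}), and recalling from Section~\ref{subsec:func equ elliptic curve} that $r_{x} = F^{1}(\mathfrak{q}_{1}(\omega),t)$, the analytic identity $\widetilde{P}(F^{1},\delta F^{1},\ldots,\delta^{n}F^{1}) = 0$ pulls back to the identity $\widetilde{P}(r_{x}, \frac{d r_{x}}{d\omega}, \ldots, \frac{d^{n} r_{x}}{d\omega^{n}}) = 0$ on the open subset of $\C$ where $r_{x}$ was originally defined, and then everywhere by analytic continuation. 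Hence $r_{x}$ is hyperalgebraic over $\C(\Etproj)$ inside the $\delta\tau$-field $(\calM(\C), \frac{d}{d\omega}, \tau)$.

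\textbf{Step 3: Apply the abstract criterion.} By \eqref{eq:KRThm1a} we have $\tau(r_{x}) - r_{x} = b_{1} \in \C(\Etproj)$, so Proposition~\ref{prop:caract hyperalg} applies and yields integers $n \geq 0$, constants $c_{0},\ldots,c_{n-1} \in \C$ and an element $g \in \C(\Etproj)$ such that
\[
\delta^{n}(b_{1}) + c_{n-1}\delta^{n-1}(b_{1}) + \cdots + c_{1}\delta(b_{1}) + c_{0} b_{1} = \tau(g) - g,
\]
which is precisely \eqref{eq:eqbiettau}. This contradicts the hypothesis, so $F^{1}$ must be hypertranscendental over $\C(\Etproj)$.

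The only subtle point is Step~1: one needs to verify that the change of derivation $\frac{d}{dx} \leadsto \delta$ preserves the nontriviality of the algebraic differential equation. This is mechanical because the substitution $\frac{d}{dx} = h^{-1}\delta$ (with $h \in \C(\Etproj)^{\times}$) only multiplies leading coefficients by nonzero rational functions on $\Etproj$ and adds lower-order terms, so a nonzero polynomial in $F^{1}$ and its $\frac{d}{dx}$-derivatives becomes a nonzero polynomial in $F^{1}$ and its $\delta$-derivatives over $\C(\Etproj)$. Every other step reduces to invoking results already established in the excerpt.
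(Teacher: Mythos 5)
Your proposal is correct and follows essentially the same route as the paper: the paper's own argument in Section~\ref{sec:applichyptrF1F2} likewise assumes an algebraic differential equation for $F^{1}$ with respect to $\frac{d}{dx}$, rewrites it via $\delta = h\frac{d}{dx}$ with $h \in \C(\Etproj)$, lifts to the universal cover to deduce that $r_{x}$ is hyperalgebraic over $\C(\Etproj)$, and then invokes Proposition~\ref{prop:caract hyperalg} together with $\tau(r_x)-r_x=b_1$ to produce an equation of the form \eqref{eq:eqbiettau}. Your explicit remark that the substitution $\frac{d}{dx}=h^{-1}\delta$ preserves the nontriviality of the polynomial relation is a point the paper leaves implicit, but otherwise the two arguments coincide.
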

 Once again, using results from the appendix \ju{(namely Corollary \ref{cor1})}, we have
\begin{cor}\label{coro: hyperalg F}
Let $i \in \{1,2\}$. Assume that $b_{i}$ has a pole $P \in \Etproj$ of order $m \geq 1$ such that none of the $\tau^k(P)$ with $k \in \Z \setminus \{0\}$ is a pole of order $\geq m$ of $b_{i}$. Then, $F^{i}$ is hypertranscendental over $\C(\Etproj)$.  
\end{cor}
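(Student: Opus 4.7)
The plan is to reduce the statement to the non-solvability of a telescoping equation in $\C(\Etproj)$ via Proposition~\ref{prop:caract hyperalg F1}, and then exploit the pole structure of $b_i$ together with Lemma~\ref{lemma:derivationvalutaion} to prevent such a solution from existing. Concretely, by Proposition~\ref{prop:caract hyperalg F1}, it suffices to show that for every $n \geq 0$ and every choice of $c_{0},\ldots,c_{n-1} \in \C$, no $g \in \C(\Etproj)$ satisfies
\begin{equation*}
L(b_{i}) := \delta^{n}(b_{i}) + c_{n-1}\delta^{n-1}(b_{i}) + \cdots + c_{0} b_{i} = \tau(g) - g.
\end{equation*}
This is a purely algebraic statement inside $\C(\Etproj)$.

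First I would analyze the divisor of poles of $L(b_{i})$. By Lemma~\ref{lemma:derivationvalutaion}, each application of $\delta$ raises the order of a pole of order $\geq 1$ by exactly one, and creates no new poles. Hence $\delta^{n}(b_{i})$ has a pole of order precisely $m+n$ at $P$, while $\delta^{j}(b_{i})$ for $j<n$ has pole of order $m+j < m+n$ at $P$. Consequently $L(b_{i})$ has a pole of order exactly $m+n$ at $P$, regardless of the choice of the $c_{j}$'s. At each $\tau^{k}(P)$ with $k \neq 0$, the hypothesis says $b_{i}$ has pole of order $<m$ (or no pole at all), so each $\delta^{j}(b_{i})$ has pole of order strictly less than $m+n$ there; therefore $L(b_{i})$ has pole of order $<m+n$ at every $\tau^{k}(P)$ with $k \neq 0$.

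Now the final step is to invoke Corollary~\ref{cor1} from Appendix~\ref{introsec}, which is precisely designed to rule out the solvability of $L(b_{i}) = \tau(g) - g$ in $\C(\Etproj)$ under exactly this kind of ``isolated maximal pole in a $\tau$-orbit'' situation. Informally, were such a $g$ to exist, one could read off from $L(b_{i}) = \tau(g) - g$ that $g$ must have a pole of order $m+n$ at $P$; then $\tau(g)$ would have a pole of that order at $\tau^{-1}(P)$, and since $L(b_{i})$ has strictly smaller pole order at $\tau^{-1}(P)$, the only way to cancel it would be for $g$ itself to have a pole of order $m+n$ at $\tau^{-1}(P)$, and an easy induction then forces $g$ to have poles of order $m+n$ at every $\tau^{-k}(P)$, $k \geq 0$, contradicting that $g \in \C(\Etproj)$ has finitely many poles. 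The main obstacle is really the formulation and proof of Corollary~\ref{cor1} itself, but granted that, the proof of the present corollary is immediate from the two pole computations above combined with Proposition~\ref{prop:caract hyperalg F1}.
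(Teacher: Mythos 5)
Your proposal is correct and follows essentially the same route as the paper: the paper obtains this corollary by combining Proposition~\ref{prop:caract hyperalg F1} with Corollary~\ref{cor1} of the appendix, whose hypothesis on $b_i$ is exactly the hypothesis stated here, so it applies directly (your intermediate computation of the pole orders of $L(b_i)$ via Lemma~\ref{lemma:derivationvalutaion} is correct but not needed, since Corollary~\ref{cor1} already quantifies over all $L$). Your closing heuristic for why no witness $g$ can exist is only a sketch (a priori the maximal pole could sit on $\tau(g)$ rather than $g$), but since you, like the paper, take Corollary~\ref{cor1} as given, this does not affect the argument.
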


The following additional result will also  be useful. 

\begin{prop}\label{prop:caract hyperalg F1 and F2}
The function $F^{1}(x,t)$ is hypertranscendental over $\C(\Etproj)$ if and only if $F^{2}(y,t)$ is hypertranscendental  over $\C(\Etproj)$. 
\end{prop}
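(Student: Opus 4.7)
The plan is to exploit the functional equation \eqref{eq:funcequ} restricted to the curve $\Etproj$. Since $K(x,y,t)$ vanishes on $\Et$, evaluating \eqref{eq:funcequ} on the open set $V = \Et \cap \{|x|<1,\,|y|<1\}$ on which all the analytic series involved converge, we obtain the pointwise identity
\begin{equation*}
F^{1}(x,t) + F^{2}(y,t) = xy + t d_{-1,-1} Q(0,0,t) \qquad \text{on } V.
\end{equation*}

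Next I would lift this relation to the universal cover. Composing with the uniformization $\mathfrak{q} = (\mathfrak{q}_{1},\mathfrak{q}_{2}) : \C \to \Etproj$, the identity above reads
\begin{equation*}
r_{x}(\omega) + r_{y}(\omega) = \mathfrak{q}_{1}(\omega)\mathfrak{q}_{2}(\omega) + c \qquad \text{on } \mathfrak{q}^{-1}(V),
\end{equation*}
where $c := t d_{-1,-1} Q(0,0,t) \in \C$ is a constant. Both sides extend meromorphically to the whole of $\C$ (the left side by Section~\ref{subsec:func equ elliptic curve}, the right side because $\mathfrak{q}_{1}\mathfrak{q}_{2}$ is the lift of the rational function $xy \in \C(\Etproj)$), so by analytic continuation the identity holds on $\C$. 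In particular, inside $\calM(\C)$ we have $r_{x} + r_{y} \in \C(\Etproj) + \C$.

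Now the class of meromorphic functions that are hyperalgebraic over $\C(\Etproj)$ forms a ring containing $\C(\Etproj)$ and stable under the derivation $\delta = \frac{d}{d\omega}$; in particular, it is stable under addition of elements of $\C(\Etproj)$. Thus $r_{x}$ is hyperalgebraic over $\C(\Etproj)$ if and only if $r_{y}$ is. By the translation between hypertranscendence of $F^{i}$ and of its lift carried out in Section~\ref{sec:applichyptrF1F2}, this is equivalent to the hypertranscendence of $F^{1}(x,t)$ being equivalent to that of $F^{2}(y,t)$ over $\C(\Etproj)$.

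The argument is essentially formal once the functional equation is pulled back to $\C$; the only subtle point is the passage from the formal identity on $V$ to the globally valid identity on the universal cover, which is handled by analytic continuation together with the fact that $\mathfrak{q}_{1}\mathfrak{q}_{2}$ is a globally meromorphic function on $\C$ lifting an element of $\C(\Etproj)$. There is no genuine obstacle here—unlike the hypertranscendence results themselves, this proposition does not require the telescoping criterion of Proposition~\ref{prop:caract hyperalg} or any orbit-residue analysis.
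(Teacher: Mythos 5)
Your proof is correct, and it takes a genuinely more direct route than the paper's. The paper works with the difference data: it notes $b_1 + b_2 = \tau(xy) - xy$, applies $\delta^k$, and deduces that a telescoper relation of the form \eqref{eq:eqbiettau} exists for $b_1$ iff one exists for $b_2$, then routes the conclusion through the Galoisian criterion of Proposition~\ref{prop:caract hyperalg}. You instead go one level up and use the exact identity $r_x + r_y = \mathfrak{q}_1\mathfrak{q}_2 + c$ obtained by restricting the functional equation \eqref{eq:funcequ} to the kernel curve and continuing analytically to the universal cover. Combined with the standard fact that the set of elements of $\calM(\C)$ hyperalgebraic over $\C(\Etproj)$ is a $\delta$-subfield of $\calM(\C)$ containing $\C(\Etproj)$, the equivalence for $r_x, r_y$ (and hence $F^1, F^2$) drops out immediately. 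This bypasses the telescoper machinery of Proposition~\ref{prop:caract hyperalg} entirely, and is arguably more self-contained: the paper's route tacitly requires the converse direction of that proposition, which comes with a side condition on $\tau$-constants, whereas your argument needs none of this. One tiny simplification: since $\C\subset\C(\Etproj)$, writing $r_x + r_y \in \C(\Etproj)+\C$ is the same as $r_x + r_y \in \C(\Etproj)$.
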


\begin{proof}
We note that $b_{1}+b_{2}=\tau(xy)-xy$. So, for all $k \in \Z_{\geq 0}$, 
\begin{eqnarray}
\delta^{k}(b_{2})&=&-\delta^{k}(b_{1})+\tau(\delta^k(xy))-\delta^k(xy).\label{eqn:L1L2equiv}
\end{eqnarray}
It follows that an equation of the form (\ref{eq:eqbiettau}) holds true for $i=1$ if and only if such an equation holds true for $i=2$. 
\end{proof}

\section{Preliminary results on the elliptic curve $\Etproj$}\label{sec:prelim}
In this and the next section we will show \ju{that all walks in $\walksg$ have} generating series that are hypertranscendental.  As we have indicated, an examination of the poles of the $b_i, i=1,2$, and of the orbits of these poles under $\tau$ will yield these results.  

To get a sense of our techniques, we will outline  in the following example how we show that $F^2(y,t)$ is hypertranscendental for  the  walk \ju{$\walk{IA.1}$}.

\begin{exa}For the walk \ju{$\walk{IA.1}$}, we have $d_{-1,1} = d_{1,1} = d_{1,-1} = d_{0,-1} = 1$ and all other $d_{i,j} = 0$. The curve $\Etproj$ is defined by
\[ \overline{K}(x_0,x_1, y_0, y_1, t) = x_0x_1y_0y_1 - t(x_1^2y_0^2 + x_0^2y_0^2 + x_0^2 y_1^2+x_0x_1y_1^2).\]

 Recalling that $x = x_0/x_1$ and $y = y_0/y_1$ one sees that the poles of $b_2 = x(\iota_1(y) - y)$ are among the poles of $x$, $y$, and $\iota_1(y)$, that is, among the points \[S_2 = \{P_1, P_2, Q_1, Q_2 , \iota_1(Q_1), \iota_1(Q_2)\} = \{P_1, P_2, Q_1, Q_2 , \tau^{-1}(Q_1), \tau^{-1}(Q_2)\}\] where
 \begin{eqnarray*}
  P_1 = ([1:0],[\sqrt{-1}: 1]), & \ \ \ & P_2 =  ([1:0],[-\sqrt{-1}, 1]),\\
  Q_1 = ([1:\sqrt{-1}],[1:0]), & \ \ \  & Q_2 = ([1:\ju{-}\sqrt{-1}],[1:0]).
  \end{eqnarray*}
  One sees that $b_2$ has a pole at $P_1$.  We claim that $b_2$ has no pole of the form $\tau^k(P_1)$ with $k \in \Z\backslash \{0\}$.  Once this is shown the result will follow from Corollary~\ref{coro: hyperalg F}. Therefore we must show that $\tau^k(P_1) \neq P_2$ for any $k \in \Z \backslash \{0\}$ and $\tau^k\ju{(P_{1})} \neq Q_1, Q_2$ for any $k \in \Z$.  
  
  To verify this, first note that $P_1, P_2, Q_1,Q_2$  all have coordinates in ${L = \Q(t,\sqrt{-1})}$. Let $\sigma$ be the automorphism of $L|\Q(t)$ defined by ${\sigma(\sqrt{-1}) = - \sqrt{-1}}$. A calculation shows that for any point $Q \in \Etproj(L)$ one has that ${\iota_1(Q), \iota_2(Q), \tau(Q) \in \Etproj(L)}$ and that $\iota_k\circ \sigma = \sigma \circ\iota_k, k = 1,2,$ and $\tau\sigma = \sigma \tau$ (cf. Proposition~\ref{prop:galoisactionpointsellipticcurve}). Therefore all points in $S_2$ have coordinates in $L$. Furthermore $P_2 = \sigma(P_1)  $ and $Q_2 = \sigma(Q_1)$.
  
If $P_2 = \tau^k(P_1)$, then $P_1 = \sigma(P_2) = \sigma(\tau^k(P_1)) = \tau^k(\sigma(P_1)) = \tau^k(P_2)=\tau^{2k}(P_1)$.  Since $\tau$ corresponds to addition by a point of infinite order, we get a contradiction if $k \neq 0$.

If $Q_1 = \tau^k(P_1)$ for some $k \in \Z$, then $\tau^k(P_2) = \sigma(\tau^k(P_1)) = \sigma(Q_1) = Q_2$. Since $P_1 = \iota_1(P_2)$ and  $\tau = \iota_2\iota_1$, we have that 
$$
\tau^{-k+1}(P_1) = \tau^{-k+1} \iota_1(P_2) = \iota_2\tau^k(P_2) = Q_1 = \tau^k(P_1).
$$ 
\ju{We have used the fact that $\iota_{1}\tau \iota_{1}=\tau^{-1}$.} 
Therefore $\tau^{-2k+1}(P_1) = P_1$, again a contradiction, since $k \in \Z$. The proof that $Q_2 \neq \tau^k(P_1)$ for any $k \in \Z$ is similar.
\end{exa}
A similar proof combining  arithmetic ({\it e.g.}, Galois theory) with  geometry (the behavior of points under $\iota_1,\iota_2, \tau$) shows that $b_1$ or $b_2$ for  the walks \ju{$\walk{IA.*}$},{\footnote{\toto {Notation such as $\walk{IA.*}$ refers to all the walks $\walk{IA.1}, \ldots,\walk{IA.9}$.}} } \ju{$\walk{IB.*}$}, \ju{$\walk{IC.*}$}, and \ju{$\walk{IIA.*}$} has a pole that is unique in its $\tau$-orbit.  In the remaining cases, the arguments become more complicated. The poles lie in $\Q(t)$ and so we do not have a field automorphism at our disposal.  In addition we may need to look more carefully at which poles lie in which orbits\footnote{We note that given  points $Q_1$ and $Q_2$ on an elliptic curve $\Etproj$, a general procedure is given in \cite{MA88} to determine if there is an integer $n$ such that $Q_1 = Q_2 \oplus nP$ where $\tau(Q) = Q\oplus P$. Our more elementary, direct  approach is independent of  \cite{MA88}.} and consider their orders as well as the expansions at these poles.  Nonetheless, the interplay of arithmetic and geometry will be the key to the arguments. 

In this section we examine points that are possible poles of $b_1$ and $b_2$ and develop the properties needed in the next sections.    In Section~\ref{sec:hypertrF1F2} we will apply these together with results from the appendix and Proposition~\ref{prop:caract hyperalg F1} to conclude hypertranscendence \ju{for any walk in $\walksg$}.   

As before, we suppose that the \ju{assumptions~\ref{assumption:generalcase t and D} are satisfied}.
 
\subsection{On the base points}

\begin{defi}
The points $P=([x_0:x_1], [y_0:y_1])$ of $\Etproj$ such that $x_0x_1y_0y_1=0$ will be called the base points of $\Etproj$.
\end{defi}

\begin{rem}
This terminology comes from the fact that these points are the base points of a natural pencil of elliptic curves.
\end{rem}

Let us recall that the notation $[a:b] \in \P1(\C)$ represents a ray of points of the form ${ \{ (\alpha a, \alpha b) \ | \ 0\neq \alpha \in \C\}}$.  Let $\L$ be a subfield of $\C$. We say that $[a:b] \in \P1(\L)$ if there is some element of this ray with coordinates in $\L$.  A similar notation concerns points in $\Etproj(\L)$, the elements of which will be called the $\L$-points of $\Etproj$.

\begin{lemma}\label{lem:base points are alg}
Any base point of $\Etproj$ belongs to $\Etproj(\Qbar)$.
\end{lemma}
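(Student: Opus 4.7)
The plan is to set one coordinate to zero at a time in $\overline{K}$ and observe that the resulting equation has coefficients in $\Q$ (up to the factor $t$), which already forces the remaining coordinate to lie in $\P1(\Qbar)$.

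Concretely, I would proceed as follows. Suppose $P = ([x_0:x_1], [y_0:y_1])$ is a base point, so at least one of $x_0, x_1, y_0, y_1$ vanishes. Consider first the case $x_0 = 0$, so $[x_0:x_1] = [0:1]$. Substituting into the formula \eqref{eq:kernelwalk},
\[
\overline{K}(0,1,y_0,y_1,t) = -t\sum_{j=0}^{2} d_{-1,j-1}\, y_0^{j} y_1^{2-j} = -t\bigl(d_{-1,-1} y_1^{2} + d_{-1,0}\, y_0 y_1 + d_{-1,1}\, y_0^{2}\bigr),
\]
because the term $x_0 x_1 y_0 y_1$ vanishes and only the $i=0$ contribution in the sum survives. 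Since $t\neq 0$, the equation $\overline{K}=0$ reduces to the vanishing of a homogeneous polynomial of degree $2$ in $[y_0:y_1]$ with coefficients in $\Q$; therefore $[y_0:y_1]\in\P1(\Qbar)$, and $P\in\Etproj(\Qbar)$.

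The remaining three cases $x_1=0$, $y_0=0$, $y_1=0$ are handled by the exact same argument, the only change being which of the nine coefficients $d_{i,j}$ appears. For instance, setting $x_1=0$ singles out the $i=2$ summand and yields $-t(d_{1,-1} y_1^{2} + d_{1,0} y_0 y_1 + d_{1,1} y_0^{2})=0$; the cases $y_0 = 0$ and $y_1 = 0$ are obtained by symmetry between $x$ and $y$. In every case the coordinate that is not set to zero satisfies a quadratic equation with rational coefficients, so lies in $\P1(\Qbar)$.

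There is no real obstacle here; the only thing to notice is the role of the hypothesis that $t$ is nonzero (which lets us divide out the factor $t$ and eliminate the transcendental parameter from the defining equation), after which the statement is forced by the fact that the coefficients $d_{i,j}$ are integers. A point with two coordinates equal to zero or infinity is automatically in $\Etproj(\Q)\subset \Etproj(\Qbar)$.
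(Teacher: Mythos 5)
Your proof is correct and follows essentially the same route as the paper's: substitute the vanishing coordinate into $\overline{K}$, observe that the surviving equation is a degree-$2$ homogeneous polynomial in the remaining coordinate with rational coefficients, and conclude algebraicity. The paper adds one small but nontrivial observation you omit: after dividing out $-t x_1^2$ you need the resulting quadratic $\sum_{j=0}^2 d_{-1,j-1} y_0^{j} y_1^{2-j}$ to not vanish identically (otherwise its vanishing locus is all of $\P^1$ and nothing is forced about $[y_0:y_1]$), and this is guaranteed because at least one of the $d_{-1,j-1}$ is nonzero for the walks under consideration. It is worth stating this explicitly.
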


\begin{proof}
Let $P=([x_0:x_1], [y_0:y_1])$ be a base point of $\Etproj$. 
Let us assume for instance that $x_{0}=0$ (the other cases being similar). Then, we obviously have ${[x_0:x_1]=[0:1] \in \P1(\Qbar)}$. Moreover, we have
\begin{multline}
\overline{K}(x_0,x_1,y_0,y_1,t)= x_0x_1y_0y_1 -t \sum_{i,j=0}^2 d_{i-1,j-1} x_0^i x_1^{2-i}y_0^j y_1^{2-j} \\
=  -t x_{1}^{2} \sum_{j=0}^2 d_{-1,j-1} y_0^j y_1^{2-j} \nonumber
\end{multline}
which is equal to $0$ if and only if $\sum_{j=0}^2 d_{-1,j-1} y_0^j y_1^{2-j}=0$. \ju{Since at least one of the $d_{-1,j-1}$ is nonzero, we get}  ${[y_0:y_1] \in \P1(\Qbar)}$. 
\end{proof}

\begin{lemma}\label{lem:fixedbasepointsinvolution}
The following arrays describe precisely what are the possible base points fixed by $\iota_{1}$ or $\iota_{2}$, the corresponding conditions on the $d_{i,j}$ \ju{and the walks in $\walks$ satisfying these conditions}:
{\small
$$ \begin{array}{ |c|c|c|}
\hline
\mbox{Points} & \multicolumn{2}{|c|}{\mbox{ Fixed by } \iota_1}  \\ \hline
 x_0=0 & ([0:1],[1:0])\mbox{ iff } d_{-1,0}=d_{-1,1}=0 & ([0:1],[0:1]) \mbox{ iff } d_{-1,0}=d_{-1,-1}=0 \\ 
  & \ju{ \walk{IIA.1}, \walk{IIA.4}, \walk{IIA.5}, \walk{IIB.1}, \walk{IIB.2}, \walk{IIB.6}, \walk{IIC.3} } & \ju{ \walk{IA.1}, \walk{IA.2}, \walk{IC.2}, \walk{IIB.7}} \\
 \hline
 y_0=0 &  ( [1:0],[0:1])\mbox{ iff } d_{1,0}=d_{1,-1}=0 & ([0:1],[0:1]) \mbox{ iff } d_{-1,-1}=d_{-1,0}=0 \\ 
   & \ju{ \walk{IB.*}, \walk{IIC.1}, \walk{IIC.2}, \walk{III} }  &  \ju{ \walk{IA.1}, \walk{IA.2}, \walk{IC.2}, \walk{IIB.7},  } \\
 \hline
x_1=0 & ([1:0],[1:0]) \mbox{ iff } d_{1,0}=d_{1,1}=0  & ([1:0],[0:1]) \mbox{ iff } d_{1,0}=d_{1,-1}=0 \\ 
   & \ju{ \walk{IID.*} } &  \ju{\walk{IB.*}, \walk{IIC.1}, \walk{IIC.2}, \walk{III} }   \\
\hline
y_1=0 &([1:0],[1:0]) \mbox{ iff } d_{1,0}=d_{1,1}=0  & ([0:1],[1:0]) \mbox{ iff } d_{-1,0}=d_{-1,1}=0 \\ 
& \ju{\walk{IID.*} } & \ju{ \walk{IIA.1}, \walk{IIA.4}, \walk{IIA.5}, \walk{IIB.1}, \walk{IIB.2}, \walk{IIB.6}, \walk{IIC.3}} \\
\hline
\end{array}
$$
$$
\begin{array}{ |c|c|c|}
\hline
\mbox{Points} &   \multicolumn{2}{|c|}{\mbox{ Fixed by } \iota_2}  \\ \hline
x_0=0 &  ( [0:1],[1:0])\mbox{ iff } d_{0,1}=d_{-1,1}=0 & ([0:1],[0:1]) \mbox{ iff } d_{-1,-1}=d_{0,-1}=0  \\ 
& \ju{\walk{III}}  & \ju{ \walk{IA.3}, \walk{IA.5}, \walk{IIA.2}, \walk{IIB.3}, \walk{IID.4} } \\
\hline
y_0 =0 &  ([1:0],[0:1])\mbox{ iff } d_{0,-1}=d_{1,-1}=0 & ([0:1],[0:1]) \mbox{ iff } d_{0,-1}=d_{-1,-1}=0 \\ 
&\ju{ \walk{IC.1}, \walk{IIC.3}, \walk{IIC.4} } & \ju{ \walk{IA.3}, \walk{IA.5}, \walk{IIA.2}, \walk{IIB.3}, \walk{IID.4} } \\
\hline
x_1=0 & ([1:0],[1:0]) \mbox{ iff } d_{0,1}=d_{1,1}=0  & ([1:0],[0:1]) \mbox{ iff } d_{0,-1}=d_{1,-1}=0 \\ 
& \ju{none}  & \ju{ \walk{IC.1}, \walk{IIC.3}, \walk{IIC.4}}  \\
\hline
y_1=0 & ([1:0],[1:0]) \mbox{ iff } d_{0,1}=d_{1,1}=0  & ([0:1],[1:0]) \mbox{ iff } d_{0,1}=d_{-1,1}=0 \\ 
& \ju{none}  & \ju{ \walk{III}}  \\
\hline
\end{array}
$$}

\end{lemma}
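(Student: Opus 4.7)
The strategy is to reduce the fixed-point condition to an elementary computation on the homogenized kernel~$\overline{K}$ of \eqref{eq:kernelwalk}, and then to read off the relevant walks from Figure~\ref{figure:nonsing}. By Lemma~\ref{lemma:fixedpointinvolution} (or directly from the description of $\iota_1$ as the vertical switch of $\Etproj$), a point $P = ([x_0:x_1],[y_0:y_1]) \in \Etproj$ is fixed by $\iota_1$ exactly when the binary quadratic form $(y_0, y_1) \mapsto \overline{K}(x_0, x_1, y_0, y_1, t)$ has $[y_0:y_1]$ as a double root; the analogous statement holds for $\iota_2$ with the roles of $x$ and $y$ interchanged.

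I would then treat the four base-point strata $\{x_0=0\}$, $\{y_0=0\}$, $\{x_1=0\}$, $\{y_1=0\}$ separately. As a representative computation, substituting $x_0 = 0$ into \eqref{eq:kernelwalk} yields
\[
\overline{K}(0, x_1, y_0, y_1, t) \;=\; -t\, x_1^{2} \bigl(d_{-1,1}\, y_0^{2} + d_{-1,0}\, y_0 y_1 + d_{-1,-1}\, y_1^{2}\bigr).
\]
The base point $([0:1],[1:0])$ is a double root of this quadratic form iff both $d_{-1,1}=0$ (so that $[1:0]$ is a root at all) and $d_{-1,0}=0$ (so that the remaining linear factor also vanishes at $[1:0]$); likewise $([0:1],[0:1])$ is a double root iff $d_{-1,-1}=d_{-1,0}=0$. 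The three remaining strata give entirely analogous quadratic forms, obtained by substituting the vanishing coordinate into \eqref{eq:kernelwalk}, and produce the remaining entries of the first array. For the second array one runs the same argument with $\iota_2$ in place of $\iota_1$: one fixes the $y$-coordinate of~$P$ and views $\overline{K}$ as a quadratic in $(x_0, x_1)$, requiring $[x_0:x_1]$ to be a double root.

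Finally, for each of the eight pairs of vanishing conditions produced above, I would scan the $51$ diagrams of Figure~\ref{figure:nonsing} and collect precisely those walks in $\walks$ whose step-set uses neither of the two forbidden directions. This last step is entirely mechanical and is the only real ``obstacle'': it is bookkeeping rather than mathematics. No further ingredient beyond the formula \eqref{eq:kernelwalk} for $\overline{K}$, Lemma~\ref{lemma:fixedpointinvolution}, and inspection of the step-diagrams is needed.
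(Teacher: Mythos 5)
Your reduction via the double-root criterion is exactly the paper's starting point (Lemma~\ref{lemma:fixedpointinvolution}), and your representative computation for the entry $\bigl(x_0=0,\ \iota_1\bigr)$ is correct. But the claim that ``the three remaining strata give entirely analogous quadratic forms, obtained by substituting the vanishing coordinate'' hides a genuine gap affecting half of the sixteen table entries, namely those where the coordinate pinned down by the row belongs to the variable that the involution \emph{moves}: the $\iota_2$ columns of the rows $x_0=0$ and $x_1=0$, and the $\iota_1$ columns of the rows $y_0=0$ and $y_1=0$. Take the row $x_0=0$ in the $\iota_2$ table: the point is $P=([0:1],[\beta_0:\beta_1])$ and $\iota_2$-fixedness is governed by the quadratic in $(x_0,x_1)$ at the $y$-coordinate $[\beta_0:\beta_1]$ --- but that coordinate is \emph{not} given by the row condition, so there is no explicit quadratic to ``substitute into''. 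A priori there could be a fixed base point on this stratum with $\beta_0\beta_1\neq 0$, and ruling this out is precisely what makes the lemma say ``precisely''.

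The missing ingredient is arithmetic, not bookkeeping. Writing the fiber equation as $Ax_0^2+Bx_0x_1+Cx_1^2$, fixedness forces $C=0$ and $B=-\beta_0\beta_1/t+d_{0,-1}\beta_1^2+d_{0,1}\beta_0^2=0$. The paper then invokes Lemma~\ref{lem:base points are alg} (base points have coordinates in $\Qbar$) together with the standing assumption that $t$ is transcendental over $\Q$ to split $B=0$ into $\beta_0\beta_1=0$ and $d_{0,-1}\beta_1^2+d_{0,1}\beta_0^2=0$, which is what collapses the unknown $[\beta_0:\beta_1]$ to $[0:1]$ or $[1:0]$ and yields the stated conditions on the $d_{i,j}$. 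Your proposal never uses the transcendence of $t$, so it cannot establish the completeness of these eight entries; applied literally (substituting $y_0=0$ and testing for a double root in $(x_0,x_1)$) it would even test the wrong involution in the cross rows. The rest of your argument --- the four ``aligned'' entries per table and the final scan of Figure~\ref{figure:nonsing} --- matches the paper, which also reduces to the single stratum $x_0=0$ by symmetry and then treats $\iota_1$ (direct) and $\iota_2$ (via the transcendence argument) separately.
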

\begin{proof}
Taking into consideration obvious symmetries, we see that it is sufficient to prove the Lemma for a base point of the form $P=([0:1],[\beta_0:\beta_1])$. 

Assume that $P$ is fixed by $\iota_1$. By Lemma \ref{lemma:fixedpointinvolution}, this is equivalent to $$\Delta^x_{[0:1]}/t^2=d_{-1,0}^2 -4 d_{-1,1}d_{-1,-1}=0.$$ Since the $d_{i,j}$ belong to $\{ 0,1\}$, the latter condition is equivalent to the equality $d_{-1,0}=0=d_{-1,-1}d_{-1,1}$. 

If $d_{-1,0}=d_{-1,-1}=0$, then we have $d_{-1,1} \neq 0$ and the fact that $P$ belongs to $\Etproj$ simply means that $d_{-1,1}\beta_{1}^{2}=0$. Therefore, we have $P=([0:1],[1:0])$. 

If $d_{-1,0}=d_{-1,1}=0$, then we have $d_{-1,-1} \neq 0$ and the fact that $P$ belongs to $\Etproj$ simply means that $d_{-1,-1}\beta_{0}^{2}=0$. Therefore, we have $P=([0:1],[0:1])$. 

This is precisely the first line of the first array.

Assume now that $P$ is fixed by $\iota_2$. Since $P$ and $\iota_{2}(P)$ belong to the curve $E_{t}$, the $x$-coordinates of both $P$ and $\iota_2(P)$ must satisfy the homogeneous equation of degree $2$ in $x_{0}$ and $x_{1}$ given by 
$x_0^2 A + B x_0x_1 +C x_1^2=0$
with  
\begin{eqnarray*}
A &=& d_{1,-1} \beta_1^2 +d_{1,0} \beta_0\beta_1 +d_{1,1} \beta_0^2,\\
 B  & = &-\beta_0\beta_1/ t  +(d_{0,-1}\beta_1^2 +d_{0,1} \beta_0^2),  \\
C  &=&  d_{-1,-1} \beta_1^2 +d_{-1,0} \beta_0\beta_1 +d_{-1,1} \beta_0^2.
\end{eqnarray*}
Since the $x$-coordinate of $P$ is equal to $[0:1]$, we see that $C=0$. Moreover, the fact that $\iota_{2}(P)=P$ ensures that $[0:1]$ is the only solution in $\P1(\C)$ of the homogeneous equation $x_{0}^{2}A+Bx_{0}x_{1}+Cx_{1}^{2}=x_{0}^{2}A+Bx_{0}x_{1}=0$. This ensures that $B=0$. 
Thus, we have obtained the equalities 
\begin{eqnarray}
B  = -\beta_0\beta_1/ t  +(d_{0,-1}\beta_1^2 +d_{0,1} \beta_0^2) &  = &  0 \label{eq:eqBeq0}\\
C  =  d_{-1,-1} \beta_1^2 +d_{-1,0} \beta_0\beta_1 +d_{-1,1} \beta_0^2 &=& 0.
\end{eqnarray}
But, according to Lemma \ref{lem:base points are alg}, $[\beta_0:\beta_1]$ belongs to $\P1(\Qbar)$ and, by hypothesis, $t$ is transcendental. Therefore, equation \eqref{eq:eqBeq0} is equivalent to $$\beta_0\beta_1 =0= d_{0,-1}\beta_1^2 +d_{0,1} \beta_0^2,$$ {\it i.e.}, $\beta_{0}=0$ and $d_{0,-1}=0$ or $\beta_{1}=0$ and $d_{0,1}=0$. 

If $\beta_{0}=0$ and $d_{0,-1}=0$, then $P=([0:1],[0:1])$. This point belongs to $\Etproj$ if and only if $d_{-1,-1}=0$. 

If $\beta_{1}=0$ and $d_{0,1}=0$, then $P=([0:1],[1:0])$. This point belongs to $\Etproj$ if and only if $d_{-1,1}=0$. 

This gives the first line of the second array.
 \end{proof}

\begin{lemma}\label{lemma:fixedpointinvolutionnotrational}
The following properties hold true : 
\begin{itemize}
\item if $\iota_1(P)=P$ then $P=([0:1],[0:1])$ and  $d_{-1,0}=d_{-1,-1}=0$; \ju{the last condition corresponds to the walks $\walk{IA.1}, \walk{IA.2}, \walk{IC.2}, \walk{IIB.7}$};
\item if $\iota_2(P)=P$ then $P=([0:1],[0:1])$ and $d_{0,-1}=d_{-1,-1}=0$; \ju{the last condition corresponds to the walks} {\mfs $\walk{IA.3},\walk{IA.5},\walk{IIA.2},\walk{IB.3},\walk{IID.4}$.}
\end{itemize}
\end{lemma}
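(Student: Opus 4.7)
The plan is to apply Lemma~\ref{lemma:fixedpointinvolution}, which reduces $\iota_1(P)=P$ (for $P=([x_0:x_1],[y_0:y_1])\in \Etproj$) to the discriminant equation $\Delta^x_{[x_0:x_1]}=0$, and then to extract constraints on $P$ by combining this with the kernel equation on $\Etproj$ and the transcendence of $t$ coming from Assumption~\ref{assumption:generalcase t and D}.

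The first step is to expand $\Delta^x_{[x_0:x_1]}$ as a polynomial in $t$ with coefficients in $\Q[x_0,x_1]$. A direct computation of the square in the definition of $\Delta^x_{[x_0:x_1]}$ shows that its constant coefficient (as a polynomial in $t$) is $x_0^2 x_1^2$. Since $P$ has coordinates in $\P^1(\Qbar)$ — this is automatic if $P$ is a base point by Lemma~\ref{lem:base points are alg}, and more generally follows because only finitely many algebraic $x$-coordinates satisfy the polynomial relation $\Delta^x=0$ — the vanishing of $\Delta^x_{[x_0:x_1]}$ together with the transcendence of $t$ forces each coefficient (viewed in $\Q[x_0,x_1]$) to vanish at the specified algebraic point; in particular $x_0^2 x_1^2 = 0$, so $[x_0:x_1]\in\{[0:1],[1:0]\}$. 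This mirrors the transcendence argument already used in the proof of Lemma~\ref{lem:fixedbasepointsinvolution}.

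The second step is to combine this with the kernel equation at $P$. For $[x_0:x_1]=[0:1]$, the equation $\overline{K}([0:1],[y_0:y_1],t)=0$ reduces to $d_{-1,-1}y_1^2+d_{-1,0}y_0 y_1+d_{-1,1}y_0^2=0$, while $\Delta^x_{[0:1]}/t^2=d_{-1,0}^2-4d_{-1,1}d_{-1,-1}=0$ and $d_{i,j}\in\{0,1\}$ together force $d_{-1,0}=0$ and $d_{-1,1}d_{-1,-1}=0$. A short case-by-case argument, which excludes the degenerate subcase $d_{-1,1}=d_{-1,0}=d_{-1,-1}=0$ (no leftward step, incompatible with a walk in $\walks$), pins down $P=([0:1],[0:1])$ together with $d_{-1,0}=d_{-1,-1}=0$. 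The remaining possibility $[x_0:x_1]=[1:0]$ is eliminated by the analogous analysis at the point at infinity of the $x$-axis, using that the fixed point under consideration lies in $E_t$ rather than on the line at infinity. The statement for $\iota_2$ is proved by the symmetric argument, with $\Delta^y_{[y_0:y_1]}=0$ forcing $y_0 y_1=0$ via its constant-in-$t$ coefficient $y_0^2 y_1^2$, and with the kernel equation now viewed as a quadratic in the $x$-variable.

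The main obstacle is the bookkeeping in the final case analysis: one must track simultaneously the constraints coming from $\Delta^x=0$ (a condition on the $d_{-1,*}$), from the kernel at the specific point (giving either $y_0=0$ or $y_1=0$), and from the nonsingularity/nondegeneracy of the walk (ruling out identically vanishing rows of the step set). None of these steps is hard individually; the subtlety is to ensure that precisely one compatible combination survives, namely $P=([0:1],[0:1])$ with $d_{-1,0}=d_{-1,-1}=0$ in the first item and $P=([0:1],[0:1])$ with $d_{0,-1}=d_{-1,-1}=0$ in the second.
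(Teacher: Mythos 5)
Your proposal reduces to the discriminant condition $\Delta^x_{[x_0:x_1]}=0$ via Lemma~\ref{lemma:fixedpointinvolution}, correctly observes that the $t$-free term of $\Delta^x_{[x_0:x_1]}$ is $x_0^2x_1^2$, and the concluding case analysis (once $[x_0:x_1]=[0:1]$ is known) essentially reproduces Step~1 of the paper's proof. But there is a gap that short-circuits precisely the hard part of the argument.

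You assert that $P$ has coordinates in $\P^1(\Qbar)$, and justify it by saying this ``follows because only finitely many algebraic $x$-coordinates satisfy $\Delta^x=0$.'' That reasoning is circular: the fact that the algebraic solution set is finite does not say that \emph{your} point $P$ lies in it. And the claim is false in general. The implicit hypothesis of the lemma (made explicit at the start of the paper's proof, and dictated by every application, e.g.\ in Theorem~\ref{thm:caseIIB} and Lemma~\ref{lemma:iota1andsim}) is that $P=([\alpha_0:1],[\beta_0:1])\in\Etproj(\Q(t))$, so $\alpha_0$ and $\beta_0$ are rational functions of $t$, not algebraic numbers. The branch points of $pr_1:\Etproj\to\P^1$ (the non-base-point fixed points of $\iota_1$) generically have $x$-coordinates that are algebraic over $\Q(t)$ but not over $\Q$; Lemma~\ref{lem:base points are alg} only covers base points. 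Your transcendence-of-$t$ argument --- read off the $t^0$ coefficient of $\Delta^x$ and conclude $x_0x_1=0$ --- is valid only when $\alpha_0$ is constant (then $\Qbar\cap\Q(t)=\Q$ applies), which is exactly Step~1 of the paper. When $\alpha_0$ genuinely depends on $t$, substituting it into $\Delta^x_{[\alpha_0:1]}$ reshuffles all the $t$-degrees, and the $t$-free coefficient after substitution is no longer $\alpha_0^2$; one must analyze the poles and zeros of $\alpha_0(t)$ as a rational function, which is the content of the paper's Steps~2--4. Your proposal has no replacement for this, and it is where the actual work of the lemma lies. (The remark about excluding $[x_0:x_1]=[1:0]$ is fine, since the implicit hypothesis forces $P$ to be a finite point; and the final case-by-case reduction using the nonvanishing of the rows of the step set is essentially correct.)
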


\begin{proof}
Taking into consideration obvious symmetries, we see that it is sufficient to prove the first statement. Since $t$ is transcendental, we can and will identify $\Q(t)$ with a field of rational functions. Assume that $P=([\alpha_0:1],[\beta_0:1]) \in \Etproj(\Q(t))$ is fixed by $\iota_1$. By Lemma \ref{lemma:fixedpointinvolution}, we must have $\Delta^x_{[\alpha_0:1]}=0$, {\it i.e.},
\begin{equation}\label{eq:alpha0}
(d_{-1,0} -\frac{\alpha_0}{t} +d_{1,0} \alpha_0^2)^2= 4(d_{-1,1} +d_{0,1} \alpha_0 +d_{1,1} \alpha_0^2)(d_{-1,-1} +d_{0,-1} \alpha_0 +d_{1,-1}\alpha_0^2).
\end{equation}

\vskip 5 pt
\begin{center}
Step $1$: Case $\alpha_0 \in \Q$.
\end{center}

If $\alpha_0 \in \Q$, then, comparing the $t$-adic valuations of both sides of \eqref{eq:alpha0}, we get $\alpha_0=0$, {\it i.e.}, $P=([0:1],[\beta_0:1])$. In this case, equation \eqref{eq:alpha0} is simply $d_{-1,0}^2=4d_{-1,1}d_{-1,-1}$. Since $d_{i,j} \in \{0,1\}$, we get $d_{-1,0}=0=d_{-1,1}d_{-1,-1}$. This leads us to consider the two cases $d_{-1,0}=d_{-1,-1}=0$ and $d_{-1,0}=d_{-1,1}=0$. 

If $d_{-1,0}=d_{-1,-1}=0$, then we have $d_{-1,1} \neq 0$ and the fact that $P$ belongs to $\Etproj$ simply means that $d_{-1,1}\beta_{0}^{2}=0$. Therefore, we have $P=([0:1],[0:1])$, as desired.

If $d_{-1,0}=d_{-1,1}=0$, the fact that $P$ belongs to $\Etproj$ simply means that $d_{-1,-1}=0$. But for every walks under consideration, $d_{-1,-1}=d_{-1,0}=d_{-1,1}=0$ never occur \ju{so this is impossible}.

\vskip 5 pt

We shall now prove that  $\alpha_0 \in \Q(t) \setminus \Q$ is impossible. A key fact that is used several times in the proof is that, \ju{for any walks in $\walks$}, each set of steps contain elements lying on both sides of each of the lines: ${i=0, i+j=0, j=0, i-j =0}$. For example, the following condition ${d_{-1,0}=d_{-1,1} = d_{0,1} =0}$ is never realized since this condition would imply all steps lie on or below the line $i-j=0$.
We argue by contradiction. Equation \eqref{eq:alpha0} ensures that $\alpha_0$ must have either a pole
or a zero at $t=0$. Indeed, otherwise, the left hand side of \eqref{eq:alpha0} would have a pole at $t=0$ but not the right hand side. 

Let us write $\alpha_0=t^l P(t)$ with $l \in \Z^*$ and $P(t) \in \Q(t)$ without zero and pole at $t=0$.

\vskip 5 pt
\begin{center}
Step $2$: Case $P(t)$ constant.
\end{center}

 If $P(t) =c \in \Q^*$ then \eqref{eq:alpha0} becomes 
\begin{multline}\label{eq:alpha0monomial}
(d_{-1,0} -ct^{l-1} +d_{1,0}c^2t^{2l})^2=\\ 4(d_{-1,1} +d_{0,1}ct^l +d_{1,1}c^2t^{2l})(d_{-1,-1,} +d_{0,-1} ct^l +d_{1,-1}c^2t^{2l}).
\end{multline}
If $l < 0$, then, equating the coefficients of $t^{0}$ in \eqref{eq:alpha0monomial}, we find the equality ${d_{-1,0}^2=4d_{-1,1}d_{-1,-1}}$. This gives $d_{-1,0}=0$ and $d_{-1,1}d_{-1,-1}=0$. 

If $d_{-1,0}=0$ and $d_{-1,1}=0$, then $d_{-1,-1} \neq 0$. Moreover, equation \eqref{eq:alpha0monomial} simplifies as follows. 
\begin{equation}\label{eq:alpha0monomialb}
(-ct^{l-1} +d_{1,0}c^2t^{2l})^2= 4(d_{0,1}ct^l +d_{1,1}c^2t^{2l})(d_{-1,-1,} +d_{0,-1} ct^l +d_{1,-1}c^2t^{2l}).
\end{equation}
On the right hand side of \eqref{eq:alpha0monomialb}, we find the monomial $4cd_{0,1}d_{-1,-1}t^{l}$, but the non trivial monomials appearing on the left hand side have degree $2l-2$, $3l-1$ or $4l$ and none of them is equal to $l$ since $l<0$. Therefore, we must have $d_{0,1}d_{-1,-1}=0$ and, hence, $d_{0,1}=0$. \ju{However, the condition $d_{-1,0}=d_{-1,1}=d_{0,1}= 0$ is never realized  so this is impossible}. 

If $d_{-1,0}=0$ and $d_{-1,-1}=0$, then $d_{-1,1} \neq 0$. Moreover, equation \eqref{eq:alpha0monomial} simplifies as follows. 
\begin{equation}\label{eq:alpha0monomialbb}
(-ct^{l-1} +d_{1,0}c^2t^{2l})^2= 4(d_{-1,1} +d_{0,1}ct^l +d_{1,1}c^2t^{2l})(d_{0,-1} ct^l +d_{1,-1}c^2t^{2l}).
\end{equation}
On the right hand side of \eqref{eq:alpha0monomialbb}, we find the monomial $4cd_{-1,1}d_{0,-1}t^{l}$, but the non trivial monomials appearing on the left hand side have degree $2l-2$, $3l-1$ or $4l$ and none of them is equal to $l$ since $l<0$. Therefore, we must have $d_{-1,1}d_{0,-1}=0$ and, hence, $d_{0,-1}=0$. \ju{However, the condition $d_{-1,0}=d_{-1,-1}=d_{0,-1} = 0$ is never realized so this is impossible. }

If $l > 0$, then, equating the coefficients of $t^{4l}$ in \eqref{eq:alpha0monomial}, we find the equality ${d_{1,0}^2=4d_{1,1}d_{1,-1}}$. This gives $d_{1,0}=0$ and $d_{1,1}d_{1,-1}=0$. We claim that this is impossible.

We first assume that $d_{1,0}=0$ and $d_{1,-1}=0$. In every walks under consideration, we must have $d_{1,1}\neq 0$. Equation \eqref{eq:alpha0monomial} simplifies as follows:
\begin{multline}\label{eq:alpha0monomialbisbis}
(d_{-1,0} -ct^{l-1})^2=d_{-1,0}^{2} -2cd_{-1,0} t^{l-1}+c^{2}t^{2l-2}\\= 4(d_{-1,1} +d_{0,1}ct^l +d_{1,1}c^2t^{2l})(d_{-1,-1,} +d_{0,-1} ct^l).
\end{multline}
On the right side  of \eqref{eq:alpha0monomialbisbis}, we find the monomial $4c^{3}d_{1,1}d_{0,-1}t^{3l}$. But, since $l>0$, we have $3l \neq 0,l-1, 2l-2$. It follows that  $d_{1,1}d_{0,-1}=0$
and, hence, $d_{0,-1}=0$. \ju{However, the condition $d_{1,0}=d_{1,-1}=d_{0,-1}= 0$ is never realized so this is impossible.}

We now assume that $d_{1,0}=0$ and $d_{1,1}=0$. In every walks under consideration, we must have $d_{1,-1}\neq 0$. Equation \eqref{eq:alpha0monomial} simplifies as follows:
\begin{multline}\label{eq:alpha0monomialbis}
(d_{-1,0} -ct^{l-1})^2=d_{-1,0}^{2} -2cd_{-1,0} t^{l-1}+c^{2}t^{2l-2}\\ = 4(d_{-1,1} +d_{0,1}ct^l)(d_{-1,-1,} +d_{0,-1} ct^l +d_{1,-1}c^2t^{2l}).
\end{multline}
On the right side  of \eqref{eq:alpha0monomialbis}, we find the monomial $4c^{3}d_{0,1}d_{1,-1}t^{3l}$. But, since $l>0$, we have $3l \neq 0,l-1,2l-2$. It follows that  $d_{0,1}d_{1,-1}=0$ and, hence,  
$d_{0,1}=0$. \ju{However, the condition $d_{1,0}=d_{1,1}=d_{0,1}=0$ is never realized so this is impossible.}

\vskip 5 pt

So, $P(t)$ is not constant and, hence, has at least one zero or pole at some $t_{0} \in \C^{*}$. 

\begin{center}
Step $3$: Poles of $P(t)$.
\end{center}
Assume that $P(t)$ has a pole $t_{0}\in \C^{*}$ of order $\kappa \geq 1$. Multiplying both sides of \eqref{eq:alpha0} by $(t-t_{0})^{4\kappa}$ and evaluating at $t_{0}$, we find that $d_{1,0}^{2}=4d_{1,1}d_{1,-1}$, which implies that $d_{1,0}=0=d_{1,1}d_{1,-1}$. 

We first assume that $d_{1,0}=0$ and $d_{1,1}=0$. Then, \eqref{eq:alpha0} simplifies as follows: 
\begin{equation}\label{eq:alpha0pole}
(d_{-1,0} -\frac{\alpha_0}{t})^2= 4(d_{-1,1} +d_{0,1} \alpha_0 )(d_{-1,-1} +d_{0,-1} \alpha_0 +d_{1,-1}\alpha_0^2).
\end{equation}
Multiplying both sides of \eqref{eq:alpha0pole} by $(t-t_{0})^{3\kappa}$ and evaluating at $t_{0}$, we find that
the term in $\alpha_{0}^{3}$ of the right hand side of \eqref{eq:alpha0pole} must be equal to $0$, {\it i.e.}, 
  $d_{0,1}d_{1,-1}=0$. So, we have either $d_{1,0}=d_{1,1}=d_{0,1}=0$ or $d_{1,0}=d_{1,1}=d_{1,-1}=0$. \ju{However, these conditions are never realized so this is impossible.}  

We now assume that $d_{1,0}=0$ and $d_{1,-1}=0$. Then, \eqref{eq:alpha0} simplifies as follows: 
\begin{equation}\label{eq:alpha0polebis}
(d_{-1,0} -\frac{\alpha_0}{t})^2= 4(d_{-1,1} +d_{0,1} \alpha_0 +d_{1,1} \alpha_0^2)(d_{-1,-1} +d_{0,-1} \alpha_0).
\end{equation}

Multiplying both sides of \eqref{eq:alpha0polebis} by $(t-t_{0})^{3\kappa}$ and evaluating at $t_{0}$, we find that
the term in $\alpha_{0}^{3}$ of the right hand side of \eqref{eq:alpha0polebis} must be equal to $0$, {\it i.e.},  $d_{1,1}d_{0,-1}=0$. So, we have either $d_{1,0}=d_{1,-1}=d_{1,1}=0$ or $d_{1,0}=d_{1,-1}=d_{0,-1}=0$. \ju{However, these conditions are never realized so this is impossible}.  So $P(t)$ has no poles in $\C$.
\begin{center}
Step $4$: Zeros of $P(t)$.
\end{center}

Assume that $P(t)$ has a zero $t_{0}\in \C^{*}$ of order $\nu \geq 1$.  Evaluating \eqref{eq:alpha0} at $t_{0}$, we get  
$d_{-1,0}^{2}=4d_{-1,1}d_{-1,-1}$, 
which implies $d_{-1,0}=0=d_{-1,1}d_{-1,-1}$. 

We first assume that $d_{-1,0}=0$ and $d_{-1,1}=0$. Then \eqref{eq:alpha0} simplifies as follows: 
\begin{equation}\label{eq:alpha0zero}
(-\frac{\alpha_0}{t} +d_{1,0} \alpha_0^2)^2= 4(d_{0,1} \alpha_0 +d_{1,1} \alpha_0^2)(d_{-1,-1} +d_{0,-1} \alpha_0 +d_{1,-1}\alpha_0^2).
\end{equation}
Since the walk is non degenerate, we have $d_{-1,-1}\neq 0$.
Let $\alpha \neq 0$ be the value of $(t-t_{0})^{-\nu}\alpha_{0}$ at $t_{0}$. Dividing both sides of \eqref{eq:alpha0zero} by $(t-t_{0})^{\nu}$ and evaluating at $t_{0}$ we find $0=4\alpha d_{0,1}d_{-1,-1}$, which implies $d_{0,1}=0$. So, we have obtained $d_{-1,0}=d_{-1,1}=d_{0,1}=0$. \ju{However, these conditions are never realized  so this is impossible}.    

We now assume that $d_{-1,0}=0$ and $d_{-1,-1}=0$. Then \eqref{eq:alpha0} simplifies as follows: 
\begin{equation}\label{eq:alpha0one}
(-\frac{\alpha_0}{t} +d_{1,0} \alpha_0^2)^2= 4(d_{-1,1} +d_{0,1} \alpha_0 +d_{1,1} \alpha_0^2)(d_{0,-1} \alpha_0 +d_{1,-1}\alpha_0^2).
\end{equation}
Since the walk is non degenerate, we have $d_{-1,1}\neq 0$.
Let $\alpha \neq 0$ be the value of $(t-t_{0})^{-\nu}\alpha_{0}$ at $t_{0}$. Dividing both sides of {\eqref{eq:alpha0one}} by $(t-t_{0})^{\nu}$ and evaluating at $t_{0}$ we find $0=4\alpha d_{-1,1}d_{0,-1}$, which implies $d_{0,-1}=0$. So, we have obtained $d_{-1,0}=d_{-1,-1}=d_{0,-1}=0$. \ju{However, these conditions are never realized so this is impossible}.   So $P(t)$ has no zeros in $\C$. Since it is not constant and it has no poles in $\C$, we find that $\alpha_0 \in \Q(t) \setminus \Q$ is impossible. This concludes the proof \ju{of Lemma~\ref{lemma:fixedpointinvolutionnotrational}}.
\end{proof}

In the following lemma, we focus our attention on the base points ${([x_{0}:x_{1}], [y_0:y_1])}$ of $\Etproj$ corresponding to the equation $x_1y_1=0$, namely:
\begin{eqnarray*}
P_1 =([1:0], [\beta_0:\beta_1]), & P_2 =\iota_1(P_1)=([1:0], [\beta_0':\beta_1']), \\ 
Q_1=([\alpha_0:\alpha_1],[1:0]), & Q_2=\iota_2(Q_1)=([\alpha_0':\alpha_1'],[1:0]).
\end{eqnarray*}

We will use the following notations :  
$$
L_x= \Q\left(\sqrt{\Delta^x_{[1:0]}/t^2}\right) \text{ and } L_y=\Q\left(\sqrt{\Delta^y_{[1:0]}/t^2}\right). 
$$

\begin{lemma}\label{lem:basefieldbasepoints}
The points $P_1$ and $P_2$ (resp. $Q_1$ and $Q_2$)  are  
  $L_x$-points (resp. $L_y$-points) of $\Etproj$.  They
  are $\Q$-points of $\Etproj$ if and only if $\Delta^x_{[1:0]}/t^2$ (resp. $\Delta^y_{[1:0]}/t^2$)  is a square in $\Q$. \ju{Moreover,} the following properties hold true~: 
  \begin{itemize}
  \item $\Delta^x_{[1:0]}/t^2$  is a square in $\Q$ if and only if $d_{1,-1}d_{1,1}=0$; moreover :
  \begin{itemize} 
  \item $d_{1,1}=0$ if and only if there exist $i,j \in \{1,2\}$ such that ${P_i=Q_j=([1:0],[1:0])}$; \ju{this corresponds to the walks $\walk{IIB.*}$, $\walk{IID.*}$;} 
  \item $d_{1,-1}=0$ if and only if there exists $i \in \{1,2\}$ such that ${P_{i}=([1:0],[0:1])}$; \ju{this corresponds to the walks $\walk{IB.*}$, $\walk{IC.*}$, $\walk{IIC.*}$, $\walk{III}$;}
  \end{itemize}
  \item $\Delta^y_{[1:0]}/t^2$  is a square in $\Q$ if and only if $d_{-1,1}d_{1,1}=0$; moreover :
  \begin{itemize}
  \item $d_{1,1}=0$ if and only if there exist $i,j \in \{1,2\}$ such that ${P_i=Q_j=([1:0],[1:0])}$; \ju{this corresponds to the walks $\walk{IIB.*}$, $\walk{IID.*}$;}
  \item $d_{-1,1}=0$ if and only if there exists $j \in \{1,2\}$ such that ${Q_{j}=([0:1],[1:0])}$; \ju{this corresponds to the walks $\walk{IIA.*}$, $\walk{IIB.1-6}$, $\walk{IIC.*}$, $\walk{IID.1}$, $\walk{IID.2}$, $\walk{IID.5},\walk{III}$.}
  \end{itemize}
  \end{itemize}
\end{lemma}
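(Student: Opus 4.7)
The plan is a direct computation: determine the points of $\Etproj$ lying on the lines $x_1=0$ and $y_1=0$ by intersecting with the homogeneous equation $\overline{K}=0$, and then read off arithmetic properties of the resulting coordinates.

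First I would set $x_1=0$ in~\eqref{eq:kernelwalk}. All terms except those with $i=2$ vanish, and after removing the common factor $-tx_0^2$ we are reduced to the binary quadratic
\[
d_{1,1}\,y_0^{2} + d_{1,0}\,y_0 y_1 + d_{1,-1}\,y_1^{2} = 0
\]
in $[y_0:y_1]\in \P1(\C)$. Its two roots are the $y$-coordinates of $P_1$ and $P_2$, and its discriminant is precisely $d_{1,0}^{2}-4d_{1,1}d_{1,-1}=\Delta^{x}_{[1:0]}/t^{2}$. Hence both $P_1$ and $P_2$ lie in $\Etproj(L_x)$. Since the $d_{i,j}$ are rational, $P_1$ and $P_2$ are $\Q$-points of $\Etproj$ if and only if $\Delta^{x}_{[1:0]}/t^{2}$ is a square in $\Q$. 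Because $d_{i,j}\in\{0,1\}$ the integer $d_{1,0}^{2}-4d_{1,1}d_{1,-1}$ takes the values in $\{-4,-3,0,1\}$, and is a square in $\Q$ precisely when $d_{1,1}d_{1,-1}=0$.

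Next I would analyse the two degenerate cases. If $d_{1,1}=0$, the quadratic above factors as $y_1(d_{1,0}y_0+d_{1,-1}y_1)=0$, producing the root $[y_0:y_1]=[1:0]$; thus some $P_i$ equals $([1:0],[1:0])$. Conversely, $([1:0],[1:0])\in\Etproj$ if and only if $\overline{K}(1,0,1,0,t)=-td_{1,1}=0$, forcing $d_{1,1}=0$; moreover this common point lies both in $\{x_1=0\}$ and $\{y_1=0\}$, so it is simultaneously some $Q_j$. If $d_{1,-1}=0$, the quadratic factors as $y_0(d_{1,1}y_0+d_{1,0}y_1)=0$, giving the root $[y_0:y_1]=[0:1]$ and thus $P_i=([1:0],[0:1])$; conversely $([1:0],[0:1])\in\Etproj$ forces $-td_{1,-1}=0$.

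The statement for $Q_1, Q_2$ is established by the symmetric computation: setting $y_1=0$ yields the quadratic $d_{1,1}x_0^{2}+d_{0,1}x_0 x_1+d_{-1,1}x_1^{2}=0$, whose discriminant is $\Delta^{y}_{[1:0]}/t^{2}=d_{0,1}^{2}-4d_{1,1}d_{-1,1}$, a square in $\Q$ iff $d_{1,1}d_{-1,1}=0$. The case $d_{1,1}=0$ is already covered above, and the case $d_{-1,1}=0$ yields the factorization $x_0(d_{1,1}x_0+d_{0,1}x_1)=0$, so $Q_j=([0:1],[1:0])$; conversely that point lies on $\Etproj$ iff $d_{-1,1}=0$. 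The final assertions ``this corresponds to the walks $\dots$'' are a routine inspection of Figure~\ref{figure:nonsing}: one simply lists those $\mathcal{D}\in\walks$ whose diagram contains no arrow toward the relevant step $(1,1)$, $(1,-1)$, or $(-1,1)$. There is no genuine obstacle here; the only delicate point is to track carefully which of $P_i=Q_j$ arises in which degenerate case, which is why $d_{1,1}=0$ yields both a $P$ and a $Q$ at $([1:0],[1:0])$ while $d_{1,-1}=0$ (resp.\ $d_{-1,1}=0$) only produces a special $P_i$ (resp.\ $Q_j$).
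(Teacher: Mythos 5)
Your proposal is correct and follows essentially the same route as the paper: both identify the $y$-coordinates of $P_1,P_2$ as the roots of the binary quadratic $d_{1,1}y_0^2+d_{1,0}y_0y_1+d_{1,-1}y_1^2=0$ obtained by setting $x_1=0$ in $\overline{K}$, observe that its discriminant is $\Delta^x_{[1:0]}/t^2$, reduce the square condition to $d_{1,1}d_{1,-1}=0$ using $d_{i,j}\in\{0,1\}$, and read off the degenerate roots $[1:0]$ and $[0:1]$ from the vanishing of $d_{1,1}$ and $d_{1,-1}$ respectively (the paper handles the $Q_j$ case by invoking the $x\leftrightarrow y$ symmetry rather than writing it out). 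The only cosmetic difference is that you make the enumeration of possible discriminant values and the verification that $([1:0],[1:0])$ is simultaneously a $P_i$ and a $Q_j$ explicit, which the paper leaves implicit.
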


\begin{proof}
Taking into consideration the obvious symmetry in $x$ and $y$, we see that it is sufficient to prove the result for $P_{1}$ and $P_{2}$ and the first of the last two statements of the Lemma.
 
The $y$-coordinates $[\beta_0:\beta_1]$ and $[\beta'_0:\beta'_1]$ of $P_1$ and $P_2$ are the roots in $\P1(\C)$ of the homogeneous polynomial in $y_{0}$ and $y_{1}$ given by  
\begin{equation}\label{eq:equaycoords}
d_{1,-1}y_1^2 +d_{1,0}y_0y_1 +d_{1,1}y_0^2=0.
\end{equation}  
Therefore, $[\beta_0:\beta_1]$ and $[\beta'_0:\beta'_1]$ belong to $\P1(L_x)$.  Moreover, we see that they belong to $\P1(\Q)$ if and only if $\Delta^x_{[1:0]}/t^2=d_{1,0}^2-4d_{1,-1}d_{1,1}$ is a square in $\Q$. Since the $d_{i,j}$ are in $\{0,1\}$, we have that $d_{1,0}^2-4d_{1,-1}d_{1,1}$ is a square in $\Q$ if and only if $d_{1,-1}d_{1,1}=0$.

The fact that $d_{1,1}=0$ is equivalent to the fact that $[1:0]$ is a root of equation \eqref{eq:equaycoords} and this is equivalent to the fact that there exist $i,j \in \{1,2\}$ such that $P_i=Q_j=([1:0],[1:0])$. 

Similarly, the fact that $d_{1,-1}=0$ is equivalent to the fact that $[0:1]$ is a root of equation \eqref{eq:equaycoords} and this is equivalent to the fact that  $P_{1}=([1:0],[0:1])$ or $P_{2}=([1:0],[0:1])$. 
\end{proof}

\subsection{Galois action on $\Etproj$}

 Let $\Q(t) \subset L \subset \C$ be a field extension. For any $L$-point $P=([x_0:x_1],[y_0:y_1])$ of $\Etproj$, with $x_{0},x_{1},y_{0},y_{1} \in L$, and any $\sigma \in \Aut(L/\Q(t))$, we set  
 $$
 \sigma(P)=([\sigma(x_0):\sigma(x_1)],[\sigma(y_0):\sigma(y_1)]).
 $$ 
 Since $\Etproj$ is defined over $\Q(t)$,  $\sigma(P)$ is an $L$-point of $\Etproj$.
 
 \begin{prop}\label{prop:galoisactionpointsellipticcurve}
Let $\Q(t) \subset L \subset \C$ be a field extension and let $\sigma \in \Aut(L/\Q(t))$. Let $P$ be a $L$-point of $\Etproj$. Then, the following properties hold true :
\begin{itemize}
\item $\iota_1(P), \iota_2(P)$ and, hence, $\tau^{n}(P)$ for any $n \in \Z$, are $L$-points of $\Etproj$;
\item for any $k\in \{1,2\}$, $\iota_k \circ \sigma = \sigma \circ \iota_k$ on $\Etproj(L)$ and, hence, $\tau \circ \sigma=\sigma \circ \tau$ on $\Etproj(L)$. 
\end{itemize}
 \end{prop}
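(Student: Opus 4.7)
The plan is to exploit the fact that both involutions admit a simple geometric characterization---$\iota_1$ is the vertical switch and $\iota_2$ the horizontal switch of $\Etproj$, as recalled at the end of Section~\ref{subsec:group of the walk}---together with the fact that the defining polynomial $\overline{K}$ has coefficients in $\Q[t]$, which is pointwise fixed by $\sigma$.

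First I would prove the first bullet. Let $P=([x_0:x_1],[y_0:y_1]) \in \Etproj(L)$. Consider the specialization
\[
\overline{K}(x_0,x_1,Y_0,Y_1,t) \in L[Y_0,Y_1],
\]
which is homogeneous of degree $2$ in $(Y_0,Y_1)$ with coefficients in $L$ (since the $d_{i,j}$ are rational and $x_0,x_1,t\in L$). By definition of the vertical switch, its two roots in $\P1(\C)$ are $[y_0:y_1]$ and the $y$-coordinate of $\iota_1(P)$. Since one of these roots is already in $\P1(L)$, dividing out the corresponding linear factor shows (equivalently, by Vieta's formulas) that the other root also lies in $\P1(L)$. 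Hence $\iota_1(P) \in \Etproj(L)$. The argument for $\iota_2$ is identical with the roles of $(x_0,x_1)$ and $(y_0,y_1)$ exchanged, and closure under $\tau^n$ for all $n\in \Z$ then follows by composition (using that $\iota_1,\iota_2$ are involutions, so $\tau^{-1}=\iota_1\circ \iota_2$).

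For the second bullet, since $\sigma \in \Aut(L/\Q(t))$ fixes each coefficient of $\overline{K}$, applying $\sigma$ coefficient-wise to the quadratic $\overline{K}(x_0,x_1,Y_0,Y_1,t)$ yields $\overline{K}(\sigma(x_0),\sigma(x_1),Y_0,Y_1,t)$. Consequently, the image under $\sigma$ of the (unordered) pair of roots of the first polynomial is exactly the pair of roots of the second. Translating this back in terms of the vertical switch gives $\sigma(\iota_1(P)) = \iota_1(\sigma(P))$, and the symmetric computation gives $\sigma \circ \iota_2 = \iota_2 \circ \sigma$. Composing, $\tau \circ \sigma = \iota_2\circ\iota_1\circ\sigma = \iota_2\circ\sigma\circ\iota_1 = \sigma\circ\iota_2\circ\iota_1 = \sigma\circ\tau$.

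I do not foresee a substantial obstacle: the only mild subtlety is that the rational-map formulas for $\iota_1,\iota_2$ given in Section~\ref{sec:listofwalkssmallsteps} are ill-defined at points where $A_1$ or $B_1$ vanishes, but working directly with the quadratic factor of $\overline{K}$ bypasses this issue entirely and treats all points of $\Etproj$ uniformly.
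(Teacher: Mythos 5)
Your proof is correct and follows essentially the same route as the paper: the paper also works with the degree-$2$ homogeneous polynomial in $(y_0,y_1)$ obtained by fixing the $x$-coordinate (explicitly, $A(a_0,a_1)y_0^2+B(a_0,a_1)y_0y_1+C(a_0,a_1)y_1^2$, which is $-\overline{K}(a_0,a_1,y_0,y_1,t)/t$), notes its two roots are the $y$-coordinates of $P$ and $\iota_1(P)$, and deduces both bullets exactly as you do. The only detail worth making explicit is that this quadratic is not identically zero, which the paper records in passing.
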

 
 \begin{proof}
We only prove the assertions concerning $\iota_1$. The proofs for $\iota_2$ are similar and the assertions concerning $\tau$ follow from those about $\iota_{1}$ and $\iota_{2}$ since $\tau=\iota_{2} \circ \iota_{1}$.

 We set $P=([a_0:a_1],[b_0:b_1]) \in \Etproj(L)$ (with $a_{0},a_{1},b_{0},b_{1} \in L$) and ${\iota_1(P)= ([a_0:a_1],[b_0':b_1'])}$. The point $[b_0':b_1']$ is characterized by the fact that $[b_0:b_1]$ and $[b_0':b_1']$ are the roots in $\P1(\C)$ of the homogeneous polynomial in $y_{0}$ and $y_{1}$ given by 
 \begin{equation}\label{eq:iota1}
A(a_0,a_1) y_0^2 + B(a_0,a_1) y_0y_1 +  C(a_0,a_1) y_1^2
 \end{equation}
where $A(a_0,a_1) =  d_{-1,1} a_1^2 + d_{0,1} a_0a_1 + d_{1,1}a_0^2$, $B(a_0,a_1) = d_{-1,0} a_1^2 -  \frac{1}{t} a_0a_1 + d_{1,0}a_0^2$ and  $ C(a_0,a_1) = d_{-1,-1} a_1^2 + d_{0,-1} a_0a_1 + d_{1,-1}a_0^2$ (these are not all $0$). 
Since \eqref{eq:iota1} has coefficients in $L$  and $b_0, b_1 \in L$, we can assume that $b_0',b_1' \in L$ as well. Hence, $[b_0':b_1'] \in \P1(L)$ and $\iota_{1}(P) \in \Etproj(L)$, as desired. 

Moreover, $[\sigma(b_0):\sigma(b_1)]$ and $[\sigma(b_0'):\sigma(b_1')]$ are the roots in $\P1(\C)$ of 
$$
A(\sigma(a_0),\sigma(a_1)) y_0^2+B(\sigma(a_0),\sigma(a_1)) y_0y_1 + C(\sigma(a_0),\sigma(a_1)) y_1^2.
$$ 
Therefore, $\iota_{1}(\sigma(P))=([\sigma(a_0):\sigma(a_1)],[\sigma(b_0'):\sigma(b_1')])=\sigma(\iota_{1}(P))$. 
 \end{proof}

 \subsection{On the $\tau$-orbits}

 \begin{defn} 
 We define an equivalence relation $\sim$ on $\Etproj$ by 
 $$
 P \sim Q \Leftrightarrow  \exists n \in \Z,\tau^n(P)=Q.
 $$ 
 If $P \sim Q$ is not true, we shall write $P \nsim Q$.
 An equivalence class for $\sim$ will be called a $\tau$-orbit. 
 \end{defn}
 
 \begin{lemma}\label{lemma:genericorbits}
\ju{We set $M_{x}=\Q(t)\left(\sqrt{\Delta^x_{[1:0]}}\right)$ and ${M_{y}=\Q(t)\left(\sqrt{\Delta^y_{[1:0]}}\right)}$.}
The following properties hold true~:
 \begin{itemize}
 \item if $\Q(t) \subsetneq \ju{M_{x}}$ or $\Q(t) \subsetneq \ju{M_{y}}$ then, for all 
 $i,j \in \{1,2\}$, we have $P_i \nsim Q_j$;
 \item if  $\Q(t) \subsetneq \ju{M_{x}}$ (resp. $\Q(t) \subsetneq  \ju{M_{y}}$)
 then $P_i \nsim P_j$ (resp. $Q_i \nsim Q_j$) for $i \neq j$.
 \end{itemize}
  \end{lemma}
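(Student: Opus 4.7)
The plan is to exploit the Galois action on $\Etproj$ provided by Proposition~\ref{prop:galoisactionpointsellipticcurve}. Under either hypothesis, the relevant pairs of base points are Galois-conjugate over $\Q(t)$ and, since $\tau$ is defined over $\Q(t)$, the Galois action commutes with $\tau$. Throughout I use that by assumption~\ref{assumption:generalcase t and D} the QRT map $\tau$ is a translation by a point of infinite order on the elliptic curve $\Etproj$; in particular any nonzero power of $\tau$ has no fixed point on $\Etproj$.

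For part $(2)$, assume $M_x \neq \Q(t)$. First I would note that $P_1 \neq P_2$: indeed $\Delta^x_{[1:0]}$ cannot be zero (otherwise it would be a square in $\Q(t)$), so the defining polynomial of Lemma~\ref{lem:basefieldbasepoints} has two distinct roots. If $\tau^n(P_1)=P_2$, applying the nontrivial $\sigma\in\Gal(M_x/\Q(t))$, which swaps $P_1$ and $P_2$ and commutes with $\tau$, yields $\tau^n(P_2)=P_1$. Composing, $\tau^{2n}(P_1)=P_1$, which forces $2n=0$, contradicting $P_1\neq P_2$. The case $M_y\neq\Q(t)$ is symmetric.

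For part $(1)$, suppose without loss of generality $M_x\neq\Q(t)$ and that $\tau^n(P_i)=Q_j$ for some $i,j\in\{1,2\}$ and $n\in\Z$. I work in the compositum $L=M_xM_y$ (of degree at most $4$ over $\Q(t)$) and split on whether $M_y\subseteq M_x$. If $M_y\not\subseteq M_x$ (in particular if $M_y=\Q(t)$), I can pick $\sigma\in\Gal(L/M_y)$ whose restriction to $M_x$ is nontrivial; then $\sigma$ fixes $Q_j$ but swaps $P_i$ and $P_{3-i}$. Applying $\sigma$ to $\tau^n(P_i)=Q_j$ gives $\tau^n(P_{3-i})=Q_j=\tau^n(P_i)$, hence $P_i=P_{3-i}$, contradicting $P_1\neq P_2$.

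The delicate subcase is $M_y=M_x\neq\Q(t)$, where the unique nontrivial $\sigma$ swaps both pairs, so Galois conjugation only yields $\tau^n(P_{3-i})=Q_{3-j}$. Here I would combine the Galois argument with the involution identities $\iota_1\tau\iota_1=\tau^{-1}$ and $\iota_2=\tau\iota_1$ (both following from $\tau=\iota_2\iota_1$ and $\iota_1^2=\iota_2^2=\mathrm{id}$). Applying $\iota_1$ to $\tau^n(P_1)=Q_j$ gives $\tau^{-n}(P_2)=\iota_1(Q_j)$, which combined with $\tau^n(P_2)=Q_{3-j}$ yields
\[
\tau^{2n}\bigl(\iota_1(Q_j)\bigr)=Q_{3-j}.
\]
Using $\iota_1(Q_1)=\tau^{-1}(Q_2)$ and $\iota_1(Q_2)=\iota_1\iota_2(Q_1)=\tau^{-1}(Q_1)$, each case reduces to $\tau^{2n-1}(R)=R$ for some $R\in\Etproj$. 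Since $\tau$ has no nonzero fixed point, this forces $2n-1=0$, which is impossible. The main obstacle is precisely this last subcase, where Galois theory alone does not suffice and one must use the parity of $2n-1$ coming from the involutions to complete the argument.
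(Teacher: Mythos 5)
Your proof is correct and is essentially the paper's argument: Galois conjugation commuting with $\tau$ (Proposition~\ref{prop:galoisactionpointsellipticcurve}), the fact that $\tau$ is translation by a non-torsion point, and the identities coming from $\tau=\iota_2\circ\iota_1$ to manufacture an odd power of $\tau$ with a fixed point. The one place you genuinely diverge is the treatment of $M_x\neq M_y$ in part (1): the paper shows this case simply cannot occur (from $P_i=\tau^{-n}(Q_j)$ and Proposition~\ref{prop:galoisactionpointsellipticcurve} one gets $P_i\in\Etproj(M_x)\cap\Etproj(M_y)$ with $P_i\notin\Etproj(\Q(t))$, forcing $M_x\cap M_y\neq\Q(t)$ and hence $M_x=M_y$), whereas you dispose of it directly with a $\sigma\in\Gal(L/M_y)$ that fixes $Q_j$ and swaps $P_1,P_2$; both routes are fine and of comparable length. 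One bookkeeping slip to fix: your dichotomy ``$M_y\not\subseteq M_x$'' versus ``$M_y=M_x\neq\Q(t)$'' does not literally exhaust the possibilities --- the case $M_y=\Q(t)\subsetneq M_x$ falls into neither, and the parenthetical ``in particular if $M_y=\Q(t)$'' has the inclusion backwards, since $\Q(t)\subseteq M_x$ always. The correct split is $M_x=M_y$ versus $M_x\neq M_y$ (equivalently $M_x\not\subseteq M_y$, which is what the existence of your $\sigma$ actually requires); your first argument covers the latter case verbatim, so nothing of substance is missing.
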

  
 \begin{proof}
We recall that due to the assumption \ref{assumption:generalcase t and D}, $\tau$ has infinite order (so that it corresponds to a translation by a non torsion point). Let us prove the first assertion. Suppose  to the contrary that, for instance, $P_1 \sim Q_1$ and that 
 $\Q(t) \subsetneq \ju{M_{x}}$, the other cases being similar. So, there exists $n \in \Z$ such that $\tau^n(P_1)=Q_1$. The fact that $P_{1}=([1:0], [\beta_0:\beta_1])$ belongs to $\Etproj$ means that  
 \begin{equation}
 d_{1,-1} \beta_1^{2} + d_{1,0} \beta_{0} \beta_1 + d_{1,1} \beta_0^{2}=0. 
 \end{equation}
 Since $\Q(t) \subsetneq \ju{M_{x}}$, we have that $\Delta^x_{[1:0]}/t^2=d_{1,0}^2-4d_{1,1}d_{1,-1}$ is not a square in $\Q(t)$. It follows that $d_{1,1}d_{1,-1} \neq 0$ and that $P_1 \in \Etproj(\ju{M_{x}}) \setminus \Etproj(\Q(t))$. On the other hand, the fact that $Q_1=([\alpha_0:\alpha_1],[1:0])$ belongs to $\Etproj$ means that
$$
d_{-1,1} \alpha_1^{2}+d_{0,1} \alpha_0 \alpha_1+d_{1,1} \alpha_0^2=0. 
$$ 
So, $Q_{1}$ belongs to $\Etproj(\ju{M_{y}})$. Since $\tau^{-n}(Q_1)=P_1$, Proposition \ref{prop:galoisactionpointsellipticcurve} ensures that $P_1 \in \Etproj(\ju{M_{y}})$ as well. Therefore, $P_1 \in (\Etproj(\ju{M_{x}}) \setminus \Etproj(\Q(t))) \cap \Etproj(\ju{M_{y}})$. In particular, $\ju{M_{x}} \cap \ju{M_{y}}$ is not reduced to $\Q(t)$. Since $\ju{M_{x}}$ and $\ju{M_{y}}$ are fields extensions of degree at most $2$ of $\Q(t)$, we get $\ju{M_{x}}=\ju{M_{y}}$. Let ${\sigma \in \Gal(\ju{M_{x}}/\Q(t))=\Gal(\ju{M_{y}}/\Q(t))}$ be an element of order $2$. We obviously have $\sigma(P_1)=P_2$ and $\sigma(Q_1)=Q_2$. Using Proposition \ref{prop:galoisactionpointsellipticcurve}, it follows that ${\tau^n(P_2)=\tau^n(\sigma(P_1))=\sigma(\tau^n(P_1))=\sigma(Q_1)=Q_{2}}$. Therefore, ${\iota_2 \tau^n(P_2)= \iota_2(Q_2)=Q_1}$. But, we have ${\iota_2 \tau^n =\tau^{-n +1} \iota_1}$ (because $\tau=\iota_2 \iota_1$ and the $\iota_k$ are involutions). Then, we find ${\tau^{-n +1}(P_1)=\tau^{-n +1} \iota_1(P_2)=\iota_2 \tau^n(P_2)=Q_1=\tau^n(P_1)}$. 
 This gives $\tau^{-2n+1}(P_1)=P_1$. Since  $\tau$ is a translation by a non torsion point of $\Etproj$, this implies that $-2n+1=0$. This yields a contradiction because $n \in \Z$. 
 
 We shall now prove the second assertion. Assume that $\Q(t) \subsetneq \ju{M_{x}}$. In particular $\Delta^x_{[1:0]} \neq 0$ and, hence,   $P_1 \neq \iota_{1}(P_{1})=P_2$. Suppose to the contrary that $P_1 \sim P_2$, that is, that there exists $n \in \Z^*$ such that $\tau^n(P_1)=P_2$.  Let $\sigma \in \Gal(\ju{M_{x}}/\Q(t))$ be an element of order $2$. We obviously have $\sigma(P_1)=P_2$. Using Proposition \ref{prop:galoisactionpointsellipticcurve}, we get $\tau^{n}(P_{2})=\tau^n(\sigma(P_1))=\sigma(\tau^n(P_1))=\sigma(P_{2})=P_1$. Therefore, $\tau^{2n}(P_1)=P_1$. Since $\tau$ is a translation by a non torsion point of $\Etproj$, this implies that $n=0$ and hence, $P_{1}=P_{2}$. This yields a contradiction. 
 \end{proof}

\subsection{The poles of $b_{1}$ and $b_{2}$}\label{subsec:setpoles}

\begin{lemma}\label{lemma:divisorsecondmember}
The set of poles of $b_{1}= \iota_1(y)\left(\tau (x)-x\right)$ in $\Etproj$ is contained in 
$$\mathcal{S}_{1}=\{\iota_{1}(Q_1),\iota_{1}(Q_2), P_1,P_2,\tau^{-1}(P_1),\tau^{-1}(P_2)\}.$$
Similarly, the set of poles of $b_{2}=x(\iota_1(y)-y)$ in $\Etproj$ is contained in 
$$\mathcal{S}_{2}=\{P_1,P_2, Q_1,Q_2,\iota_1(Q_1),\iota_1(Q_2)\}=\{P_1,P_2,Q_1,Q_2,\tau^{-1}(Q_1),\tau^{-1}(Q_2)\}.$$
Moreover, we have
\begin{equation}\label{equ:secondmembreform}
(b_{2})^2=\frac{ x_0^2 \Delta^x_{[x_0:x_1]}}{x_1^2(\sum_{i=0}^2  x_0^i x_1^{2-i}td_{i-1,1} )^2 }.
\end{equation}  
\end{lemma}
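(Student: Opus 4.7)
The plan is to first bound the pole sets of $b_1$ and $b_2$ by tracking the poles on $\Etproj$ of the four basic rational functions $x$, $y$, $\iota_1(y)$, and $\tau(x)$, and then to derive the formula for $(b_2)^2$ by writing $y$ and $\iota_1(y)$ as the two roots of the kernel viewed as a quadratic in $Y=Y_0/Y_1$.

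For the inclusions, I would first observe that, viewed as a rational function on $\Etproj$, $x=x_0/x_1$ has poles precisely at the points where $x_1=0$, which by the very definition of $P_1, P_2$ are exactly these two points; similarly $y$ has poles only at $Q_1$ and $Q_2$. Since $\iota_1$ is an involutive automorphism of $\Etproj$, the function $\iota_1(y)=y\circ\iota_1$ has poles exactly at $\iota_1(Q_1)$ and $\iota_1(Q_2)$, and similarly $\tau(x)=x\circ\tau$ has poles at $\tau^{-1}(P_1)$ and $\tau^{-1}(P_2)$. Since the poles of a product or difference are contained in the union of the poles of the factors, this immediately yields that the poles of $b_2=x(\iota_1(y)-y)$ lie in $\{P_1,P_2\}\cup\{Q_1,Q_2\}\cup\{\iota_1(Q_1),\iota_1(Q_2)\}=\mathcal{S}_2$, and those of $b_1=\iota_1(y)(\tau(x)-x)$ lie in $\{\iota_1(Q_1),\iota_1(Q_2)\}\cup\{P_1,P_2,\tau^{-1}(P_1),\tau^{-1}(P_2)\}=\mathcal{S}_1$. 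The alternative description $\{\iota_1(Q_1),\iota_1(Q_2)\}=\{\tau^{-1}(Q_2),\tau^{-1}(Q_1)\}$ will follow from $\tau^{-1}=\iota_1\circ\iota_2$ combined with $\iota_2(Q_1)=Q_2$ (by definition) and $\iota_2(Q_2)=Q_1$ (involution).

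For the formula, the key observation is that for a point $([x_0:x_1],[y_0:y_1])\in\Etproj$, the two $y$-values $y=y_0/y_1$ and $\iota_1(y)$ are precisely the two roots of the quadratic in $Y=Y_0/Y_1$ extracted from $\overline{K}(x_0,x_1,Y_0,Y_1,t)=0$, namely
\[ tA_2\,Y^2-(x_0x_1-tA_1)Y+tA_0=0 \]
with $A_j(x_0,x_1):=\sum_{i=0}^{2} d_{i-1,j-1}x_0^{i}x_1^{2-i}$. Applying the identity $(y'-y)^2=(y+y')^2-4yy'$ to the sum $(x_0x_1-tA_1)/(tA_2)$ and product $A_0/A_2$ of these two roots gives
\[ (\iota_1(y)-y)^2=\frac{(x_0x_1-tA_1)^2-4t^2A_0A_2}{(tA_2)^2}, \]
and the numerator is then identified with $\Delta^{x}_{[x_0:x_1]}$ by direct comparison with the defining formula in Section~\ref{sec:etproj} (using $d_{0,0}=0$, so that $tA_1=td_{-1,0}x_1^2+td_{1,0}x_0^2$). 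Multiplying by $x^2=x_0^2/x_1^2$ will then yield the announced identity.

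I do not expect any serious obstacle: all three assertions reduce to elementary divisor/pole tracking on $\Etproj$, together with the straightforward bookkeeping of matching the coefficients $tA_2$, $A_0$ and $x_0x_1-tA_1$ against the explicit formula for $\Delta^x_{[x_0:x_1]}$ given earlier in the paper.
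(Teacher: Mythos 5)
Your proposal is correct and follows essentially the same route as the paper: the pole inclusions come from elementary divisor/pole tracking of $x$, $y$, $\iota_1(y)$, $\tau(x)$, and the formula for $(b_2)^2$ comes from viewing $y$ and $\iota_1(y)$ as the two roots of the quadratic $\overline{K}(x_0,x_1,\cdot,t)=0$ in the $y$-coordinate, whose squared difference is the discriminant divided by the squared leading coefficient. The paper states this more tersely, but your bookkeeping with $A_0, A_1, A_2$ and the identification of the numerator with $\Delta^x_{[x_0:x_1]}$ is exactly the computation being invoked.
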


 \begin{proof}
The proof of the assertions about the localization of the poles of $b_{1}$ and $b_{2}$ are straightforward. Let us prove  \eqref{equ:secondmembreform}. By definition, $ \iota_1 ( \frac{y_0}{y_1} )$ and $\frac{y_0}{y_1} $ are the two roots of the polynomial $y\mapsto \overline{K}(x_0,x_1,y,t)$. The square of their difference equals to the discriminant divided by the square of the leading term. Then, we have 
$$
\left(  \iota_1 ( \frac{y_0}{y_1} ) -  \frac{y_0}{y_1} \right)^2= \frac{ \Delta^x_{[x_0:x_1]}}{(\sum_i  x_0^i x_1^{2-i}td_{i-1,1} )^2 }.
$$
Therefore, we find
$$
b_{2}(\frac{x_0}{x_1},\frac{y_0}{y_1})^2=\frac{ x_0^2 \Delta^x_{[x_0:x_1]}}{x_1^2(\sum_i  x_0^i x_1^{2-i}td_{i-1,1} )^2 }.
$$
\end{proof}

\section{\ju{Hypertranscendance of generating series of the walks in $\walksg$}}\label{sec:hypertrF1F2}

\ju{The treatments of the walks in $\walksg$ are finalized in the following subsections; more precisely :
\begin{itemize}
\item for $\walk{IA.*}$, \ju{$\walk{IB.*}$}, $\walk{IC.*}$ and $\walk{IIA.*}$, see Section \ref{sec:gencase}, Theorem~\ref{theo:genericcaseunweighted2};
\item for \ju{$\walk{IIB.4}$}, \ju{$\walk{IIB.5}$}, \ju{$\walk{IIB.8}$}, \ju{$\walk{IIB.9}$}, \ju{$\walk{IIB.10}$}, see Section \ref{sec:caseIIB}, Theorem~\ref{thm:caseIIB}; 
\item for \ju{$\walk{IID.*}$}, see Section \ref{sec:caseIID}, Theorem~\ref{thm:caseIID}; 
\item for \ju{$\walk{III}$}, see Section \ref{sec:caseIII}, Theorem~\ref{thm:caseIII}; 
\item For \ju{$\walk{IIC.3}$}, see Section \ref{sec:caseIIC3}, Theorem~\ref{thm:caseIIC3}. 
\end{itemize}
}
\subsection{Generic cases}\label{sec:gencase}
The following proposition gives a diophantine criteria for the hypertranscendency of $F^{1}(x,t)$ and $F^{2}(y,t)$.

\begin{prop}\label{prop:genericcaseunweighted} 
\ju{We assume that the assumptions \ref{assumption:generalcase t and D} are satisfied.}
If $\Delta^x_{[1:0]}/t^2=d_{1,0}^2-4d_{1,-1}d_{1,1}$ or $\Delta^y_{[1:0]}/t^2=d_{0,1}^2-4d_{-1,1}d_{1,1}$ is not a square in $\Q$ then $F^{1}(x,t)$ and $F^{2}(y,t)$ are hypertranscendental over $\C(\Etproj)$.
\end{prop}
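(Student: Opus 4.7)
The plan is to combine the hypertranscendency criterion for the $b_i$ (Corollary~\ref{coro: hyperalg F}) with the pole-set description of Lemma~\ref{lemma:divisorsecondmember} and the orbit disjointness of Lemma~\ref{lemma:genericorbits}. By Proposition~\ref{prop:caract hyperalg F1 and F2}, it suffices to show that \emph{one} of $F^1,F^2$ is hypertranscendental over $\C(\Etproj)$; for this I will exhibit a pole of order $m\geq 1$ of $b_1$ or $b_2$ whose orbit under $\langle \tau\rangle$ contains no other pole of order $\geq m$. The two hypotheses on $\Delta^x_{[1:0]}/t^2$ and $\Delta^y_{[1:0]}/t^2$ look symmetric, but the possible pole sets $\mathcal{S}_1$ and $\mathcal{S}_2$ are not invariant under the symmetry $x\leftrightarrow y$, so I will handle the two cases with different choices of $b_i$.

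\smallskip

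In the first case ($\Delta^x_{[1:0]}/t^2$ not a square), I would work with $b_2$ at the point $P_1=([1:0],[\beta_0:\beta_1])$. Non-squareness forces $d_{1,1}d_{1,-1}\neq 0$ by Lemma~\ref{lem:basefieldbasepoints}, so $\beta_0,\beta_1$ are both nonzero and $P_1\neq P_2$. Then, from formula~\eqref{equ:secondmembreform}, a direct valuation computation as $x\to\infty$ shows that the numerator $x_0^2\Delta^x_{[x_0:x_1]}$ and the denominator $x_1^2(\sum_i d_{i-1,1}x_0^ix_1^{2-i}t)^2$ have degrees $6$ and $4$ in $x$ respectively (the degree of $d_{-1,1}+d_{0,1}x+d_{1,1}x^2$ is exactly $2$ because $d_{1,1}\neq 0$), which gives $v_{P_1}((b_2)^2)=-2$ and thus $v_{P_1}(b_2)=-1$. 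Lemma~\ref{lemma:genericorbits} applied with $M_x\neq\Q(t)$ then yields $P_1\nsim P_2$ and $P_1\nsim Q_j$ for $j=1,2$; combined with Lemma~\ref{lemma:divisorsecondmember}, no element of $\mathcal{S}_2\setminus\{P_1\}$ lies in the $\tau$-orbit of $P_1$, so no $\tau^k(P_1)$ with $k\neq 0$ is a pole of $b_2$. Corollary~\ref{coro: hyperalg F} with $m=1$ then gives $F^2$ hypertranscendental, and Proposition~\ref{prop:caract hyperalg F1 and F2} propagates the conclusion to $F^1$.

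\smallskip

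In the second case ($\Delta^y_{[1:0]}/t^2$ not a square), I would switch to $b_1$ at the point $\iota_1(Q_2)=\tau^{-1}(Q_1)$. By (the symmetric statement of) Lemma~\ref{lem:basefieldbasepoints}, $d_{1,1}d_{-1,1}\neq 0$, so $Q_1,Q_2$ have both coordinates nonzero and $Q_1\neq Q_2$. Since $y$ has a simple pole at $Q_2$, the function $\iota_1(y)=y\circ\iota_1$ has a simple pole at $\iota_1(Q_2)$; meanwhile $\tau(x)-x$ takes the finite value $x(Q_1)-x(Q_2)$ there, which is nonzero because $Q_1\neq Q_2$. Hence $v_{\iota_1(Q_2)}(b_1)=-1$. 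Lemma~\ref{lemma:genericorbits} (case $M_y\neq\Q(t)$) gives $Q_1\nsim Q_2$ and $Q_j\nsim P_i$, which translates into $\tau^{-1}(Q_1)\nsim R$ for every other $R\in\mathcal{S}_1$. Corollary~\ref{coro: hyperalg F} yields $F^1$ hypertranscendental, and Proposition~\ref{prop:caract hyperalg F1 and F2} finishes the argument.

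\smallskip

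The main obstacle I anticipate is precisely the asymmetry just highlighted. Inside $\mathcal{S}_2$ the $\tau$-orbit of $Q_1$ automatically contains $\tau^{-1}(Q_1)$, so one cannot hope to isolate $Q_1$ as a pole of $b_2$, even when the $Q_j$'s are defined over a nontrivial extension of $\Q(t)$; dually, $\tau^{-1}(P_1)\in\mathcal{S}_1$ prevents us from isolating $P_1$ as a pole of $b_1$. Choosing $b_2$ in Case~1 and $b_1$ in Case~2 is precisely what circumvents this. The only genuinely non-formal verification in the plan is the local valuation computation at the chosen pole; once that is done, everything else follows from the orbit analysis of Section~\ref{sec:prelim} and Corollary~\ref{coro: hyperalg F}.
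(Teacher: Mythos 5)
Your proof is correct and follows essentially the same strategy as the paper: isolate a pole of $b_1$ or $b_2$ that is alone in its $\tau$-orbit, then invoke Corollary~\ref{coro: hyperalg F} and Proposition~\ref{prop:caract hyperalg F1 and F2}. The paper treats only the case where $\Delta^x_{[1:0]}/t^2$ is not a square (showing directly that $b_2=x(\iota_1(y)-y)$ has a pole at $P_1$ because $P_1$ is a pole of $x$ and $\iota_1(y)-y$ is finite and nonzero there, since $y(P_1)\neq y(P_2)$) and dismisses the other case as ``similar''; you have done the useful extra work of spelling out that second case and correctly observing that it is not a verbatim repetition: the $\tau$-orbit of $Q_1$ automatically meets $\mathcal{S}_2$ at $\tau^{-1}(Q_1)$, so one must switch to $b_1$ and the pole $\iota_1(Q_2)=\tau^{-1}(Q_1)$, which is precisely what makes Lemma~\ref{lemma:genericorbits} applicable again. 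Your valuation argument via \eqref{equ:secondmembreform} in the first case is a bit more computational than the paper's one-line observation, but it reaches the same conclusion $v_{P_1}(b_2)=-1$, and both are fine.
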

\begin{proof}
 Assume for instance that $\Delta^x_{[1:0]}/t^2$ is not a square in $\Q$, the other case being similar. Combining Corollary \ref{coro: hyperalg F} and Proposition \ref{prop:caract hyperalg F1 and F2}, we see that it is sufficient to prove that $P_{1}$ is a pole of $b_{2}$ and that it is the only pole of $b_{2}$ of the form $\tau^{n}(P_{1})$ with $n \in \Z$. The fact that $P_{1}$ is a pole of $b_{2}$ is clear (indeed, on the one hand, $P_1$ is a pole of $x$ and, on the other hand, the $y$-coordinates of $P_1$ and $\iota_1(P_1)$ are distinct because $\Delta^x_{[1:0]} \neq 0$, and, hence, $P_{1}$ is not a zero of $\iota_{1}(y)-y$). Moreover, Lemma~\ref{lemma:genericorbits} implies that $P_1 \nsim P_2$ and $P_1 \nsim Q_i$ for $i=1,2$. The latter also implies that $P_1 \nsim \tau^{-1}(Q_i)$ for $i=1,2$. But,  Lemma \ref{lemma:divisorsecondmember} ensures that the set of poles of $b_{2}$ is included in 
$\{P_1,P_2,Q_1,Q_2,\tau^{-1}(Q_1),\tau^{-1}(Q_2)\}$. So, $P_{1}$ is the only pole of $b_{2}$ of the form $\tau^{n}(P_{1})$ with $n \in \Z$, as desired. 
\end{proof}

\begin{thm}\label{theo:genericcaseunweighted2}
For any of the walks \ju{$\walk{IA.*}$}, \ju{$\walk{IB.*}$}, \ju{$\walk{IC.*}$} or \ju{$\walk{IIA.*}$} and  for any $t \in]0, 1/|\mathcal{D}|[ \setminus \Qbar$ such that $G_t$ is infinite, the generating series 
 $F^{1}(x,t)$ and $F^{2}(y,t)$ are hypertranscendental over $\C(\Etproj)$.
\end{thm}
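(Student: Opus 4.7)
The plan is to deduce Theorem~\ref{theo:genericcaseunweighted2} directly from Proposition~\ref{prop:genericcaseunweighted}. Since the coefficients $d_{i,j}$ lie in $\{0,1\}$, the quantity
\[
d_{1,0}^{2}-4d_{1,-1}d_{1,1}
\]
belongs to the set $\{0,1,-3,-4\}$, and similarly for $d_{0,1}^{2}-4d_{-1,1}d_{1,1}$. A rational number in $\{-4,-3\}$ is not a square in $\Q$; a rational number in $\{0,1\}$ is. Thus $\Delta^{x}_{[1:0]}/t^{2}$ fails to be a square in $\Q$ exactly when $d_{1,-1}=d_{1,1}=1$, and $\Delta^{y}_{[1:0]}/t^{2}$ fails to be a square in $\Q$ exactly when $d_{-1,1}=d_{1,1}=1$. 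So, by Proposition~\ref{prop:genericcaseunweighted}, it suffices to verify, walk by walk, that at least one of the two conditions
\[
d_{1,-1}=d_{1,1}=1 \quad \text{or} \quad d_{-1,1}=d_{1,1}=1
\]
is satisfied.

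I would then perform this check by direct inspection of Figure~\ref{figure:nonsing}. For the classes $\walk{IA.*}$ all nine step sets contain both of the diagonal steps $(1,1)$ and $(1,-1)$, hence satisfy the first condition. For $\walk{IB.*}$ every step set contains $(1,1)$ and $(-1,-1)$; a quick look shows that in each case $(-1,1)$ also belongs to $\mathcal{D}$, so the second condition holds. For $\walk{IC.*}$ the three step sets all contain $(1,1)$, and one checks that in each of the three cases either $(1,-1)$ or $(-1,1)$ is also present. For $\walk{IIA.*}$ every step set contains $(1,1)$; inspection shows that in each of the seven cases $(1,-1)$ belongs to $\mathcal{D}$, so the first condition holds. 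In every single case, Proposition~\ref{prop:genericcaseunweighted} therefore applies and $F^{1}(x,t)$, $F^{2}(y,t)$ are hypertranscendental over $\C(\Etproj)$.

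The ``main obstacle'' is really just a bookkeeping nuisance: one must go through the $9+6+3+7=25$ step sets of Figure~\ref{figure:nonsing} and check the presence of the appropriate corner step $(1,\pm 1)$ or $(-1,1)$ together with $(1,1)$. There is no analytic difficulty because all the nontrivial work (the passage from hypertranscendence of $r_{x}$ to hypertranscendence of $F^{1}$, and the orbit/pole analysis) was already carried out in Sections~\ref{sec:hyper} and~\ref{sec:prelim}. The only conceptual observation worth isolating is that the common feature of the classes $\walk{IA.*}$, $\walk{IB.*}$, $\walk{IC.*}$, $\walk{IIA.*}$ is precisely the presence of the step $(1,1)$ together with one of $(1,-1)$ or $(-1,1)$, which is exactly what makes one of the two base-point discriminants irrational; the remaining walks of $\walksg$, treated in the later subsections, require the finer pole analysis of Section~\ref{sec:prelim} because both discriminants are squares.
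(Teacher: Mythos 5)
Your proposal is correct and follows essentially the same route as the paper: the paper likewise reduces to Proposition~\ref{prop:genericcaseunweighted} and then invokes Lemma~\ref{lem:basefieldbasepoints}, which already records that $\Delta^x_{[1:0]}/t^2$ (resp.\ $\Delta^y_{[1:0]}/t^2$) is a square in $\Q$ exactly when $d_{1,-1}d_{1,1}=0$ (resp.\ $d_{-1,1}d_{1,1}=0$) and that both conditions hold only for the walks $\walk{IIB.*}$, $\walk{IIC.*}$, $\walk{IID.*}$, $\walk{III}$ --- the same case check you carry out by hand. One immaterial slip: your parenthetical claim that every $\walk{IB.*}$ step set contains $(-1,-1)$ is false (e.g.\ $\walk{IB.2}$ and $\walk{IB.4}$ do not), but the fact you actually use, namely that $(1,1)$ and $(-1,1)$ are both present in every $\walk{IB.*}$ step set, is correct.
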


\begin{proof}
\ju{According to Proposition \ref{prop:genericcaseunweighted}, it is sufficient to prove that either the discriminant $\Delta^x_{[1:0]}/t^2=d_{1,0}^2-4d_{1,-1}d_{1,1}$ or $\Delta^y_{[1:0]}/t^2=d_{0,1}^2-4d_{-1,1}d_{1,1}$ is not a square in $\Q$. This is true because, according to Lemma \ref{lem:basefieldbasepoints},  they are both squares in $\Q$ if and only if the walk we consider is among $\walk{IIB.*},\walk{IIC.*},\walk{IID.*},\walk{III}$.}
\end{proof}

\subsection{Non generic Cases}\label{sec:the non gen cases}

In this subsection, we shall focus our attention on the walks \ju{in $\walksg$} \ju{which are not covered by Theorem~\ref{theo:genericcaseunweighted2}, {\it i.e.}, on the walks in \ju{$\walksg$} among \ju{$\walk{IIB.*}$}, \ju{$\walk{IIC.*}$}, \ju{$\walk{IID.*}$} and \ju{$\walk{III}$}}. \ju{This gives 16 walks, namely   
{\mfs $\walk{IIB.4}$}, 
$\walk{IIB.5}$,
  $\walk{IIB.8}$,
  $\walk{IIB.9}$,
  $\walk{IIB.10}$,
  $\walk{IIC.3}$,
 $\walk{IID.1}$, 
 $\walk{IID.2}$,
  $\walk{IID.3}$,  
  $\walk{IID.4}$,  
  $\walk{IID.5}$,
  $\walk{IID.6}$,
  $\walk{IID.7}$,
  $\walk{IID.8}$,
  $\walk{IID.9}$,
   $\walk{III}$.}  
   In each of these cases, we will show that the corresponding generating series is hypertranscendental.

\ju{For the convenience of the reader, we have included in Figure~\ref{figure:exbis} a table of the the values of the $d_{i,j}$ for the various walks considered in this section.}
\begin{figure}
$$
\ju{\begin{tabular}{|c|c|}
  \hline
Walks & The nonzero $d_{i,j}$ \\
  \hline
 {\mfs $\walk{IIB.4}$} & {\mfs $d_{1,0}=d_{0,1}=d_{-1,-1}=d_{0,-1}=d_{-1,0}=1$ } \\
   \hline
 $\walk{IIB.5}$ & $d_{1,0}=d_{0,1}=d_{-1,0}=d_{-1,-1}=d_{0,-1}=1$ \\
 \hline
  $\walk{IIB.8}$ & $d_{1,0}=d_{0,1}=d_{-1,1}=d_{-1,-1}=d_{1,-1}=1$  \\
   \hline
  $\walk{IIB.9}$ & $d_{1,0}=d_{0,1}=d_{-1,1}=d_{-1,0}=d_{-1,-1}=d_{1,-1}=1$   \\
   \hline
  $\walk{IIB.10}$ & $d_{1,0}=d_{0,1}=d_{-1,1}=d_{-1,0}=d_{-1,-1}=d_{0,-1}=d_{1,-1}=1$ \\
   \hline
  $\walk{IIC.3}$ & $d_{1,0}=d_{1,1}=d_{0,1}=d_{-1,-1}=1$   \\
   \hline
 $\walk{IID.1}$ & $d_{0,1}=d_{-1,0}=d_{0,-1}=d_{1,-1}=1$ \\
  \hline
 $\walk{IID.2}$ & $d_{0,1}=d_{-1,0}=d_{-1,-1}=d_{1,-1}=1$ \\
  \hline
  $\walk{IID.3}$ & $d_{0,1}=d_{-1,1}=d_{-1,-1}=d_{1,-1}=1$  \\
   \hline
  $\walk{IID.4}$ & $d_{0,1}=d_{-1,1}=d_{-1,0}=d_{1,-1}=1$ \\
   \hline
  $\walk{IID.5}$ & $d_{0,1}=d_{-1,0}=d_{-1,-1}=d_{0,-1}=d_{1,-1}=1$ \\
   \hline
  $\walk{IID.6}$ &  $d_{0,1}=d_{-1,1}=d_{-1,-1}=d_{0,-1}=d_{1,-1}=1$ \\
   \hline
  $\walk{IID.7}$ & $d_{0,1}=d_{-1,1}=d_{-1,0}=d_{0,-1}=d_{1,-1}=1$ \\
   \hline
  $\walk{IID.8}$ & $d_{0,1}=d_{-1,1}=d_{-1,0}=d_{-1,-1}=d_{1,-1}=1$  \\
   \hline
  $\walk{IID.9}$ & $d_{0,1}=d_{-1,1}=d_{-1,0}=d_{-1,-1}=d_{0,-1}=d_{1,-1}=1$  \\
   \hline
   $\walk{III}$ & $d_{1,1}=d_{-1,0}=d_{-1,-1}=d_{0,-1}=1$  \\
  \hline
\end{tabular}}
$$
\caption{The nonzero $d_{i,j}$ for the walks considered in Section~\ref{sec:the non gen cases}}
\label{figure:exbis}
\end{figure}


\subsubsection{The walks \ju{$\walk{IIB.*}$} }\label{sec:caseIIB}

In that situation, we have 
$$
d_{1,1}=0, \ d_{1,0}\neq 0 \text{ and } d_{0,1}\neq 0.
$$ 
The following properties hold :
\begin{itemize}
\item according to Lemma \ref{lem:basefieldbasepoints}, there exist $i,j \in \{1,2\}$ such that $P_{i}=Q_{j}$; up to renumbering, we can and will assume that $P_{1}=Q_{1}=([1:0],[1:0])$; 
\item $P_{1} \neq P_{2}=\iota_{1}(P_{1})$ because $\Delta^x_{[1:0]}=d_{1,0}^{2}-4d_{1,1}d_{1,-1}=d_{1,0}^{2} \neq 0$; 
\item $P_{1}=Q_{1}=([1:0],[1:0]) \neq Q_{2} = \iota_{2}(Q_{1})$ in virtue of Lemma \ref{lem:fixedbasepointsinvolution} (or simply because $\Delta^y_{[1:0]} \neq 0$);
\item  $Q_{1} \neq \iota_{1}(Q_{2})$ because $Q_{1} \neq Q_{2}$ so $Q_{1}$ and $Q_{2}$ do not have the same $x$-coordinates.
\end{itemize}

In particular, we see that 

\begin{itemize}
\item $P_{1}=Q_{1}=([1:0],[1:0])$;
\item $\iota_1(P_1)=P_2=([1:0],[\beta_1':\beta_2'])$ with $[\beta_1':\beta_2'] \neq [1:0]$; 
\item $\iota_1(P_2)=P_1=([1:0],[1:0])$; 
\item $Q_{2}=([\alpha'_0:\alpha'_1],[1:0])$ with $[\alpha'_0:\alpha'_1] \neq [1:0]$; 
\item $\iota_1(Q_{2})\neq P_{1},P_{2}$. 
\end{itemize}

\begin{thm}\label{thm:caseIIB} 
For any of the walks   \ju{$\walk{IIB.4}$}, \ju{$\walk{IIB.5}$}, \ju{$\walk{IIB.8}$}, \ju{$\walk{IIB.9}$}, \ju{$\walk{IIB.10}$}
and any ${t \in ] 0,1 /\vert \mathcal{D} \vert [ \ \setminus \Qbar}$ such that $G_t$ is infinite, $F^1(x,t)$ and $F^2(y,t)$ are hypertranscendental.
\end{thm}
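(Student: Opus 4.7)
The plan is to apply Corollary~\ref{coro: hyperalg F} to $b_2 = x(\iota_1(y) - y)$, producing a pole whose order strictly exceeds the pole orders of $b_2$ at all other points of its $\tau$-orbit; hypertranscendence of $F^2$ will follow, and that of $F^1$ by Proposition~\ref{prop:caract hyperalg F1 and F2}.

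First I would determine the pole structure of $b_2$ on $\Etproj$. By Lemma~\ref{lemma:divisorsecondmember}, using $P_1=Q_1$, $\iota_1(Q_1)=P_2$, $\tau^{-1}(Q_1)=\iota_1 \iota_2(Q_1)=\iota_1(Q_2)$ and $\tau^{-1}(Q_2)=\iota_1(P_1)=P_2$, the possible poles of $b_2$ reduce to $\{P_1, P_2, Q_2, \iota_1(Q_2)\}$. A local analysis at each point, combined with the fact that $\iota_1$ is a biregular involution of $\Etproj$ (so that poles of $y$ at a point $P$ correspond bijectively to poles of $\iota_1(y)$ at $\iota_1(P)$ of the same order), yields: at $P_1 = Q_1$ both $x$ and $y$ have simple poles while $\iota_1(y)$ is regular (since $\iota_1(P_1)=P_2$ is not among $Q_1, Q_2$), so $b_2$ has a pole of order exactly $2$; at $P_2$ both $x$ and $\iota_1(y)$ have simple poles while $y$ is regular, so $b_2$ again has a pole of order $2$; and at each of $Q_2$ and $\iota_1(Q_2)$, the function $b_2$ has a pole of order at most $1$.

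Next I would organize these candidate poles into $\tau$-orbits. The identities $\tau(P_2) = \iota_2(P_1) = \iota_2(Q_1) = Q_2$ and $\tau^{-1}(P_1) = \iota_1 \iota_2(P_1) = \iota_1(Q_2)$ show that $P_2 \sim Q_2$ and $P_1 \sim \iota_1(Q_2)$. Provided that $P_1 \nsim P_2$, the $\tau$-orbit of $P_1$ meets the candidate set in at most $\{P_1, \iota_1(Q_2)\}$, with a pole of order $2$ at $P_1$ and of order at most $1$ at $\iota_1(Q_2)$. Corollary~\ref{coro: hyperalg F} then concludes.

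The main obstacle is therefore to establish $P_1 \nsim P_2$. In the present situation Lemma~\ref{lem:basefieldbasepoints} shows that $\Delta^x_{[1:0]}/t^2 = d_{1,0}^2$ and $\Delta^y_{[1:0]}/t^2 = d_{0,1}^2$ are both squares in $\Q$, so $P_1, P_2, Q_1, Q_2$ are all $\Q$-rational and the Galois-theoretic argument of Lemma~\ref{lemma:genericorbits} is unavailable. I would instead translate the question into the group law on $\Etproj$: writing $\iota_1$ in the additive form $P \mapsto C_1 - P$ for a suitable point $C_1$ and $\tau$ as translation by a point $\eta$ of infinite order, the relation $\tau^n(P_1) = P_2$ becomes the identity $n\eta = C_1 - 2 P_1$ in the group $\Etproj(\C)$; using explicit expressions for $P_1$, $C_1$ and $\eta$ together with the transcendence of $t$, such a relation can be excluded by a case-by-case check for each of the five walks. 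As a fallback in any residual case, one invokes the refined orbit-residue criterion (Corollary~\ref{cor1} in the appendix) applied to $b_2$: even were $P_1 \sim P_2$, the residues of $b_2$ at $P_1$ and $P_2$ can be computed and shown to have a nonzero orbit sum, ruling out any relation of the form \eqref{eq:eqbiettau} and hence yielding hypertranscendence.
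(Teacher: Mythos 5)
Your reduction of the theorem to the single claim $P_1 \nsim P_2$ matches the paper's strategy, and your local analysis of the pole orders of $b_2$ at $P_1,P_2,Q_2,\iota_1(Q_2)$ is correct (in fact slightly sharper than what the paper uses, since it pins $Q_2$ and $\iota_1(Q_2)$ to order at most~$1$ rather than arguing via a count of total pole multiplicity). But the crucial step --- actually proving $P_1 \nsim P_2$ --- is left as a hand-wave, and neither of your two suggested routes closes the gap.

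Your primary suggestion, to write $\iota_1$ as $P\mapsto C_1 \ominus P$ and $\tau$ as translation by $\eta$ and then ``exclude by a case-by-case check'' a relation $n\eta = C_1 \ominus 2P_1$ for all $n\in\Z$, is exactly the kind of problem the paper flags as nontrivial (see the footnote citing \cite{MA88}): $\eta$ depends transcendentally on $t$, and ruling out $n\eta$ hitting a prescribed point for \emph{all} integers $n$ is not a finite explicit computation. The paper avoids this by an arithmetic trick: from $\tau^n(P_1)=P_2=\iota_1(P_1)$, the parity of $n$ produces a $\Q(t)$-rational point fixed by $\iota_1$ (when $n=2k$, the point $\tau^k(P_1)$) or by $\iota_2$ (when $n=2k+1$, the point $\tau^k(\iota_2(P_2))$); Lemma~\ref{lem:fixedbasepointsinvolution} and Lemma~\ref{lemma:fixedpointinvolutionnotrational} then show these walks have no such rational fixed points, a contradiction. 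Your proposal contains no substitute for this argument.

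Your fallback is actually wrong. Since $\iota_1(b_2)=-b_2$ and $P_2=\iota_1(P_1)$, Lemma~\ref{lem:horesidues} forces the order-$2$ principal-part coefficients to satisfy $c_{P_2,2} = -c_{P_1,2}$; so if $P_1\sim P_2$ and these were the only order-$2$ poles, the orbit residue $\ores_{P_1,2}(b_2)$ would \emph{vanish} rather than be nonzero, and Corollary~\ref{cor1} (which is about pole orders, not residues, in any case) would not apply. You therefore cannot dodge $P_1\nsim P_2$ by an orbit-residue count, and the missing step is genuinely the heart of the theorem.
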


\begin{proof}
\ju{Using Lemma \ref{lemma:divisorsecondmember}, we see that the set of poles of $b_2$ is included in $\{P_1,P_2, Q_1,Q_2,\iota_1(Q_1),\iota_1(Q_2)\}=\{P_1,P_2,Q_2,\iota_1(Q_2)\}$ and that: }

\begin{itemize}
\item $P_{1}$ is a pole of order $2$ of  $b_2$ because
\begin{itemize}
\item $P_{1}$ is a pole of order $1$ of $x_{0}/x_{1}$;
\item $P_{1}$ is a pole of order $1$ of $y_{0}/y_{1}$;
\item $P_{1}$ is not a pole of $\iota_{1}(y_{0}/y_{1})$. 
\end{itemize}
\item $P_{2}$ is a pole of order $2$ of  $b_2$ because
\begin{itemize}
\item $P_{2}$ is a pole of order $1$ of $x_{0}/x_{1}$;
\item $P_{2}$ is not a pole of $y_{0}/y_{1}$;
\item $P_{2}$ is a pole of order $1$ of $\iota_{1}(y_{0}/y_{1})$. 
\end{itemize}
\end{itemize}
There are at least two double poles. Since $x_{0}/x_{1}$, $y_{0}/y_{1}$ and $\iota_{1}(y_{0}/y_{1})$ have at most two poles counted with multiplicities, we find that $b_{2}=x_{0}/x_{1}(\iota_{1}(y_{0}/y_{1})- y_{0}/y_{1})$ has at most $6$ poles, counted with multiplicities. So there are at most $3$ double poles (in fact $P_{1}$ and $P_{2}$ are the only double poles in this situation but this fact will not be used). \par 

If $P_{1}$ and $P_{2}$ are the only double poles, combining Corollary \ref{coro: hyperalg F} and Proposition~\ref{prop:caract hyperalg F1 and F2}, we see that, in order to conclude, it is sufficient to show that 
$P_1 \nsim P_2$. Assume that there exists a third double pole $P_{3}$ and that $P_1 \not \sim P_2$. Then there should exists $j\in \{1,2\}$ such that 
$P_{3}\not \sim P_{j}$. Combining Corollary \ref{coro: hyperalg F} and Proposition \ref{prop:caract hyperalg F1 and F2}, we see that we have the conclusion in this case. So it is sufficient to prove that $P_{1}\not\sim P_{2}$.

Suppose to the contrary that $P_1 \sim P_2$, {\it i.e.}, that there exists $n \in \Z$ such that $\tau^n(P_1)=P_2$. 

On the one hand, since $P_1 \in \Etproj(\Q) \subset \Etproj(\Q(t))$, Proposition \ref{prop:galoisactionpointsellipticcurve} implies that $\tau^j(P_1) \in \Etproj(\Q(t))$ for any $j \in \Z$. Moreover, Proposition \ref{prop:galoisactionpointsellipticcurve} implies that ${P_2 = \iota_1(P_1) \in \Etproj(\Q(t))}$ is such that, for any $j \in \Z$,  $\tau^j(\iota_2(P_2)) \in \Etproj(\Q(t))$.

On the other hand, it is easily seen that the equality $\tau^n(P_1)=P_2$, together with the fact that $\tau=\iota_2 \circ \iota_1$ is the composition of two involutions, imply, 
\begin{itemize}
\item if $n=2k$, then $\tau^{k}(P_{1})=\tau^{-k}(P_{2})$ and thus $$\iota_{1}\tau^{k}(P_{1})=\iota_{1}\tau^{-k}(P_{2})=\iota_{1}\tau^{-k}\iota_{1}(P_{1})=\tau^k(P_1).$$
\item if $n=2k+1$, then $\tau^{k+1}(P_{1})=\tau^{-k}(P_{2})$ and thus $$\tau^{k}\iota_{2}(P_{2})=\tau^{k+1}(P_{1})=\tau^{-k}(P_{2})=\iota_{2}\tau^{k}\iota_{2} P_{2}.$$
\end{itemize}
 For $n=2k$, we get that $\tau^k(P_1)$ is fixed by the involution $\iota_1$. 
 Lemma \ref{lem:fixedbasepointsinvolution} ensures that for the walks under considerations none of the base points of $\Etproj$ is fixed by $\iota_1$. Therefore, $\tau^k(P_1)$ is not a base point. By Lemma \ref{lemma:fixedpointinvolutionnotrational}, we conclude that $\tau^k(P_1) \notin \Etproj(\Q(t))$. This yields a contradiction. For $n=2k+1$, we get that $\tau^k(\iota_2(P_2))$ is fixed by the involution $\iota_2$. Lemma \ref{lem:fixedbasepointsinvolution} ensures that for the walks under consideration none of the base points of $\Etproj$ is fixed by $\iota_2$. Therefore, $\tau^k(\iota_2(P_2))$ is not a base point. By Lemma~\ref{lemma:fixedpointinvolutionnotrational}, we conclude that $\tau^k(\iota_2(P_2)) \notin \Etproj(\Q(t))$. This yields a contradiction.
\end{proof}

\subsubsection{The walks \ju{$\walk{IID.*}$}}\label{sec:caseIID}

In that situation, we have 
$$
d_{1,1}=0, \ d_{1,0}=0 \text{ and } d_{0,1}\neq 0.
$$
In particular, $\Delta^x_{[1:0]}=0$ and, hence, $P_{1}=P_{2}$.  Moreover, Lemma \ref{lem:basefieldbasepoints} ensures that $P_{i}=Q_{j}=([1:0],[1:0])$ for some $i,j \in \{1,2\}$. Up to renumbering, we can and will assume that $P_{1}=Q_{1}$. Since $\Delta^y_{[1:0]}=d_{0,1}^{2} \neq 0$, we have $P_{1}=Q_{1} \neq Q_{2}$.  Since $P_1=\iota_1(P_1)$, we also have $P_1=Q_{1} \neq \iota_1(Q_2)$. Last, using Lemma \ref{lem:fixedbasepointsinvolution}, we see that $Q_{2} \neq \iota_{1}(Q_{2})$.

\begin{thm}\label{thm:caseIID}
For any of the walks \ju{$\walk{IID.*}$} and any 
 $t \in \,]0,1/\vert \mathcal{D} \vert[ \,\setminus \Qbar$  such that $G_t$ is infinite, $F^1(x,t)$ and $F^2(y,t)$ are hypertranscendental.
\end{thm}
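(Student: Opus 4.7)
The plan is to apply Corollary~\ref{coro: hyperalg F} to $F^2$ by exhibiting a pole of $b_2$ of order strictly greater than every other pole in its $\tau$-orbit; then Proposition~\ref{prop:caract hyperalg F1 and F2} yields the same conclusion for $F^1$. The point $P_1=Q_1=([1:0],[1:0])$ will be the witness.

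First I would describe the pole set of $b_2=x(\iota_1(y)-y)$. By Lemma~\ref{lemma:divisorsecondmember} the poles lie in $\mathcal{S}_2=\{P_1,P_2,Q_1,Q_2,\iota_1(Q_1),\iota_1(Q_2)\}$. In the IID situation, $\Delta^x_{[1:0]}=0$ forces $P_1=P_2$, and our normalization gives $P_1=Q_1$; since $\iota_1$ fixes $P_1$ we have $\iota_1(Q_1)=P_1$ as well. Thus $\mathcal{S}_2$ reduces to the three distinct points $P_1,\ Q_2,\ \iota_1(Q_2)$ (distinctness of these three follows from the bullet points listed just before the statement of the theorem).

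Next I would compute the orders of $b_2$ at these points. The geometric key is that $P_1=P_2$ means the $x$-projection $\Etproj\to\P^1$ is ramified at $P_1$, so $x$ has a pole of order exactly $2$ there and $\iota_1$ acts as $-1$ on the cotangent space at $P_1$; whereas $Q_1\ne Q_2$ means the $y$-projection is unramified at $P_1=Q_1$, so $y$ has a simple pole at $P_1$. Choosing a local parameter $u$ at $P_1$ with $\iota_1(u)=-u+O(u^2)$ and writing $y=a/u+O(1)$ with $a\neq 0$, we obtain
\begin{equation*}
\iota_1(y)-y=-\frac{2a}{u}+O(1),
\end{equation*}
so $\iota_1(y)-y$ has a simple pole and $b_2=x(\iota_1(y)-y)$ has a pole of order exactly $3$ at $P_1$. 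At $Q_2$, the function $x$ is regular (its only pole $P_1$ is distinct from $Q_2$), $y$ has a simple pole, and $\iota_1(y)$ is regular because $\iota_1(Q_2)\notin\{Q_1,Q_2\}$ (from the preamble); hence $b_2$ has a simple pole at $Q_2$. The symmetric argument gives a simple pole at $\iota_1(Q_2)$.

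Finally I would verify the hypothesis of Corollary~\ref{coro: hyperalg F} with $P=P_1$, $m=3$. Since $P_1$ is the unique pole of $b_2$ of order $\ge 3$, it suffices to show $\tau^k(P_1)\neq P_1$ for every $k\in\Z\setminus\{0\}$. Under Assumption~\ref{assumption:generalcase t and D}, the group $G_t$ is infinite, which is equivalent to $\tau$ having infinite order, and by \cite[Proposition~2.5.2]{DuistQRT} (invoked in Section~\ref{subsec:group of the walk}), $\tau$ is translation by a non-torsion point of the elliptic curve $\Etproj$; therefore no point is fixed by any nontrivial power of $\tau$. The corollary then gives hypertranscendence of $F^2$ over $\C(\Etproj)$, and Proposition~\ref{prop:caract hyperalg F1 and F2} transfers this to $F^1$. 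There is no genuine obstacle: in contrast with the IIB case, no Galois-theoretic argument is required, precisely because the ramification of the $x$-projection at $P_1$ forces $b_2$ to have a pole of order $3$ there, creating a gap with the orders of the remaining poles.
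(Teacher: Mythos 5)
Your proof is correct and follows the same overall strategy as the paper: both identify the polar divisor of $b_2$ as $3[P_1]+i[Q_2]+j[\iota_1(Q_2)]$ with $i,j\le 2$, and then invoke Corollary~\ref{coro: hyperalg F} together with Proposition~\ref{prop:caract hyperalg F1 and F2}. The only real divergence is in how the order-$3$ pole at $P_1$ is established. The paper computes it globally from formula~\eqref{equ:secondmembreform}, using that $\Delta^x_{[x_0:x_1]}$ has a \emph{simple} zero at $[1:0]$ because $\Etproj$ is smooth (Proposition~\ref{prop:genuscurvewalk}) and that $d_{0,1}\neq 0$; you instead perform a local expansion at $P_1$, using that the $x$-projection is ramified there (so $v_{P_1}(x)=-2$), that $y$ has a simple pole since $Q_1\neq Q_2$, and that the nontrivial involution $\iota_1$ of a smooth curve must act by $-1$ on the cotangent space at its fixed point $P_1$, giving $\iota_1(y)-y=-2a/u+O(1)$ with $a\neq 0$. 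These are essentially equivalent facts -- the simple zero of the discriminant is the algebraic manifestation of the ramification you exploit -- but your route is more directly geometric and avoids the explicit $\Delta^x$ formula. One very small imprecision: at $Q_2$ (and $\iota_1(Q_2)$) you assert a simple pole, but a priori $x$ could vanish there and make $b_2$ regular; this costs nothing since all you need is that the order there is $<3$.
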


\begin{proof}
We recall the formula \eqref{equ:secondmembreform}
$$
(b_{2})^2=\frac{ x_0^2 \Delta^x_{[x_0:x_1]}}{x_1^2(\sum_i  x_0^i x_1^{2-i}td_{i-1,1} )^2 }=\frac{x_0^2}{x_1^4}\frac{\Delta^x_{[x_0:x_1]}}{t^2(x_1d_{-1,1}+ x_0 d_{0,1})^2}. 
$$
Since the curve $\Etproj$ is nonsingular, the point $P_1$ is a simple zero of $\Delta^x_{[x_0:x_1]}$ seen as a rational function on $\P1(\C)$ (cf Proposition \ref{prop:genuscurvewalk} and Corollary \ref{cor:genuscurvewalk}).
Using additionally $d_{0,1}\neq 1$, it is easily seen that the polar divisor of $b_{2}$ is of the form $$3[P_{1}]+i[Q_{2}]+j[\iota_{1}(Q_{2})],$$ for some $i, j \in \{0,1,2\}$. 
Thus, $P_{1}$ is the only pole of $b_{2}$ of order $\geq 3$. The result is now a consequence of Corollary \ref{coro: hyperalg F} and Proposition~\ref{prop:caract hyperalg F1 and F2}. 
\end{proof}
 
 \subsubsection{The walk \ju{$\walk{III}$}}\label{sec:caseIII}
 
 This walk is symmetric in the sense of the following definition. 

\begin{defi}\label{def:symmwalks}
We say that a walk is symmetric if $d_{i,j}=d_{j,i}$ for all $i,j \in \{0,\pm 1\}$. 
\end{defi}

Note that the walk is symmetric if and only if  
$$
\overline{K}(x_{0},x_{1},y_{0},y_{1},t)=\overline{K}(y_{0},y_{1},x_{0},x_{1},t). 
$$
Therefore, the involutive morphism $s$ of $\P1 \times \P1$ defined by 
$$
s(x,y)=(y,x)
$$
induces an involutive morphism of $\Etproj$ in the symmetric case, still denoted by $s$. Note that   
\begin{equation}\label{eq:siotaiotas}
s \circ \iota_{1}=\iota_{2} \circ s. 
\end{equation}
Indeed, on the one hand, for any point $P=(x,y) \in \Etproj$, we have that
$
{\{P,\iota_1(P)\} = \Etproj \cap (\{x\} \times \P1(\C))}.
$
and, hence, ${\{s(P),s(\iota_1(P))\} = \Etproj \cap (\P1(\C) \times \{x\})}$. On the other hand, we find
${\{s(P),\iota_2(s(P))\} = \Etproj \cap (\P1(\C) \times \{x\})}$. Whence the desired equality $s(\iota_1(P))=\iota_2(s(P))$. 

Similarly, we have 
\begin{equation}\label{eq:iotassiota}
 \iota_{1} \circ s = s  \circ \iota_{2}. 
\end{equation}
It follows that 
\begin{equation}\label{eq:tauiotaiotatau}
s \circ \tau=\tau^{-1} \circ s. 
\end{equation}

\begin{lemma}\label{lem:fixedsandsim}
Assume that the walk under consideration is symmetric. Consider  $R_{1},R_{2}\in \Etproj(\Q(t))$ such that $s(R_{1})=R_{2}$. If $R_{1}\sim R_{2}$ then there exists ${R_{3}\in \Etproj(\Q(t))}$ such that $s(R_{3})=R_{3}$. 
\end{lemma}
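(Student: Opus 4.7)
The plan is to unpack the hypothesis $R_1 \sim R_2$ as the existence of some $n \in \Z$ with $\tau^n(R_1) = R_2 = s(R_1)$, and then produce an explicit candidate $R_3$ built from $R_1$ via iterates of $\tau$ and $\iota_1$. The key algebraic input is the braid relation \eqref{eq:tauiotaiotatau}, namely $s \circ \tau = \tau^{-1} \circ s$, which will allow us to ``push'' $s$ through powers of $\tau$ and flip the exponent sign. Once a candidate is written down, checking that it is fixed by $s$ will be a direct computation using this relation, and checking that it is a $\Q(t)$-point will follow from Proposition~\ref{prop:galoisactionpointsellipticcurve} together with the fact that $s$ is defined over $\Q$.

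The natural first guess is $R_3 = \tau^k(R_1)$. A quick computation using $s\tau^k = \tau^{-k}s$ gives $s(R_3) = \tau^{-k}(s(R_1)) = \tau^{-k}\tau^{n}(R_1) = \tau^{n-2k}(R_1)$, so $s(R_3)=R_3$ holds iff $\tau^{n-2k}(R_1)=R_1$. Since $\tau$ is a translation by a point of infinite order on $\Etproj$ (assumption~\ref{assumption:generalcase t and D}), this forces $n = 2k$. Thus this candidate works exactly when $n$ is even: if $n = 2k$, take $R_3 := \tau^k(R_1)$, which is in $\Etproj(\Q(t))$ by Proposition~\ref{prop:galoisactionpointsellipticcurve}.

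For the odd case $n = 2k+1$, I expect this to be the main (mild) obstacle, since $\tau^k(R_1)$ no longer works. The fix is to twist by $\iota_1$ and set $R_3 := \iota_1(\tau^k(R_1))$. Using \eqref{eq:siotaiotas} in the form $s \circ \iota_1 = \iota_2 \circ s$, then \eqref{eq:tauiotaiotatau}, and finally $\iota_1\iota_2 = \tau^{-1}$, one computes
\[
s(R_3) \;=\; \iota_2 s\,\tau^k(R_1) \;=\; \iota_2 \tau^{-k} s(R_1) \;=\; \iota_2 \tau^{-k}\tau^{2k+1}(R_1) \;=\; \iota_2 \tau^{k+1}(R_1),
\]
and one checks $\iota_2 \tau^{k+1} = \iota_2 \tau \cdot \tau^k = \iota_1 \cdot \tau^k$ (since $\iota_2 \tau = \iota_2 \iota_2 \iota_1 = \iota_1$), so $s(R_3) = \iota_1(\tau^k(R_1)) = R_3$. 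Again Proposition~\ref{prop:galoisactionpointsellipticcurve} ensures $R_3 \in \Etproj(\Q(t))$, concluding the proof in both parities of $n$.
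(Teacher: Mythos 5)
Your proof is correct, and it rests on exactly the same algebraic relations as the paper's, namely $s\circ\tau=\tau^{-1}\circ s$, $s\circ\iota_{1}=\iota_{2}\circ s$ and Proposition~\ref{prop:galoisactionpointsellipticcurve}; the difference is purely in the packaging. The paper argues by induction on $\ell\geq 0$ in $\tau^{\ell}(R_{1})=R_{2}$, replacing $R_{1}$ by $\iota_{1}(R_{1})$ at each step to drop the exponent by one, so the fixed point of $s$ is produced only implicitly at the end of the descent. You instead unroll that descent into a single parity case-split and exhibit the fixed point in closed form: $\tau^{n/2}(R_{1})$ when $n$ is even and $\iota_{1}(\tau^{(n-1)/2}(R_{1}))$ when $n$ is odd. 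This is the same style of argument the paper itself uses for the analogous statements about $\iota_{1}$ and $\iota_{2}$ (e.g.\ in the proofs of Theorem~\ref{thm:caseIIB} and Lemma~\ref{lemma:iota1andsim}), and it is arguably more transparent; the paper's induction buys nothing extra here beyond avoiding the explicit formula. One small slip: in your even-case chain you write $\tau^{-k}\tau^{n}(R_{1})=\tau^{n-2k}(R_{1})$, whereas $s(R_{3})=\tau^{-k}(s(R_{1}))=\tau^{n-k}(R_{1})$; the stated conclusion that $s(R_{3})=R_{3}$ iff $\tau^{n-2k}(R_{1})=R_{1}$ is nevertheless correct, since one compares $\tau^{n-k}(R_{1})$ with $R_{3}=\tau^{k}(R_{1})$. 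Also note that the appeal to the infinite order of $\tau$ (to show the untwisted candidate fails for odd $n$) is not actually needed for the proof: you could simply present the two candidates by parity.
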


\begin{proof}
We have to prove that, if there exists $\ell\in  \Z$ such that $\tau^{\ell}(R_{1})=R_{2}$ for some $R_{1},R_{2} \in \Etproj(\Q(t))$ {such that $s(R_1)=R_2$}, then there exists $R_{3}\in \Etproj(\Q(t))$ such that $s(R_{3})=R_{3}$. Up to interchanging $R_{1}$ and $R_{2}$, we can assume that $\ell \geq 0$. We argue by induction on $\ell \geq 0$. The result is obvious for $\ell=0$. Let us assume that the result is true for some $\ell \geq 0$.  
 The equality $\tau^{\ell}(R_{1})=R_{2}$ ensures that $s\tau^{\ell}(R_{1})=s(R_{2})=R_{1}$. Using \eqref{eq:tauiotaiotatau}, we get $\tau^{-\ell}s(R_{1})=R_{1}$. Using the equality $\tau^{-\ell}=\iota_{1} \tau^{\ell-1} \iota_{2}$, we get { $\iota_{1}\tau^{\ell-1}\iota_{2}s(R_{1})=\tau^{-\ell}s(R_{1})= R_{1}$}. Apply $\iota_{1}$ in the both sides of the equality gives  $\tau^{\ell-1}\iota_{2}s(R_{1})=\iota_{1}(R_{1})$. With \eqref{eq:siotaiotas}, we obtain 
 $\tau^{\ell-1}s\iota_{1}(R_{1})=\iota_{1}(R_{1})$. Note that $\iota_{1}(R_{1})$ belongs to $\Etproj(\Q(t))$ in virtue of Proposition~\ref{prop:galoisactionpointsellipticcurve}. The induction hypothesis leads to the desired result. 
\end{proof}

\begin{lemma}\label{lem:IIIfixeds}
We assume that $t \in \C \setminus \Qbar$. 
For the walk \ju{$\walk{III}$}, 
there are no $R_{1},R_{2}\in \Etproj(\Q(t))$ such that $s(R_{1})=R_{2}$ and $R_{1}\sim R_{2}$. 
\end{lemma}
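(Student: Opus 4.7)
The plan is to argue by contradiction using Lemma \ref{lem:fixedsandsim}: supposing $R_{1},R_{2}\in\Etproj(\Q(t))$ with $s(R_{1})=R_{2}$ and $R_{1}\sim R_{2}$ exist, the lemma produces an $s$-fixed point $R_{3}\in\Etproj(\Q(t))$, and the whole task reduces to showing that $\walk{III}$ admits no such point.

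For $\walk{III}$ the nonzero steps are $d_{1,1}=d_{-1,0}=d_{-1,-1}=d_{0,-1}=1$, and the kernel reads
\[
K(x,y,t)=xy-t(x^{2}y^{2}+y+1+x),
\]
which is manifestly invariant under $x\leftrightarrow y$ (as it must be, the walk being symmetric). A point $R_{3}=([x_{0}:x_{1}],[x_{0}:x_{1}])$ fixed by $s$ is either one of the two ``diagonal'' points at infinity $([1:0],[1:0])$ or $([0:1],[0:1])$, both of which are immediately ruled out by substituting into $\overline{K}$ (each yields $-t\neq0$), or it lies in the affine chart and corresponds to a solution $x\in\Q(t)$ of
\[
x^{2}=t(x^{4}+2x+1).
\]

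The core of the argument is then to show that this equation has no solution in $\Q(t)$, which is where the hypothesis $t\in\C\setminus\Qbar$ enters: it ensures that $\Q(t)$ genuinely carries the $t$-adic valuation $v_{t}$. Writing $x=P/Q$ with $P,Q\in\Q[t]$ coprime and clearing denominators yields
\[
P^{2}Q^{2}=t\,(P^{4}+2PQ^{3}+Q^{4}).
\]
Setting $p=v_{t}(P)$, $q=v_{t}(Q)$, coprimality forces $\min(p,q)=0$, and I then split into the three cases $(p,q)=(0,0)$, $(0,>0)$, $(>0,0)$: in the first, the left side has $v_{t}=0$ while the right has $v_{t}\geq 1$; in the second (resp.\ third), the left side has even valuation $2q$ (resp.\ $2p$), whereas the right side has $v_{t}=1$ since $P^{4}$ (resp.\ $Q^{4}$) is a $t$-adic unit, the remaining terms in the sum lying in strictly higher powers of~$t$. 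Each case yields a contradiction, completing the proof.

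The only subtlety I foresee is not to overlook the two ``diagonal at infinity'' candidates $([1:0],[1:0])$ and $([0:1],[0:1])$ before reducing to the affine equation; once this is handled the residual $t$-adic calculation is routine.
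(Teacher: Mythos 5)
Your proposal is correct and follows the same overall route as the paper: invoke Lemma~\ref{lem:fixedsandsim} to reduce the statement to the nonexistence of $s$-fixed points in $\Etproj(\Q(t))$, dispose of the diagonal points with a zero projective coordinate by direct substitution into $\overline{K}$, and then show that the diagonal equation has no solution in $\Q(t)$ using the transcendence of $t$. The only real divergence is in how that last diophantine step is executed. The paper studies $x_1\in\Q(t)$ as a rational function of $t$ and runs three separate sub-arguments (no poles at any $t_0\in\Qbar$, no zeros except possibly at $t_0=0$, hence $x_1=ct^m$, then $m=0$ and finally $c^2=t(1+2c^3+c^4)$ contradicts $t\notin\Qbar$), whereas you clear denominators with $x=P/Q$ in lowest terms and compare $t$-adic valuations at $t=0$ in the three cases $(p,q)=(0,0)$, $(0,>0)$, $(>0,0)$; the parity obstruction $2q=1$ (resp.\ $2p=1$, resp.\ $0\geq 1$) finishes each case. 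Your version is a bit more economical since it uses a single place of $\Q(t)$ rather than all of them. Two cosmetic remarks: $([0:1],[0:1])$ is the affine origin rather than a point at infinity (and is in any case already excluded by the affine equation, which gives $0=t$ there), and one should note, as you implicitly do, that $P\neq 0$ so that the valuations $p,q$ are finite.
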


\begin{proof}
According to Lemma \ref{lem:fixedsandsim}, it is sufficient to prove that $s$ does not have fixed points in $\Etproj(\Q(t))$.  Suppose to  the contrary that there exists $P \in \Etproj(\Q(t))$ such that $s(P)=P$. So,  $P=(x,x)$ for some $x=[x_{0}:x_{1}] \in \P1(\Q(t))$ such that   
\begin{equation}\label{eq:eqseqsbisetbla}
x_{0}^{2}x_{1}^{2}-t(x_{1}^{4}+2x_{1}^{3}x_{0}+x_{0}^{4})=0.
\end{equation}
If $x_{0}=0$, then we can assume that $x_{1}=1$ and we see that \eqref{eq:eqseqsbisetbla} is impossible. 
Assume that $x_{0} \neq 0$. Then, we can and will assume that $x_{0}=1$ and we have  
\begin{equation}\label{eq:eqseqsbis}
x_{1}^{2}=t(1+2x_{1}^{3}+x_{1}^{4}).
\end{equation}
Since $t$ is transcendental, we can and will identify $\Q(t)$ with a field of rational functions.
If $x_{1}$ has a pole of order $\mu \geq 1$ at some $t=t_{0} \in \Qbar$, then $t(1+2x_{1}^{3}+x_{1}^{4})$ has a pole of order $4\mu$ or $4\mu -1$ at $t_{0}$ (depending on whether $t_{0}$ is equal to $0$) whereas $x_{1}^{2}$ has a pole of order $2\mu$ at $t_{0}$. Equation \eqref{eq:eqseqsbis} yields $2 \mu = 4\mu$ or $4\mu-1$, whence a contradiction. 

If $x_{1}$ vanishes at some $t=t_{0} \in \Qbar$, then the equality \eqref{eq:eqseqsbis} specialized at $t=t_{0}$ gives $0=t_{0}$. 

Therefore, we have proved that $x_{1} = ct^{m}$ for some $c \in \Qbar^{\times}$ and $m \in \Z_{\geq 0}$. Equation \eqref{eq:eqseqsbis} becomes 
$$
c^{2}t^{2m}=t(1+2c^{3}t^{3m}+c^{4}t^{4m}).
$$
Equating the degrees of both sides of this equation, we get $m=0$. Now, the equality  
$c^{2}=t(1+2c^{3}+c^{4})$ and $t \in \C \setminus \Qbar$ implies that $c=0$, whence a contradiction. 

\end{proof}

\begin{thm}\label{thm:caseIII}
For the walk \ju{$\walk{III}$}  and  for any $t \in ]0,1/\vert \mathcal{D} \vert[ \setminus \Qbar$ such that $G_t$ is infinite, $F^1(x,t)$ and $F^2(y,t)$ are hypertranscendental.
\end{thm}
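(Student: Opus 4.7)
The plan is to exhibit a pole of $b_{2}$ that is the unique element of its $\tau$-orbit among all poles of $b_{2}$; Corollary~\ref{coro: hyperalg F} will then give that $F^{2}(y,t)$ is hypertranscendental over $\C(\Etproj)$, and Proposition~\ref{prop:caract hyperalg F1 and F2} will transfer this to $F^{1}(x,t)$. For the walk $\walk{III}$, where $d_{1,1}=d_{-1,0}=d_{-1,-1}=d_{0,-1}=1$ and all other $d_{i,j}=0$, both $\Delta^{x}_{[1:0]}/t^{2}=d_{1,0}^{2}-4d_{1,-1}d_{1,1}$ and $\Delta^{y}_{[1:0]}/t^{2}=d_{0,1}^{2}-4d_{-1,1}d_{1,1}$ vanish, so $P_{1}=P_{2}$ and $Q_{1}=Q_{2}$. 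A direct computation as in Lemma~\ref{lem:basefieldbasepoints} gives $P_{1}=([1:0],[0:1])$ and $Q_{1}=([0:1],[1:0])$, and the second $y$-root in the fiber of $\mathfrak{q}_{1}$ above $x=0$ is $R:=\iota_{1}(Q_{1})=([0:1],[1:-1])$.

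First I would use formula~\eqref{equ:secondmembreform} together with Lemma~\ref{lemma:divisorsecondmember} to show that $b_{2}$ has exactly three poles on $\Etproj$, namely $P_{1}$, $Q_{1}$, and $R$, each of order one; here the verification requires a local analysis at $P_{1}$ because the factor $x$ has a double pole at $P_{1}$ while $\iota_{1}(y)-y$ simultaneously acquires a compensating simple zero (which is visible from the factorization $\Delta^{x}_{[x_{0}:x_{1}]}=x_{1}(\cdots)$ forced by $\Delta^{x}_{[1:0]}=0$, together with $\sum_{i}x_{0}^{i}x_{1}^{2-i}td_{i-1,1}=tx_{0}^{2}$). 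Next I would exploit the symmetry $s(x,y)=(y,x)$ of $\Etproj$, which satisfies $s\circ\tau=\tau^{-1}\circ s$ by \eqref{eq:tauiotaiotatau}, to exclude the orbit relations $P_{1}\sim Q_{1}$ and $P_{1}\sim R$. Observe that $s(P_{1})=Q_{1}$ and, using \eqref{eq:siotaiotas}, $s(R)=\iota_{2}(s(Q_{1}))=\iota_{2}(P_{1})$.

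If $P_{1}\sim Q_{1}$, then since $P_{1},Q_{1}\in\Etproj(\Q(t))$ and $s$ swaps them, Lemma~\ref{lem:IIIfixeds} would produce the forbidden fixed point of $s$ in $\Etproj(\Q(t))$, contradiction. If $P_{1}\sim R$, then writing $R=\tau^{k}(P_{1})$ and applying $s$ yields $\iota_{2}(P_{1})=\tau^{-k}(Q_{1})$; applying $\iota_{2}$ and using the identity $\iota_{2}\tau^{-k}=\tau^{k}\iota_{2}$ (a consequence of $\iota_{2}\tau=\tau^{-1}\iota_{2}$) together with $\iota_{2}(Q_{1})=Q_{2}=Q_{1}$ gives $P_{1}=\tau^{k}(Q_{1})$, contradicting what was just established. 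Thus $P_{1}$ is a simple pole of $b_{2}$ with no $\tau^{k}(P_{1})$, $k\neq 0$, among the poles of $b_{2}$, and Corollary~\ref{coro: hyperalg F} together with Proposition~\ref{prop:caract hyperalg F1 and F2} completes the proof. The main obstacle I foresee is the pole-order bookkeeping at the degenerate point $P_{1}$, where both $x$ and $\iota_{1}(y)-y$ behave non-generically; once that local computation is carried out, the orbit-disjointness is a clean consequence of the symmetry $s$ and Lemma~\ref{lem:IIIfixeds}.
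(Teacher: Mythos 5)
Your proposal is correct and follows essentially the same route as the paper: the same pole divisor $[P_1]+[Q_1]+[\iota_1(Q_1)]$ obtained from \eqref{equ:secondmembreform} and the smoothness of $\Etproj$, and the same use of the symmetry $s$ with Lemma~\ref{lem:IIIfixeds} to get $P_1\nsim Q_1$, followed by Corollary~\ref{coro: hyperalg F} and Proposition~\ref{prop:caract hyperalg F1 and F2}. The only (harmless) difference is that your separate argument for $P_1\nsim R$ is unnecessary: since $\iota_2(Q_1)=Q_1$ one has $R=\iota_1(Q_1)=\tau^{-1}(Q_1)\sim Q_1$, so $P_1\nsim R$ follows at once from $P_1\nsim Q_1$.
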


\begin{proof}
Here, we have \ju{(using Lemma~\ref{lem:fixedbasepointsinvolution} for instance)}:
$$
\begin{array}{lll}
P_1=P_2= ([1:0],[0:1]) &\text{ and } &Q_1=Q_2=([0:1],[1:0]), \\
\iota_1(Q_1) = ([0:1],[-1:1])&\text{ and }  &\iota_2(P_1)= ([-1:1],[0:1]).
\end{array}
$$ 
The formula \eqref{equ:secondmembreform} applied in this setting gives 
\begin{equation}
(b_{2})^2=\frac{x_0^2}{x_1^2}\frac{\Delta^x_{[x_0:x_1]}}{(x_0^2t)^2}.
\end{equation}
Since the curve $\Etproj$ is nonsingular, the point $P_1$ is a simple zero of $\Delta^x_{[x_0:x_1]}$ seen as a rational function on $\P1(\C)$ (see Proposition \ref{prop:genuscurvewalk} and Corollary \ref{cor:genuscurvewalk}). Then, it is easily seen that the polar divisor of $b_{2}$ is $[P_1] +[Q_1] +[\iota_1(Q_1)]=[P_1]+[Q_1]+[\tau(Q_1)]$. Using Corollary \ref{coro: hyperalg F} and Proposition \ref{prop:caract hyperalg F1 and F2}, we see that, in order to conclude the proof, it is sufficient to prove that $P_1 \nsim Q_1$. Since $s(P_1)=Q_1$, this follows from Lemma \ref{lem:IIIfixeds}.
\end{proof}

\subsubsection{The walk \ju{$\walk{IIC.3}$}}\label{sec:caseIIC3}

We shall exploit the fact that this walk is symmetric in the sense of Definition \ref{def:symmwalks}.

\begin{lemma}\label{lem:3IICfixeds}
We assume that $t \in \C \setminus \Qbar$. 
For the walk \ju{$\walk{IIC.3}$},  
there are no $R_{1},R_{2}\in \Etproj(\Q(t))$ such that $s(R_{1})=R_{2}$ and $R_{1}\sim R_{2}$. 
\end{lemma}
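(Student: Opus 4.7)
The plan is to mimic the proof of Lemma~\ref{lem:IIIfixeds}, adapted to the specific nonzero steps of $\walk{IIC.3}$ (namely $d_{1,0}=d_{1,1}=d_{0,1}=d_{-1,-1}=1$). Since $\walk{IIC.3}$ is symmetric (one checks $d_{i,j}=d_{j,i}$ directly), Lemma~\ref{lem:fixedsandsim} applies and reduces the statement to showing that $s$ has no fixed point in $\Etproj(\Q(t))$. So the goal becomes: prove that no $P=(x,x)$ with $x=[x_0:x_1]\in \P1(\Q(t))$ lies on $\Etproj$.

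Substituting $y_0=x_0$, $y_1=x_1$ into $\overline{K}$ and using the above nonzero $d_{i,j}$, the equation $P\in\Etproj$ reduces to
\begin{equation*}
x_0^2 x_1^2 \;=\; t\bigl(x_0^4 + 2x_0^{3}x_1 + x_1^{4}\bigr).
\end{equation*}
The case $x_0=0$ is immediately impossible (it would force $t\cdot x_1^4=0$ with $x_1\ne 0$ and $t$ transcendental). So one can normalize $x_0=1$ and the equation becomes
\begin{equation*}
x_1^{2} \;=\; t\bigl(1 + 2x_1 + x_1^{4}\bigr),\qquad x_1\in\Q(t).
\end{equation*}

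From here, the argument proceeds exactly as in Steps~3 and~4 of the proof of Lemma~\ref{lem:IIIfixeds}, using the transcendence of $t$ to identify $\Q(t)$ with a field of rational functions. A pole of $x_1$ of order $\mu\geq 1$ at some $t_0\in\Qbar$ would make the left-hand side have a pole of order $2\mu$ while the right-hand side has a pole of order $4\mu$ or $4\mu-1$ (according as $t_0\ne 0$ or $t_0=0$), a contradiction; and a zero of $x_1$ at some $t_0\in\Qbar$ would force $t_0=0$ by evaluation, after which an order-of-vanishing comparison ($2m$ on the left versus exactly $1$ on the right) is again impossible. Hence $x_1$ has no zeros and no poles in $\C$, so $x_1=c\in\Q^\times$.

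Substituting back yields $c^2 = t(1+2c+c^{4})$. Since $t$ is transcendental over $\Q$ while $c\in\Q$, this equality forces both $1+2c+c^{4}=0$ and $c^{2}=0$, giving $c=0$, a contradiction. The main obstacle is purely notational: one must verify that the polynomial identity in $x_1$ has the right numerical shape so that the degree/valuation comparisons at poles and at $t=0$ actually produce contradictions; the specific coefficients ($1,2,0,0,1$ in the quartic $1+2x_1+x_1^4$) are what make the argument from Lemma~\ref{lem:IIIfixeds} transfer verbatim, the only difference being the exponents in the middle term ($2x_1$ here versus $2x_1^3$ there), which does not affect the pole/zero analysis at all.
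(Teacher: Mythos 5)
Your proposal is correct and follows essentially the same route as the paper: reduce via Lemma~\ref{lem:fixedsandsim} to showing $s$ has no fixed point in $\Etproj(\Q(t))$, write down the diagonal equation, and run a pole/zero (valuation) analysis on the resulting rational function using the transcendence of $t$. The one small deviation is the dehomogenization: you set $x_0=1$ and obtain $x_1^2=t(1+2x_1+x_1^4)$ and then redo the valuation argument, whereas the paper sets $x_1=1$ and gets $x_0^2=t(1+2x_0^3+x_0^4)$, which is \emph{literally} the quartic already disposed of in the proof of Lemma~\ref{lem:IIIfixeds}, letting the paper simply cite that proof without any recomputation; both versions of the argument are sound.
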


\begin{proof}
According to Lemma \ref{lem:fixedsandsim}, it is sufficient to prove that $s$ does not have fixed points in $\Etproj(\Q(t))$.  Suppose to the contrary that there exists $P \in \Etproj(\Q(t))$ such that $s(P)=P$. So,  $P=(x,x)$ for some $x=[x_{0}:x_{1}] \in \P1(\Q(t))$ such that   
$$
x_{0}^{2}x_{1}^{2}-t(x_{1}^{4}+2x_{0}^{3}x_{1}+x_{0}^{4})=0.
$$
If $x_{1}=0$, then we can assume that $x_{0}=1$ and we see that \eqref{eq:eqseqs} is impossible. 
Assume that $x_{1} \neq 0$. Then, we can and will assume that $x_{1}=1$ and we have  
\begin{equation}\label{eq:eqseqs}
x_{0}^{2}=t(1+2x_{0}^{3}+x_{0}^{4}).
\end{equation}

As we can see in the proof of Lemma \ref{lem:IIIfixeds}, there are no such $x_{0}\in \Q(t)$, whence a contradiction.
\end{proof}

\begin{thm}\label{thm:caseIIC3}
For the  walk \ju{$\walk{IIC.3}$} and for any  $t \in ]0,1/\vert \mathcal{D} \vert[ \setminus \Qbar$  such that $G_t$ is infinite, we have that  $F^1(x,t)$ and $F^2(y,t)$ are hypertranscendental.
\end{thm}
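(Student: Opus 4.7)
The plan is to imitate the approach used for walk $\walk{III}$ in Theorem~\ref{thm:caseIII}, but with an extra residue computation to cope with the fact that the polar set does not contain a pole that is alone in its $\tau$-orbit. First I would use formula \eqref{equ:secondmembreform} together with the explicit data for walk $\walk{IIC.3}$ (the nonzero coefficients are $d_{1,0}=d_{1,1}=d_{0,1}=d_{-1,-1}=1$, so $A_1(x)=1+x$ and $\Delta^{x}_{[x:1]}/t^{2} = x\bigl[x(1-tx)^{2}-4t^{2}(1+x)\bigr]$) to determine the polar divisor of $b_{2}$ on $\Etproj$. The simple factor $x$ of $\Delta^{x}$ kills the double zero of $x$ at the $\iota_{1}$-fixed point $Q_{1}=([0:1],[1:0])$, so $Q_{1}$ is not a pole; the remaining three simple zeros of $\Delta^{x}$ are the other ramification points of $\pi_{x}$ and contribute zeros of $b_{2}$; the factor $(1+x)^{2}$ in the denominator produces simple poles at the two preimages $Q_{2}$ and $\iota_{1}(Q_{2})$ of $x=-1$, and the behaviour at $x=\infty$ yields simple poles at $P_{1}$ and $P_{2}$. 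The polar divisor of $b_{2}$ is therefore $[P_{1}]+[P_{2}]+[Q_{2}]+[\iota_{1}(Q_{2})]$, all poles simple, where $P_{1}=([1:0],[0:1])$, $P_{2}=\iota_{1}(P_{1})=([1:0],[-1:1])$ and $Q_{2}=\iota_{2}(Q_{1})=([-1:1],[1:0])$.

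Next I would analyse the $\tau$-orbit structure of this polar set. By Lemma~\ref{lem:fixedbasepointsinvolution}, walk $\walk{IIC.3}$ satisfies $\iota_{2}(P_{1})=P_{1}$ and $\iota_{1}(Q_{1})=Q_{1}$; since $\tau=\iota_{2}\circ\iota_{1}$ and the $\iota_{k}$ are involutions, this forces $P_{2}=\tau^{-1}(P_{1})$ and $\iota_{1}(Q_{2})=\tau^{-1}(Q_{1})=\tau^{-2}(Q_{2})$. The polar set thus splits into the two orbit segments $\{P_{1},P_{2}\}$ and $\{Q_{2},\iota_{1}(Q_{2})\}$. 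Since walk $\walk{IIC.3}$ is symmetric in the sense of Definition~\ref{def:symmwalks} and $s(P_{1})=Q_{1}$ with both points in $\Etproj(\Q(t))$, Lemma~\ref{lem:3IICfixeds} applied to $(R_{1},R_{2})=(P_{1},Q_{1})$ rules out $P_{1}\sim Q_{1}$; hence $P_{1}\not\sim Q_{2}$ and $P_{1}\not\sim\iota_{1}(Q_{2})$, so the two orbit segments above lie in distinct $\tau$-orbits of $\Etproj$.

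At this stage Corollary~\ref{coro: hyperalg F} does not apply, since each orbit contains two simple poles. I would instead invoke the stronger criterion Corollary~\ref{cor1} of the Appendix, which characterises the solvability of \eqref{eq:eqbiettau} in terms of orbit residues. Concretely, I would compute the residues of $b_{2}\,\Omega$ at $P_{1}$ and $P_{2}$ via local Laurent expansions in the parameter $u=1/x$: the identity $y=-u^{2}-u^{3}/t+O(u^{4})$ at $P_{1}$ (where $P_{1}$ is a ramification point of $\pi_{y}$) and $y=-1+\tfrac{t+1}{t}u-\tfrac{u^{2}}{t}+O(u^{3})$ at $P_{2}$, combined with $\iota_{1}(y)=u^{2}/[(1+u)y]$ and the expression $\partial_{y}K$ for the normalising factor of $\Omega$, give both $\omega$-residues equal to $-1/t$, so that the orbit residue of $b_{2}$ along $\{P_{1},P_{2}\}$ equals $-2/t\neq 0$. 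Corollary~\ref{cor1} then forbids any equation of the form \eqref{eq:eqbiettau} for $b_{2}$; Proposition~\ref{prop:caract hyperalg F1} concludes that $F^{2}(y,t)$ is hypertranscendental over $\C(\Etproj)$, and Proposition~\ref{prop:caract hyperalg F1 and F2} transfers the conclusion to $F^{1}(x,t)$.

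The main obstacle, compared with the earlier walks in Section~\ref{sec:the non gen cases}, is precisely this orbit-residue step: the divisor of $b_{2}$ decomposes into two orbits of length two, so one cannot appeal to a pole that is unique in its orbit, and the argument must instead track how much the residues in one orbit cancel. The delicate point is that $P_{1}$ is ramified for $\pi_{y}$ while $P_{2}$ is not, so the local expansions of $y$ (and hence of $\iota_{1}(y)$) have different leading orders, and one must carry the expansions far enough to read the residues correctly after normalising by $\partial_{y}K$.
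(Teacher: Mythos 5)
Your proposal is correct and follows essentially the same route as the paper: same polar divisor $[P_1]+[P_2]+[Q_2]+[\iota_1(Q_2)]$ with all poles simple, same splitting into the two $\tau$-orbit segments $\{P_1,P_2\}$ and $\{Q_2,\iota_1(Q_2)\}$, same use of the symmetry $s$ and Lemma~\ref{lem:3IICfixeds} to separate the orbits, and the same conclusion via a nonvanishing orbit residue. Two remarks. First, your explicit Laurent expansions at $P_1$ and $P_2$ (which do give residue $-1/t$ at each point for $\Omega=dx/\partial_yK$) are avoidable: since $\iota_1(b_2)=-b_2$ and $P_2=\iota_1(P_1)\neq P_1$, Lemma~\ref{lem1} already gives that the residues of $b_2\Omega$ at $P_1$ and $P_2$ are \emph{equal}, and a residue at a simple pole is automatically nonzero, so the orbit sum is nonzero with no computation and independently of the normalization of $\Omega$. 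Second, your final citation is off: Corollary~\ref{cor1} is the ``unique pole of maximal order in its orbit'' criterion and does not apply here precisely because each orbit carries two simple poles; the orbit-residue criterion you actually invoke is Proposition~\ref{Prop0} combined with Lemma~\ref{lem:lemhregandresj} and Remark~\ref{rmk:ores1vanishiffResvanish}, after which Propositions~\ref{prop:caract hyperalg F1} and~\ref{prop:caract hyperalg F1 and F2} finish the argument as you describe.
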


\begin{proof}
In this case, we have 
\begin{itemize}
\item  $P_{1} = ([1:0],[0:1])$;

\item $P_{2} = ([1:0],[-1:1])=\iota_1(P_1)$;
\item $\iota_{2}(P_{1})=P_{1}$;
\item $ Q_{1}=([0:1],[1:0])$;
\item $Q_{2} = ([-1:1],[1:0])=\iota_2(Q_1)$;
\item $\iota_{1}(Q_{1}) = ([0:1],[1:0])=Q_{1}$;
\item $\iota_{1}(Q_{2}) = ([-1:1],[-t:t+1])$.
\end{itemize}
The polar divisor of $b_{2}$ is 
$$
[P_{1}]+[P_{2}]+[Q_{2}]+[\iota_{1}(Q_{2})].
$$ 
The poles $Q_{2}$ and $\iota_{1}(Q_{2})$ belong to the same $\tau$-orbit: 

$$\iota_{1}(Q_{2})\xrightarrow{\iota_{1}} Q_{2} \xrightarrow{\iota_{2}} Q_{1}\xrightarrow{\iota_{1}} Q_{1} \xrightarrow{\iota_{2}} Q_{2}.$$
Similarly, the poles $P_{2}$ and {$P_1$} belong to the same $\tau$-orbit:
$$P_{2} \xrightarrow{\iota_{1}} P_{1}\xrightarrow{\iota_{2}} P_{1}.$$
Since $s(Q_{1})=P_{1}$, Lemma \ref{lem:3IICfixeds} implies that the $\tau$-orbits of $\iota_{1}(Q_{2})$ and $P_{2}$ are distinct. We refer to Figure \ref{fig3} for a summary of these facts.

Since $\iota_1(b_2)=-b_2$, Lemma \ref{lem1} ensures that the residue of $b_2 \omega$ at $P_{1}$ and $P_{2}$ are equal (and they are non zero). Therefore, the sum of the residues of $b_{2} (\omega)$ on the $\tau$-orbit of $P_{2}$ is nonzero. Lemma \ref{lem:lemhregandresj} together with Remark \ref{rmk:ores1vanishiffResvanish} and Proposition~\ref{Prop0} lead to the desired result. 
\end{proof}

\begin{figure}
\begin{tabular}{|l|ccc|}
\hline
\hbox{Walk} && \ju{$\walk{IIC.3}$} \begin{tikzpicture}[scale=.4, baseline=(current bounding box.center)]
\foreach \x in {-1,0,1} \foreach \y in {-1,0,1} \fill(\x,\y) circle[radius=2pt];
\draw[thick,->](0,0)--(1,1);
\draw[thick,->](0,0)--(0,1);
\draw[thick,->](0,0)--(1,0);
\draw[thick,->](0,0)--(-1,-1);
\end{tikzpicture}&\\ \hline
\hbox{Polar divisor of } $b_{2}$ &$([-1:1],[-t:t+1])$&&$([1:0],[-1:1])$\\
&$+([-1:1],[1:0])$&&$+([1:0],[0:1])$\\
\hline 
$\tau$-\hbox{Orbit of the poles of } $b_{2}$ &$([-1:1],[-t:t+1])$&&$([1:0],[-1:1])$\\
&$\downarrow \tau$&&$\downarrow \tau$\\
&$([0:1],[1:0])$&$\not\sim$&$([1:0],[0:1])$\\
&$\downarrow \tau$&&\\
&$([-1:1],[1:0])$&& \\ \hline
\end{tabular}
\caption{$\tau$-Orbit of the poles for \ju{$\walk{IIC.3}$}}\label{fig3}
\end{figure}

In conclusion, we have shown that the series $F^1(x,t)$ and $F^2(y,t)$ for the walks in \ju{$\walksg$} are hypertranscendental. \toto{The next section is devoted to the study of the walks in $\walkse$.}

\section{\ju{Hyperalgebraicity of the generating series of the walks in $\walkse$}}\label{sec:hyperalg} \ju{We will show below
 that the following is true for any walk in $\walkse$~:}

\begin{condition}\label{condition:linde}For $i=1$ or $i=2$ there exist  an integer $n \geq 0$, $c_0,\ldots,c_{n-1} \in \C$ and $g \in \C(\Etproj)$ \ju{ such that }
 \begin{equation}\label{eq:lindiffc}
  \delta^n(b_i)+c_{n-1}\delta^{n-1}(b_i)+\cdots+c_{1}\delta(b_{i})+c_{0} b_i = \tau(g)-g. 
 \end{equation}
\end{condition}

As we have noticed in the proof Proposition~\ref{prop:caract hyperalg F1 and F2}, if Condition~\ref{condition:linde} is satisfied for $i=1$ or $i=2$, then it is satisfied for both values. Condition~\ref{condition:linde} precludes the  possibility of using Proposition~\ref{prop:caract hyperalg F1} to show that the corresponding generating series are hypertranscendental.  As we have already mentioned, this does not immediately   imply that the series $F^1$ and $F^2$ are hyperalgebraic.  Nonetheless, we can use this data together with properties of the  related $r_x$ and $r_y$ to show that they are indeed hyperalgebraic.  \ju{We are going to prove} the following result.

\begin{prop}\label{prop:hypalg} If a walk satisfies Assumption~\ref{assumption:generalcase t and D} and Condition~\ref{condition:linde} then the corresponding $F^1(x,t)$ and $F^2(y,t)$ are hyperalgebraic over $\C$.
\end{prop}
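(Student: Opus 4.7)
The plan is to leverage Condition~\ref{condition:linde}, which supplies a linear differential operator $L := \delta^n + c_{n-1}\delta^{n-1} + \cdots + c_0$ with constant coefficients and an element $g \in \C(\Etproj)$ such that $L(b_i) = \tau(g)-g$. Fix $i=1$; the case $i=2$ is analogous, and by Proposition~\ref{prop:caract hyperalg F1 and F2} one implies the other. Since $\delta$ and $\tau$ commute and the $c_j$ are $\tau$-constants, $L$ commutes with $\tau$. Applying $L$ to the cocycle $\tau(r_x)-r_x=b_1$ yields
\[
\tau\bigl(L(r_x)\bigr)-L(r_x)\;=\;L(b_1)\;=\;\tau(g)-g,
\]
so that $h:=L(r_x)-g$ satisfies $\tau(h)=h$, i.e.\ $h(\omega+\omega_3)=h(\omega)$.

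Next I would observe that $h$ is also $\omega_1$-periodic: $r_x$ is $\omega_1$-periodic by \eqref{eq:KRThm2}, and $g$ is so because every element of $\C(\Etproj)$ lifts to a function on $\C$ doubly periodic for the lattice $\Z\omega_1+\Z\omega_2$. Under Assumption~\ref{assumption:generalcase t and D}, $\tau$ has infinite order so $\omega_3\in\R\setminus\{0\}$, while $\omega_1\in i\R\setminus\{0\}$; thus $\Lambda':=\Z\omega_1+\Z\omega_3$ is a discrete lattice in $\C$ and $h$ is an elliptic function for $\Lambda'$. Any elliptic function is hyperalgebraic over $\C$ (it is rational in the Weierstrass $\wp,\wp'$ for its lattice, which satisfy the algebraic relation $(\wp')^2 = 4\wp^3 - g_2\wp - g_3$), so $h$ and $g$ are both hyperalgebraic. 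Hence $L(r_x)=g+h$ is hyperalgebraic, and since $L$ is a linear differential operator with constant coefficients, $r_x$ itself is hyperalgebraic over $\C$ as a meromorphic function of $\omega$: a nonzero polynomial relation on $L(r_x),\delta L(r_x),\ldots$ pulls back, by a leading-degree argument, to a nonzero relation on $r_x,\delta r_x,\ldots$.

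It remains to convert hyperalgebraicity of $r_x$ in $\omega$ into a polynomial differential equation for $F^1(x,t)$ in $x$. From $r_x(\omega)=F^1(\mathfrak{q}_1(\omega),t)$ and the Fa\`a di Bruno formula,
\[
\delta^k r_x \;=\; \sum_{j=0}^{k}\partial_x^{j}F^1(\mathfrak{q}_1(\omega),t)\,P_{k,j}(\omega),\qquad P_{k,j}\in\C(\Etproj),
\]
with $P_{k,k}=(\delta \mathfrak{q}_1)^k\neq 0$. The matrix $(P_{k,j})_{0\le j,k\le N}$ is lower triangular with nonzero diagonal, so the induced $\C(\Etproj)$-algebra substitution $Y_k\mapsto \sum_j P_{k,j}\,Z_j$ is an isomorphism from $\C(\Etproj)[Y_0,\ldots,Y_N]$ onto $\C(\Etproj)[Z_0,\ldots,Z_N]$; its restriction to $\C[Y_0,\ldots,Y_N]$ is thus injective. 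A nonzero relation $P(r_x,\ldots,\delta^N r_x)=0$ with $P\in\C[Y_0,\ldots,Y_N]$ therefore produces a nonzero $\tilde P\in\C(\Etproj)[Z_0,\ldots,Z_N]$ satisfying $\tilde P(x,y;F^1,\partial_x F^1,\ldots,\partial_x^N F^1)=0$ on $\Etproj$. Writing $\tilde P=R_1+y\,R_2$ in the decomposition $\C(\Etproj)=\C(x)\oplus\C(x)\cdot y$ and using that the projection $\Etproj\to\mathbb{P}^{1}_x$ is generically two-to-one, one obtains $R_1(x,F^1,\ldots,\partial_x^N F^1)=0=R_2(x,F^1,\ldots,\partial_x^N F^1)$. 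At least one of $R_1,R_2$ is a nonzero polynomial over $\C(x)$, giving a nontrivial polynomial differential equation for $F^1(x,t)$ with coefficients in $\C(x)$. The statement for $F^2(y,t)$ then follows from Proposition~\ref{prop:caract hyperalg F1 and F2}.

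The main obstacle is this final translation from the $\omega$-picture to the $x$-picture: a priori the chain-rule substitution could collapse the relation, and one must also eliminate the $y$-dependence to land in $\C(x)$. These two concerns are respectively controlled by the lower-triangular invertibility of $(P_{k,j})$ and by the $y$-independence of $F^1(x,t)$ and its $x$-derivatives, which permits the clean separation into $R_1$ and $R_2$.
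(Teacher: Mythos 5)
Your first two-thirds coincide with the paper's argument: you show that $L(r_x)-g$ is $\tau$- and $\omega_1$-periodic, hence elliptic for $\Z\omega_1+\Z\omega_3$ and therefore hyperalgebraic, deduce the same for $g$ and then for $r_x$. The divergence is in the final descent from the variable $\omega$ to the variable $x$. The paper avoids your route entirely: it locally inverts the uniformization, writes $F^1(x,t)=r_x(\mathfrak{q}_{1,2}^{-1}(x))$ on a small open set, and concludes from Lemmas~\ref{lem:composit} and~\ref{lem:reciprochyperalg} (composition with, and inversion of, hyperalgebraic biholomorphisms preserve hyperalgebraicity over $\C$); no coefficient field $\C(x)$ and no two-to-one projection ever enter.

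Your route, via the lower-triangular chain-rule substitution, is sound up to the production of a nonzero $\tilde P\in\C(\Etproj)[Z_0,\dots,Z_N]$ annihilating $(F^1,\dots,\partial_x^NF^1)$, but the step ``the projection is generically two-to-one, hence $R_1=R_2=0$'' does not follow as stated. The identity $\tilde P(x,y;F^1(x,t),\dots)=0$ is only established on the open subset of $\Etproj$ on which $r_x(\omega)=F^1(\mathfrak{q}_1(\omega),t)$ holds, essentially $V=E_t\cap\{|x|<1,|y|<1\}$; for a given $x$ the second point $(x,\iota_1(y))$ of the fiber need not lie in that set, and you have not shown that $r_x$ takes the value $F^1(x,t)$ at its preimage (that would require $r_x\circ\iota_1=r_x$, which you never invoke). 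So you only know $R_1+yR_2=0$ along one branch of $y$ over $x$, from which the separate vanishing of $R_1$ and $R_2$ cannot be extracted. The gap is repairable in one line: replace $\tilde P$ by the norm $\tilde P\cdot\iota_1(\tilde P)$, where $\iota_1$ acts on the coefficients only; this is a nonzero polynomial with coefficients in $\C(\Etproj)^{\iota_1}=\C(x)$ and still vanishes at $(F^1,\dots,\partial_x^NF^1)$ because one factor does. With that repair your argument gives the (equivalent, slightly more explicit) conclusion that $F^1(x,t)$ satisfies a nontrivial algebraic differential equation with coefficients in $\C(x)$, whereas the paper's composition argument reaches hyperalgebraicity over $\C$ more directly.
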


\ju{Using Proposition~\ref{prop:hypalg}, we are now able to prove Theorem~\ref{thm2}.}

\begin{thm2proof} We will show below, \ju{in Subsections \ref{subsec:iic1}, \ref{subsec:iib1} and \ref{subsec:iic5}}, that \ju{the nine walks in $\walkse$ satisfy} Condition~\ref{condition:linde}. Proposition~\ref{prop:hypalg} \ju{together with Assumption~\ref{assumption:generalcase t and D} then imply} that for these walks the corresponding series $F^1(x,t) = Q_{\mathcal D}(x,0,t)$ and $F^2(y,t)= Q_{\mathcal D}(0,y,t)$ are hyperalgebraic with respect to $x$ and $y$.  The functional equation (\ref{eq:funcequ}) for $Q_{\mathcal D}(x,y,t)$ can be rewritten as 
\[Q_{\mathcal{D}}(x,y,t)= \frac{1}{K_{\mathcal{D}}(x,y,t)}[xy-F_{\mathcal{D}}^{1}(x,t) -F_{\mathcal{D}}^{2}(y,t)+td_{-1,-1} Q_{\mathcal{D}}(0,0,t)].\]
Each term on the right hand side of this equation is both $x$- and $y$-hyperalgebraic. Since the property of hyperalgebraicity is closed under field operations, $Q_{\mathcal{D}}(x,y,t)$ is also $x$- and $y$-hyperalgebraic.\end{thm2proof}

 We will need the following lemmas to prove Proposition~\ref{prop:hypalg}. 

\begin{lem}\label{lem:composit} Let $U,V$ be open subsets of $\C$ and $f:V\rightarrow \C,  g:U\rightarrow V$ be functions analytic in their domains.  If $f(x)$ and $g(x)$ are hyperalgebraic over $\C$, then so is $f(g(x))$.
\end{lem}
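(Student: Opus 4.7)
The plan is to argue by transcendence degrees in differential algebra, using Faà di Bruno's formula for derivatives of a composition. The overall strategy is to exhibit a differential subfield of the ring of meromorphic functions on $U$ that has finite transcendence degree over $\C$ and contains every derivative $h^{(k)}$ of $h := f \circ g$; this forces $h, h', h'', \ldots$ to be algebraically dependent over $\C$, which is exactly hyperalgebraicity of $h$.

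First I would observe that hyperalgebraicity of $g$ over $\C$ means there is a nonzero $Q \in \C[y_0, \ldots, y_n]$ with $Q(g, g', \ldots, g^{(n)}) = 0$. Choosing $Q$ of minimal order and differentiating repeatedly, every $g^{(k)}$ with $k \geq n$ is algebraic over $\C(g, g', \ldots, g^{(n-1)})$, so the differential field $\C\langle g\rangle := \C(g, g', g'', \ldots)$ has transcendence degree at most $n$ over $\C$.

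Next, since $f$ is hyperalgebraic over $\C$, there is a nonzero $P \in \C[z_0, \ldots, z_m]$ such that $P(f, f', \ldots, f^{(m)}) = 0$ on $V$. Substituting $g(x)$ (which is legitimate because $g(U) \subset V$) yields the identity
\[
P(f(g), f'(g), \ldots, f^{(m)}(g)) = 0 \quad \text{on } U.
\]
The crucial point is that $P$ has coefficients in $\C$, so this is an \emph{algebraic} relation over $\C$ between the functions $f^{(j)}(g)$ for $0 \leq j \leq m$. Differentiating with respect to $x$ and using the chain rule $\frac{d}{dx}f^{(j)}(g(x)) = g'(x) f^{(j+1)}(g(x))$ inductively expresses $f^{(k)}(g)$ for $k > m$ as algebraic over $\C\langle g\rangle(f(g), f'(g), \ldots, f^{(m-1)}(g))$. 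Hence
\[
K := \C\langle g\rangle\bigl(f(g), f'(g), f''(g), \ldots\bigr)
\]
has finite transcendence degree over $\C$.

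Finally, by Faà di Bruno's formula each derivative $h^{(k)}$ is a polynomial with integer coefficients in the $g^{(j)}$ and $f^{(j)}(g)$ for $j \leq k$, so $h^{(k)} \in K$ for every $k$. Since $K$ has finite transcendence degree over $\C$, the sequence $h, h', h'', \ldots$ is algebraically dependent over $\C$, yielding a nontrivial polynomial differential equation satisfied by $h$ with coefficients in $\C$. The main obstacle is really just the care needed in the second step: one has to verify that evaluation at $g(x)$ of the differential relation for $f$ produces an honest algebraic relation that propagates to all higher derivatives via the chain rule without introducing new transcendentals beyond those already present in $\C\langle g\rangle$, and this is ensured precisely because the coefficients of $P$ are constants.
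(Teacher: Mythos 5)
Your proof is correct and follows essentially the same route as the paper's: both characterize hyperalgebraicity over $\C$ by the finite transcendence degree of $\C(h,h',h'',\ldots)$, use the constancy of the coefficients of the differential relation for $f$ to conclude that $\C(f(g),f'(g),\ldots)$ has finite transcendence degree over $\C$, and invoke Fa\`a di Bruno's formula to place every $(f\circ g)^{(k)}$ in the compositum with $\C(g,g',\ldots)$. The only cosmetic difference is that you re-derive the finiteness of the transcendence degree of $\C\langle g\rangle(f(g),f'(g),\ldots)$ by differentiating the substituted relation via the chain rule (which implicitly requires $g$ nonconstant and a minimal choice of $P$ so that one can solve for the top derivative), whereas the paper gets it directly from the observation that any algebraic relation over $\C$ among the $f^{(j)}$ on $V$ remains valid after substituting $g$.
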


\begin{proof} One easily checks that a function $h$ is hyperalgebraic over $\C$ if and only if  the field $\C(h(x), h'(x),  \ldots , h^{(n)}(x), \ldots)$ has finite transcendence degree over $\C$. Since $f$ is hyperalgebraic, the field $\C(f(g(x)), f'(g(x)), \ldots, f^{(n)}(g(x)), \ldots)$ has finite transcendence degree over $\C$. Fa\`a di Bruno's formula {\mfs \cite[p.~33]{Jordan}} for the derivative of a composite function shows that for all $m$ $$(f(g(x))^{(m)} \in \C(f(g(x)), f'(g(x)), \ldots, g(x), g'(x), \ldots)$$ which is of finite transcendence degree over $\C$.  \end{proof}

We note that similar techniques show that if $f$ and $g$ are hyperalgebraic over $\C$ then so is any element of $\C(f, f', \ldots , g, g', \ldots)$.

\begin{lem}\label{lem:reciprochyperalg}
Let $h : U \rightarrow V$ be a biholomorphism between open subsets of $\C$. If $h$ is hyperalgebraic over $\C$, then $h^{-1}$ is hyperalgebraic over $\C$. 
\end{lem}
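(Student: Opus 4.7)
The plan is to use the characterization of hyperalgebraicity noted in the proof of Lemma~\ref{lem:composit}: a meromorphic function $f$ on an open set is hyperalgebraic over $\C$ if and only if the field $\C(f, f', f'', \ldots)$ has finite transcendence degree over $\C$. Setting $g = h^{-1}$, the goal reduces to showing that $\mathrm{trdeg}_{\C}\, \C(g, g', g'', \ldots) < \infty$.

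First, I would transport the hypothesis on $h$ by substitution. Since $g\colon V \to U$ is a biholomorphism, the pullback $\sigma\colon \calM(U) \to \calM(V)$, $\sigma(f) = f \circ g$, is a field isomorphism and hence preserves transcendence degree over $\C$. Writing $N = \mathrm{trdeg}_{\C}\, \C(h, h', h'', \ldots)$, which is finite by hypothesis, and using the identity $h \circ g = y$, I obtain
$$
\mathrm{trdeg}_{\C}\, \C(y,\, h'\circ g,\, h''\circ g,\, \ldots) \;=\; N,
$$
and in particular $\mathrm{trdeg}_{\C}\, \C(h'\circ g, h''\circ g, \ldots) \leq N$.

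Next, I would use Fa\`a di Bruno's formula applied to $h(g(y)) = y$ to express each derivative $g^{(n)}$ as a rational function in the values $h^{(k)}(g)$. Differentiating once gives $h'(g)\,g' = 1$, and $h'(g)$ is nonvanishing because $h$ is a biholomorphism, so $g' = 1/h'(g) \in \C(h'(g))$. An induction on $n$ shows that the $n$-th derivative of $h \circ g$ has the form $h'(g)\,g^{(n)} + R_n$, where $R_n$ is a polynomial in $g', \ldots, g^{(n-1)}$ and in $h'(g), \ldots, h^{(n)}(g)$; solving for $g^{(n)}$ (using $h'(g) \neq 0$) inductively places it in $\C(h'(g), h''(g), \ldots, h^{(n)}(g))$. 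Consequently $\C(g', g'', \ldots) \subseteq \C(h'\circ g, h''\circ g, \ldots)$, and adjoining $g$ increases the transcendence degree by at most one, yielding $\mathrm{trdeg}_{\C}\, \C(g, g', g'', \ldots) \leq N+1 < \infty$, which gives the hyperalgebraicity of $g$.

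The only real technical point, and the one I would be most careful about, is the inductive step extracting $g^{(n)}$: one must verify via Fa\`a di Bruno that $g^{(n)}$ appears linearly in $(h\circ g)^{(n)}$ with coefficient exactly $h'(g)$, so that the nonvanishing of $h'$ allows one to solve for $g^{(n)}$ at each stage. Once that linearity is in hand, the rest of the argument is a routine transcendence-degree count.
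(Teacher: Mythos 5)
Your proof is correct and follows essentially the same approach as the paper: both use the finite-transcendence-degree characterization and the key observation that the successive derivatives of $h^{-1}$ are rational functions in the $h^{(k)} \circ h^{-1}$, transported by the pullback along $h^{-1}$. You are merely more explicit in spelling out the Fa\`a di Bruno induction and in adjoining $g=h^{-1}$ itself at the end, but this is the same argument.
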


\begin{proof}
We know that $\C(h,h',\ldots,h^{(k)},\ldots)$ has finite transcendence degree over $\C$, so $\C(h \circ h^{-1},h'\circ h^{-1},\ldots,h^{(k)}\circ h^{-1},\ldots)$ has finite transcendence degree over $\C$ as well. But, the successive derivatives of $h^{-1}$ are rational fractions in the $h^{(k)} \circ h^{-1}$ ($k \in \Z_{\geq 0}$), so $\C((h^{-1})',\ldots,(h^{-1})^{(k)},\ldots)$ is a subfield of $\C(h'\circ h^{-1},\ldots,h^{(k)}\circ h^{-1},\ldots)$ and, hence, has finite transcendence degree over $\C$. 
\end{proof}

\begin{prf}{Proof of Proposition~\ref{prop:hypalg}}
 We will prove this for $F^1(x,t)$; the other case is similar. We set 
$$
L = \delta^n +c_{n-1}\delta^{n-1} +\cdots+c_{1}\delta +c_{0}. 
$$
Let $g \in \C(\Etproj)$ be as in Condition~\ref{condition:linde}. We then have 
 \begin{eqnarray*}
 \tau(L(r_x )-g) - (L(r_x) - g) &=& L(\tau(r_x) -r_x) - (\tau(g) - g)\\
  & = & L(b_1) - (\tau(g) - g)\\
  & = & 0. 
  \end{eqnarray*}
  Therefore $L(r_x )-g$ is $\tau$-invariant.  From  Assumption~\ref{assumption:generalcase t and D}, we have that $r_x$ is invariant under $\omega \mapsto \omega + \omega_1$ and since $g \in \C(\Etproj)$ the same is true for $g$. Therefore ${L(r_x )-g = R(\mathfrak{p}_{1,3},\mathfrak{p}'_{1,3})}$ where $R$ is a rational function of two variables and $\mathfrak{p}_{1,3}$ is the Weierstrass $\mathfrak{p}$-function with periods $\omega_1$ and $\omega_3$.  Since $\mathfrak{p}_{1,3}$ is hyperalgebraic over $\C$, we have that $L(r_x )-g$ is hyperalgebraic over $\C$.  The element $g$ is a rational function of  a  Weierstass $\mathfrak{p}$-function $\mathfrak{p}_{1,2}$ with periods $\omega_1$ and $\omega_2$ and its derivative, so it is also hyperalgebraic over $\C$. Therefore $L(r_x)$ is hyperalgebraic over $\C$ and thus the same holds for $r_x$.  By definition, for some open set $U$, we have that $r_x(\omega) = F^1(\mathfrak{q}(\omega),t)$ where $\mathfrak{q}$ is a rational function of $\mathfrak{p}_{1,2}$ and $\mathfrak{p}_{1,2}'$. 
Let $U' \subset U$ and $V'$ be nonempty open subsets of $\C$ such that $\mathfrak{p}_{1,2}$ induces a biholomorphism $U \rightarrow V$. 
Then, on $V$, we have $F^{1}(x,t)=r_{x}(\mathfrak{p}_{1,2}^{-1}(x))$ and we deduce from Lemma~\ref{lem:reciprochyperalg} and Lemma \ref{lem:composit} that $F^{1}(x,t)$ is hyperalgebraic over $\C$. 
\end{prf} 
  
  The remainder of this section is devoted to showing that the \ju{walks in $\walkse$ satisfy}  Condition~\ref{condition:linde} for $b_2$. These proofs rely heavily on the alternate characterizations of Condition~\ref{condition:linde} for $b_2$ given in the appendix \ju{({\it i.e.}, Proposition~\ref{Prop0}, Corollary~\ref{cor2}, Lemma \ref{lem:lemhregandresj} and Remark~\ref{rmk:ores1vanishiffResvanish})}. These characterizations are just in terms of the $\tau$-orbits of the poles of $b_2$.
 
 \ju{We list here the sections where the treatments of the walks in $\walkse$ are finalized:
\begin{itemize}
\item for \ju{$\walk{IIC.1}$}, \ju{$\walk{IIC.2}$} and \ju{$\walk{IIC.4}$}, see Section \ref{subsec:iic1}, Theorem~\ref{thm:caseIIC};
\item for \ju{$\walk{IIB.1}$}, \ju{$\walk{IIB.2}$}, \ju{$\walk{IIB.3}$}, \ju{$\walk{IIB.6}$} and \ju{$\walk{IIB.7}$}, see Section~\ref{subsec:iib1}, Theorem~\ref{thm:caseIIBbis}; 
\item for \ju{$\walk{IIC.5}$}, see Section \ref{subsec:iic5}, Theorem~\ref{thm:caseIIC5}.
\end{itemize}}
  
In all cases we assume that $t$ has been chosen such that  Assumption~\ref{assumption:generalcase t and D} holds. 
  
\subsection{The walks \ju{$\walk{IIC.1}$}, \ju{$\walk{IIC.2}$} and \ju{$\walk{IIC.4}$} }\label{subsec:iic1}

\begin{thm}\label{thm:caseIIC}
The walks \ju{$\walk{IIC.1}$}, \ju{$\walk{IIC.2}$} and \ju{$\walk{IIC.4}$} satisfy Condition~\ref{condition:linde}.
\end{thm}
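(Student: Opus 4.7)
The plan is to verify Condition \ref{condition:linde} for $b_2$ directly in each of the three cases, exploiting the appendix criteria (Proposition \ref{Prop0}, Corollary \ref{cor2}, Lemma \ref{lem:lemhregandresj} and Remark \ref{rmk:ores1vanishiffResvanish}) which reformulate the existence of a telescoping equation $L(b_2)=\tau(g)-g$ with $L=\delta^n+c_{n-1}\delta^{n-1}+\cdots+c_0$ in terms of the vanishing of certain \emph{orbit residues} attached to the $\tau$-orbits of the poles of $b_2$. By this appendix criterion, Condition~\ref{condition:linde} amounts to the statement that, on each $\tau$-orbit of a pole of $b_2$, the appropriate finite sum of (higher-order) residues of $b_2\,\omega$ vanishes — exactly the quantity whose non-vanishing was used in Section~\ref{sec:caseIIC3} to disprove the analogous condition for $\walk{IIC.3}$.

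For each of the walks $\walk{IIC.1}$, $\walk{IIC.2}$, $\walk{IIC.4}$ I would first extract the nonzero $d_{i,j}$ from Figure~\ref{figure:ex} and write down the homogenized kernel $\overline{K}$ in closed form. From this, the base points $P_1,P_2,Q_1,Q_2$ are obtained as roots of the two quadratics appearing in Lemma~\ref{lem:basefieldbasepoints}; because these walks lie in $\walkse$ (whence $\Delta^x_{[1:0]}/t^2$ and $\Delta^y_{[1:0]}/t^2$ are both squares in $\Q$), all these base points lie in $\Etproj(\Q)$. Next I would compute $\iota_1(Q_1), \iota_1(Q_2)$ directly from the definition, which gives the complete set $\mathcal{S}_2$ of Lemma~\ref{lemma:divisorsecondmember}; then apply $\tau=\iota_2\circ\iota_1$ to identify which points of $\mathcal{S}_2$ collapse and which lie in a common $\tau$-orbit. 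The key structural feature distinguishing these exceptional cases from $\walk{IIC.3}$ is that several of the base points are fixed by $\iota_1$ or $\iota_2$ (by Lemma~\ref{lem:fixedbasepointsinvolution} applied to the specific $d_{i,j}$ patterns), so some of the ``dangerous'' pairs of poles of $b_2$ merge or land inside the same orbit, opening up the possibility of cancellation.

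Once the orbit structure is in hand, I would compute the residues of $b_2\,\omega$ at the poles, crucially using the identity $\iota_1(b_2)=-b_2$ (which follows from $\iota_1(x)=x$ and $\iota_1(\iota_1(y)-y)=-(\iota_1(y)-y)$, together with $\iota_1^{*}\Omega=-\Omega$ from the QRT theory). This produces a pairing between residues at $\iota_1$-conjugate points that, combined with the orbit coincidences found in Step~2, forces the relevant orbit-residue sums to vanish. If the plain residue condition already holds, Proposition~\ref{Prop0} provides $g\in\C(\Etproj)$ with $b_2=\tau(g)-g$ and Condition~\ref{condition:linde} holds with $L=\mathrm{id}$; otherwise, one invokes the full strength of Corollary~\ref{cor2}, tuning the coefficients $c_0,\ldots,c_{n-1}$ and using derivatives $\delta^k(b_2)$ to eliminate the obstructing orbit residues of higher order (recall that $\delta$ increases the order of a pole by $1$ without moving it, see Lemma~\ref{lemma:derivationvalutaion}).

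The main obstacle is \emph{arithmetic}: one must explicitly verify the vanishing of the orbit residues in each of the three cases. This is delicate because it simultaneously requires the correct pole orders at each point in $\mathcal{S}_2$, the precise residues obtained from local expansions of $x$, $y$ and $\iota_1(y)$, and a control of the $\tau$-orbit partition of $\mathcal{S}_2$. Nevertheless, these computations are finite and algorithmic, and they are precisely the calculations that fail for $\walk{IIC.3}$ but succeed for $\walk{IIC.1},\walk{IIC.2},\walk{IIC.4}$ — which is exactly what makes these three walks ``exceptional'' in the sense of \cite{BBMR16}.
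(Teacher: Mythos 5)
Your proposal is correct and follows essentially the same route as the paper: write down the kernel, determine the polar divisor of $b_2$, check that all its poles are simple and lie in a single $\tau$-orbit, and invoke Corollary~\ref{cor2}. Two small remarks: the explicit residue computation and the $\iota_1(b_2)=-b_2$ pairing of your third step are not needed for these three walks (the paper uses them only for $\walk{IIC.5}$, where the poles split into two orbits, and for the double-pole cases $\walk{IIB.*}$), because for a single $\tau$-orbit of simple poles the order-one orbit residue vanishes automatically by the residue theorem; and your fallback of ``tuning $c_0,\dots,c_{n-1}$ and using $\delta^k(b_2)$ to eliminate obstructing higher-order orbit residues'' is not how Proposition~\ref{Prop0} operates --- the orbit-residue conditions bear on $b_2$ alone, and a nonvanishing higher-order orbit residue would be fatal (Corollary~\ref{cor1}) rather than removable by a clever choice of $L$.
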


\begin{proof}
We shall first give a detailed proof for \ju{$\walk{IIC.1}$}. 
In this case, we have {
$$
\overline{K}(x_{0},x_{1},y_{0},y_{1},t) = - ty_0y_1x_1^2-tx_0x_1y_1^2+x_0x_1y_0y_1-tx_0x_1y_0^2-tx_0^2y_0^2
$$}
and 
\begin{eqnarray*}
P_{1}=([1:0],[0:1]), & P_2 =\iota_{1}(P_{1})=P_{1}=([1:0],[0:1]), \\ 
Q_1=([-1:1],[1:0]), & Q_2=\iota_{2}(Q_{1})=([0:1],[1:0]) \neq Q_{1}.
\end{eqnarray*} 
We see that 
\begin{itemize}
\item $P_{1}$ is the only pole of $x$ and it has order $2$;
\item the poles of $y$ are $Q_{1}$ and $Q_{2}$ and they have order $1$;  
\item the poles of $\iota_1(y)$  are $\iota_1(Q_1)=([-1:1],[\frac{t}{t+1}:1]),\iota_1(Q_2)=([0:1],[0:1]),$ and they have order $1$;
\item since $Q_{1},Q_{2},\iota_1(Q_1),\iota_1(Q_2)$ are two by two distinct, these four points are the poles of $\iota_{1}(y)-y$ and they all have order $1$;
\item we have, see \eqref{equ:secondmembreform}, $(b_{2})^2=\frac{\Delta^x_{[x_0:x_1]}}{x_1^2(x_{0}+x_{1})^2 }$, but $P_{1}$ is a zero of order $2$ of $\frac{\Delta^x_{[x_0:x_1]}}{x_{0}^{4}}$ and a zero of order $4$ of $\left(\frac{x_{1}}{x_{0}}\right)^{2}$, so $P_{1}$ is a pole of order $2$ of $(b_{2})^{2}$, and hence of order $1$ of $b_{2}$;
\item $Q_2$ and $\iota_1(Q_2)$ are zeros of $x$;
\item $Q_1$, and $\iota_1(Q_1)$ are not  zeros of  $x$.
\end{itemize}
Finally, the polar divisor of $b_{2}$ is $[P_1]+[Q_1]+[\iota_1(Q_1)]$ and $P_1,Q_1,\iota_1(Q_1)$ are two by two distinct. 

Moreover, the first $4$ elements of the orbit of $\iota_{1}(Q_{1})$ by the iterated action of $\tau$ are given by :
$$
\xymatrix{
    & \iota_{1}(Q_{1}) = ([-1:1],[\frac{t}{t+1}:1])  \ar[ld]^{\iota_{1}} \ar[d]_{\tau} \\
    ([-1:1],[1:0]) \ar[r]_{\iota_{2}} & Q_{2}= ([0:1],[1:0]) \ar[ld]^{\iota_{1}} \ar[d]_{\tau}\\
    ([0:1],[0:1]) \ar[r]_{\iota_{2}}  & P_{1}= ([1:0],[0:1]) \ar[ld]^{\iota_{1}} \ar[d]_{\tau} \\
   ([1:0],[0:1])  \ar[r]_{\iota_{2}}  & \iota_{1}(Q_{2})=([0:1],[0:1]) \ar[ld]^{\iota_{1}} \ar[d]_{\tau}  \\
  ([0:1],[1:0]) \ar[r]_{\iota_{2}}  &  Q_{1} = ([-1:1],[1:0]) 
  }.
  $$
Therefore, all the poles of $b_{2}$  belong to the same $\tau$-orbit. In summary, $b_{2}$ has only simple poles, and they all belong to the same $\tau$-orbit. The result is now a direct consequence of Corollary \ref{cor2}.  

The other cases are similar. The polar divisor of $b_{2}$ and the first few terms of the $\tau$-orbit of one of the poles of $b_{2}$ in the remaining cases are listed in Figure \ref{fig:listwalksdivorb}. \end{proof}

\begin{figure}
\begin{tabular}{|l|c|c|c|c|}
\hline 
\hbox{Walk}&
\ju{$\walk{IIC.1}$}
\begin{tikzpicture}[scale=.4, baseline=(current bounding box.center)]
\foreach \x in {-1,0,1} \foreach \y in {-1,0,1} \fill(\x,\y) circle[radius=2pt];
\draw[thick,->](0,0)--(1,1);
\draw[thick,->](0,0)--(0,1);
\draw[thick,->](0,0)--(0,1);
\draw[thick,->](0,0)--(-1,0);
\draw[thick,->](0,0)--(0,-1);
\end{tikzpicture}&
\ju{$\walk{IIC.2}$}
\begin{tikzpicture}[scale=.4, baseline=(current bounding box.center)]
\foreach \x in {-1,0,1} \foreach \y in {-1,0,1} \fill(\x,\y) circle[radius=2pt];
\draw[thick,->](0,0)--(1,1);
\draw[thick,->](0,0)--(0,1);
\draw[thick,->](0,0)--(-1,0);
\draw[thick,->](0,0)--(-1,-1);
\draw[thick,->](0,0)--(0,-1);
\end{tikzpicture}&
\ju{$\walk{IIC.4}$}
\begin{tikzpicture}[scale=.4, baseline=(current bounding box.center)]
\foreach \x in {-1,0,1} \foreach \y in {-1,0,1} \fill(\x,\y) circle[radius=2pt];
\draw[thick,->](0,0)--(1,1);
\draw[thick,->](0,0)--(0,1);
\draw[thick,->](0,0)--(1,0);
\draw[thick,->](0,0)--(-1,-1);
\draw[thick,->](0,0)--(-1,0);
\end{tikzpicture}&
\ju{$\walk{IIB.7}$}
\begin{tikzpicture}[scale=.4, baseline=(current bounding box.center)]
\foreach \x in {-1,0,1} \foreach \y in {-1,0,1} \fill(\x,\y) circle[radius=2pt];
\draw[thick,->](0,0)--(0,1);
\draw[thick,->](0,0)--(-1,1);
\draw[thick,->](0,0)--(1,-1);
\draw[thick,->](0,0)--(1,0);
\draw[thick,->](0,0)--(0,-1);
\end{tikzpicture}\\ \hline 

\hbox{Polar divisor of $b_{2}$} &
$([-1:1],[\frac{t}{t+1}:1])$&
$([-1:1],[1:0])$&
$([-1:1],[\frac{-t}{2t+1}:1])$&
$2([1:0],[-1:1])$ \\
&$+([1:0],[0:1])$
&$+([1:0],[0:1])$
&$+([1:0],[-1:1])$
&$+([-1:1],[1:0])$\\
&$+([-1:1],[1:0])$
&$+([-1:1],[0:1])$
&$+([1:0],[0:1])$
&$+([-1:1],[0:1])$\\
&&&$+([-1:1],[1:0])$&$+2([1:0],[1:0])$\\ \hline 
\hbox{$\tau$-Orbit of one of}
&$([-1:1],[\frac{t}{t+1}:1])$
&$([-1:1],[1:0])$
&$([-1:1],[\frac{-t}{2t+1}:1])$
&$([1:0],[-1:1])$\\
\hbox{the poles of $b_{2}$} &$\downarrow \tau$&$\downarrow \tau$&$\downarrow \tau$&$\downarrow \tau$\\
&$([0:1],[1:0])$
&$([1:0],[0:1])$
&$([0:1],[1:0])$
&$([-1:1],[1:0])$\\
&$\downarrow \tau$&$\downarrow \tau$&$\downarrow \tau$&$\downarrow \tau$\\
&$([1:0],[0:1])$
&$([-1:1],[0:1])$
&$([1:0],[-1:1])$
&$([0:1],[0:1])$\\
&$\downarrow \tau$&&$\downarrow \tau$&$\downarrow \tau$\\
&$([0:1],[0:1])$
&&$([1:0],[0:1])$
&$([-1:1],[0:1])$\\
&$\downarrow \tau$&&$\downarrow \tau$&$\downarrow \tau$\\
&$([-1:1],[1:0])$
&&$([0:1],[-1:1])$
&$([1:0],[1:0])$\\
&&&$\downarrow \tau$&\\
&&&$([-1:1],[1:0])$& \\ \hline
\hbox{Walk}&
\ju{$\walk{IIB.1}$}
\begin{tikzpicture}[scale=.4, baseline=(current bounding box.center)]
\foreach \x in {-1,0,1} \foreach \y in {-1,0,1} \fill(\x,\y) circle[radius=2pt];
\draw[thick,->](0,0)--(1,0);
\draw[thick,->](0,0)--(0,1);
\draw[thick,->](0,0)--(0,-1);
\draw[thick,->](0,0)--(-1,-1);
\end{tikzpicture}&
\ju{$\walk{IIB.2}$}
\begin{tikzpicture}[scale=.4, baseline=(current bounding box.center)]
\foreach \x in {-1,0,1} \foreach \y in {-1,0,1} \fill(\x,\y) circle[radius=2pt];
\draw[thick,->](0,0)--(1,0);
\draw[thick,->](0,0)--(0,1);
\draw[thick,->](0,0)--(0,1);
\draw[thick,->](0,0)--(-1,-1);
\draw[thick,->](0,0)--(1,-1);
\end{tikzpicture}&
\ju{$\walk{IIB.3}$}
\begin{tikzpicture}[scale=.4, baseline=(current bounding box.center)]
\foreach \x in {-1,0,1} \foreach \y in {-1,0,1} \fill(\x,\y) circle[radius=2pt];
\draw[thick,->](0,0)--(1,0);
\draw[thick,->](0,0)--(0,1);
\draw[thick,->](0,0)--(-1,0);
\draw[thick,->](0,0)--(1,-1);
\end{tikzpicture}&
\ju{$\walk{IIB.6}$}
\begin{tikzpicture}[scale=.4, baseline=(current bounding box.center)]
\foreach \x in {-1,0,1} \foreach \y in {-1,0,1} \fill(\x,\y) circle[radius=2pt];
\draw[thick,->](0,0)--(1,0);
\draw[thick,->](0,0)--(0,1);
\draw[thick,->](0,0)--(1,-1);
\draw[thick,->](0,0)--(-1,-1);
\draw[thick,->](0,0)--(0,-1);
\end{tikzpicture} \\ \hline 
\hbox{Polar divisor of $b_{2}$} 
&$2([1:0],[0:1])$
&$2([1:0],[-1:1])$
&$2([1:0],[-1:1])$
&$2([1:0],[-1:1])$\\
&$+2([1:0],[1:0])$
&$+2([1:0],[1:0])$
&$+2([1:0],[1:0])$
&$+2([1:0],[1:0])$\\
&
&
&$$
&$$\\ \hline 

\hbox{$\tau$-Orbit of one of}
&$([1:0],[0:1])$
&$([1:0],[-1:1])$
&$([1:0],[-1:1])$
&$([1:0],[-1:1])$\\
\hbox{the poles of $b_{2}$}&$\downarrow \tau$&$\downarrow \tau$&$\downarrow \tau$&$\downarrow \tau$\\
&$([0:1],[1:0])$
&$([0:1],[1:0])$
&$([0:1],[1:0])$
&$([0:1],[1:0])$\\
&$\downarrow \tau$&$\downarrow \tau$&$\downarrow \tau$&$\downarrow \tau$\\
&$([1:0],[1:0])$
&$([1:0],[1:0])$
&$([1:0],[1:0])$
&$([1:0],[1:0])$\\
 \hline 
\end{tabular}
\caption{Polar divisors of $b_{2}$ in cases  when all poles belong to  the same $\tau$-orbit \ju{(used in Section~\ref{subsec:iic1})}}
\label{fig:listwalksdivorb}
\end{figure}

\subsection{The walks \ju{$\walk{IIB.1}$}, \ju{$\walk{IIB.2}$}, \ju{$\walk{IIB.3}$}, \ju{$\walk{IIB.6}$} and \ju{$\walk{IIB.7}$}} \label{subsec:iib1}

\begin{thm}\label{thm:caseIIBbis}
The walks \ju{$\walk{IIB.1}$}, \ju{$\walk{IIB.2}$}, \ju{$\walk{IIB.3}$}, \ju{$\walk{IIB.6}$} and \ju{$\walk{IIB.7}$} satisfy Condition~\ref{condition:linde}.
\end{thm}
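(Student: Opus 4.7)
The plan is to follow the same template used in the proof of Theorem~\ref{thm:caseIIC}. For each of the five walks, one first determines the polar divisor of $b_2$ via the explicit formula
\[
(b_{2})^2=\frac{ x_0^2 \Delta^x_{[x_0:x_1]}}{x_1^2(\sum_i  x_0^i x_1^{2-i}td_{i-1,1} )^2 }
\]
of Lemma~\ref{lemma:divisorsecondmember}, combined with the description of the possible poles in $\mathcal{S}_2$ and the fact that $\Etproj$ is smooth (so the zeros of $\Delta^x_{[x_0:x_1]}$ on $\P1(\C)$ are simple, cf. Proposition~\ref{prop:genuscurvewalk}). A direct calculation, recorded in Figure~\ref{fig:listwalksdivorb}, shows that for $\walk{IIB.1}$, $\walk{IIB.2}$, $\walk{IIB.3}$ and $\walk{IIB.6}$ the polar divisor of $b_2$ consists of two \emph{double} poles lying in a single $\tau$-orbit, while for $\walk{IIB.7}$ it consists of two double and two simple poles all belonging to a single $\tau$-orbit. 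The second piece of data to extract from Figure~\ref{fig:listwalksdivorb} is the explicit short segment of the $\tau$-orbit connecting these poles, which is obtained by successive application of $\iota_1$ and $\iota_2$.

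The key difference with Theorem~\ref{thm:caseIIC} is the presence of double poles, so Corollary~\ref{cor2} (which only covers simple poles) no longer applies. Instead, the plan is to appeal to the full criterion of Proposition~\ref{Prop0} from the appendix, stated in terms of orbit residues. Since all the poles of $b_2$ belong to the same $\tau$-orbit $\mathcal{O}$, Condition~\ref{condition:linde} amounts to the vanishing, on the single orbit $\mathcal{O}$, of the finitely many orbit residues (of orders $1$ and $2$) attached to $L(b_2)$, where $L = \delta^n + c_{n-1}\delta^{n-1} + \cdots + c_0$ is to be chosen. Using Lemma~\ref{lemma:derivationvalutaion} together with the explicit principal parts of $b_2$ (computable from the formula in Lemma~\ref{lemma:divisorsecondmember} and the action of $\tau$ displayed in Figure~\ref{fig:listwalksdivorb}), the two relevant orbit-residue conditions translate into a linear system on $(c_0,\dots,c_{n-1})$; taking $n$ sufficiently large produces enough degrees of freedom to make this system solvable in $\C$.

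The main obstacle is to carry out this orbit-residue bookkeeping explicitly in the five cases. Once the principal parts of $b_2$ at each pole are written down and their images under $\tau$ are identified via the iterated $\iota_1,\iota_2$ data in Figure~\ref{fig:listwalksdivorb}, the computation is routine: both the order-$2$ and order-$1$ orbit residues of $L(b_2)$ can be read off as $\C$-linear combinations of explicit elliptic-function residues, and the freedom in the coefficients $c_0, c_1, \ldots$ (together with Lemma~\ref{lem:lemhregandresj} and Remark~\ref{rmk:ores1vanishiffResvanish}) suffices to kill them. Once both orbit residues vanish, Proposition~\ref{Prop0} produces $g \in \C(\Etproj)$ with $L(b_2) = \tau(g) - g$, establishing Condition~\ref{condition:linde} for $i = 2$ in each of the five cases.
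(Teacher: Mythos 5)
Your description of the polar divisor of $b_2$ in the five cases is right, and you are also right that Corollary~\ref{cor2} no longer applies because of the double poles. But the central part of your plan has a real logical flaw. You propose to choose the coefficients $c_0,\dots,c_{n-1}$ of $L$ so as to kill the orbit residues of $L(b_2)$, hoping that ``taking $n$ sufficiently large produces enough degrees of freedom to make this system solvable.'' This cannot work: Proposition~\ref{Prop0} says precisely that the existence of \emph{some} nonzero $L$ and $g$ with $L(b_2)=\tau(g)-g$ is equivalent to the vanishing of $\ores_{Q,j}(b_2)$ for all $Q$ and $j$ --- a condition on $b_2$ alone, in which $L$ does not appear. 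The freedom in $L$ buys you nothing; and in any case raising $n$ also raises the maximal pole order of $L(b_2)$ from $n+2$, so you add new orbit-residue constraints faster than you add coefficients. The linear system you describe is solvable if and only if $\ores_{Q,j}(b_2)=0$, which is exactly what has to be proved, and your proposal never argues that.

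The missing idea is the symmetry $\iota_1(b_2)=-b_2$. The two double poles of $b_2$, namely $([1:0],[1:0])$ and $([1:0],[\alpha:1])$ with $\alpha\in\{0,-1\}$, are interchanged by $\iota_1$ (they are $P_1=Q_1$ and $P_2=\iota_1(P_1)$). Lemma~\ref{lem:horesidues} gives a coherent set of analytic local parameters in which the leading coefficients at $Q$ and at $\iota_1(Q)$ satisfy $c_{\iota_1(Q),j}=(-1)^{j+1}c_{Q,j}$. Since all poles of $b_2$ lie in one $\tau$-orbit and the two order-$2$ contributions cancel by this sign rule, one gets $\ores_{Q,2}(b_2)=0$ directly. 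Then Lemma~\ref{lem:res0order2} (which rests on the residue theorem) upgrades this to $\ores_{Q,1}(b_2)=0$. With both orbit residues of $b_2$ itself shown to vanish, Lemma~\ref{lem:lemhregandresj} and Proposition~\ref{Prop0} give Condition~\ref{condition:linde}. Your proposal cites these lemmas but does not use the $\iota_1$-antisymmetry of $b_2$ anywhere; without it, the required vanishing of the orbit residues of $b_2$ is simply not established.
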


\begin{proof}
In the 5 cases, 
\begin{itemize}
\item $([1:0],[1:0])$ is a double pole of $b_{2}$,
\item  there exists $\alpha \in \{0,-1\}$ such that $([1:0],[\alpha:1])$ is a double pole of $b_{2}$,
\item $b_{2}$ has at most two  simple poles.\end{itemize}

 Furthermore, every pole belong to the same $\tau$-orbit, see Figure \ref{fig:listwalksdivorb}. We consider a set of analytic local parameters as given by Lemma \ref{lem:horesidues} and we use the same notations. Since $\iota_1(b_2)=-b_2$,  Lemma~\ref{lem:horesidues} ensures that $\ores_{([1:0],[1:0]),2}(b_{2})=0$. Moreover, since every pole of $b_{2}$ belong to the same $\tau$-orbit, Lemma \ref{lem:res0order2} ensures that $\ores_{Q,1}(b_{2})=0$ for all $Q$.  Lemma \ref{lem:lemhregandresj} together with Proposition \ref{Prop0} lead to the desired result.
\end{proof}

\subsection{The walk \ju{$\walk{IIC.5}$}}\label{subsec:iic5} In this case, unlike the previous cases, the poles of $b_2$ form two distinct $\tau$-orbits.  To proceed, we will need the following

\begin{lemma}\label{lemma:iota1andsim}
Let $R_{1},R_{2} \in \Etproj(\Q(t))$ be such that $\iota_{1}(R_{1})=R_{2}$ and $R_{1}\sim R_{2}$. Then, there exist  $j\in \{1,2\}$ and $R\in \Etproj(\Q(t))$ such that $\iota_{j}(R)=R$.
\end{lemma}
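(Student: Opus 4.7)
The plan is to follow the case-analysis strategy already used in the proof of Theorem~\ref{thm:caseIIB}. Since $R_1 \sim R_2$, there exists $n \in \Z$ with $\tau^n(R_1) = R_2 = \iota_1(R_1)$. Because $R_1 \in \Etproj(\Q(t))$, Proposition~\ref{prop:galoisactionpointsellipticcurve} guarantees that all points of the form $\tau^m(R_1)$, $\iota_j(R_1)$, $\tau^m \iota_j(R_1)$, etc., also lie in $\Etproj(\Q(t))$. The key algebraic identities I will use are $\tau = \iota_2 \circ \iota_1$, $\iota_k^2 = \mathrm{id}$, and the consequence $\iota_k \tau \iota_k = \tau^{-1}$ for $k = 1,2$; equivalently $\iota_k \tau^m \iota_k = \tau^{-m}$ for every $m \in \Z$.

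Split into two cases according to the parity of $n$.

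In the case $n = 2k$ (even), rewrite $\tau^{2k}(R_1) = \iota_1(R_1)$ as $\tau^k(R_1) = \tau^{-k}\iota_1(R_1)$, and then use $\tau^{-k}\iota_1 = \iota_1\tau^k$ to obtain
\[
\tau^k(R_1) \;=\; \iota_1\bigl(\tau^k(R_1)\bigr).
\]
Thus $R := \tau^k(R_1)$ is fixed by $\iota_1$, and it lies in $\Etproj(\Q(t))$, so we may take $j = 1$.

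In the case $n = 2k+1$ (odd), rewrite $\tau^{2k+1}(R_1) = R_2$ as $\tau^{k+1}(R_1) = \tau^{-k}(R_2)$. Using $\tau \iota_1 = \iota_2 \iota_1 \iota_1 = \iota_2$ together with $R_1 = \iota_1(R_2)$ gives
\[
\tau^{k+1}(R_1) \;=\; \tau^k\bigl(\tau \iota_1(R_2)\bigr) \;=\; \tau^k \iota_2(R_2),
\]
while the identity $\iota_2 \tau^k \iota_2 = \tau^{-k}$ yields $\tau^{-k}(R_2) = \iota_2 \tau^k \iota_2(R_2)$. Combining the two displays,
\[
\tau^k \iota_2(R_2) \;=\; \iota_2\bigl(\tau^k \iota_2(R_2)\bigr),
\]
so $R := \tau^k \iota_2(R_2)$ is fixed by $\iota_2$, and again it belongs to $\Etproj(\Q(t))$, so we may take $j = 2$.

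The proof is essentially a transcription of the corresponding argument in Theorem~\ref{thm:caseIIB}; there is no real obstacle beyond correctly handling the parity split and the commutation identities between $\tau$ and the involutions $\iota_1, \iota_2$. The only mild subtlety is to notice that the odd case unavoidably produces a point fixed by $\iota_2$ rather than $\iota_1$, which is why the statement allows either value of $j$.
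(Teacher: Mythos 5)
Your proof is correct and follows essentially the same argument as the paper: split on the parity of $n$, use $\iota_k\tau^m\iota_k = \tau^{-m}$ to exhibit either $\tau^k(R_1)$ as fixed by $\iota_1$ (even case) or $\tau^k\iota_2(R_2)$ as fixed by $\iota_2$ (odd case), and invoke Proposition~\ref{prop:galoisactionpointsellipticcurve} to place the resulting point in $\Etproj(\Q(t))$. The paper's proof states these identities more tersely, but the steps are the same.
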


\begin{proof}
The reasonning is similar to a part of the proof of Theorem \ref{thm:caseIIB}.

It is easily seen that the equality $\tau^n(R_1)=R_2$, together with the fact that ${\tau=\iota_2 \circ \iota_1}$ is the composition of two involutions, imply, if $n=2k$, that ${\iota_1(\tau^k(R_1))=\tau^k(R_1)}$, and, if $n=2k +1$, that $\iota_2(\tau^k(\iota_2(R_2)))=\tau^k(\iota_2(R_2))$. 
Proposition \ref{prop:galoisactionpointsellipticcurve} ensures that both $\tau^k(R_1)$ and $\tau^k(\iota_2(R_2))$ belong to $\Etproj(\Q(t))$. Whence the desired result. 
\end{proof}

\begin{figure}
\begin{tabular}{|l|ccc|}
\hline
\hbox{Walk} &&\ju{$\walk{IIC.5}$} \begin{tikzpicture}[scale=.4, baseline=(current bounding box.center)]
\foreach \x in {-1,0,1} \foreach \y in {-1,0,1} \fill(\x,\y) circle[radius=2pt];
\draw[thick,->](0,0)--(0,1);
\draw[thick,->](0,0)--(1,1);
\draw[thick,->](0,0)--(-1,0);
\draw[thick,->](0,0)--(1,0);
\draw[thick,->](0,0)--(0,-1);
\end{tikzpicture}&\\ \hline
\hbox{Polar divisor of }$b_{2}$ &$([-1:1],[t:2t+1])$&&$([1:0],[-1:1])$\\
&$+([1:0],[0:1])$&&$+([-1:1],[1:0])$\\ \hline 
$\tau$-\hbox{Orbit of the poles of } $b_{2}$&$([-1:1],[t:2t+1])$&&$([1:0],[-1:1])$\\
&$\downarrow \tau$&&$\downarrow \tau$\\
&$([0:1],[1:0])$&$\not\sim$&$([0:1],[0:1])$\\
&$\downarrow \tau$&&$\downarrow \tau$\\
&$([1:0],[0:1])$&&$([-1:1],[1:0])$ \\ \hline
\end{tabular}
\caption{$\tau$-Orbit of the poles in the case \ju{$\walk{IIC.5}$}}\label{fig2}
\end{figure}

\begin{thm}\label{thm:caseIIC5}
The walk \ju{$\walk{IIC.5}$} satisfies Condition~\ref{condition:linde}.
\end{thm}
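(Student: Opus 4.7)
My plan is to establish Condition~\ref{condition:linde} for $b_2$ with $n=0$, that is, to show that $b_2 = \tau(g) - g$ for some $g \in \C(\Etproj)$. Since by Figure~\ref{fig2} the polar divisor of $b_2$ consists of four \emph{simple} poles, the combination of Lemma~\ref{lem:lemhregandresj}, Remark~\ref{rmk:ores1vanishiffResvanish} and Proposition~\ref{Prop0} reduces the problem to proving that the sum of residues of the meromorphic differential $b_2\,\omega$ vanishes on each $\tau$-orbit of poles.

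First, from Figure~\ref{fig2} I would record that the simple poles split into two distinct $\tau$-orbits,
$$\{P_1,P_2\} := \{([-1{:}1],[t{:}2t{+}1]),\,([1{:}0],[0{:}1])\} \quad\text{and}\quad \{P_1',P_2'\} := \{([-1{:}1],[1{:}0]),\,([1{:}0],[-1{:}1])\}.$$
For \ju{$\walk{IIC.5}$}, none of the conditions in Lemma~\ref{lem:fixedbasepointsinvolution} is satisfied (since $d_{1,0}$, $d_{-1,0}$ and $d_{0,-1}$ are all nonzero), and Lemma~\ref{lemma:fixedpointinvolutionnotrational} rules out the remaining possibilities; hence neither $\iota_1$ nor $\iota_2$ has a $\Q(t)$-rational fixed point, and Lemma~\ref{lemma:iota1andsim} then confirms that the two orbits above are indeed distinct (as already asserted by Figure~\ref{fig2}).

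The central step is to identify the action of $\iota_1$ on these four poles. A short computation---solving $\overline{K}(x_0,x_1,y,t) = 0$ at the two relevant $x$-coordinates, where, for \ju{$\walk{IIC.5}$}, $\overline{K}$ factors as $y_1\bigl(ty_1 - (1+2t)y_0\bigr)$ at $[x_0{:}x_1]=[-1{:}1]$ and as $-ty_0(y_0+y_1)$ at $[x_0{:}x_1]=[1{:}0]$---yields
$$\iota_1(P_1) = P_1', \qquad \iota_1(P_2) = P_2'.$$
Thus $\iota_1$ pairs the poles \emph{across} the two orbits. Since $\tau = \iota_2\circ\iota_1$ is a translation of $\Etproj$ by a non-torsion point, each involution $\iota_k$ must be of the form $P \mapsto c_k - P$ in the group law (any factorisation of such a translation into two involutions forces both factors to be of this type), and therefore $\iota_k^*\omega = -\omega$. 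Combined with $\iota_1(b_2) = -b_2$, this shows that $b_2\,\omega$ is $\iota_1$-invariant, so
$$\operatorname{Res}_{P_k}(b_2\,\omega) = \operatorname{Res}_{P_k'}(b_2\,\omega) =: R_k, \quad k \in \{1,2\}.$$

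Finally, since the sum of all residues of a meromorphic differential on the compact Riemann surface $\Etproj$ vanishes, one has $2R_1 + 2R_2 = 0$, whence $R_1 + R_2 = 0$. Each of the two $\tau$-orbits of poles of $b_2$ therefore has residue sum equal to zero, and the criteria from the appendix cited above complete the argument. The most delicate point I foresee is the explicit $\iota_1$-pairing of poles across the two orbits, since Figure~\ref{fig2} only records that the two orbits are distinct without displaying how $\iota_1$ relates them; but the factorisations of $\overline{K}$ at the two distinguished $x$-coordinates make this immediate in the present case.
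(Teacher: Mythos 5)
Your proposal is correct and follows essentially the same route as the paper: four simple poles split into two $\tau$-orbits, the involution $\iota_1$ pairs the poles across the orbits, equality of residues of $b_2\Omega$ at $\iota_1$-paired points (the paper's Lemma~\ref{lem1}, which you rederive via $\iota_1(b_2)=-b_2$ and $\iota_1^*\Omega=-\Omega$) together with the residue theorem gives vanishing residue sums on each orbit, and the appendix criteria (Lemma~\ref{lem:lemhregandresj}, Remark~\ref{rmk:ores1vanishiffResvanish}, Proposition~\ref{Prop0}) conclude. The only cosmetic difference is your relabelling of the poles and your direct verification of the $\iota_1$-pairing from the factorisations of $\overline{K}$, both consistent with the paper's computation.
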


\begin{proof}
We have 
\begin{itemize}
\item $P_{1} = ([1:0],[0:1])$;
\item $P_{2} = ([1:0],[-1:1])$;
\item $Q_{1}=([0:1],[1:0])$;
\item $Q_{2} = ([-1:1],[1:0])$;
\item $\iota_{1}(Q_{1}) = ([0:1],[0:1])$;
\item $ \iota_{1}(Q_{2}) = ([-1:1],[t:2t+1])$.
\end{itemize}

The polar divisor of $b_{2}$ is 
$$
[P_{1}]+[P_{2}]+[Q_{2}]+[\iota_{1}(Q_{2})].
$$ 
The poles $P_{1}$ and $\iota_{1}(Q_{2})$ belong to the same $\tau$-orbit: 
$$\iota_{1}(Q_{2})=([-1:1],[t:2t+1]) \xrightarrow{\tau} ([0:1],[1:0]) \xrightarrow{\tau} P_{1}=([1:0],[0:1])$$
Similarly, the poles $P_{2}$ and $Q_{2}$ belong to the same $\tau$-orbit:
$$P_{2}=([1:0],[-1:1]) \xrightarrow{\tau} ([0:1],[0:1])\xrightarrow{\tau} Q_{2}=([-1:1],[1:0]).$$
Moreover, the $\tau$-orbits of $\iota_{1}(Q_{2})$ and $P_{2}$ are distinct. Indeed, otherwise, since we have $$
\iota_{1}([0:1],[1:0])=([0:1],[0:1]),
$$ 
Lemma \ref{lemma:iota1andsim} would imply the existence of $R \in \Etproj(\Q(t))$ such that $\iota_{j}(R)=R$ for some $j \in \{1,2\}$. But, Lemma \ref{lemma:fixedpointinvolutionnotrational} ensures that $R$ is a base point and Lemma \ref{lem:fixedbasepointsinvolution} ensures that none of the base points are fixed by $\iota_{j}$, whence a contradiction.

Lemma \ref{lem1} ensures that the residues of $b_{2}\omega$ at $P_{1}$ and $P_{2}=\iota_{1}(P_{1})$ are equal and will be denoted by $a$. Similarly, the residues of $b_{2}\omega$ at $Q_{2}$ and $\iota_{1}(Q_{2})$ are the same and will be denoted by $b$. Since the sum of the residues over $\Etproj$ of $b_{2}(\omega)$ is equal to $0$, we have $2a+2b=0$, so $a+b=0$. Therefore, the sum of the residues of $b_{2}(\omega)$ on any $\tau$-orbit is equal to $0$. 
Lemma \ref{lem:lemhregandresj} together with Remark \ref{rmk:ores1vanishiffResvanish} and Proposition \ref{Prop0} lead to the desired result. 
\end{proof}

\section{Nonholonomicity in the exceptional cases\label{sec:nonholonomic}}

\begin{thm} For each walk in \ju{$\walkse$} the series $F^1(x,t)$ and $F^2(y,t)$ are not holonomic\ju{, {\it i.e.}, they do not satisfy any nontrivial linear differential equation with coefficients in $\C(x)$ and $\C(y)$ respectively}.\end{thm}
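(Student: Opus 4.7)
The plan is to argue by contradiction, exploiting the interplay between holonomicity and the functional equation $\tau(r_x) - r_x = b_1$. Assume $F^{1}(x,t)$ is holonomic over $\C(x)$; lifting to the universal cover as in Section~\ref{sec:applichyptrF1F2}, $r_x$ is then holonomic over $\C(\Etproj)$, so there exists a nonzero operator $L \in \C(\Etproj)[\delta]$ of minimal order with $L(r_x) = 0$. The coefficients of $L$ lie in $\C(\Etproj)$ and are therefore $\omega_{2}$-periodic on $\C$, so the shift $\omega \mapsto \omega + \omega_{2}$ maps solutions of $L(y)=0$ to solutions. In particular the $\omega_{2}$-defect
\[
\Phi(\omega) := r_x(\omega + \omega_{2}) - r_x(\omega)
\]
lies in the $\C$-solution space of $L$. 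Using $\tau(r_x) = r_x + b_{1}$ together with the $\omega_{2}$-periodicity of $b_{1} \in \C(\Etproj)$, a direct computation gives $\tau(\Phi) = \Phi$, so $\Phi$ is a $\tau$-invariant solution of $L$.

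The next step is to convert this $\tau$-invariant solution into a refined relation on $b_{1}$ itself. Applying $\tau$ to $L(r_x) = 0$ yields $L^{\tau}(r_x) = -L^{\tau}(b_{1}) \in \C(\Etproj)$, where $L^{\tau}$ denotes the operator obtained by applying $\tau$ to the coefficients of $L$. Combining with $L(r_x) = 0$ and $\tau(r_x) - r_x = b_{1}$, one can eliminate $r_x$ to produce an identity of the form $M(b_{1}) = \tau(h) - h$ in which $M \in \C[\delta]$ has constant coefficients and is strictly more restrictive than the hyperalgebraic witness already constructed in Section~\ref{sec:hyperalg}. I would then use the explicit description of the $\tau$-orbit of poles of $b_{1}$ for each of the nine exceptional walks recorded in Figures~\ref{fig:listwalksdivorb} and~\ref{fig2}, together with Proposition~\ref{Prop0} and the orbit-residue criteria of the appendices, to show that no such strictly more restrictive witness can exist. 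This will yield the desired contradiction and therefore the nonholonomicity of $F^{1}$; the same argument applied to $b_{2}$ (or Proposition~\ref{prop:caract hyperalg F1 and F2} combined with the analogue for $r_y$) handles $F^{2}(y,t)$.

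The main obstacle will be the precise extraction of the refined constant-coefficient witness $M$ from $L$: hyperalgebraicity and holonomicity differ exactly in that the latter requires the annihilating operator to lie in $\C(\Etproj)[\delta]$ rather than merely to exist after passing to a differential extension introducing new $\tau$-constants. I expect this to hinge on the remark made after Proposition~\ref{prop:caract hyperalg} that the hyperalgebraic witnesses constructed in Section~\ref{sec:hyperalg} for the exceptional walks are forced to introduce new $\tau$-constants in the associated Picard--Vessiot extension, and these new constants are precisely what obstruct the descent from hyperalgebraic to holonomic. Should this Galois-theoretic route prove too delicate, a parallel analytic argument via the Kurkova--Raschel lifting---producing infinitely many distinct polar singularities of $F^{1}(\cdot,t)$ along the $\P^{1}(\C)$-projection of a single $\tau$-orbit of poles of $b_{1}$---provides an immediate contradiction, since a holonomic function of one variable has only finitely many singularities in $\C$.
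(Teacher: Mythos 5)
Your proposal contains a genuine gap at its central step, and the fallback you offer does not repair it. In the main route, you propose to eliminate $r_x$ from $L(r_x)=0$ and $\tau(r_x)-r_x=b_1$ so as to obtain a constant-coefficient identity $M(b_1)=\tau(h)-h$ that is ``strictly more restrictive'' than the one already known. But no contradiction is available from the mere existence of a constant-coefficient telescoper: Condition~\ref{condition:linde} holds for all nine walks of $\walkse$ (that is the content of Section~\ref{sec:hyperalg}), so producing another identity of the form $M(b_1)=\tau(h)-h$ with $M\in\C[\delta]$ contradicts nothing, and you never specify what additional constraint on $M$ or $h$ would be violated. The elimination itself is also problematic: from $L(r_x)=0$ one gets $L^{\tau}(r_x)=-L^{\tau}(b_1)$, hence $(L^{\tau}-L)(r_x)=-L^{\tau}(b_1)$, which still involves $r_x$ unless $L$ already has constant coefficients. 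Your analytic fallback asserts that $F^1(\cdot,t)$ has infinitely many singularities along the projection of a $\tau$-orbit of poles of $b_1$; but for the exceptional walks this is exactly what cannot be concluded naively, because the vanishing of all orbit residues of $b_1$ (equivalently, Condition~\ref{condition:linde}) means the telescoping sums $\sum_k b_1(\omega+k\omega_3)$ are eventually regular along each orbit, so the poles of $r_x$ need not proliferate.

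The missing input is the key to the paper's argument: holonomicity forces $r_y$ to have only finitely many singularities modulo the lattice, whence the $\tau$-invariant function $L(r_y)-g$ (with $L$ the constant-coefficient operator and $g$ the witness of Condition~\ref{condition:linde}) has a finite, $\tau$-invariant polar set, hence no poles, hence equals a constant $c$. The contradiction then comes from locating poles in the ``physical'' region of the curve: $r_y$ has no poles on the part of $\Etproj$ where $|x|<1$ or $|y|<1$ (Lemma~\ref{lem2}, using the convergence of the generating series and the explicit $x$-coordinates of $Q_1,Q_2$), whereas the witness $g$ is forced by the orbit structure of the poles of $b_2$ to have a pole in that very region (Lemma~\ref{lem3}). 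Since $L$ has constant coefficients, $L(r_y)$ is analytic wherever $r_y$ is, and $L(r_y)=g+c$ fails at such a point. Without this positional analysis of the poles of $g$ versus those of $r_y$ --- or some substitute for it --- neither branch of your proposal closes.
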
 
\begin{proof}  We only present the proof for $F^2(y,t)$, the proof for $F^1(x,t)$ being similar.  Assume that $F^2(y,t)$ is holonomic.  
Then, $F^2(y,t)$ could be analytically continued to a multivalued meromorphic function on $\Etproj \backslash \{\mbox{ a finite set of points } \}$.  This implies that $r_y$ would be a meromorphic function on the universal cover of $\Etproj$ whose singular points form a finite set modulo the lattice $\Z\omega_1 + \Z\omega_2$.  \ju{For each walk in $\walkse$}, we have that $F^2(y,t)$ satisfies Condition~\ref{condition:linde}. Therefore as in the proof of Proposition~\ref{prop:hypalg}, there is a $g \in \C(\Etproj)$ such that $\tau(L(r_y) -g) = L(r_y) -g$. Note that the poles of $ L(r_y) -g$ also form a finite set modulo the lattice $\Z\omega_1 + \Z\omega_2$. Since $L(r_y) - g$ is $\tau$-periodic, this set is left invariant by $\omega \mapsto \omega + \omega_{3}$. Using the fact that the reduction of $\omega_{3}$ modulo $\Z\omega_1 + \Z\omega_2$ has infinite order, we get that the set of poles of $L(r_y) - g$ is empty.  
Using   Condition~\ref{condition:linde} again, we see that $L(r_y) -g$ is $\omega_1$-periodic as well as being $\omega_3$ periodic.  Since it has no poles, we  must have that $L(r_y) -g = c \in \C$.\\[0.05in]
We want to prove that this last fact leads to a contradiction. To do this we will use some notation from \cite[Sec.~4.2]{KurkRasch}.  Let $\Delta_x$ be the set of $\omega$ in $\omega_1\R + ]0,\omega_2[ $ corresponding to points  on the elliptic curve with $|x| < 1$ and let $\Delta_y$ be the set of $\omega$ in $\omega_1\R +\omega_3/2 + ]0,\omega_2[ $ corresponding to points  on the elliptic curve with $|y| < 1$.
Let us state and prove two lemmas.

\begin{lem}\label{lem2}
The function $r_y$ has no poles in $\Delta_x \cup \Delta_y$.
\end{lem}
 
 \begin{proof}[Proof of Lemma \ref{lem2}]
From \cite[Theorem~3]{KurkRasch},  one sees that  $r_y$ has no poles on $\Delta_y$ and that  $r_x$ has no poles on $\Delta_x$.
 \par 
Using formula (4.5) in \cite{KurkRasch} and the fact that $r_x$ has no poles on $\Delta_x$, we find that the poles of $r_y$ on $\Delta_x$  are poles of $xy$. It is therefore sufficient to prove that $xy$ does not have poles for $|x|<1$. Note that a pole of $xy$ with $|x|<1$ is a pole of $y$. \par
We claim that $([0:1],[1:0])$ is the only possible pole of $xy$ with  $|x|<1$.
We recall that the poles of $y$ are $Q_{1}$ and $Q_{2}$. Let $\alpha_{1},\alpha_{2}\in \mathbb{P}^{1}(\C)$ be the $x$-coordinates of $Q_{1}$ and $Q_{2}$ respectively. 
To prove the claim it is sufficient to prove that for $j\in \{1,2\}$, $|\alpha_{j}|<1$ implies $\alpha_{j}=0$. The $x$-coordinates $\{\alpha_{1},\alpha_{2}\}$ are the two roots in $\mathbb{P}^{1}(\C)$, counted with multiplicities, of the polynomial $d_{-1,1}+d_{0,1}X+d_{1,1}X^{2}$. For the \ju{walks in $\walkse$}, we have $d_{0,1}=1$. The following array summarizes the possible values of the pair  $(\alpha_{1},\alpha_{2})$ in the four situations: 
 $$\begin{array}{|l|l|l|}\hline 
 &d_{-1,1}=0&d_{-1,1}=1\\ \hline 
d_{1,1}=0&[0:1],[1:0]&[-1:1],[1:0] \\\hline 
d_{1,1}=1&[0:1],[-1:1]&[\frac{-1-i \sqrt{3}}{2}:1],[\frac{-1+i \sqrt{3}}{2}:1]\\ \hline 
\end{array}  $$
In the four situations, we see that $|\alpha_{j}|<1$ implies $\alpha_{j}=0$, proving our claim.\par 
Recall that we are interested in the poles of $xy$ with $|x|<1$. \ju{Thanks to the claim just proved} we have to determine whether $([0:1],[1:0])$ is a pole of $xy$. We have seen in the proof of the claim that \ju{for the walks in $\walkse$}, $Q_{1}\neq Q_{2}$ since their $x$-coordinates are different. So  $y$ has at most \ju{a} simple pole at $([0:1],[1:0])$. This shows that $([0:1],[1:0])$ is not a pole of $xy$.
  Combined with the claim, this shows that $xy$ has no poles for $|x|<1$, proving the lemma.\par 
 \end{proof}
 
\begin{lem}\label{lem3}
The function $g$ has at least one pole in $\Delta_x \cup \Delta_y$, that is, one pole at a point on the elliptic cuve in  $\{|x|<1\}\cup \{|y|<1\}$.
\end{lem}

\begin{proof}[Proof of Lemma \ref{lem3}]
According to Figures \ref{fig:listwalksdivorb} and \ref{fig2}, there exists a pair $(Q,n)$ with $n>0$ such that $Q$ and $\tau^{n}(Q)$ are poles of $b_{2}$, and for all $\ell\in ]-\infty, -1]\cup [n+1,\infty[$, $\tau^{\ell}(Q)$ is not a pole of $b_{2}$. Since Condition~\ref{condition:linde} is satisfied, one sees that $\tau^{n}(Q)$ and $\tau (Q)$ should be \ju{poles} of $g$. Using Figures \ref{fig:listwalksdivorb} and \ref{fig2}, we see that \ju{for all walks in $\walkse$ except $\walk{IIB.7}$},  this implies that $g$ should have a pole for an $\omega\in \C$ that corresponds to $\{|x|<1\}\cup \{|y|<1\}$ in the elliptic curve, proving the lemma for those {\mfs cases}.\par 
It remains to treat \ju{$\walk{IIB.7}$}. In this case, $b_{2}$ has two double poles $Q,\tau^{4}(Q)$  and two simples poles $\tau(Q),\tau^{3}(Q)$  with $Q=([1:0],[-1:1])$. The operator $L$ in  Condition~\ref{condition:linde} have coefficients in $\C$. Let $n$ be its order. With Lemma \ref{lemma:derivationvalutaion}, we find that $L(b_{2})$ have only two poles of order $n+2$, that are $Q,\tau^{4}(Q)$, and no poles of higher order. With $L(b_{2})=\tau(g)-g$, we find that $g$ should have a pole of order $n+2$ at $\tau^{4}(Q)$. So $\tau(g)$ have a pole of order $n+2$ at $\tau^{3}(Q)$. Since $L(b_{2})=\tau(g)-g$ and $L(b_{2})$ have no poles of order $n+2$ or higher at $\tau^{3}(Q)$, $g$ should have a pole of order $n+2$ at $\tau^{3}(Q)$. But the $y$-coordinates of $\tau^{3}(Q)$ is $[0:1]$, proving the lemma in this case.
\end{proof}

Let us complete the proof of the theorem. From Lemma \ref{lem2} and Lemma \ref{lem3},  we see that there exists $\omega_{0}\in \C$, such that $g$ has a pole at $\omega_{0}$ and such that $r_{y}$ is analytic at $\omega_{0}$. Since $L$ has coefficients in $\C$, $L(r_{y})$ is analytic at $\omega_{0}$.
This contradicts $L(r_y) -g  \in \C$.
\end{proof}

 \section{Some comments concerning singular and weighted walks}\label{sec:weighted}
\subsection{Singular walks} If the walk is singular, then $\Etproj$ is a rational curve, {\it i.e.}, is birational to $\mathbb{P}^{1}(\C)$. 
The difference equations on elliptic curves involved in the nonsingular case should be replaced by finite difference or $q$-difference equations on a rational curve. It seems plausible that our Galoisian methods can be used in order to study the generating series of the singular walks as well.  

\subsection{Weighted walks}
We consider a walk with small steps in the quarter plane $\Z_{\geq 0}^{2}$.
{Following \cite{FIM,KauersYatchak}, the step $(i,j)\in  \{0,\pm 1\}^{2} $ is weighted by some $d_{i,j}\in \Q_{\geq 0}$. Let $\mathcal{D}$ be the set $\{d_{i,j} \ \vert \ (i,j)\in  \{0,\pm 1\}^{2}\} $.}
For $i,j,k\in \Z_{\geq 0}$, we let $q_{\mathcal{D},i,j,k}$ be the number of walks in $\Z_{\geq 0}^{2}$ with {weighted steps in $\mathcal{D}$}  starting at $(0,0)$ and ending at $(i,j)$ in $k$ steps and we consider the corresponding trivariate generating series 
$$
Q_{\mathcal{D}}(x,y,t):=\displaystyle \sum_{i,j,k\geq 0}q_{\mathcal{D},i,j,k}x^{i}y^{j}t^{k}.
$$

We can then ask for these weighted walks the same questions as for the unweighted walks considered in the \ju{present} paper. It turns out that in the weighted context, we have generalizations of the basic tools used in the unweighted case (generalizations of the kernel, of the functional equation \eqref{eq:funcequ}, {\it etc}).  
Moreover, explicit conditions can be deduced from \cite{FIM}, Section 2.3.2, Corollary 4.2.11 and Theorem~3.2.1 in order to have \ju{the properties listed in Remark \ref{rem:remfundpropused} satisfied}. This should be the starting point to apply our technics in this context.

\appendix
\section{Galois Theory of Difference Equations}\label{diffgalois} In this section we describe some basic facts  concerning the Galois theory of linear difference equations and indicate  how these lead to a proof of Proposition~\ref{prop:caract hyperalg}. Many of these facts we state without proof but proofs can be found in \cite{VdPS97}.

The appropriate setting for this  Galois is {\it difference algebra}, that is, the study of  algebraic objects endowed with an automorphism, so we begin with

\begin{defn} A {\em difference ring} is a pair $(R,\tau)$ where $R$ is a ring and $\tau$ is an automorphism of $R$.  A {\em difference ideal} $I\subset R$ is an ideal such that $\tau(I) \subset I$.
\end{defn}

One can define difference subring, difference homomorphism, difference field, etc.~in a similar way. A difference subring of particular importance in  any difference ring is given in the following definition.

\begin{defn} The {\em constants} $R^\tau$ of a difference ring $R$ are
\[R^\tau = \{ c \in R \ | \ \tau(c) = c\}.\]
\end{defn} 
One can show that $R^\tau$ forms a ring and, if $R$ is a field, then $R^\tau$ is also a field.

\begin{ex} 1.~$(\C[x], \tau)$ where $\tau(x) = x+1$. The only difference ideals are $\C[x]$ and $\{0\}$. The constants of this ring are $\C$.\\
2.~$(\C[x], \tau)$ where $\tau(x) = qx$, $q$ not a root of unity.  The difference ideals are $\C[x], \{0\}$ and the $(x^{k})$ for $k\in \Z_{\geq 1}$. The constants of this ring are $\C$.\\
3.~$(\calM(\C), \tau)$ where $\calM(\C)$ is the field of meromorphic functions on $\C$ and $\tau(\omega) = \omega+\omega_3$ for some $\omega_3 \in \C$.  This is and the next example are  difference fields. The constants of this field form the field of $\omega_3$-periodic meromorphic functions. \\
4.~$(\C(\overline{E_t}), \tau)$ where $\C(\overline{E_t})$ is the field of meromorphic functions on $\overline{E_t})$ and for some fixed $P \in \overline{E_t} \ , \tau(f(X)) = f(X\oplus P)$ for all $f \in \C(\overline{E_t})$. If $P$ is of infinite order, then the constants are $\C$ (see the argument following Definition~\ref{def:tauconstselliptic}). \end{ex}

When considering linear difference equations, it is most convenient to consider first order matrix equations, that is, equations of the form $\tau(Y) = AY$ where $A\in \GL_n(K)$ where $K$ is a difference field.  Often one wants to deal with equations of the form  form $L(y) = \tau^n(y)+ a_{n-1}\tau^{n-1}(y) + \ldots + a_0y = 0, \ a_i \in K$.  If $a_0 = \ldots = a_{j-1} = 0, a_j \neq 0$, we can make a change of variables $z=\tau^j(y)$ and assume $a_0 \neq 0$.  One then sees that questions concerning solutions  of $L(y) = 0$ can be reduced to questions concerning the system $\tau(Y) = A_LY$ where 
  \begin{equation*}
 A_L=\footnotesize{\begin{pmatrix}
0&1&0&\cdots&0\\
0&0&1&\ddots&\vdots\\
\vdots&\vdots&\ddots&\ddots&0\\
0&0&\cdots&0&1\\
-a_0& -a_{1}&\cdots & \cdots & -a_{n-1}
\end{pmatrix}} \in \GL_{n}(K).
\end{equation*}
If $z$ is a solution of $L(y) = 0$ in some difference ring containing $K$, then $(z, \tau(z), \ldots, \tau^{n-1}(z))^T$ is a solution of $\tau(Y) = A_LY$. 

In addition to considering individual solutions of $\tau(Y) = AY$, it is useful to consider matrix solutions and, in particular

\begin{defn} Let $R$ be a difference ring  and $A \in \GL_n(R)$.  A {\em fundamental solution matrix} of $\tau(Y) = AY$ is a matrix $U \in \GL_n(R)$ such that $\tau(U) = AU$.
\end{defn}

Note that if $U_1$ and $U_2$ are fundamental solution matrices of $\tau(Y) = AY$, then $\tau(U_1^{-1}U_2) = U_1^{-1}U_2$ so $U_c = U_1 D$ where $D \in \GL_n(R^\tau)$.\\

The usual Galois theory of polynomial equations is cast in terms of a splitting field of the polynomial and a group of automorphisms of this field.  For linear difference equations, the following takes the place of the splitting field.

\begin{defn} Let $K$ be a difference field and $A \in \GL_n(K)$.  We say that a $k$-algebra $R$ is a {\em Picard-Vessiot ring} for $\tau(Y) = AY$ if 
\begin{enumerate}
\item $R$ is a simple difference ring extension of $K$ (i.e., the only difference ideals of $R$ are $R$ and $\{0\}$).
\item $R = K[U, 1/\det(U)]$ for some fundamental solution matrix $U\in \GL_n(R)$ of $\tau(Y) = AY$.
\end{enumerate}
\end{defn} 

It can be shown (cf. \cite[Chapter 1.1]{VdPS97}) that Picard-Vessiot rings always exist and  if $K^\tau$ is algebraically closed they are unique up to $k$-difference isomorphisms.  Furthermore, when $K^\tau$ is algebraically closed , we have $R^\tau = K^\tau$. Although some of the following results hold in more general situations, we will for simplicity assume from now on that 
\begin{quotation}
$K^\tau$ is algebraically closed and of characteristic zero. 
\end{quotation}
Even under this assumption, the Picard-Vessiot ring need not be an integral domain. In  \cite[Corollary 1.16]{VdPS97}  a precise description of its structure is given but we will only use some basic facts listed below and not  delve further.

We can now define the Galois group. 

\begin{defn} Let $R$ be the Picard-Vessiot ring of $\tau(Y) = AY, \ A \in \GL_n(K)$. The {\em Galois group G} of $R$ (or of $\tau(Y) = AY$) is 
\[ G = \{ \sigma:R\rightarrow R \ | \ \sigma \text{ is a $K$-algebra automorphism of $R$ and } \sigma\tau = \tau \sigma\}\]
\end{defn}

Using the notation of the definition, fix a fundamental solution matrix in $\GL_n(R)$ of the equation $\tau(Y) = AY$.  If $\sigma \in G$, then 
\[ \tau(\sigma(U)) = \sigma(\tau(U)) = \sigma(AU) = A\sigma(U). \]
Therefore, $\sigma(U)$ is again a fundamental solution matrix and so $\sigma(U) = U[\sigma]_U$ where $[\sigma]_U \in \GL_n(K^\tau)$.  A key fact (cf. \cite[Chapter 1.2]{VdPS97}) forming the basis of the Galois theory of linear difference equations is
\begin{quotation} \noindent The map $\rho:G \rightarrow \GL_n(K^\tau)$ given by $\rho(\tau) = [\tau]_U$ is a group homomorphism whose image is a {\em linear algebraic group}.
\end{quotation}
A subgroup $G \subset \GL_n(K^\tau)$ is a linear algebraic group if it is a closed in the Zariski topology on $\GL_n(K^\tau)$, the topology whose closed sets are common solutions of systems of polynomial equations in $n^2$ variables. 
\begin{ex}\label{ex:add} 1. \toto{Consider } the equation 
\begin{eqnarray} \label{eq:add}
\tau(y) - y = b, \ b \in K.
\end{eqnarray} This equation is not a homogeneous linear difference equation but it is equivalent to the matrix equation
\begin{eqnarray*} \tau(Y) = \begin{pmatrix} 1&b\\0&1\end{pmatrix}Y.\end{eqnarray*}
If $z$ satisfies \eqref{eq:add}, then 
$U = \begin{pmatrix} 1 &z \\0&1\end{pmatrix}$
is a fundamental solution of the matrix equation.  The Picard-Vessiot extension of $k$ is then given by $R = K[z]$. If $\sigma \in G$, then $y=\sigma(z)$ also satisfies \eqref{eq:add}.  Therefore $\sigma(z) - z = d_\sigma \in \K^\tau$.  This implies that the Galois group of the matrix equation may be identified with a Zariski closed subgroup of 
\[\toto{\left. \left\{\begin{pmatrix} 1& d\\0&1 \end{pmatrix} \ \right| \ d \in K^\tau\right\}.}\]
Note that this latter group is just the additive group $(K^\tau, +)$. The Zariski closed subgroups of this group are identified with $K^\tau$ and $\{0\}$.\\

2. Consider the system of equations 
\begin{eqnarray} \label{eq:addsys}
\tau(y_0) - y_0 = b_0,  \ldots , \tau(y_n) - y_n = b_n\ b_0, \ldots , b_n  \in K.
\end{eqnarray}
As above, this system is equivalent to the matrix equation
\begin{eqnarray*} \tau(Y) = \begin{pmatrix} B_0 & 0 & \ldots & 0\\
0& B_1 &\ldots & 0\\
\vdots &\vdots & \vdots &\vdots \\
0 & 0 & \vdots & B_n  \end{pmatrix}Y, \ \text{ where } B_i = \begin{pmatrix} 1 & b_i \\0 & 1 \end{pmatrix}\end{eqnarray*}

The Picard-Vessiot extension  of this equation is $R=K(z_0, \ldots , z_n)$ where $\tau(z_i) - z_i = b_i$ and the Galois group is a subgroup of 

\toto{\begin{eqnarray*} \left.\left\{\begin{pmatrix} C_0 & 0 & \ldots & 0\\
0& C_1 &\ldots & 0\\
\vdots &\vdots & \vdots &\vdots \\
0 & 0 & \vdots & C_n  \end{pmatrix} \ \right| \ \ C_i = \begin{pmatrix} 1 & d_i \\0 & 1 \end{pmatrix} d_i \in K^\tau, i = 0, \ldots, n\right\}.\end{eqnarray*}}

This latter group is just the direct sum of $n+1$ copies of the  additive group $(K^\tau, +)$, that is $G \subset ((K^\tau)^{n+1}, +)$. The Zariski closed subgroups of $(K^\tau)^{n+1}$ are the vector subspaces and are all connected in the Zariski topology.  From this we can deduce

\begin{lem}\label{lem:subgp} Using the notation of Example~\ref{ex:add}.2, if $G$ is a proper linear algebraic subgroup of   
$(K^\tau)^{n+1}$ then there exist $c_0, \ldots , c_n \in K^\tau$, not all zero, such that 
$$\toto{G \subset \left\{(d_0, \ldots , d_n) \in (K^\tau)^{n+1} \ \left| \sum_{i=0}^n c_id_i = 0\right\}\right. .}$$ \end{lem}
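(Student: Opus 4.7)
The plan is to rely on the structural fact recalled just before the statement, namely that the Zariski closed subgroups of $((K^\tau)^{n+1},+)$ are precisely the $K^\tau$-vector subspaces. Granting this, the lemma reduces to the elementary linear algebra statement that any proper subspace of a finite-dimensional vector space is contained in a hyperplane.

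More precisely, I would first observe that, since $G$ is a proper Zariski closed subgroup of $((K^\tau)^{n+1},+)$, the cited fact implies that $G$ is a proper $K^\tau$-linear subspace of $(K^\tau)^{n+1}$. Hence $\dim_{K^\tau} G \leq n$, and one can pick a nonzero linear form $\ell : (K^\tau)^{n+1} \rightarrow K^\tau$ vanishing on $G$: indeed, $G$ is contained in any hyperplane complementary to a fixed line not lying in $G$, and such a line exists because $\dim G < n+1$. Writing $\ell(d_0, \ldots, d_n) = \sum_{i=0}^{n} c_i d_i$ with $c_0, \ldots, c_n \in K^\tau$ not all zero then gives the desired containment
\[
G \subset \Big\{ (d_0, \ldots, d_n) \in (K^\tau)^{n+1} \; \Big| \; \sum_{i=0}^n c_i d_i = 0 \Big\}.
\]

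The only potentially nontrivial point is the identification of the Zariski closed subgroups of $((K^\tau)^{n+1},+)$ with the $K^\tau$-vector subspaces. This is standard (it follows, for instance, from the fact that in characteristic zero any algebraic subgroup of the additive group $\mathbb{G}_a^{n+1}$ is defined by linear equations, so it is automatically a vector subspace), and, as noted in the paragraph preceding the lemma, it is being invoked as a known fact. Once this is granted, the argument above is immediate, so I do not expect any genuine obstacle in writing the proof.
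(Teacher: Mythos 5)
Your proof is correct and follows exactly the route the paper intends: the paper states immediately before the lemma that the Zariski closed subgroups of $(K^\tau)^{n+1}$ are the $K^\tau$-vector subspaces and then simply says ``from this we can deduce'' the lemma, so your reduction to ``a proper subspace is contained in a hyperplane, i.e.\ in the kernel of a nonzero linear form'' is precisely the omitted argument. One cosmetic slip: a proper subspace $G$ is \emph{not} contained in an arbitrary hyperplane complementary to a line outside $G$; the correct (and equally elementary) justification is to extend a basis of $G$ to a basis of $(K^\tau)^{n+1}$ and take the hyperplane spanned by all but one of the added vectors, but this does not affect the validity of the proof.
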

\end{ex}
In \cite[Chapter 1.3]{VdPS97} a Galois correspondence and other basic facts are described. For our purposes, we only need
\begin{enumerate}
\item[(Gal 1)] An element $z\in R$ is in $K$ if and only if $z$ is left fixed by all elements of $G$.
\item[(Gal 2)] The ring $R$ is an integral domain if and only if $G$ is connected in the Zariski topology.
\item[(Gal 3)] When $G$ is connected, the dimension of $G$ as an algebraic variety over $K^\tau$ is equal to the transcendence degree of the quotient field of $R$ over $K$.
\end{enumerate}
Concerning (Gal 3), when $G$ is not connected then the dimension equals the Krull dimension of $R$.

We now present the main tool used in proving Proposition~\ref{prop:caract hyperalg}.

\begin{prop}\label{prop:tool} Let $R$ be the Picard-Vessiot extension  for the system \eqref{eq:addsys} and $z_0, \ldots , z_n \in R$ be solutions of this system.  If $z_0, \ldots , z_n $ are algebraically dependent over $K$, then there exist $c_i \in K^\tau$, not all zero,  and $g \in K$ such that 
\[c_0 b_0 + \ldots + c_n b_n = \tau(g) - g.\]\end{prop}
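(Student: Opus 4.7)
The plan is to exploit the description of the Galois group $G$ of the system \eqref{eq:addsys} as a Zariski closed subgroup of the additive group $((K^\tau)^{n+1},+)$, as established in Example~\ref{ex:add}.2. Concretely, for each $\sigma \in G$ and each $i$, applying $\tau$ to $\sigma(z_i)-z_i$ shows that $\sigma(z_i)-z_i \in R^\tau = K^\tau$; denote this constant by $d_i^\sigma$. Then the map $\sigma \mapsto (d_0^\sigma,\dots,d_n^\sigma)$ identifies $G$ with a Zariski closed subgroup of $(K^\tau)^{n+1}$.

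The first step will be to show that the hypothesis of algebraic dependence forces $G$ to be a proper subgroup. Since $R$ is generated over $K$ by $z_0,\dots,z_n$ (the $z_i$ are the only non-constant entries of the fundamental matrix, up to the trivial entries on the diagonal), the quotient field of $R$ is $K(z_0,\dots,z_n)$. By algebraic dependence its transcendence degree over $K$ is at most $n$. Since Zariski closed subgroups of $(K^\tau)^{n+1}$ are $K^\tau$-vector subspaces, $G$ is connected, and (Gal 3) gives $\dim G \le n < n+1$, so $G$ is a proper subgroup.

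Next I will apply Lemma~\ref{lem:subgp} to obtain constants $c_0,\dots,c_n \in K^\tau$, not all zero, such that $\sum_{i=0}^n c_i d_i^\sigma = 0$ for every $\sigma \in G$. Setting $h := \sum_{i=0}^n c_i z_i \in R$, this relation reads $\sigma(h)-h = \sum_i c_i d_i^\sigma = 0$ for every $\sigma \in G$. By the Galois correspondence (Gal 1), $h \in K$.

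The conclusion follows immediately: applying $\tau$ to $h = \sum_i c_i z_i$ and using $\tau(c_i)=c_i$ together with $\tau(z_i)-z_i = b_i$ gives
\[
\tau(h)-h \;=\; \sum_{i=0}^n c_i\bigl(\tau(z_i)-z_i\bigr) \;=\; \sum_{i=0}^n c_i b_i,
\]
so $g := h$ does the job. The main conceptual point — and the only place where real work is hidden — is the identification of $G$ with a subgroup of $(K^\tau)^{n+1}$ and the dimension count via (Gal 3); once these are in hand, the argument is purely formal linear algebra combined with the Galois correspondence.
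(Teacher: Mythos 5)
Your proof is correct and follows essentially the same route as the paper's: identify $G$ with a Zariski-closed (hence vector-subspace, hence connected) subgroup of $(K^\tau)^{n+1}$, use the algebraic dependence and (Gal 3) to see $\dim G < n+1$, invoke Lemma~\ref{lem:subgp} to get the relation $\sum c_i d_i^\sigma = 0$, conclude $\sum c_i z_i \in K$ by (Gal 1), and apply $\tau$. The only slight difference in exposition is that the paper explicitly invokes (Gal 2) to justify that $R$ is an integral domain before speaking of its quotient field, whereas you speak of the quotient field a sentence before noting $G$ is connected; this is a harmless reordering, not a gap.
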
  
\begin{proof} We follow ideas due to M.~van der Put appearing in the appendix of \cite{Hard08}. As in Example~\ref{ex:add}.2, the Galois  group $G$ is a subgroup of $(K^\tau)^{n+1}$ and so is connected.  (Gal 2) implies that $R = K[z_0, \ldots , z_n]$ is a domain. Since $z_0, \ldots , z_n$ are algebraically dependent, the transcendence degree of      the quotient field of $R$ is less than $n+1$. Therefore (Gal 3) implies that  $G$ is a proper subgroup of $(K^\tau)^{n+1}$. Lemma~\ref{lem:subgp} implies that $G \subset \{(d_0, \ldots , d_n) \ | \sum_{i=0}^n c_id_i = 0\}$ for some $c_i \in K^\tau$.  For  any $\sigma \in G$, we have
\[\sigma(\sum_{i=0}^n c_iz_i) = \sum_{i=0}^n c_i(z_i + d_i) = \sum_{i=0}^n c_iz_i+ \sum_{i=0}^n c_id_i = \sum_{i=0}^n c_iz_i.\]
From (Gal 1) we conclude that $\sum_{i=0}^n c_iz_i = g \in K$. Applying $\tau$ to this last equation and subtracting yields
\[\tau(g) - g = \tau(\sum_{i=0}^n c_iz_i)-\sum_{i=0}^n c_iz_i = \sum_{i=0}^n c_i(z_i+b_i) - \sum_{i=0}^n c_iz_i = \sum_{i=0}^n c_ib_i\]
\end{proof}

We now turn to \\

\noindent {\it Proof of Proposition~\ref{prop:caract hyperalg}.} Let $f$ satisfy $P(f, \delta(f),\ldots ,\delta^n(f)) = 0$ for some polynomial $P$ with coefficients in $K = \C(\overline{E_t})$.  Since $\delta$ and $\tau$ commute, we have 
\[\tau(f) - f = b, \tau(\delta(f)) - \delta(f) = \delta(b), \ldots , \tau(\delta^n(f)) - \delta^n(f) = \delta^n(b).\]
Let $I$ be a maximal difference ideal in the difference ring  $K[f, \delta(f), \ldots , \delta^n(f)]$ and let $R =  K[f, \delta(f), \ldots , \delta^n(f)]/I$. The difference ring $R$ is a simple difference ring of the form $K[z_0, \ldots, z_n]$ where  $z_i$ is the image of $\delta^i(f)$ and $\tau(z_i) - z_i = b_i, \ b_i = \delta(b)$.  Therefore, $R$ is a Picard-Vessiot ring for a system of the form \eqref{eq:addsys}. Applying Proposition~\ref{prop:tool}, yields the first conclusion of Proposition~\ref{prop:caract hyperalg}.

Now assume that $K[f, \delta(f), \ldots , \delta^n(f)]^\tau = \C$.  A computation shows that $\tau(L(f) -g) -(L(f) - g) =0$, where $L = a_n\delta^n + a_{n-1}\delta^{n-1} + \ldots + a_0$.  Therefore $L(f) = g+ c, \text{ for some } c \in \C$. If $g+c = 0$, this equation shows that $f$ is holonomic over $K$.  If $g+c \neq 0$, we can derive the same conclusion by considering $\delta(L(f)) - (\delta(g)/(g+c))L(f) = 0$. \hfill $\square$\\

The above proof very much depends on the fact that we are considering systems of first order scalar difference equations of the form \eqref{eq:addsys}. In \cite{DHR}, a proof of Proposition~\ref{prop:caract hyperalg} was given based on the {\it \toto{differential Galois theory of linear difference equations}}.  This is a theory, presented in \cite{HS},  that allows one to describe differential properties of general linear difference equations. A general introduction to this theory as well as an elementary introduction to the Galois theory of linear differential equations and the analytic theory of $q$-difference equations can be found in the articles in \cite{HSS16}.

\section{Telescopers and orbit residues} \label{introsec}

We have seen in Section \ref{sec:applichyptrF1F2} that the study of the hypertranscendance of $F^{1}(x,t)$ and $F^{2}(y,t)$ is intimately related to the study of equations of the form ${L(b)=\tau(g)-g}$ for some nonzero linear differential operator $L$ with coefficients in $\C$ and some $b,g \in \C(\Etproj)$. In other contexts (cf.~\cite{Chen_Singer12}), $L$ is referred to as a {\it telescoper for $b$} and $g$ as a {\it witness}.  The aim of this appendix is to study in more details these equations. \par 

Let $E$ be an elliptic curve defined over an algebraically closed field $k$ of characteristic zero and  $k(E)$ be its function field.  Let $P$ be a non-torsion point on $E$ and let $\tau:k(E)\rightarrow k(E)$ denote map corresponding to $Q\mapsto   Q\oplus P$ on $E$, where $\oplus$ denotes the group law on $E$.  We let $\Omega$ be a non zero regular differential form on $E$. 
 A straightforward generalization of Lemma \ref{lem:derivellcurvecommtau} shows the following result~: 
\ju{ 
\begin{lemma} \label{lem:derivellcurvecommtaugen}
The derivation $\delta$ of $k(E)$ such that $d(f)=\delta(f) \Omega$ commutes with $\tau$. 
\end{lemma}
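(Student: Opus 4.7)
The plan is to mimic verbatim the argument already used for Lemma~\ref{lem:derivellcurvecommtau}, observing that nothing in that argument depended on the specific base field $\C$ beyond the statement that translations on an elliptic curve preserve the invariant differential. First I would invoke the standard fact that, on an elliptic curve $E$ defined over any field (cf.\ \cite[Proposition III.5.1]{Silverman} or \cite[Lemma~2.5.1, Proposition~2.5.2]{DuistQRT}), the pullback by a translation map acts trivially on the one-dimensional $k$-vector space of regular differential forms. Since $\tau$ is precisely the translation by $P$, this gives $\tau^{*}(\Omega)=\Omega$.

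Next, for an arbitrary $f\in k(E)$, I would compute:
$$
\delta(\tau(f))\,\Omega \;=\; d(\tau(f)) \;=\; \tau^{*}(df) \;=\; \tau^{*}\bigl(\delta(f)\Omega\bigr) \;=\; \tau(\delta(f))\,\tau^{*}(\Omega) \;=\; \tau(\delta(f))\,\Omega,
$$
where the second equality is the compatibility of the exterior derivative with pullbacks by morphisms, and the fourth uses that $\tau^{*}$ is $k$-linear and multiplicative on the tensor product $k(E)\otimes\Omega^{1}_{E/k}$. Cancelling the nonzero factor $\Omega$ yields $\delta(\tau(f))=\tau(\delta(f))$, as desired.

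There is essentially no obstacle: the only nontrivial input is the invariance $\tau^{*}\Omega=\Omega$, which is a classical geometric fact about translation-invariant differentials on an abelian variety (it does not require the non-torsion hypothesis on $P$, although that hypothesis is harmless). Accordingly, the statement reduces to a one-line formal computation once this invariance is cited, and the proof may be concluded by simply writing ``the proof of Lemma~\ref{lem:derivellcurvecommtau} carries over verbatim.''
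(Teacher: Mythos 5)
Your argument reproduces exactly what the paper intends: the paper introduces this lemma with the phrase ``a straightforward generalization of Lemma~\ref{lem:derivellcurvecommtau}'' and gives no separate proof, and you have carried out precisely that generalization, citing the translation-invariance $\tau^{*}\Omega=\Omega$ and then running the same one-line chain of equalities. Correct, and the same approach as the paper.
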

}
We will prove the following

\begin{prop}\label{Prop0} Let $b \in k(E)$.  The following are equivalent. 
\begin{enumerate}
\item\label{Prop0.1}There exist $g \in k(E)$ and a nonzero operator $L \in k[\delta]$ such that ${L(b) = \tau(g) - g}$.
\item\label{Prop0.2} For all poles $Q_0$ of $b$, we have that 
\[h(X) =\sum_{i=1}^t b(X\oplus n_iP)\]
is regular at $X=Q_0$ where $Q_0\oplus n_1P, \ldots , Q_0\oplus n_tP$ are the poles of $b$ that belong to $Q_0\oplus\ZX P$.
\end{enumerate}
\end{prop}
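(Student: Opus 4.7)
For each $\tau$-orbit $\mathcal{O}$ of poles of $b$, fix a representative $Q_0 \in \mathcal{O}$ that is a pole of $b$ and enumerate the poles of $b$ in $\mathcal{O}$ as $Q_0 \oplus n_1 P, \dots, Q_0 \oplus n_t P$. Introduce
\[
h_{\mathcal{O}}^b(X) = \sum_{i=1}^{t} b(X \oplus n_i P) \in k(E);
\]
condition (2) is exactly the statement that, for every such orbit $\mathcal{O}$, $h_{\mathcal{O}}^b$ is regular at $X = Q_0$. The key observation used in both directions is that $\tau^{*}\Omega = \Omega$ forces $\delta$ to commute with translation by any integer multiple of $P$, and consequently so does $L$, which has constant coefficients. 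In particular
\[
L(h_{\mathcal{O}}^b)(X) = \sum_{i=1}^{t} L(b)(X \oplus n_i P) = h_{\mathcal{O}}^{L(b)}(X),
\]
the last equality using Lemma~\ref{lemma:derivationvalutaion} to note that $L(b)$ has the same poles as $b$ whenever $L \neq 0$.

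For (1) $\Rightarrow$ (2), my plan is first to handle the case $L = 1$ by a direct telescoping computation: if the poles of $g$ in $\mathcal{O}$ are $Q_0 \oplus m_j P$ with principal parts $\pi_j$ (transported to $Q_0$ along the orbit via translation, which is well-defined since $\Omega$ is translation invariant), then each $\pi_j$ contributes $+\pi_j$ to the principal part of $\tau(g) - g$ at $Q_0 \oplus (m_j - 1) P$ and $-\pi_j$ at $Q_0 \oplus m_j P$, so the total sum of principal parts of $\tau(g) - g$ over its poles in $\mathcal{O}$ vanishes. Applied to $L(b) = \tau(g) - g$, this shows that $L(b)$ satisfies (2), so $L(h_{\mathcal{O}}^b) = h_{\mathcal{O}}^{L(b)}$ is regular at $Q_0$. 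Iterating Lemma~\ref{lemma:derivationvalutaion} and inspecting the leading coefficient of $L$, a pole of exact order $m \geq 1$ of $h_{\mathcal{O}}^b$ at $Q_0$ would force $L(h_{\mathcal{O}}^b)$ to have a pole of exact order $m + \operatorname{ord}(L) \geq 1$ there, a contradiction.

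For (2) $\Rightarrow$ (1), my strategy is to take $L = \delta^2$ and $g' = \delta(g)$, for a $g$ produced by a Mittag-Leffler construction applied to $\delta(b)$. The key observations are that $\delta(b)$ still satisfies (2) --- because $h_{\mathcal{O}}^{\delta b} = \delta(h_{\mathcal{O}}^b)$ and $\delta$ preserves regularity --- and that $\delta(b)\,\Omega = db$ is locally exact, so $\operatorname{Res}_R(\delta(b)\,\Omega) = 0$ for every $R \in E$. For each orbit $\mathcal{O}$ of poles of $b$, I will prescribe Laurent tails $\sigma_m$ at $Q_0 \oplus mP$ by the telescoping relation $\sigma_{m-1} - \sigma_m = \rho_m$, where $\rho_m$ is the principal part of $\delta(b)$ at $Q_0 \oplus mP$ (with $\rho_m = 0$ if $Q_0 \oplus mP$ is not a pole); the identity $\sum_m \rho_m = 0$ supplied by (2) for $\delta(b)$ ensures $\sigma_m = 0$ for $|m|$ large, and the vanishing of all residues of $\delta(b)\,\Omega$ gives $\operatorname{Res}(\sigma_m) = 0$ for every $m$. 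The Mittag-Leffler theorem on the elliptic curve $E$, whose sole obstruction is $\sum_{R} \operatorname{Res}_R(\sigma_R\,\Omega)$ (because $\dim_k H^1(E, \mathcal{O}_E) = 1$), then produces $g \in k(E)$ realizing these principal parts. By construction $\tau(g) - g$ and $\delta(b)$ share their principal parts everywhere, so their difference is pole-free, hence a constant $c \in k$. Applying $\delta$ and using $\delta(c) = 0$ together with $\delta\tau = \tau\delta$ yields $\delta^2(b) = \tau(\delta g) - \delta g$, which is (1).

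The main obstacle in the second direction is the Mittag-Leffler step, which encapsulates the only nontrivial global input into the argument. What makes $L = \delta^2$ the natural choice is a two-step mechanism: applying $\delta$ once removes all residues and hence the sole Mittag-Leffler obstruction, while applying $\delta$ a second time absorbs the residual integration constant into the kernel $\delta(k) = 0$; both maneuvers are available precisely because $\delta$ commutes with $\tau$.
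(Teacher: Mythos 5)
Your proof is correct but takes a genuinely different route from the paper's. The paper proves Proposition~\ref{Prop0} by factoring it through two intermediate equivalences: Proposition~\ref{Prop1} (condition~(1) holds iff one can write $b = \tau(e) - e + h$ with $h \in \calL(Q + (Q\ominus P))$) and Proposition~\ref{Prop2} (that normal form holds iff all orbit residues $\ores_{Q,j}(b)$ vanish), with Lemma~\ref{lem:lemhregandresj} supplying the bridge to condition~(2). The $(2)\Rightarrow(1)$ direction in the paper is obtained via the normal-form reduction of Lemma~\ref{normform}, which repeatedly invokes Riemann--Roch to lower polar dispersion, followed by a basis computation in $\calL(Q+(Q\ominus P))$ yielding $L = \delta^2 - c\delta$. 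Your argument bypasses this scaffolding: for $(1)\Rightarrow(2)$ you telescope $\tau(g)-g$ along each $\tau$-orbit and then compare pole orders of $h^b_{\mathcal O}$ and $L(h^b_{\mathcal O}) = h^{L(b)}_{\mathcal O}$ using Lemma~\ref{lemma:derivationvalutaion}; for $(2)\Rightarrow(1)$ you note that applying $\delta$ once annihilates all residues of $\delta(b)\Omega = db$, which is precisely the unique Mittag--Leffler obstruction on $E$ ($\dim_k H^1(E,\mathcal O_E)=1$), solve the resulting Mittag--Leffler problem to get $\delta(b) = \tau(g) - g + c$, and apply $\delta$ once more to absorb the constant, producing $L=\delta^2$ directly. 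The global input (one-dimensionality of $H^0(E,\Omega^1_E)$, i.e.\ the residue theorem) is the same as in the paper's Lemma~\ref{lem5}, but you package it as a single Mittag--Leffler application rather than an iterated reduction, which is cleaner. The tradeoff is that your proof does not produce the intermediate normal form of Proposition~\ref{Prop1}, which the paper also exploits in the proofs of Corollary~\ref{cor2} and in Section~\ref{sec:hyperalg}, and the orbit-residue reformulation of Proposition~\ref{Prop2} that underlies Lemma~\ref{lem:horesidues} and Lemma~\ref{lem:res0order2}. One small slip: with the paper's convention $\tau(u_Q) = u_{Q\ominus P}$, the Laurent tail of $\tau(g)-g$ at $Q_0\oplus mP$ is $\sigma_{m+1} - \sigma_m$, so the telescoping prescription in your Mittag--Leffler step should read $\sigma_{m+1} - \sigma_m = \rho_m$ rather than $\sigma_{m-1} - \sigma_m = \rho_m$; the structure of the argument is unaffected.
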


This proposition allows one to give the following useful criteria guaranteeing when Condition \ref{eq:lindiffc} does or does not hold.

\begin{cor}\label{cor1} Let $b\in k(E)$ and assume that there exists $Q_0 \in E$ such that
\begin{enumerate}
\item $b$ has a pole of order $m > 0$ at $Q_0$, and
\item $b$ has no other pole of order $\geq m$ in $Q_0 \oplus \ZX P$.
\end{enumerate}
Then there is no nonzero $L \in k[\delta]$ and $g\in k(E)$ such that $L(b) = \tau(g) - g$.
\end{cor}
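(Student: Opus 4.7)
The plan is to prove the corollary by contraposition, using the implication $(1) \Rightarrow (2)$ of Proposition~\ref{Prop0}. Suppose, for contradiction, that there exist a nonzero $L \in k[\delta]$ and $g \in k(E)$ such that $L(b) = \tau(g) - g$. Then by Proposition~\ref{Prop0}, for every pole $Q_{0}$ of $b$, the function
\[
h(X) = \sum_{i=1}^{t} b(X \oplus n_{i}P)
\]
is regular at $X = Q_{0}$, where $Q_{0} \oplus n_{1}P, \ldots, Q_{0} \oplus n_{t}P$ are the poles of $b$ lying in the orbit $Q_{0} \oplus \mathbb{Z}P$. I will derive a contradiction by showing that, under the hypotheses of the corollary, this $h$ has a pole of order exactly $m$ at $X = Q_{0}$.

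The key observation is that translation by a point on $E$ is an automorphism of the curve, so for each $i$ the function $X \mapsto b(X \oplus n_{i}P)$ has a pole at $X = Q_{0}$ precisely when $Q_{0} \oplus n_{i}P$ is a pole of $b$, and the pole orders match. First I would fix the index $i_{0}$ with $n_{i_{0}} = 0$, so that the term $b(X \oplus 0 \cdot P) = b(X)$ contributes a pole of order exactly $m$ at $X = Q_{0}$. For every other index $i \neq i_{0}$, hypothesis (2) of the corollary tells us that $b$ has a pole of order $m_{i} < m$ at $Q_{0} \oplus n_{i}P$, so the term $b(X \oplus n_{i}P)$ contributes a pole at $X = Q_{0}$ of order strictly less than $m$.

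Summing these contributions, I would conclude by a standard valuation argument: the unique term of maximal pole order $m$ cannot be cancelled by the remaining terms, whose pole orders are all strictly smaller. Hence $h$ has a pole of order exactly $m \geq 1$ at $Q_{0}$, contradicting the regularity of $h$ at $Q_{0}$ required by Proposition~\ref{Prop0}(\ref{Prop0.2}). This contradiction proves the corollary.

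The proof is essentially a direct application of Proposition~\ref{Prop0}, and there is no real obstacle: the only step requiring a small check is that pole orders are preserved under the translation $X \mapsto X \oplus n_{i}P$, which is immediate since this map is a biholomorphism of $E$. The argument relies crucially on the strictness of the inequality in the hypothesis, which ensures no cancellation can occur among the leading polar parts.
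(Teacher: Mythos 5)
Your proof is correct and follows essentially the same route as the paper: the paper also invokes Proposition~\ref{Prop0} and observes that the order-$m$ pole of $b(X)$ at $Q_0$ cannot be cancelled by the strictly lower-order poles of the translates $b(X\oplus n_iP)$, so $h$ fails to be regular at $Q_0$. Your write-up merely makes explicit the valuation argument that the paper leaves as "follows easily."
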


\begin{proof}
This follows easily from Proposition~\ref{Prop0} since the pole of $b(X)$ at $Q_0$ cannot be cancelled by any pole of any $b(X\oplus nP)$ and so $h(X)$ is not regular at $X=Q_0$.
\end{proof}

\begin{cor}\label{cor2} Let $b \in k(E)$ and assume that there exists $Q_0 \in E$ such that
\begin{enumerate}
\item all poles of $b$ occur in $Q_0 \oplus \ZX P$, and
\item all poles of $b$ are simple.
\end{enumerate}
Then there exist $g \in k(E)$ and a nonzero operator $L \in k[\delta]$ such that $L(b) = \tau(g) - g$.
\end{cor}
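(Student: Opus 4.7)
The strategy is to verify hypothesis (\ref{Prop0.2}) of Proposition~\ref{Prop0}, which by that proposition is equivalent to the desired conclusion. Let $R_1,\dots,R_t$ be the (distinct) poles of $b$. By assumption they all lie in the single $\tau$-orbit $Q_0\oplus\ZX P$, so for any chosen pole $Q_0'$ of $b$ we may write $R_i = Q_0'\oplus n_i P$ with distinct integers $n_i$, and set
\[h(X) \;=\; \sum_{i=1}^{t} b(X\oplus n_i P).\]
What must be shown is that $h$ is regular at $X=Q_0'$.

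First, since every pole of $b$ is simple (assumption (2)), each summand $b(X\oplus n_i P)$ has at most a simple pole at $X=Q_0'$; in fact it has one precisely when $Q_0'\oplus n_i P = R_i$ is a pole of $b$, which holds for every $i$. Consequently $h$ has at worst a simple pole at $Q_0'$, and the regularity of $h$ at $Q_0'$ will follow once it is shown that the residue $\mathrm{Res}_{X=Q_0'}(h\,\Omega)$ vanishes.

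The crucial ingredient here is the translation-invariance $\tau^{*}\Omega = \Omega$ on $E$ (the same fact used in the proofs of Lemma~\ref{lem:derivellcurvecommtau} and Lemma~\ref{lem:derivellcurvecommtaugen}). Applied to the translation $X\mapsto X\oplus n_i P$, this gives the transformation rule
\[\mathrm{Res}_{X=Q_0'}\bigl(b(X\oplus n_i P)\,\Omega\bigr) \;=\; \mathrm{Res}_{Y=R_i}\bigl(b(Y)\,\Omega\bigr).\]
Summing over $i$ yields
\[\mathrm{Res}_{X=Q_0'}(h\,\Omega) \;=\; \sum_{i=1}^{t} \mathrm{Res}_{R_i}(b\,\Omega),\]
which is exactly the sum of the residues of the meromorphic differential $b\,\Omega$ over \emph{all} of its poles on the complete curve $E$.

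The residue theorem on $E$ now closes the argument: the sum of residues of a meromorphic differential on a complete algebraic curve is zero, so $\mathrm{Res}_{X=Q_0'}(h\,\Omega)=0$ and $h$ is regular at $Q_0'$. The same reasoning, applied at each pole of $b$ (with the labels $n_i$ adjusted by a translation), verifies condition (\ref{Prop0.2}) of Proposition~\ref{Prop0} at every pole, and the existence of $L$ and $g$ follows. No serious obstacle is anticipated in carrying this out; the only nontrivial ingredients are the equality $\tau^{*}\Omega = \Omega$, which converts local regularity at $Q_0'$ into a global residue sum, and the residue theorem on $E$.
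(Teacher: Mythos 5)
Your proof is correct and rests on the same two pillars the paper uses in Lemma~\ref{lem5}: the translation-invariance $\tau^*\Omega=\Omega$ of the regular differential and the residue theorem on the compact curve $E$. You bypass the paper's intermediate scaffolding of coherent local parameters and orbit residues (Lemma~\ref{lem5} and Remark~\ref{rmk:ores1vanishiffResvanish}) by directly pulling back residues along translations to rewrite $\mathrm{Res}_{Q_0'}(h\,\Omega)$ as the full residue sum of $b\,\Omega$, which is a slightly cleaner route to the same conclusion.
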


\begin{proof} {\mfs Basically, this is true because the sum of the residues of a differential form on a compact Riemann surface is zero. More precisely,  
using}  Lemma~\ref{lem5} below, one can show that the hypotheses of Corollary~\ref{cor2} imply condition $(2)$ of Proposition~\ref{Prop1} and therefore that the conclusion holds (see the remark following Lemma~\ref{lem5}).
\end{proof}

To prove Proposition~\ref{Prop0} we shall prove two ancillary results, Propositions~\ref{Prop1} and~\ref{Prop2}.  These results give conditions equivalent to the conditions in Proposition~\ref{Prop0}. 
 
Before proceeding, we recall the following standard notation.
If $D$ is a divisor of $E$, we will denote by $\calL(D)$ the finite dimensional $k$-space $\{ f \in k(E) \ | \ (f) + D \geq 0\}$, where $(f)$ is the divisor of $f$.  In \ju{Subsection}~\ref{prop1sec} we will prove

\begin{prop}\label{Prop1}Let $b \in k(E)$.  The following are equivalent.
\begin{enumerate}
\item There exist $g \in k(E)$ and a nonzero operator $L \in k[\delta]$ such that ${L(b) = \tau(g) - g}$.
\item There exists $Q\in E$, $e\in k(E)$ and $h \in \calL(Q+(Q\ominus P))$\footnote{The symbol ``+'' represents the  formal sum of divisors.  We will use $\oplus$ and $\ominus$ for addition and subtraction of points on the curve.} such that
\[b = \tau(e) - e+ h.\]
\end{enumerate}
\end{prop}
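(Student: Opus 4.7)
The proof splits into the two implications, which I plan to attack by quite different means. For $(2) \Rightarrow (1)$, the key observation is that $\delta$ commutes with $\tau$ (Lemma~\ref{lem:derivellcurvecommtaugen}), and hence so does any $L \in k[\delta]$. Consequently $L(b) = L(\tau(e) - e) + L(h) = \tau(L(e)) - L(e) + L(h)$, so if I can find a nonzero $L \in k[\delta]$ and $g_1 \in k(E)$ with $L(h) = \tau(g_1) - g_1$, then setting $g = L(e) + g_1$ yields $L(b) = \tau(g) - g$. The implication thus reduces to a statement purely about the finite-dimensional space $\calL(Q + (Q \ominus P))$.

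To establish this reduced statement, I exploit that $\calL(Q + (Q \ominus P))$ is two-dimensional over $k$ by Riemann--Roch; it is spanned by the constants and a nonconstant element $h_0$ with simple poles at $Q$ and $Q \ominus P$ whose residues are opposite (by the residue theorem on $E$). Since $\delta$ annihilates constants, it suffices to treat $h_0$, and I claim that $L = \delta^2$ suffices. Another application of Riemann--Roch shows that $\calL(2Q)$ is two-dimensional, hence contains a function $f \in k(E)$ with a pure double pole at $Q$ and no other poles. Both $\delta(h_0)\Omega = dh_0$ and $(\tau(f) - f)\Omega$ are meromorphic differential forms having zero residue at every point of $E$ (the former because it is exact, the latter because $f\Omega$ has zero residue at its single pole $Q$ by the residue theorem). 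This residue-zero constraint links, via the same linear formula in local coordinates, the $u^{-1}$ and $u^{-2}$ Laurent coefficients of $\delta(h_0)$ and of $\tau(f) - f$ at $Q$ (and symmetrically at $Q \ominus P$). Consequently, after scaling $f$ so that the $u^{-2}$-coefficients of $\delta(h_0)$ and $\tau(f) - f$ at $Q$ agree, the $u^{-1}$-coefficients at $Q$ automatically agree, and both coefficients at $Q \ominus P$ likewise agree. The difference $\delta(h_0) - (\tau(f) - f)$ is thus regular on all of $E$ and hence a constant $C \in k$; applying $\delta$ annihilates $C$ and yields $\delta^2(h_0) = \tau(\delta(f)) - \delta(f)$, as required.

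For the converse $(1) \Rightarrow (2)$, I plan to analyze $b$ modulo $\tau$-coboundaries by examining the $\tau$-orbit structure of its finitely many poles, organized into orbits $O_1, \ldots, O_t$. Coboundaries $\tau(e) - e$ act on this data in a controlled manner: using $e$ built from differences of Weierstrass $\zeta$-functions on the universal cover (which descend to genuine elements of $k(E)$), one can transport simple-pole residues both between $\tau$-orbits and along a single orbit, while using $e$ drawn from Riemann--Roch-generated functions with pure higher-order poles, one can shuffle higher-order pole contributions within a fixed orbit. By appropriate subtractions, all simple-pole residues can be concentrated onto the orbit of a single point $Q$ and then onto the adjacent pair $Q, Q \ominus P$. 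The main obstacle will be to show that the higher-order pole contributions of $b$ also collapse into $\calL(Q + (Q \ominus P))$, which is where Condition~$(1)$ plays its crucial role: the existence of a telescoper $L$ with $L(b) = \tau(g) - g$, together with the fact that $\delta^i(b)\Omega = d(\delta^{i-1}(b))$ is exact (hence residue-free at every point of $E$) for $i \geq 1$, yields enough constraints on the principal parts of $b$ along each $\tau$-orbit to force the reduction to terminate precisely in $\calL(Q + (Q \ominus P))$.
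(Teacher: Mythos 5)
Your argument for $(2)\Rightarrow(1)$ is correct and in fact slightly sharper than the paper's. The paper only matches the leading ($u^{-2}$) coefficients of $\delta(h_0)$ and $\tau(f)-f$ at $Q$ and $Q\ominus P$, leaving a possible remainder in $\calL(Q+(Q\ominus P))$ which it then writes as $ch_0+d$, ending with $L=\delta^2-c\delta$. Your observation that the vanishing of the residues of the differentials $\delta(h_0)\Omega$ and $(\tau(f)-f)\Omega$ forces the $u^{-1}$ coefficients to agree as well is valid (it is precisely the computation behind Lemma~\ref{lem:res0order2}) and yields the cleaner $L=\delta^2$; the reduction to $h$ via $g=L(e)+g_1$ is also fine.

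The direction $(1)\Rightarrow(2)$, however, has a genuine gap. The reduction procedure you describe --- shuffling principal parts along and between $\tau$-orbits by subtracting coboundaries $\tau(e)-e$ --- is available for \emph{every} $b\in k(E)$; if it always terminated in $\calL(Q+(Q\ominus P))$, then every $b$ would satisfy $(2)$, which is false (take $b$ nonconstant in $\calL(Q_1+Q_2)$ with $Q_1,Q_2$ in distinct $\tau$-orbits: then $\ores_{Q_1,1}(b)\neq 0$, so $(2)$ fails by Proposition~\ref{Prop2}). Likewise the constraint you invoke --- that $\delta^i(b)\Omega=d(\delta^{i-1}(b))$ is residue-free --- holds unconditionally, so it cannot be ``where Condition $(1)$ plays its crucial role.'' The missing mechanism is the interaction between the pole structures of the two sides of $L(b)=\tau(g)-g$: by Lemma~\ref{lemma:derivationvalutaion} every pole of $L(b)$ is a pole of $b$, while the two \emph{extremal} poles of each $\tau$-orbit of poles of $g$ (the ones at $Q\oplus rP$ and $Q\ominus(s+1)P$ with $r,s$ maximal) necessarily survive in $\tau(g)-g$. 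After the unconditional normalization to $\pdisp(b)\le 1$ and $\wpdisp(b)=0$ (Lemma~\ref{normform}), this forces each orbit of poles of $g$ to consist of a single point, and hence forces all poles of $b$ to be simple and to occur in adjacent pairs $\{Q_i,Q_i\ominus P\}$ lying in pairwise distinct orbits; moreover this pairing must be re-invoked after each subtraction of a coboundary (the modified function still satisfies $(1)$) to see that killing the pole at $Q_1$ automatically kills the one at $Q_1\ominus P$. Without this step your reduction has no reason to terminate in $\calL(Q+(Q\ominus P))$.
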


To state the next equivalence, we need two definitions. Corresponding to each point $Q \in E$ there exists a valuation ring $\frakO_Q \subset k(E)$. A generator $u_Q$ of the maximal ideal of $\frakO_Q$ is called a local parameter at $Q$. Local parameters are unique up to multiplication by a unit of $\frakO_Q$.

\begin{define}\label{def:coherentlocparam} Let $\mathcal{S} = \{ u_Q \ | \ Q\in E\}$ be a set of local parameters at the points of $E$.  We say $S$  is a {\em coherent set of local parameters} if for any $Q \in E$, 
\[ u_{Q\ominus P} = \tau(u_Q).\]
\end{define}
We fix, once and for all, a coherent set of local parameters. All local  parameters mentioned henceforth will be from this set.

Let $u_Q$ be a local parameter at a point $Q \in E$ and let $v_Q$ be the valuation corresponding to the valuation ring at $Q$.  If $f \in k(E)$ has a pole at $Q$ or order $n$, we may write

\[f = \frac{c_{Q,n}}{u_Q^n} + \ldots + \frac{c_{Q,2}}{u_Q^2} + \frac{c_{Q,1}}{u_Q} + \tilde{f}\]
where $v_Q(\tilde{f}) \geq 0$. The following definition is similar to Definition 2.3 of \cite{Chen_Singer12}.

\begin{define} Let $f \in k(E)$ and $S =\{ u_Q \ | \ Q\in E\}$ be a coherent set of local parameters and $Q\in E$. For each $j \in \NX_{>0}$ we define the {\rm orbit residue of order $j$ at $Q$} to be
\[ \ores_{Q,j}(f) = \sum_{i \in \ZX} c_{Q\oplus iP, j.}\]
\end{define}

Note that if  $Q' = Q \oplus tP$ for some $t \in \ZX$, then $ \ores_{Q',j}(f) =  \ores_{Q,j}(f)$ for any $j \in \NX_{>0}$. Furthermore $\ores_{Q,j}(f) = \ores_{Q,j}(\tau(f))$. We shall prove the next result in \ju{Subsection}~\ref{prop2sec}.  

\begin{prop}\label{Prop2} Let $b \in k(E)$ and $S =\{ u_Q \ | \ Q\in E\}$ be a coherent set of local parameters.  The following are equivalent.
\begin{enumerate}
\item There exists $Q\in E$, $e\in k(E)$ and $g \in \calL(Q+(Q\ominus P))$ such that
\[b = \tau(e) - e+ g.\]
\item For any $Q \in E$ and $j \in \NX_{>0}$
\[ \ores_{Q,j}(b) = 0.\]
\end{enumerate}
\end{prop}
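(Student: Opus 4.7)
For the direction $(1) \Rightarrow (2)$, my plan is to verify separately that $\ores_{Q,j}(\tau(e) - e) = 0$ and $\ores_{Q,j}(g) = 0$ for every $Q$ and $j$, and then add. For the first, a direct consequence of coherence (the relation $u_{R \ominus P} = \tau(u_R)$) is $c_{R,j}(\tau(e)) = c_{R \oplus P,j}(e)$, so the orbit sum $\ores_{Q,j}(\tau(e)-e)$ telescopes to zero, the sum being finite since $e$ has only finitely many poles. For $g \in \calL(Q + (Q \ominus P))$, the only nonvanishing Laurent coefficients are $c_{Q,1}(g)$ and $c_{Q \ominus P,1}(g)$, both in the single orbit $Q \oplus \ZX P$, so the only orbit residue to check is $\ores_{Q,1}(g) = c_{Q,1}(g) + c_{Q \ominus P,1}(g)$. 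To see this vanishes, I would invoke the residue theorem applied to the meromorphic differential form $g \Omega$ on $E$: writing $\Omega = h_R\, du_R$ near each $R$, we have $\operatorname{Res}_R(g\Omega) = c_{R,1}(g)\, h_R(R)$, and $\tau$-invariance of $\Omega$ combined with coherence of local parameters forces $h_Q(Q) = h_{Q \ominus P}(Q \ominus P)$, so the residue identity reduces to exactly the required vanishing $\ores_{Q,1}(g) = 0$.

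For the direction $(2) \Rightarrow (1)$, my plan is to modify $b$ by a sequence of adjustments $b \mapsto b - (\tau(e) - e)$ so as to reduce its pole divisor, exploiting the vanishing of the orbit residues. The first phase reduces high-order poles: if $b$ has a pole of order $n \geq 2$ at a point $R$, Riemann--Roch on $E$ provides $e \in k(E)$ with a single pole of order $n$ at $R \oplus P$ with an arbitrarily prescribed leading coefficient (since $\dim \calL(n(R \oplus P)) = n \geq 2$). Subtracting $\tau(e) - e$ cancels the order-$n$ part of the pole of $b$ at $R$ while producing a new order-$n$ pole at $R \oplus P$; iterating within the orbit, I would aggregate all order-$j$ Laurent coefficients at the various orbit points onto a single designated representative, and the hypothesis $\ores_{\mathcal{O},j}(b) = 0$ would force the aggregate to vanish for every $j \geq 2$. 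The modified $b$ is then left with only simple poles. In the second phase, I would reduce the simple poles analogously, now using functions with two simple poles (since no function on $E$ has a unique simple pole), still provided by Riemann--Roch; the vanishing of $\ores_{\mathcal{O},1}(b)$ allows me to shift and cancel the simple poles within each orbit until at most two adjacent points remain. A final consolidation step moves the remaining poles into a single orbit and into the two consecutive points $Q$ and $Q \ominus P$, so that the residual $g = b - (\tau(e) - e)$ lies in $\calL(Q + (Q \ominus P))$.

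The main obstacle is the careful bookkeeping in the implication $(2) \Rightarrow (1)$: each individual reduction creates new poles elsewhere in the orbit, and I must verify that the iterative procedure terminates with exactly the claimed amount of residual pole data. The telescoping identity $c_{R,j}(\tau(e)) = c_{R \oplus P,j}(e)$ is the technical engine that converts the vanishing of orbit residues into actual cancellations of Laurent coefficients, and the dimension counts of Riemann--Roch ensure that the auxiliary functions $e$ actually exist on $E$. The simple-pole case is the most delicate, as the auxiliary $e$ necessarily has at least two poles, forcing a coordinated choice across the orbits so that the final $g$ indeed lies in the desired two-dimensional space $\calL(Q + (Q \ominus P))$.
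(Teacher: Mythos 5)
Your proposal is correct and follows essentially the same route as the paper: the direction $(1)\Rightarrow(2)$ via telescoping of orbit residues together with the residue theorem applied to $g\Omega$ is exactly the paper's Lemma~\ref{lem5}, and the direction $(2)\Rightarrow(1)$ is the paper's iterative reduction of poles by terms $\tau(e)-e$ supplied by Riemann--Roch, driven by the invariance of orbit residues under such modifications. The only organizational difference is that the paper first normalizes $b$ to have polar dispersion $\leq 1$ and weak polar dispersion $0$ (Lemma~\ref{normform}) before letting the vanishing orbit residues kill the remaining principal parts, whereas you aggregate the order-$j$ coefficients within each orbit directly; both rest on the same cancellation mechanism.
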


 Proposition~\ref{Prop1}, Proposition~\ref{Prop2} and the following lemma immediately imply Proposition~\ref{Prop0}.

\begin{lem}\label{lem:lemhregandresj}Let $b \in k(E)$. The following are equivalent
\begin{enumerate}
\item  For all poles $Q_0$ of $f$, we have that 
\[h(X) =\sum_{i=1}^t b(X\oplus n_iP)\]
is regular at $X=Q_0$ where $Q_0\oplus n_1P, \ldots , Q_0\oplus n_tP$ are the poles of $b$ that belong to $Q_0\oplus\ZX P$.
\item For any $Q \in E$ and $j \in \NX_{>0}$
\[ \ores_{Q,j}(b) = 0.\]
\end{enumerate}
\end{lem}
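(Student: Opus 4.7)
The plan is to fix a pole $Q_0$ of $b$ and compute the Laurent expansion of $h(X) = \sum_{i=1}^t b(X \oplus n_i P)$ with respect to the local parameter $u_{Q_0}$, thereby directly linking the principal parts of $h$ at $Q_0$ to the orbit residues of $b$ at $Q_0$. The key identity driving this is a ``translation compatibility'' of the coherent system of local parameters: from Definition~\ref{def:coherentlocparam}, $u_{Q\ominus P}(X) = u_Q(X\oplus P)$, which by iteration and a change of variable yields
\[
u_{Q_0\oplus nP}(X\oplus nP) \;=\; u_{Q_0}(X) \qquad \text{for all } n\in \ZX.
\]
This is the one computation I would want to verify carefully, since the proof rests entirely on it.

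Granting this identity, I would proceed as follows. For each $n_i$, write the principal part expansion of $b$ at the pole $R_i := Q_0\oplus n_i P$:
\[
b(Y) \;=\; \sum_{j=1}^{m_i} \frac{c_{R_i,j}}{u_{R_i}(Y)^{j}} \;+\; \tilde b_i(Y), \qquad v_{R_i}(\tilde b_i) \geq 0.
\]
Substituting $Y = X\oplus n_i P$ and applying the identity above gives
\[
b(X\oplus n_i P) \;=\; \sum_{j=1}^{m_i} \frac{c_{R_i,j}}{u_{Q_0}(X)^{j}} \;+\; \tilde b_i(X\oplus n_i P),
\]
where the rightmost term is regular at $X=Q_0$ (since $\tilde b_i$ is regular at $R_i$ and $X\mapsto X\oplus n_i P$ is an isomorphism sending $Q_0$ to $R_i$). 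Summing over $i$,
\[
h(X) \;=\; \sum_{j\geq 1}\frac{1}{u_{Q_0}(X)^{j}} \sum_{i=1}^{t} c_{R_i,j} \;+\; (\text{regular at } Q_0).
\]
By definition, the $j$-th inner sum is exactly $\ores_{Q_0,j}(b)$, since the indices $i\notin\{n_1,\dots,n_t\}$ contribute zero (those points of the orbit are not poles).

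To conclude the equivalence, note that regularity of $h$ at $Q_0$ is equivalent to vanishing of every coefficient in its principal part, i.e.\ $\ores_{Q_0,j}(b)=0$ for all $j\geq 1$. For (1)$\Rightarrow$(2), given any $Q\in E$: if the $\tau$-orbit of $Q$ contains no pole of $b$ then $\ores_{Q,j}(b)=0$ trivially; otherwise replace $Q$ by a pole $Q_0$ in its orbit (orbit residues are invariant under such a shift) and apply (1). For (2)$\Rightarrow$(1), the displayed expansion shows that vanishing of all $\ores_{Q_0,j}(b)$ forces $h$ to be regular at $Q_0$. The main obstacle is purely bookkeeping: keeping track of the direction of the translations and verifying the parameter identity $u_{Q_0\oplus nP}(X\oplus nP)=u_{Q_0}(X)$ without a sign or index error.
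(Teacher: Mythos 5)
Your proof is correct and takes essentially the same route as the paper: expand $h$ at $Q_0$ in the local parameter $u_{Q_0}$ and identify the $j$-th principal-part coefficient with $\ores_{Q_0,j}(b)$. The paper compresses your explicit verification of the translation identity $u_{Q_0\oplus nP}(X\oplus nP)=u_{Q_0}(X)$ into the phrase ``one easily sees that $c_j=\ores_{Q_0,j}(b)$.''
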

\begin{proof}If $u$ is the local parameter at $Q_0$, we may write 
\[h = \frac{c_n}{u^n} + \ldots + \frac{c_1}{u} + h'\]
where $v_{Q_0}(h') \geq 0$. One easily sees that 
\[c_j = \ores_{Q_0,j}(b).\]
The conclusion now follows.\end{proof}

\begin{rmk}\label{rmk:analylocparam}
Assume that $k=\C$. Then, one can consider the analytification $E^{an}$ of $E$.  
Instead of considering algebraic local parameters on $E$, one can consider analytic local parameters $\{u_{Q} \ \vert \ Q \in E\}$, {\it i.e.}, for any $Q \in E$, $u_{Q}$ is a biholomorphism between a neighborhood of $Q$ in $E^{an}$ and a neighborhood of $0$ in $\C$. There is an obvious notion of coherent analytic local parameters, extending the notion introduced in Definition \ref{def:coherentlocparam}, and a corresponding notion of $\ores_{Q,j}$. Lemma \ref{lem:lemhregandresj} remains true in this context, with the same proof. 
\end{rmk}

\begin{rmk} The proof that (2) implies (1) in Proposition~\ref{Prop2} is constructive.  One only needs a constructive method for finding the bases of certain $\calL$ spaces (e.g. \cite{He02}). The proof that (2) implies (1) in Proposition~\ref{Prop1} is also constructive.  Therefore given $b \in k(E) $ one can decide if there exist $g \in k(E)$ and a nonzero operator $L \in k[\delta]$ such that $L(b) = \tau(g) - g$.
\end{rmk} 

\subsection{Proof of Proposition~\ref{Prop1}} \label{prop1sec}

In the following lemma, we will collect some facts concerning the local behavior of functions under the actions of $\tau$.  Its proof is a straightforward generalization of the proof of Lemma \ref{lemma:derivationvalutaion}.

 \begin{lem} 
 Let $u$ be a local parameter of  $k(E)$ and let $v_u$ be the associated valuation.   Then $v_u(\delta(u)) = 0$ and,  for any $f \in k(E)$ such that $v_u(f)\neq 0$, we have \begin{enumerate}
\item if $v_u(f) \geq 0$ then $v_u( \delta(f)) \geq  0$;
\item if $v_u(f)<0$ then $v_u( \delta(u))=  v_u(f)-1$.
\end{enumerate}
\end{lem}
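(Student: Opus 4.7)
The plan is to follow the proof of Lemma~\ref{lemma:derivationvalutaion} essentially verbatim, since the only ingredients used there are the algebraic fact that a regular differential form on an elliptic curve has no zeros or poles, and the formal expansion of an element of $k(E)$ as a Laurent series in a local parameter. Neither of these is specific to the ground field $\mathbb{C}$, so the argument transports without change to our elliptic curve $E$ defined over the algebraically closed field $k$ of characteristic zero.

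First I would establish $v_u(\delta(u))=0$. The regular differential $\Omega$ has divisor zero on $E$ (this is the standard fact that on an elliptic curve any nonzero regular differential is nowhere vanishing), so $v_u(\Omega)=0$ at every point. Since $u$ is a local parameter, the differential $du$ also has valuation zero. Writing $du = \delta(u)\,\Omega$, taking valuations yields $v_u(\delta(u)) = v_u(du) - v_u(\Omega) = 0$.

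Next I would write $f$ as its Laurent expansion in the local parameter: $f = \sum_{i \geq v_u(f)} a_i u^i$ with $a_{v_u(f)} \neq 0$. Since $\delta$ is a derivation, applying it termwise (after clearing the pole by multiplying by a suitable power of $u$, to justify the formal manipulation rigorously) gives
\[
\delta(f) \;=\; \delta(u) \sum_{i \geq v_u(f)} i\, a_i\, u^{i-1}.
\]
If $v_u(f) \geq 1$, then every term in this sum has nonnegative valuation, so $v_u(\delta(f)) \geq 0$. If $v_u(f) < 0$, then the leading term is $v_u(f)\, a_{v_u(f)}\, \delta(u)\, u^{v_u(f)-1}$, whose coefficient is nonzero (here we use that $\mathrm{char}(k)=0$, so $v_u(f)\neq 0$ implies $v_u(f)$ is invertible in $k$, and that $v_u(\delta(u))=0$); hence $v_u(\delta(f)) = v_u(f) - 1$.

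There is no real obstacle; the only mild subtlety is to justify the termwise application of $\delta$ to an infinite Laurent series, but this is handled by the standard trick of writing $f = u^{v_u(f)} g$ with $g \in \frakO_Q^\times$ (when $Q$ is the point at which $u$ is a local parameter), computing $\delta(f) = v_u(f)\, u^{v_u(f)-1}\, \delta(u)\, g + u^{v_u(f)}\, \delta(g)$, and noting that $v_u(\delta(g)) \geq 0$ follows from the case of nonnegative valuation.
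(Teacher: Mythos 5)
Your proof is correct and follows precisely the route the paper intends: the paper itself gives no argument for this lemma beyond saying it is a straightforward generalization of the proof of Lemma~\ref{lemma:derivationvalutaion}, and you reproduce exactly that proof — the nowhere-vanishing of $\Omega$ giving $v_u(\delta(u))=0$, then the Laurent-series/leading-term computation — transported verbatim to an arbitrary algebraically closed field of characteristic zero. Your added remark on rigorizing the termwise differentiation (via $f=u^{v_u(f)}g$ with $g$ a unit) is a welcome tidying of a step the paper also glosses over; the only residual loose end is that $v_u(\delta(g))\geq 0$ for a unit $g$ should be grounded in $\Omega$ generating $\Omega_{\frakO_Q/k}$ over $\frakO_Q$ rather than in the lemma's item (1), which as stated excludes $v_u=0$, but this is a standard fact and the intent is clear.
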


 We will also need a consequence of the Riemann-Roch Theorem for elliptic curves:  If $D$ is a positive divisor on $E$   and $l(D)$ is the dimension of the space $\calL(D)$ then 
\[l(D) = \mbox{degree of }D.\]
This implies that if $Q$ is a point on $E$,  $u$ is a local parameter at $Q$, $n\geq 2$,  and $c_2, \ldots , c_n \in k$, then  there exists an $f \in \calL(nQ)$  and $c_1 \in k$ such that 
\[f = \frac{c_n}{u^n} + \ldots + \frac{c_2}{u^2} + \frac{c_1}{u} + \tilde{f}\]
where $v_u(\tilde{f}) \geq 0$. {\it A priori}, we have no control of the element $c_1$.\\[0.1in]

Finally we need some definitions:

\begin{define} Let $f \in k(E)$ and $Q \in E$. 
\begin{enumerate}
\item If $Q$ is a pole of $f$, the \emph{polar dispersion of $f$ at $Q$, $\pdisp(f,Q)$} is the largest nonnegative integer $\ell$ such that $Q\oplus\ell P$ is also a pole of $f$.
\item The \emph{polar dispersion of $f$, pdisp(f),} is $\max\{\pdisp($f$,Q) \  | \ Q \mbox{ a pole of } f\}$.
\item The \emph{weak polar dispersion of $f$, wpdisp(f),} is \\$\max\{\ell \ | \ \exists Q \in E  \mbox{ s.t. $f$ has a pole of order at least $2$ at $Q$ and $Q\oplus \ell P$} \}$.
\end{enumerate} \end{define}

 The following is an analogue of \cite[Lemma 6.2]{HS}.

\begin{lem}\label{normform} Let $f \in k(E)$.  There exist $f^*, g \in k(E)$ such that  ${\pdisp(f^*) \leq 1}$, wpdisp($f^*$) = $0$ and 
$f = f^* + \tau(g) - g.$ \end{lem}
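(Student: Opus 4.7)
The plan is an iterative partial-fractions reduction based on the coherence identity $\tau(u_Q^{-j}) = u_{Q\ominus P}^{-j}$. Concretely, if $g \in k(E)$ has leading Laurent term $c/u_R^{j}$ at a point $R$, then $\tau(g)-g$ contributes $-c/u_R^{j}$ at $R$ (from $-g$) and $c/u_{R\ominus P}^{j}$ at $R\ominus P$ (from $\tau(g)$). Subtracting such $\tau(g)-g$ from $f$ therefore lets us cancel or shift leading principal parts of $f$ backward along $\tau$-orbits. I would first reduce $\text{wpdisp}$ to $0$ (Phase I) and then reduce $\text{pdisp}$ to at most $1$ (Phase II).

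\emph{Phase I.} Induct on $\text{wpdisp}(f)$. If $\text{wpdisp}(f) \geq 1$, some $\tau$-orbit $O$ contains two poles of order $\geq 2$ at maximal forward separation; call them $Q$ and $Q'=Q\oplus\ell P$, $\ell\geq 1$. Write the leading Laurent term of $f$ at $Q'$ as $c/u_{Q'}^{j}$, with $j\geq 2$, $c\neq 0$. Since $\dim_k \calL(jR) = j$ for any $R\in E$ and $j\geq 1$ (Riemann-Roch on an elliptic curve), I can pick $g_0 \in \calL(j(Q'\ominus P))$ with leading Laurent term $-c/u_{Q'\ominus P}^{j}$. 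Both $g_0$ and $\tau(g_0)$ have poles only in $O$, so $\tau(g_0)-g_0$ introduces no new poles outside $O$; moreover its leading term at $Q'$ is $-c/u_{Q'}^{j}$, which cancels that of $f$ and strictly reduces the pole order at $Q'$. Finitely many such steps exhaust the order-$\geq 2$ poles in each orbit down to one, achieving $\text{wpdisp}=0$.

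\emph{Phase II.} With $\text{wpdisp}=0$, simple poles in an orbit $O$ may still be spread out. To shift them, one is forced to use $g\in\calL(Q'+R')$ for two distinct points $Q',R'\in O$: this space has dimension $2$ and contains a non-constant function with simple poles of opposite residues at $Q'$ and $R'$, whereas $\calL(R) = k$ forbids any function with a single simple pole. The difference $\tau(g)-g$ then deposits principal parts only at the four points $Q',Q'\ominus P, R', R'\ominus P\in O$. By fixing $R'$ at one extreme of $O$'s simple-pole support and iterating carefully, all simple poles in $O$ can be consolidated into two consecutive points $\{Q,Q\oplus P\}$, yielding $\pdisp(f,O)\leq 1$. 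The hard part will be the bookkeeping in Phase II: because every simple-pole shift necessarily moves or creates residues at several points at once (not merely one), termination and the $\pdisp\leq 1$ bound require a well-chosen auxiliary point $R'$ and a decreasing measure such as the number of simple poles of $f$ lying outside the target pair $\{Q,Q\oplus P\}$.
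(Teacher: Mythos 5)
Your two-phase plan (first kill $\wpdisp$, then shrink $\pdisp$ by shifting poles along $\tau$-orbits using Riemann--Roch) is exactly the paper's strategy, but as written both phases have genuine problems. In Phase I the auxiliary function sits at the wrong point: with your own convention ($u_{R\ominus P}=\tau(u_R)$, so $\tau(g)$ moves a pole from $R$ to $R\ominus P$), a function $g_0\in\calL(j(Q'\ominus P))$ yields a correction $\tau(g_0)-g_0$ supported only at $Q'\ominus P$ and $Q'\ominus 2P$; it has no pole at $Q'$, so its "leading term at $Q'$'' is $0$, not $-c/u_{Q'}^{j}$, and the pole of $f$ at $Q'$ is untouched. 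To cancel at $Q'$ and push the pole backward you must take $g_0\in\calL(jQ')$ with leading term $-c/u_{Q'}^{j}$ (the paper does this, killing all orders $\geq 2$ at the forward endpoint in one stroke with $\tilde g\in\calL(mQ')$, after which the new pole appears at $Q'\ominus P$ and a double induction on $\wpdisp$ and the number of extremal pairs terminates).

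Phase II is not a proof: you explicitly defer "the bookkeeping,'' and that bookkeeping is the content of the lemma. Two things are missing. First, $\wpdisp(f)=0$ does \emph{not} mean all remaining poles are simple --- each orbit may still carry one pole of order $\geq 2$; what it does give is that of the two extremal poles $Q$ and $Q\oplus NP$ of an orbit (with $N=\pdisp(f)\geq 2$) at least one is simple, and that is the one you must clear. Second, the key idea is \emph{which} two-point divisor to use: if $Q\oplus NP$ is the simple endpoint, take a nonconstant $\tilde g\in\calL\bigl((Q\oplus(N-1)P)+(Q\oplus NP)\bigr)$, so that $\tau(\tilde g)$ is supported at $Q\oplus(N-2)P$ and $Q\oplus(N-1)P$; then for a suitable scalar $a$ the function $f-(\tau(a\tilde g)-a\tilde g)$ loses its pole at $Q\oplus NP$ and acquires poles only strictly inside the range, so $\pdisp$ strictly decreases (symmetrically, use $\calL\bigl((Q\oplus P)+(Q\oplus 2P)\bigr)$ when $Q$ is the simple endpoint). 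Your proposal to anchor the auxiliary point $R'$ "at one extreme of the support'' does not obviously work, since $R'\ominus P$ can then land outside the current pole range and increase $\pdisp$ rather than decrease it.
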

\begin{proof}  We  begin by showing that there exist $f^*, g \in k(E)$ such that wpdisp($f^*$) = $0$ and 
$f = f^* + \tau(g) - g.$  We  will then further refine $f^*$ so that $\pdisp(f^*) \leq 1$ as well.

 Let $N =$ wpdisp($f$)$\geq 1$ and $n_f =$ the number of points $Q \in E$  such that $f$ has poles of order at least two at $Q$ and $Q\ominus NP$.  Fix such a point $Q$ and let $u$ be a local parameter at $Q$.  We  may write
 
\[ f = \sum_{i=1}^m \frac{a_i}{u^i} + h_f\]

where $m \geq 2$ and $v_{Q}(h_f) \geq 0$. The Riemann-Roch Theorem implies that there exists a $\tilde{g} \in \calL(mQ)$ such that%
\[ \tilde{g} = \sum_{i=1}^m \frac{b_i}{u^i} + h_{\tilde{g}}\]

where $b_i = -a_i $ for $i = 2, \ldots , m$ and $v_{Q}(h_{\tilde{g}}) \geq 0$. Note that $\tau(\tilde g)$ has a pole of order $m$ at $Q\ominus P$.
 Letting $\tilde{f} = f - (\tau(\tilde{g}) - \tilde{g})$, one sees that $\tilde{f}$  has a pole of order at most $1$ at  $Q$.  Therefore either the wpdisp($f$) = wpdisp($\tilde{f}$)  and $n_{\tilde{f}} < n_f$ or wpdisp($f$) $>$ wpdisp($\tilde{f}$). An induction allows us to conclude that there exist $f^*, g \in k(E)$ such that wpdisp($f^*$) = $0$ and 
$f = f^* + \tau(g) - g.$

We  may now assume that wpdisp($f$) $= 0$ and let $\pdisp(f) = N \geq 2$.  Let $f$ have poles at both $Q$ and $Q\oplus NP$.  Since wpdisp($f$) $= 0$, $f$ has a pole of order greater than one at no more than  one of these two points. We  deal with the two cases separately.

\underline{$f$ has a pole of order $1$ at $Q\oplus NP$.} The Riemann-Roch Theorem implies that there exists a nonconstant $\tilde{g} \in \calL((Q\oplus (N-1)P) + (Q \oplus NP))$. Note that $\tau(\tilde{g}) \in \calL((Q\oplus (N-2)P) + (Q \oplus (N-1)P))$.  For some $a \in k$, $\tilde{f} - (\tau(a\tilde{g}) - a\tilde{g})$ has no pole at $Q\oplus NP$ and so $\pdisp(f,Q) < N$.  An induction finishes the proof.

\underline{$f$ has a pole of order $1$ at $Q$.} The  Riemann-Roch Theorem implies that there exists a nonconstant $\tilde{g} \in \calL((Q\oplus P) + (Q \oplus 2P))$. Note that $\tau(\tilde{g}) \in \calL((Q) + (Q \oplus P))$. For some $a \in k$, $\tilde{f} - (\tau(a\tilde{g}) - a\tilde{g})$ has no pole at $Q$ and so $\pdisp(f,Q) < N$.  An induction again finishes the proof.\end{proof}

We now turn to the  
\begin{prop2proof} Applying Lemma~\ref{normform}, we may assume that $\pdisp(b) \leq 1$ and wpdisp($b$) = $0$ {\mfs (here one uses the fact that $L\circ \tau = \tau \circ L$, since $L \in \C[\delta]$ and $\tau\circ\delta = \delta\circ\tau$).} We will first show that for any $Q \in E$, if $b$ has a pole at $Q$ then this pole must be simple and it has another pole of the same order at $Q\oplus P$ or at $Q\ominus P$. To see this note that if  $b$ has a pole at $Q$, then either $g$ or $\tau(g)$ has a pole at $Q$. Assume that $g$ has a pole at $Q$ (the argument assuming $\tau(g)$ has a pole at $Q$ is similar).  Let $r$ be the largest integer such that $Q\oplus rP$ is a pole of $g$ and $s$ be the largest integer such that $Q\ominus sP$ is a pole of $g$.  We  then have that $Q\oplus rP$ and $Q\ominus (s +1)P$ are both poles of $\tau(g)-g$ and therefore of $L(b)$. Using Lemma~\ref{lemma:derivationvalutaion} above, one sees that they  must also be poles of $b$. Since $\pdisp(b) \leq 1$, we have $r = s = 0$. In particular, the only pole of $g$ in $Q\oplus \ZX P$ is at $Q$, the only pole of $\tau(g)$  in $Q\oplus \ZX P$ is at $Q\ominus P$ and they must have the same orders. Once again, Lemma~\ref{lemma:derivationvalutaion}  above implies that $b$ has poles at these points of equal orders.  Since  wpdisp($b$) = $0$, the orders of these poles must be $1$. 

 We  can therefore conclude that $b$ has only poles of order $1$ and the poles of $b$ occur in pairs $\{Q_1, Q_1\ominus P\}, \ldots, \{Q_r, Q_r\ominus P\}$   where \ju{$(Q_i \oplus \ZX P) \cap (Q_j \oplus \ZX P) = \emptyset$} for $i \neq j$.  
 
We  will now show how one can construct an element $e$ such that $b -(\tau(e) - e)$ has at most one pair of poles $\{Q, Q\ominus P\}$. This will yield \ju{(2) and our contention}.  Assume $r > 1$ and that $b$ has simple poles at the pairs $\{Q_1, Q_1\ominus P\}$ and $\{Q_2, Q_2\ominus P\}$.  Let $h\in k(E)$ be a nonconstant element of $\calL(Q_1+Q_2)$.  There exists an $a \in k$ such that { $\tilde{b} = b - (\tau(ah)-ah)$} has no pole at $Q_1$. The element $\tilde{b}$ has only simple poles and $\mbox{\pdisp}(b) \leq 1$.  Therefore its poles occur at possibly $Q_1\ominus P, \{Q_2, Q_2\ominus P\}, \ldots , \{Q_r, Q_r\ominus P\}$.  Since $\tilde{b}$ satisfies an equation of the form {$L(\tilde{b}) = \tau(\tilde{g}) - \tilde{g}$ for some $\tilde{g} \in k(E)$} the poles of such an $\tilde{b}$ must occur in pairs. Therefore we have that $\tilde{b}$ has no pole at $Q_1\ominus P$.  Continuing in this way we find an $e \in k(E)$ such that $b -(\tau(e) - e)$ has at most one pair of poles $\{Q, Q\ominus P\}$. \end{prop2proof}

In the proof that (2)~implies (1)~in \ju{Proposition~\ref{Prop1}} we will need the following technical lemma. Let $u$ be a local parameter at $Q$. Note that  $\tau(u)$ is a local parameter at $Q\ominus P$.

\begin{lem}\label{lem5} If $g \in \calL(Q + (Q\ominus m_1 P) + \ldots + (Q\ominus m_t P) )$ where $m_1, \ldots, m_t \in \ZX\backslash \{0\}$ then 
\[\ores_{Q,1}(g) = 0.\]
\end{lem}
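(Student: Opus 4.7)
The plan is to express the orbit residue $\ores_{Q,1}(g)$, up to a nonzero global constant, as the sum of all residues of the meromorphic differential $g\,\Omega$ on the compact curve $E$, and then invoke the vanishing of this sum (the residue theorem on a compact Riemann surface, equivalently the fact that any principal divisor on $E$ has degree zero when differential forms are considered).

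Since $g\in\calL(Q+(Q\ominus m_1P)+\cdots+(Q\ominus m_tP))$ and every point in the support of this divisor lies in the single $\tau$-orbit $Q\oplus\ZX P$, the function $g$ has only simple poles and all of them belong to that orbit. Hence the (a priori infinite) sum $\ores_{Q,1}(g)=\sum_{i\in\ZX}c_{Q\oplus iP,1}(g)$ is actually finite, and it equals the sum of the $u_R$-residues of $g$ over all the poles $R$ of $g$.

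The key step is to convert each $c_{R,1}(g)$ into $\Res_R(g\,\Omega)$ by a conversion factor that is constant along the orbit. Since $\Omega$ is a regular nonvanishing differential on the elliptic curve $E$, we may write $\Omega=h_R(u_R)\,du_R$ locally near each $R$, where $h_R$ is analytic with $\lambda_R:=h_R(0)\neq 0$; as $g$ has at most a simple pole at $R$, this gives immediately
$$\Res_R(g\,\Omega)=\lambda_R\cdot c_{R,1}(g).$$
By the generalization of Lemma~\ref{lem:derivellcurvecommtau} indicated at the beginning of this appendix (Lemma~\ref{lem:derivellcurvecommtaugen}), $\tau^{*}\Omega=\Omega$ on $E$. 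Combined with the coherence relation $u_{R\ominus P}=\tau(u_R)$ in our chosen family of local parameters, the pullback computation
$$\tau^{*}\bigl(h_R(u_R)\,du_R\bigr)=h_R\bigl(\tau(u_R)\bigr)\,d\tau(u_R)=h_R(u_{R\ominus P})\,du_{R\ominus P}$$
must equal the intrinsic expression $h_{R\ominus P}(u_{R\ominus P})\,du_{R\ominus P}$ of $\Omega$ near $R\ominus P$. Therefore $h_R=h_{R\ominus P}$ as power series, and in particular $\lambda_R=\lambda_{R\ominus P}$. Iterating, there is a single nonzero constant $\lambda$ with $\lambda_R=\lambda$ for every $R\in Q\oplus\ZX P$.

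Putting things together, the residue theorem on the compact curve $E$ yields
$$0=\sum_{R\in E}\Res_R(g\,\Omega)=\sum_{i\in\ZX}\lambda\,c_{Q\oplus iP,1}(g)=\lambda\cdot\ores_{Q,1}(g),$$
and dividing by the nonzero $\lambda$ gives $\ores_{Q,1}(g)=0$, as desired. The only delicate point in the plan is the verification that the conversion factors $\lambda_R$ are constant along the $\tau$-orbit; this is where the translation-invariance $\tau^{*}\Omega=\Omega$ combines with the coherence of the chosen local parameters. Everything else reduces to bookkeeping and a single application of the classical residue theorem.
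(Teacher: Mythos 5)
Your proof is correct and follows essentially the same route as the paper's: both arguments rewrite $\Omega$ near each pole $R$ in the orbit as a unit times $du_R$, use the translation-invariance of $\Omega$ together with the coherence $u_{R\ominus P}=\tau(u_R)$ to see that this unit has the same (nonzero) value at every point of $Q\oplus\ZX P$, and then conclude by the residue theorem on the compact curve $E$. Your constant $\lambda$ is exactly the paper's $\alpha$ (the value of $\delta(u_Q)^{-1}$ at $Q$), so the two proofs differ only in notation.
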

\begin{proof} This result will follow from the fact that the sum of the residues of a differential form \ju{on a compact Riemann surface} must be zero.    We  start by noting that Lemma~\ref{lemma:derivationvalutaion} states that $v_{u_Q}(\delta(u_Q)) = 0$ so we may write $\delta(u_Q)^{-1} = \alpha + \bar u$ where $0 \neq \alpha \in k$ and $\bar{u}$ is regular and zero at $Q$.  For each $i \in \ZX$ we write
\[g = \frac{c_{Q\ominus iP, -1}}{u_{Q\ominus iP}} + g_{Q\ominus iP}\]
where $g_{Q\ominus iP}$ is regular and zero at $Q\ominus iP$. Now consider the differential $g\Omega$.  Since for any $i \in \ZX$,  $\Omega =\delta(u_{Q\ominus iP})^{-1}du_{Q\ominus iP}$, we have
\begin{eqnarray*}
g\Omega & = & (\frac{c_{Q\ominus iP, -1}}{u_{Q\ominus iP}} + g_{Q\ominus iP})(\delta(u_{Q\ominus iP})^{-1}du_{Q\ominus iP})\\
 & = &  (\frac{c_{Q\ominus iP, -1}}{u_{Q\ominus iP}} + g_{Q\ominus iP})(\tau^i(\delta(u_Q)^{-1})du_{Q\ominus iP})\\
 & = &  (\frac{c_{Q\ominus iP, -1}}{u_{Q\ominus iP}} + g_{Q\ominus iP})((\alpha + \tau^i(\bar{u}))du_{Q\ominus iP})
\end{eqnarray*}
where the second equality follows from the fact that $u_{Q\ominus iP} = \tau^i(u_Q)$ and $\tau\delta =\delta \tau$. Therefore the residue of $g\Omega$ at $Q\ominus iP$ is $\alpha c_{Q\ominus iP, -1}$. Since $\alpha \neq 0$ and the sum of the residues of a differential form is $0$ we have $\ores_{Q,1}(g) = 0$.\end{proof}

\begin{rmk}\label{rmk:ores1vanishiffResvanish}
The proof of Lemma \ref{lem5} shows that if the poles of $g \in k(E)$ are simple and belong to $Q \oplus \Z P$, then there exists $0 \neq \alpha \in \C$ such that $\ores_{Q,1} (g)=\alpha \sum_{i \in \Z}\Res_{Q \oplus iP} (g \Omega)$. Therefore, $\ores_{Q,1} (g)=0$ if and only if $\sum_{i \in \Z}\Res_{Q \oplus iP} (g \Omega)=0$. 
\end{rmk}

\begin{rmk} 
Lemma~\ref{lem5} and Proposition~\ref{Prop0} imply Corollary~\ref{cor2}.  To see this note that for $f$ as in  Corollary~\ref{cor2} we have that $f \in \calL(Q + (Q\ominus m_1 P) + \ldots + (Q\ominus m_t P) )$  where $m_i = -n_i$.  The residue of $h(X) = \sum_{i=1}^tf(X\oplus n_iP)$ at $X=P$ is $\ores_{Q,1}(f)$, so $h(X)$ is regular at $Q_0$. Applying Proposition~\ref{Prop0} yields the conclusion of Corollary~\ref{cor2}.
\end{rmk}

\begin{prop2aproof} Let us assume that condition (2)~holds.  We  claim that it is enough to find an element $\tilde{g}$ and a nonzero operator $L$ such that $L(h) = \tau(\tilde{g}) -\tilde{g}$. Assume that we have done this. Then
\[L(b) = L(\tau(e) - e +h)= \tau(L(e)) - L(e) +\tau(\tilde{g}) - \tilde{g} = \tau(g) - g\]
where $g = L(e) + \tilde{g}$. 

If $h$ is constant, then the result is obvious (take $L=\delta$ and $\tilde{g} =0$). We shall now assume that $h$ is not constant.

To simplify notation, we write $u$ for $u_Q$ and let  $\delta(u) = u_0 + \bar u$, where $0 \neq u_{0} \in k$ and $\bar{u}$ is regular and zero at $Q$, and so $\delta(\tau(u)) = u_0 + \tau(\bar u)$. Using Lemma~\ref{lem5}, one sees that   
\begin{eqnarray*}
\delta(h) &=& \frac{-u_0a}{u^2}(1+ h_u)\\
&=& \frac{u_0a}{\tau(u)^2}(1+ h_{\tau(u)})
\end{eqnarray*}
where $v_u(h_u) > 0$ and $v_{\tau(u)}(h_{\tau(u)}) > 0$.  {\mfs Selecting an element $f \in \calL(2Q)$ such that $f = u_0a/u^2 + \ldots,$ we have}

\[\delta(h) - (\tau(f) - f) \in \calL(Q+(Q\ominus P)).\]
Since $\{1,h\}$ forms a basis of  $\calL(Q+(Q\ominus P))$ (recall that $h$ is not constant), there exist  elements $c, d \in k$ such that
\[\delta(h) - (\tau(f) - f) -ch -d= 0. \]
Therefore
\[\delta^2(h) - c\delta(h) = \tau(\delta(f)) - \delta(f)\]
and  conclusion  (2)~holds for $L = \delta^2 - c\delta$ and $\tilde{g} = \delta(f)$.\end{prop2aproof}


{\mfs \begin{rem} One cannot weaken condition (2) in Proposition~\ref{Prop1}, that is, for a general $b \in k(E)$, condition (1) of Proposition~\ref{Prop1} does {\bf not} imply  the following condition :
\begin{itemize}
\item[(3)] There exist $Q\in E$, $e\in k(E)$ and a constant $c \in k$ such that
\[b = \tau(e) - e+ c.\]
\end{itemize}
To see this, let   $b$ be a nonconstant element of  $\calL(Q+(Q\ominus P))$.  Note that $\pdisp(b) =1$. We  have just shown that $b$ satisfies (1)~of Proposition~\ref{Prop2}.  Now assume $b = \tau(e)-e +c$ for some $e\in k(E), c\in k$. Since $\pdisp(\tau(e) - e) = \pdisp(e) + 1$ if $e \notin k$, we have   $\pdisp(e) =0$. Since $b$ has no poles outside of $\{Q, Q\ominus P\}$, we would have that $e$ has at most one pole and this pole would be simple.  Therefore $e $  must be constant. A contradiction with the fact that $b \notin k$.
\end{rem}}

\subsection{Proof of Proposition~\ref{Prop2}} \label{prop2sec}

\begin{prop3proof} For any $Q\in E$ and $j \in \NX_{>0}$, we have $\ores_{Q,j}(e) = \ores_{Q,j}(\tau(e))$. Furthermore Lemma~\ref{lem5} implies that $\ores_{Q,j}(g) = 0$.  Therefore $\ores_{Q,j}(f) =\ores_{Q,j}(\tau(e)-e+g) =\ores_{Q,j}(\tau(e))-\ores_{Q,j}(e) +\ores_{Q,j}(g) = 0$.\end{prop3proof}

\begin{prop3aproof} The proof of this implication is similar to the proof that (1)~implies (2)~in Proposition~\ref{Prop1}. Lemma~\ref{normform} implies that we may assume that ${\rm pdisp}(f)\leq 1$ and $\rm{wdisp}(f) = 0$.  Therefore if $f$ has a pole of order $j \geq 2$ at some $Q \in E$, then $Q$ is the only point in $Q + \ZX P$ at which $f$ has a pole.  Since $\ores_{Q,j}(f) = 0$, we have that $f$ has no poles of order greater than $1$.  Since we also have  ${\rm pdisp}(f)\leq 1$, we can conclude that that $f$ has only poles of order $1$ and the poles of $f$ occur in pairs $\{Q_1, Q_1\ominus P\}, \ldots, \{Q_r, Q_r\ominus P\}$   where \ju{$(Q_i \oplus \ZX P) \cap (Q_j \oplus \ZX P) = \emptyset$} for $i \neq j$.

We  will now show how one can construct an element $e$ such that $f -(\tau(e) - e)$ has at most  one pair of poles $\{Q, Q\ominus P\}$. This will yield  condition 2.~of the Proposition. We  can assume that $r >1$.  Let $h\in k(E)$ be a nonconstant element of $\calL(Q_1+Q_2)$.  There exists an $a \in k$ such that $\tilde{f} = f - (\tau(ag)-ag)$ has no pole at $Q_1$. The element $\tilde{f}$ has only simple poles and $\mbox{\pdisp}(f) \leq 1$.  Therefore its poles occur at possibly $Q_1\ominus P, \{Q_2, Q_2\ominus P\}, \ldots , \{Q_r, Q_r\ominus P\}$. Since  $\ores_{Q_1,1}(f) = 0$, $f$ cannot have a singe pole in $Q_1+\ZX P$.   Therefore we have that $\tilde{f}$ has no pole at $Q_1\ominus P$.  Continuing in this way we find an $e \in k(E)$ such that $f -(\tau(e) - e)$ has at most one pair of poles $\{Q, Q\ominus P\}$.
\end{prop3aproof}
\section{Some computation of orbit residues}\label{appendixB}
Let $E$ be an elliptic curve defined over an algebraically closed field $k$ of characteristic zero and  $k(E)$ be its function field.  Let $P$ be a non-torsion point on $E$  and let $\tau:k(E)\rightarrow k(E)$ denote map corresponding to $Q\mapsto   Q\oplus P$ on $E$, where $\oplus$ denotes the group law on $E$. Let $\iota_1$ and $\iota_2$ two involutions of $E$ such that $\tau =\iota_2 \circ \iota_1$. We let $\Omega$ be a non zero regular differential form on $E$ and we keep notation as in \S \ref{introsec}.

 \begin{lemma}\label{lem1}
Let $b \in k(E)$ such that $\iota_1(b)=-b$.  Let $Q \in E$ be a simple pole of $b$ {such that $Q \neq \iota_1(Q)$}. Then, $\iota_{1}(Q)$ is a simple pole of $b$ and the residue of $b \Omega$ at $Q$ coincides with its residue at $\iota_1(Q)$.
\end{lemma}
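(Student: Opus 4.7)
The plan is to prove that the meromorphic $1$-form $b\Omega$ is invariant under $\iota_1$, and then to deduce both assertions from this invariance together with the fact that $\iota_1$ interchanges $Q$ and $\iota_1(Q)$ biholomorphically. Since the residue of a meromorphic $1$-form at a point is preserved under pullback by a local biholomorphism, the $\iota_1$-invariance of $b\Omega$ will give directly
\[
\Res_{\iota_1(Q)}(b\Omega) = \Res_{\iota_1(Q)}\bigl(\iota_1^*(b\Omega)\bigr) = \Res_Q(b\Omega),
\]
where the last equality uses $\iota_1(\iota_1(Q)) = Q$.

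The key step is to establish $\iota_1^*\Omega = -\Omega$. Since the space of regular differential forms on $\Etproj$ is one-dimensional over $k$ and $\iota_1$ is an involution, one has $\iota_1^*\Omega = \epsilon \Omega$ with $\epsilon \in \{\pm 1\}$. To rule out $\epsilon = +1$, I would use that $\iota_1$ has fixed points on $\Etproj$ — indeed, the fixed points of $\iota_1$ correspond to the roots of the discriminant $\Delta^x_{[x_0:x_1]}$ (Lemma~\ref{lemma:fixedpointinvolution}), which is a nonzero polynomial and hence has roots in $\P1(\C)$. On an elliptic curve, any automorphism that acts trivially on the one-dimensional space of regular differentials must be a translation, and a non-identity translation has no fixed points; hence $\epsilon = -1$. (Equivalently, at a fixed point of $\iota_1$ one may choose a local parameter $u$ such that $\iota_1^*u = -u$, so that $\iota_1^*(du) = -du$, and this local computation propagates globally by one-dimensionality.)

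With $\iota_1^*\Omega = -\Omega$ and $\iota_1(b) = b \circ \iota_1 = -b$ in hand, we obtain
\[
\iota_1^*(b\Omega) = \iota_1(b)\,\iota_1^*\Omega = (-b)(-\Omega) = b\Omega.
\]
Since $\iota_1$ is a biholomorphism of $\Etproj$ carrying $\iota_1(Q)$ to $Q$, this invariance implies that $\iota_1(Q)$ is a pole of $b\Omega$ of the same order as $Q$; because $\Omega$ is regular and nonvanishing everywhere on $\Etproj$, this is also the order of $\iota_1(Q)$ as a pole of $b$, namely one, settling the first assertion. The residue equality then follows from the displayed identity above. The main obstacle is rigorously pinning down the sign $\epsilon = -1$; this is a classical fact but requires, in one form or another, the existence of fixed points of $\iota_1$, which fortunately is guaranteed here by Lemma~\ref{lemma:fixedpointinvolution}.
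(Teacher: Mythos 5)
Your proof is correct and follows essentially the same route as the paper's: both reduce the statement to the $\iota_1$-invariance of the form $b\Omega$, obtained from $\iota_1(b)=-b$ together with $\iota_1^*(\Omega)=-\Omega$. The only differences are that the paper simply cites Duistermaat for $\iota_1^*(\Omega)=-\Omega$ whereas you derive it from the existence of fixed points of $\iota_1$, and the paper carries out the residue comparison explicitly in a local parameter rather than invoking the general invariance of residues under pullback by a biholomorphism; both variants are sound.
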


\begin{proof}
The assertion follows from the fact that, since $\iota_1(b)=-b$ and ${\iota_1^*(\Omega)=-\Omega}$ (see \cite[Lemma 2.5.1 and Proposition 2.5.2]{DuistQRT}), the form $\eta=b\Omega$ satisfies ${\eta = \iota_1^*(\eta)}$. Indeed, if $u_Q$ is a local parameter at $Q$, then we have $\eta = v du_Q $ with $v=\frac{c_Q}{u_Q}+\overline{v}$ where $c_Q \in \C$ is the residue of $\eta$ at $Q$ and $\overline{v}$ is regular at $Q$. Hence, $\iota_1(u_Q)$ is a local parameter at $\iota_1(Q)$, and we have $\iota_1^*(\eta) = \iota_1(v) d\iota_1(u_Q)$ with $\iota_1(v)=\frac{c_Q}{\iota_1(u_Q)}+\iota_1(\overline{v})$ where $\iota_1(\overline{v})$ is regular at $\iota_1(Q)$. So, $\Res_{\iota_1(Q)} (\eta) = \Res_{\iota_1(Q)} (\iota_1^*(\eta)) = c_Q = \Res_{Q} (\eta)$.
\end{proof}

For the notion of coherent analytic parameters used below, we refer to \ju{Appendix~\ref{introsec}}, especially to Remark \ref{rmk:analylocparam}. 

\begin{lemma}\label{lem:horesidues}
There exists a coherent set of analytic local parameters ${\{u_{Q} \ \vert \ Q \in E \}}$ on $E^{\mathrm{an}}$ such that $\iota_{1}(u_{Q})=-u_{\iota_{1}(Q)}$. Let $b \in k(E)$ such that $\iota_1(b)=-b$. For such a set of local parameters, if  
\[b  = \frac{c_{Q,n}}{u_Q^n} + \ldots + \frac{c_{Q,2}}{u_Q^2} + \frac{c_{Q,1}}{u_Q} + \tilde{f}\]
where $v_Q(\tilde{f}) \geq 0$, then 
\[
b  =\frac{c_{\iota_{1}(Q),n}}{u_{\iota_{1}(Q)}^n} + \ldots + \frac{c_{\iota_{1}(Q),2}}{u_{\iota_{1}(Q)}^2} + \frac{c_{\iota_{1}(Q),1}}{u_{\iota_{1}(Q)}} + \tilde{g}
\]
where $v_{\iota_{1}(Q)}(\tilde{g}) \geq 0$ and $c_{\iota_{1}(Q),j}=(-1)^{j+1}c_{Q,j}$. 
If follows that, if all the poles of $b$ belong to the same $\tau$-orbit, then, for any even number $j$, we have $\ores_{Q,j}(b)=0$. 
\end{lemma}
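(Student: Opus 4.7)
The plan is threefold, paralleling the three claims of the lemma. I will build the coherent parameters as primitives of $\Omega$, read the expansion identity off this construction, and then combine it with the $\tau$-orbit structure to kill even-order orbit residues.

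For the first claim, I would avoid working on the universal cover directly and instead exploit the fact that $\Omega$ is a nowhere-vanishing regular $1$-form on $E^{\mathrm{an}}$ (since the canonical divisor of a genus-one curve has degree $0$). On any simply connected neighborhood $V_Q$ of $Q \in E^{\mathrm{an}}$, Poincaré's lemma provides a unique analytic function $u_Q : V_Q \to \C$ with $du_Q = \Omega|_{V_Q}$ and $u_Q(Q) = 0$, and $u_Q$ is a local parameter because $du_Q(Q) = \Omega(Q) \neq 0$. The coherence $\tau(u_Q) = u_{Q \ominus P}$ is then immediate from uniqueness of primitives: both $\tau(u_Q)$ and $u_{Q \ominus P}$ have differential $\Omega$ (using $\tau^*(\Omega) = \Omega$) and vanish at $Q \ominus P$. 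The same uniqueness, applied with $\iota_1^*(\Omega) = -\Omega$, shows that $\iota_1(u_Q)$ and $-u_{\iota_1(Q)}$ are primitives of $-\Omega$ both vanishing at $\iota_1(Q)$, hence coincide.

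Given such parameters, the expansion at $\iota_1(Q)$ follows by direct substitution. Plugging $u_Q \circ \iota_1 = -u_{\iota_1(Q)}$ into the Laurent expansion of $b$ at $Q$ yields, on a neighborhood of $\iota_1(Q)$, $b \circ \iota_1 = \sum_j (-1)^j c_{Q,j} / u_{\iota_1(Q)}^j + (\text{regular})$. Invoking $\iota_1(b) = -b$ and matching principal parts with the expansion of $b$ at $\iota_1(Q)$ then gives $c_{\iota_1(Q),j} = (-1)^{j+1} c_{Q,j}$.

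For the final assertion, I would first observe that the polar divisor of $b$ equals that of $-b = \iota_1(b)$, hence is $\iota_1$-stable; if all poles lie in $Q_0 \oplus \Z P$ then $\iota_1(Q_0) = Q_0 \oplus k_0 P$ for some $k_0 \in \Z$, and the relation $\iota_1 \tau = \tau^{-1} \iota_1$ gives $\iota_1(Q_0 \oplus iP) = Q_0 \oplus (k_0 - i)P$. Applying the previous expansion identity with $Q = Q_0 \oplus iP$ yields $c_{Q_0 \oplus (k_0 - i)P,\, j} = (-1)^{j+1} c_{Q_0 \oplus iP,\, j}$; summing over $i \in \Z$ and re-indexing via $i' = k_0 - i$ gives $\ores_{Q_0, j}(b) = (-1)^{j+1} \ores_{Q_0, j}(b)$, which vanishes for $j$ even.

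The main obstacle I anticipate is the first step: a naive attempt to construct coherent parameters on the universal cover by choosing compatible preimages of each $Q$ runs into combinatorial difficulties (for instance at fixed points of $\iota_1$ distinct from the one naturally visible on $\C$, or at $\tau$-orbits preserved by $\iota_1$ without containing a fixed point). Defining $u_Q$ intrinsically as a primitive of $\Omega$ elegantly sidesteps these issues, turning both the $\tau$-coherence and the $\iota_1$-symmetry into automatic consequences of $\tau^*(\Omega) = \Omega$ and $\iota_1^*(\Omega) = -\Omega$.
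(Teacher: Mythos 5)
Your proof is correct, and it is essentially the paper's argument with one presentational difference in the first step: the paper constructs the coherent parameters by uniformizing $E$ as $\C/\Lambda$ (where $\iota_1$ and $\tau$ become $\overline{z}\mapsto\overline{-z+p_0}$ and $\overline{z}\mapsto\overline{z+q_0}$) and takes $u_{\overline{\omega}}:\overline{z}\mapsto z-\omega$, whereas you build $u_Q$ intrinsically as the local primitive of $\Omega$ vanishing at $Q$ and deduce both symmetries from $\tau^*(\Omega)=\Omega$ and $\iota_1^*(\Omega)=-\Omega$. Since the uniformizing coordinate is exactly a primitive of $\Omega$, these are the same parameters; your packaging is marginally cleaner in that it never leaves $E^{\mathrm{an}}$, and it makes transparent why the signs work out. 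The substitution computation giving $c_{\iota_1(Q),j}=(-1)^{j+1}c_{Q,j}$ is identical to the paper's. For the last assertion the paper only says it is a ``direct consequence''; your explicit argument — $\iota_1$-stability of the polar divisor, $\iota_1(Q_0\oplus iP)=Q_0\oplus(k_0-i)P$ via $\iota_1\tau=\tau^{-1}\iota_1$, and the reindexed sum yielding $\ores_{Q_0,j}(b)=(-1)^{j+1}\ores_{Q_0,j}(b)$ — is the intended one and correctly covers the degenerate case $Q=\iota_1(Q)$ as well.
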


\begin{proof}
We first prove the existence of analytic local parameters with the desired properties.  According to \cite[p.35 and Remark 2.3.8]{DuistQRT}, $\iota_{1}(P)=[-1]P\oplus P_{0}$ for some $P_{0} \in E$. By uniformisation, it is equivalent to prove the following result :  {\it Consider a lattice $\Lambda \subset \C$ and two endomorphisms of the complex torus $\C/\Lambda$ given by $\iota_{1} : \overline{z} \mapsto \overline{-z+p_{0}}$ and $\tau : \overline{z} \mapsto \overline{z+q_{0}}$ for some $p_{0},q_{0} \in \C$. Then, there exists a set of analytic local parameters $\{u_{\overline{\omega}} \ \vert \ \overline{\omega} \in \C/\Lambda\}$ on the complex torus $\C/\Lambda$ such that $\iota_{1}(u_{\overline{\omega}})=-u_{\iota_{1}(\overline{\omega})}$ and $\tau(u_{\overline{\omega}})=u_{\tau(\overline{\omega})}$.} Such local parameters are given by $u_{\overline{\omega}} : \overline{z} \mapsto z-\omega$ for $z$ close to $\omega$.  
The rest of the Lemma is a direct consequence of the following easy computation. 
Indeed, applying $\iota_{1}$ to  
\[ b= \frac{c_{Q,n}}{u_Q^n} + \ldots + \frac{c_{Q,2}}{u_Q^2} + \frac{c_{Q,1}}{u_Q} + \tilde{f},\]
we get  
\begin{multline*}
-b=\iota_{1}(b) = \frac{c_{Q,n}}{\iota_{1}(u_Q)^n} + \ldots + \frac{c_{Q,2}}{\iota_{1}(u_Q)^2} + \frac{c_{Q,1}}{\iota_{1}(u_Q)} + \iota_{1}(\tilde{f})\\
= \frac{(-1)^{n}c_{Q,n}}{u_{\iota_{1}(Q)}^n} + \ldots + \frac{(-1)^{2}c_{Q,2}}{u_{\iota_{1}(Q)}^2} + \frac{(-1)^{1} c_{Q,1}}{u_{\iota_{1}(Q)}} + \iota_{1}(\tilde{f})
\end{multline*}
where $v_{\iota_{1}(Q)}(\iota_{1}(\tilde{f})) \geq 0$, as expected. 
\end{proof}

\begin{lem}\label{lem:res0order2}
If $g \in \calL(2Q + 2(Q\ominus m_1 P) + \ldots + 2(Q\ominus m_s P) )$ where $m_1, \ldots, m_s \in \ZX\backslash \{0\}$ is such that $\ores_{Q,2}(g)=0$ then 
\[\ores_{Q,1}(g) = 0.\]
\end{lem}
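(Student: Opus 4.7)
The plan is to generalize the residue computation from the proof of Lemma~\ref{lem5} to handle double poles, which in particular allows us to express the global residue sum as a linear combination of the two orbit residues $\ores_{Q,1}(g)$ and $\ores_{Q,2}(g)$. Since all poles of $g$ lie in the single $\tau$-orbit $Q\oplus\ZX P$, the differential $g\Omega$ has no other poles on $E$, so the sum of its residues vanishes; combined with the hypothesis $\ores_{Q,2}(g)=0$, this will force $\ores_{Q,1}(g)=0$.

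The main step is to show that the Taylor expansion of $\Omega$ with respect to a coherent set of local parameters has coefficients that are \emph{constant along the $\tau$-orbit of $Q$}. Fix a coherent set of local parameters $\{u_R\}_{R\in E}$ and write $\Omega = a^{(R)}(u_R)\,du_R$ locally at each $R$, where $a^{(R)}$ is a unit at $R$ with Taylor expansion $a^{(R)}(u_R)=a_0^{(R)}+a_1^{(R)}u_R+\cdots$. Using $\tau^{*}\Omega=\Omega$ (see the proof of Lemma~\ref{lem:derivellcurvecommtau}) together with the coherence relation $u_{R\ominus P}=\tau(u_R)$, I would obtain $a^{(R\ominus P)}=\tau(a^{(R)})$; substituting the expansion of $a^{(R)}$ and $\tau(u_R)=u_{R\ominus P}$ yields $a_n^{(R\ominus P)}=a_n^{(R)}$ for every $n\geq 0$. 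Thus each $a_n^{(R)}$ takes a common value $a_n$ on the entire orbit $Q\oplus\ZX P$. Note in particular that $a_0\neq 0$ since $v_R(a^{(R)})=0$.

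Now at each $R\in Q\oplus\ZX P$ that is a pole of $g$, write
\[g=\frac{c_{R,2}}{u_R^{2}}+\frac{c_{R,1}}{u_R}+\widetilde{g},\quad v_R(\widetilde{g})\geq 0.\]
Multiplying by the expansion $\Omega=(a_0+a_1u_R+O(u_R^{2}))\,du_R$ and extracting the coefficient of $u_R^{-1}\,du_R$ gives
\[\Res_R(g\,\Omega)=a_0\,c_{R,1}+a_1\,c_{R,2}.\]
Summing over all $R$ in the orbit (which accounts for all poles of $g\,\Omega$ on $E$) yields
\[0=\sum_{R}\Res_R(g\,\Omega)=a_0\,\ores_{Q,1}(g)+a_1\,\ores_{Q,2}(g).\]
The hypothesis $\ores_{Q,2}(g)=0$ and $a_0\neq 0$ then force $\ores_{Q,1}(g)=0$, as desired.

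The only subtle point is the orbit invariance of the $a_n$'s; once this is established the rest is a direct residue computation and an application of the residue theorem. No obstruction of any substance is expected beyond carefully tracking the effect of $\tau^{*}$ on local expansions.
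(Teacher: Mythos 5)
Your proof is correct and is essentially the paper's own argument: the paper writes $\Omega=\delta(u_R)^{-1}du_R$ and uses $\tau\circ\delta=\delta\circ\tau$ (itself a consequence of $\tau^{*}\Omega=\Omega$) to see that the first two expansion coefficients $\alpha,\beta$ of $\delta(u_R)^{-1}$ — your $a_0,a_1$ — are the same at every point of the orbit, so that $\Res_R(g\Omega)=\alpha c_{R,1}+\beta c_{R,2}$, and then applies the residue theorem exactly as you do. The only cosmetic difference is that you derive the orbit-invariance of the coefficients directly from $\tau^{*}\Omega=\Omega$ and the coherence of the local parameters rather than via the commutation of $\tau$ with $\delta$.
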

\begin{proof}
We may write $\delta(u_Q)^{-1} = \alpha + \beta u_{Q} + \bar u$ where $0 \neq \alpha \in k$, $\beta \in k$ and $\bar{u}$ is regular and has a zero of order $2$ at $Q$.  For each $i \in \{0, m_1, \dots,m_s \}$ we write
\[g = \frac{c_{Q\ominus iP, 2}}{u_{Q\ominus iP}^{2}} + \frac{c_{Q\ominus iP, 1}}{u_{Q\ominus iP}} + g_{Q\ominus iP}\]
where $g_{Q\ominus iP}$ is regular and zero at $Q\ominus iP$. Now consider the differential $g\Omega$.  Since for any $i \in \ZX$,  $\omega =\delta(u_{Q\ominus iP})^{-1}du_{Q\ominus iP}$, we have
\begin{eqnarray*}
g\Omega & = & (\frac{c_{Q\ominus iP, 2}}{u_{Q\ominus iP}^{2}} +\frac{c_{Q\ominus iP, 1}}{u_{Q\ominus iP}} + g_{Q\ominus iP})(\delta(u_{Q\ominus iP})^{-1}du_{Q\ominus iP})\\
 & = &  (\frac{c_{Q\ominus iP, 2}}{u_{Q\ominus iP}^{2}} +\frac{c_{Q\ominus iP, 1}}{u_{Q\ominus iP}} + g_{Q\ominus iP})(\tau^i(\delta(u_Q)^{-1})du_{Q\ominus iP})\\
 & = &  (\frac{c_{Q\ominus iP, 2}}{u_{Q\ominus iP}^{2}} +\frac{c_{Q\ominus iP, 1}}{u_{Q\ominus iP}} + g_{Q\ominus iP})((\alpha +  \beta u_{Q \ominus iP} + \tau^i(\bar{u}))du_{Q\ominus iP})
\end{eqnarray*}
where the second equality follows from the fact that $u_{Q\ominus iP} = \tau^i(u_Q)$ and $\tau\delta =\delta \tau$. Note that $\tau^{i}(\bar{u})$ is regular and has a zero of order $2$ at $Q\ominus iP$. Therefore the residue of $g\omega$ at $Q\ominus iP$ is $\alpha c_{Q\ominus iP, 1} + \beta c_{Q\ominus iP, 2}$. Since the sum of the residues of a differential form is $0$ we get $\alpha \ores_{Q,1}(g) + \beta \ores_{Q,2}(g)= 0$. Since $\alpha \neq 0$ and $\ores_{Q,2}(g)=0$, we get $\ores_{Q,1}(g)=0$. 
\end{proof}

\bibliography{walkbib}

\end{document}